\documentclass[11pt]{article} 
\usepackage[utf8]{inputenc} 

\usepackage[margin=1.25in]{geometry} 
\geometry{letterpaper} 
\usepackage{graphicx} 

\usepackage{booktabs} 
\usepackage{array} 
\usepackage{paralist} 
\usepackage{verbatim} 
\usepackage{mathrsfs} 
\usepackage{amssymb}
\usepackage{amsthm}
\usepackage{amsmath,amsfonts,amssymb}
\usepackage{esint}
\usepackage{graphics}
\usepackage{enumerate}
\usepackage{mathtools}
\usepackage{xfrac}
\usepackage{bbm}
\usepackage{subcaption}
\usepackage{stmaryrd}

\let\oldsquare\square 

\usepackage{mathabx}

\usepackage{nicefrac}

\renewcommand{\square}{\oldsquare}

 \usepackage{tocloft}
 
\setlength\cftparskip{0pt}
\setlength\cftbeforesecskip{1pt}
\setlength\cftaftertoctitleskip{2pt}

\usepackage[usenames,dvipsnames]{xcolor}
\usepackage[colorlinks=true, pdfstartview=FitV, linkcolor=blue, citecolor=blue, urlcolor=blue]{hyperref}
\usepackage[normalem]{ulem}

\numberwithin{equation}{section}

\newtheorem{theorem}{Theorem}[section]

\newtheorem{corollary}[theorem]{Corollary}
\newtheorem{proposition}[theorem]{Proposition}
\newtheorem{lemma}[theorem]{Lemma}
\theoremstyle{definition}

\newtheorem{remark}[theorem]{Remark}

\let\originalleft\left
\let\originalright\right
\renewcommand{\left}{\mathopen{}\mathclose\bgroup\originalleft}
\renewcommand{\right}{\aftergroup\egroup\originalright}

\newcommand{\vertiii}{\vert\kern-0.3ex\vert\kern-0.25ex\vert}

\newcommand{\norm}[1]{\left\|#1\right\|}
\newcommand{\abs}[1]{\left|#1\right|}

\newcommand*{\N}{\ensuremath{\mathbb{N}}}
\newcommand*{\Z}{\ensuremath{\mathbb{Z}}}
\newcommand*{\R}{\ensuremath{\mathbb{R}}}
\newcommand*{\Zd}{\ensuremath{\mathbb{Z}^d}}
\newcommand*{\Rd}{\ensuremath{{\mathbb{R}^d}}}

\renewcommand*{\tilde}{\widetilde}

\newcommand{\ep}{\varepsilon}
\newcommand{\ve}{\varepsilon}
\newcommand{\vp}{\varphi}
\newcommand{\Om}{\Omega}
\newcommand{\La}{\Lambda}
\newcommand{\la}{\lambda}
\newcommand{\Ga}{\Gamma}
\newcommand{\al}{\alpha}
\newcommand{\ga}{\gamma}
\newcommand{\ka}{\kappa}

\DeclareSymbolFont{boldoperators}{OT1}{cmr}{bx}{n}
\SetSymbolFont{boldoperators}{bold}{OT1}{cmr}{bx}{n}
\renewcommand{\a}{\mathbf{A}}
\newcommand{\A}{\mathbf{A}}

\renewcommand{\O}{\mathcal{O}}
\newcommand{\X}{\mathcal{X}}

\newcommand{\Y}{\mathcal{Y}}

\newcommand{\ahom}{\bar{\a}}

\newcommand{\cu}{\square}
\newcommand{\F}{\mathcal{F}}
\renewcommand{\P}{\mathbb{P}}
\newcommand{\E}{\mathbb{E}}

\renewcommand{\O}{\mathcal{O}}

\newcommand{\indc}{1}

\DeclareMathOperator{\dist}{dist}

\DeclareMathOperator*{\osc}{osc}

\DeclareMathOperator{\tr}{tr}

\DeclareMathOperator{\supp}{supp}

\newcommand{\avsum}{\mathop{\mathpalette\avsuminner\relax}\displaylimits}

\makeatletter
\newcommand\avsuminner[2]{%
  {\sbox0{$\m@th#1\sum$}%
   \vphantom{\usebox0}%
   \ooalign{%
     \hidewidth
     \smash{\,\rule[.23em]{8.8pt}{1.1pt} \relax}%
     \hidewidth\cr
     $\m@th#1\sum$\cr
   }
  }
}
\makeatother

\makeatletter
\newcommand\avsuminnerr[2]{%
  {\sbox0{$\m@th#1\sum$}%
   \vphantom{\usebox0}%
   \ooalign{%
     \hidewidth
     \smash{\,\rule[.23em]{6pt}{0.7pt} \relax}%
     \hidewidth\cr
     $\m@th#1\sum$\cr
   }%
  }%
}
\makeatother

\def\Xint#1{\mathchoice
{\XXint\displaystyle\textstyle{#1}}%
{\XXint\textstyle\scriptstyle{#1}}%
{\XXint\scriptstyle\scriptscriptstyle{#1}}%
{\XXint\scriptscriptstyle\scriptscriptstyle{#1}}%
\!\int}
\def\XXint#1#2#3{{\setbox0=\hbox{$#1{#2#3}{\int}$}
\vcenter{\hbox{$#2#3$}}\kern-.5\wd0}}

\def\fint{\Xint-}

\makeatletter 
\newcommand{\negphantom}{\v@true\h@true\negph@nt} 
\newcommand{\neghphantom}{\v@false\h@true\negph@nt} 
\newcommand{\negph@nt}{\ifmmode\expandafter\mathpalette 
  \expandafter\mathnegph@nt\else\expandafter\makenegph@nt\fi} 
\newcommand{\makenegph@nt}[1]{%
  \setbox\z@\hbox{\color@begingroup#1\color@endgroup}\finnegph@nt} 
\newcommand{\finnegph@nt}{%
  \setbox\tw@\null 
  \ifv@ \ht\tw@\ht\z@\dp\tw@\dp\z@\fi \ifh@\wd\tw@-\wd\z@\fi\box\tw@} 
\newcommand{\mathnegph@nt}[2]{%
  \setbox\z@\hbox{$\m@th #1{#2}$}\finnegph@nt} 
\makeatother

\usepackage{titlesec}

\newcommand{\addperiod}[1]{#1.}
\titleformat{\section}
   {\normalfont\Large}{\thesection.}{0.5em}{}
\titleformat*{\subsection}{\normalfont\large}
\titleformat{\subsubsection}[runin]
  {\bfseries}
  {\thesubsubsection.}
  {0.5em}
  {\addperiod}
\titleformat*{\subsubsection}{\bfseries}
\titleformat*{\paragraph}{\bfseries}
\titleformat*{\subparagraph}{\large\bfseries}

\title{\bf \Large Green function and invariant measure estimates for nondivergence form elliptic homogenization}

\author{Scott Armstrong
\thanks{Laboratoire Jacques-Louis Lions, Sorbonne Universit\'e 
and
Courant Institute of Mathematical Sciences, New York University.
{\footnotesize \href{mailto:scottnarmstrong@gmail.com}{scottnarmstrong@gmail.com}.}}\and 
Benjamin Fehrman
\thanks{Department of Mathematics, Louisiana State University.
{\footnotesize \href{mailto:fehrman@math.lsu.edu}{fehrman@math.lsu.edu}.}
}
\and
Jessica Lin
\thanks{Department of Mathematics and Statistics, McGill University.
{\footnotesize \href{mailto:jessica.lin@mcgill.ca}{jessica.lin@mcgill.ca}.}
}
}
\date{November 29, 2025}

\usepackage[nottoc,notlot,notlof]{tocbibind}

\begin{document}

\maketitle

\begin{abstract}
We prove quantitative estimates on the parabolic Green function and the stationary invariant measure in the context of stochastic homogenization of elliptic equations in nondivergence form. We consequently obtain a quenched, local CLT for the corresponding diffusion process and a quantitative ergodicity estimate for the environmental process.  Each of these results are characterized by deterministic (in terms of the environment) estimates which are valid above a random, ``minimal'' length scale, the stochastic moments of which we estimate sharply.
\end{abstract}

\setcounter{tocdepth}{1}  
\tableofcontents

\renewcommand{\thefootnote}{\fnsymbol{footnote}} 
\footnotetext{\emph{MSC 2020 subject codes:} 35B27, 60F17, 60K37}     
\renewcommand{\thefootnote}{\arabic{footnote}} 

\renewcommand{\thefootnote}{\fnsymbol{footnote}} 
\footnotetext{\emph{Keywords:} stochastic homogenization, invariance principle, local limit theorem, invariant measure}     
\renewcommand{\thefootnote}{\arabic{footnote}}

\section{Introduction}
\subsection{Motivation and informal summary of the main results}
We consider the large-scale behavior of solutions of the nondivergence form elliptic equation 
\begin{equation}
\label{e.ellip}
- \tr(\a(x) D^2 u) = 0 \quad \mbox{in} \ U \subseteq \Rd,
\end{equation}
and its parabolic counterpart, namely 
\begin{equation}
\label{e.parab}
\partial_t u - \tr(\a(x) D^2 u) = 0
\quad \mbox{in} \ U_{T} \subseteq \R \times \Rd\,, 
\end{equation}
where $U_{T}:=(0, T]\times U$. Here~$\tr(M)$ denotes the trace of a~$d\times d$ matrix~$M$, the spatial gradient of a real-valued function~$u$ is denoted by~$Du$, and~$D^2u$ is the Hessian of~$u$. The coefficient field~$\a(x)$ is a~$\Zd$--stationary random field taking values in the symmetric matrices satisfying, for given constants~$0<\lambda\leq \Lambda <\infty$, the uniform ellipticity condition
\begin{equation}
\label{e.unifellip}
\lambda I_d \leq \a(x) \leq \Lambda I_d, \quad x\in\Rd\,,
\end{equation}
and, for some exponent~$\alpha_0\in (0,1]$ and constant~$K_0\in [1,\infty)$, the uniform H\"older continuity condition 
\begin{equation}
\label{e.holder}
\bigl[ \a \bigr]_{C^{0,\alpha_0}(\Rd)}:=
\sup_{x,y\in \Rd,\, x\neq y} \frac{|\A(x)-\A(y)|}{|x-y|^{\alpha_0}}\leq K_0. 
\end{equation}
We will suppose that the law of~$\a(\cdot)$ satisfies a strong quantitative decorrelation condition: namely a finite range of dependence. 

\smallskip

The equations~\eqref{e.ellip} and~\eqref{e.parab} arise in the study of diffusion processes in~$\Rd$ in a random environment. Consider a stochastic process~$\{ X_t\}$ evolving according to the SDE
\begin{equation}
\label{e.SDE}
dX_t = \sigma(X_t) \,dW_t
\end{equation}
where~$\{W_t\}_{t>0}$ is a Brownian motion and~$\sigma:\Rd \to \R^{d\times d}$ is a given H\"older continuous function satisfying the uniform nondegeneracy condition
\begin{equation*}
\lambda I_d \leq \frac12 \sigma \sigma^t \leq \Lambda I_d \quad \mbox{in}\ \Rd\,. 
\end{equation*}
If we set~$\a(x):=\frac12\sigma(x)\sigma^t(x)$, then the infinitesimal generator of the Markov process~$\{X_t\}$  is precisely the operator~$\varphi\mapsto \tr(\a(x) D^2\varphi)$, and the parabolic equation~\eqref{e.parab} is the backward Kolmogorov equation. If~$\sigma$ is itself a stationary random field, then~$\{ X_t\}$ is a ``diffusion in a random environment'' and the study of its large-scale behavior is essentially equivalent to that of the large-scale behavior of solutions of~\eqref{e.parab}, which lies in the realm of stochastic homogenization.

Note that the diffusion~\eqref{e.SDE} is \emph{not} reversible, in general. The infinitesimal generator of a reversible diffusion process is a divergence form elliptic operator. The homogenization theory for divergence form equations, though in some ways analogous, is necessarily completely separate from the nondivergence form case. 

\smallskip

The classical qualitative result of stochastic homogenization in the nondivergence form setting was proved more than forty years ago~\cite{PV,Y1}. It roughly states that, on an event of~$\P$--probability one, solutions of~\eqref{e.ellip} and~\eqref{e.parab} converge in the large-scale limit to solutions of the homogenized equations, in which the coefficient~$\a(x)$ is replaced by a constant, deterministic matrix~$\ahom$. This result can be equivalently formulated in terms of the corresponding Markov process~$\{ X_t\}$ as the statement that, on an event of~$\P$--probability one, the rescaled process~$X^\ep_t := \ep X_{t/\ep^2}$ converges in law as~$\ep \to 0$ to a Brownian motion with covariance matrix~$2\ahom$.

\smallskip

In this paper we are interested in quantitative homogenization estimates. That is, we are interested in the speed of convergence of homogenization. Of particular interest is upper bounds on the size of the (random) minimal length scale on which one begins to see the effects of homogenization. 

\smallskip

Quantitative homogenization can be proved by considering the unique (up to normalization) stationary \emph{invariant measure}~$\mu$, which can be written as~$d\mu (x) = m(x) dx$. The density function~$m$ belongs to~$L^1_{\mathrm{loc}}$ and solves the forward Kolmogorov equation (independent of the time variable), which formally is the ``doubly divergence form'' equation
\begin{equation}
\label{e.ddf}
-\sum_{i,j=1}^d 
\partial_{x_i} \partial_{x_j} 
\bigl( \a_{ij}(x) m(x) \bigr) = 0.
\end{equation}
Note that this invariant measure has no explicit form or representation formula. 
Under a qualitative ergodicity assumption, the effective matrix~$\ahom$ can be identified as the ensemble average of~$m\A$, and the ergodic theorem then implies that 
\begin{equation}
\label{e.harpoons}
m\bigl( \tfrac \cdot \ep \bigr)
\rightharpoonup 1
\quad \mbox{and} \quad 
m\bigl( \tfrac \cdot \ep \bigr)
\a\bigl( \tfrac \cdot \ep \bigr)
\rightharpoonup \ahom
\quad 
\mbox{as $\ep\to 0$, \ weakly in~$L^1_\mathrm{loc}$, \ $\P$--a.s.}
\end{equation}
This pair of weak limits implies homogenization by purely deterministic, PDE arguments. 
In probabilistic terms, one can use~\eqref{e.harpoons} to prove the quenched CLT for the process~$X_t^\ep$ directly by applying the martingale CLT and then observing that, thanks to~\eqref{e.harpoons}, the quadratic variation of this process is averaging to~$2 \ahom$. 

\smallskip

The main point of this paper is to obtain quantitative estimates on the ergodicity of the invariant measure~$m$, under the assumption that~$\a$ has a finite range of dependence, and to thereby obtain a quantitative rate of convergence for the weak limits in~\eqref{e.harpoons}. What we prove is that~$m$ decorrelates at least at an algebraic rate, and this translates into an algebraic rate of convergence for the weak limits in~\eqref{e.harpoons}. 
We do not obtain the optimal decay exponents for these estimates, as we are instead focused on optimal stochastic integrability bounds. We show that the probability of ``exceptional events'' in any space domain is at most $O(\exp(-{\text{volume}}))$ (see for example \cite[Remark 4.11]{AK} for a general explanation of this optimality). That is, we sharply bound the probability of large~$O(1)$-type deviations, but do not sharply estimate the size of the typical fluctuation. In Theorem \ref{t.algrate2}, we prove the first quantitative homogenization result for the Cauchy-Dirichlet problem which exhibits this exact optimal stochastic integrability. The study of optimal rates of convergence for~\eqref{e.harpoons} will be the subject of future work, which we expect to use the present work as a starting point for a renormalization-type argument. 

\smallskip

The proof of our main results are based on the close connection between the stationary invariant measure and the parabolic Green function~$P(t,x,y)$ for the Kolmogorov equations. As a function of~$(t,x)$, for each $y\in \R^{d}$, the Green function~$P(\cdot, \cdot, y)$ is the solution of
\begin{equation}\label{e.Pdef}
\begin{cases}
\partial_{t}P(\cdot, \cdot, y)-\tr(\A D^{2}P(\cdot, \cdot, y))=0&\text{in $(0, \infty)\times \R^{d}$},\\
P(0,\cdot,y)=\delta(\cdot-y)&\text{on $\R^{d}$}.
\end{cases}
\end{equation}
Since~\eqref{e.parab} is in nondivergence form, the equation does not preserve mass. 
However, since the equation is homogenizing on large scales to a constant-coefficient equation--which does preserve mass--and the Krylov-Safonov estimate guarantees that~$P(t,\cdot,y)$ must ``spread out'' for large times~$t$, we should expect that~$P(t,\cdot,y)$ has an asymptotic mass as~$t \to \infty$. If we denote this mass by~$m(y)$, then it turns out that~$d\mu(x)=m(x)\,dx$ is the unique stationary invariant measure. That is, we can characterize the invariant measure in terms of the limiting mass of the parabolic Green function. 
Indeed, one of our first main results (Theorem \ref{t.GF}) asserts that $P(t,\cdot, y)$ converges asymptotically as $t\to\infty$ to $m(y)\overline{P}(t,\cdot,y)$, where $\overline{P}$ is the parabolic Green function of the homogenized equation, 
\begin{equation}\label{e.Pbardef}
\begin{cases}
\partial_{t}\overline{P}(\cdot, \cdot-y)-\tr(\overline{\A} D^{2}\overline{P}(\cdot, \cdot-y))=0&\text{in $(0, \infty)\times \R^{d}$},\\
\overline{P}(0,\cdot,y)=\delta(\cdot-y)&\text{on $\R^{d}$}.
\end{cases}
\end{equation}

\smallskip

We expect that quantitative homogenization estimates for the parabolic Green function imply quantitative estimates on the invariant measure~$\mu$. 
Our strategy is to use the quantitative estimates proved in~\cite{asnondiv}, which provide algebraic rates of homogenization with optimal stochastic integrability (in terms of solutions of Dirichlet and Cauchy-Dirichlet problems), to obtain bounds for the speed of homogenization for the parabolic Green function. We then use the above identification to obtain quantitative estimates for the limits in~\eqref{e.harpoons}: see Theorem~\ref{t.mto1}. 
Consequently, we obtain a large-scale regularity estimate for solutions of~\eqref{e.ddf} which implies a quenched Liouville-type result for the stationary invariant measure (see Theorem~\ref{t.largescaleC01}). 

\smallskip

In our last result, we quantify the ergodicity for the so-called \emph{environmental process}, also known as the ``environment from the point of view of the particle.''
We let~$\{ X_t \}$ denote the Feller diffusion process whose generator is the operator~$L$, where $L u  = \tr (\a D^2 u)$, and such that~$X_0=0$. 
Note that~$\{ X_t \}$ is the solution of the SDE
\begin{equation}
\label{e.Feller}
dX_t = \sigma (X_t) \,dW_t
\end{equation}
where~$\sigma(x) : = \sqrt{2 \a(x) }$ and~$\{ W_t \}$ is a Brownian motion on~$\Rd$. We let~$\mathbf{P}$ denote the probability measure associated to~$\{X_t\}$.
We are interested in ergodic properties of the process~$\a(X_t)$.  It is well-known that 
\begin{equation}
\label{e.EPVP}
\frac1t \int_{0}^t  \a(X_s)\,ds \rightarrow \ahom, \quad \mbox{as} \ t \to \infty, \quad \mbox{$\P \otimes \mathbf{P}$--a.s.}
\end{equation}
We introduce a martingale argument which transforms quantitative estimates for the limits in~\eqref{e.harpoons} into a quantitative rate for the limit~\eqref{e.EPVP}. This results in the statement of Theorem~\ref{t.environmental}, below.

\subsection{Statement of the main results}

Throughout the paper, we fix a spatial dimension~$d \in\N$ with~$d\geq 2$, ellipticity constants~$\lambda,\Lambda\in(0,\infty)$ with~$\lambda\leq \Lambda$, a H\"older exponent~$\alpha_0\in (0,1]$ and~$K_0\in [1,\infty)$.
We let~$\mathcal{S}^d$ denote the set of~$d\times d$ symmetric matrices. We define the set~$\Omega$ by  
\begin{equation*}
\Om:=\left\{\text{$\A: \R^{d}\rightarrow \mathcal{S}^{d}$ satisfies \eqref{e.unifellip} and \eqref{e.holder}}\right\}.
\end{equation*}
For every Borel subset~$U\subseteq \R^{d}$, we define the~$\sigma$--algebra~$\mathcal{F}(U)$ by
\begin{equation*}
\mathcal{F}(U)
:=
\text{
the $\sigma$-algebra generated by~$\{ \A \mapsto \A(x) \,:\, x\in U\}$,
}
\end{equation*}
and we denote the largest of these by~$\mathcal{F} := \mathcal{F}(\R^{d})$. 

\smallskip

The law of the coefficient field~$\a(\cdot)$ is prescribed by a probability measure~$\P$ on~$(\Omega,\F)$. The assumptions on~$\P$ are twofold: it is stationary with respect to~$\Zd$ translations, and has a unit range of dependence. Precisely, if we denote the act of translation by~$y\in \R^{d}$ by the operator~$\tau_{y}: \Om\rightarrow \Om$, where 
\begin{equation*}
\tau_{y}\A:=\A(\cdot+y)\,,
\end{equation*}
and extend~$\tau_y$ to events~$E\in\F$ in the obvious way, then the~$\Zd$--stationarity assumption is 
\begin{equation}
\label{e.stationary}
\P = \P \circ \tau_z\,, \quad \forall z\in\Zd\,.
\end{equation}
The unit range of dependence assumption is 
\begin{equation}
\label{e.FRD}
\left\{ 
\begin{aligned}
& 
\mbox{ for all Borel subsets $U, V\subseteq \R^{d}$ with~$\dist(U,V) \geq 1$,}
\\ &
\mbox{ $\F(U)$ and~$\F(V)$ are~$\P$--independent.}
\end{aligned}
\right.
\end{equation}
The hypothesis \eqref{e.FRD} is a stronger, quantitative form of the classical qualitative  ergodicity assumption, which is that translation-invariant events must have probability either zero or one. We use the notation $\X=\O_{p}(C)$ to mean that 
\begin{equation*}
\E[\exp((C^{-1}\X_{+})^{p})]\leq 2.
\end{equation*}
We denote triadic cubes by
\begin{equation*}
\cu_{n}:=\left(-\frac{1}{2}3^{n}, \frac{1}{2}3^{n}\right)^{d}
\quad \mbox{and} \quad 
\cu_{n}(x):=3^{n}\Big\lfloor 3^{-n}x+\frac{1}{2}\Big\rfloor +\cu_{n}, \qquad x\in\Rd\,. 
\end{equation*}
The ball centered at~$x$ of radius~$r$ is denoted by $B_{r}(x)$ and the parabolic cylinder $Q_{r}(t,x):=(t-r^{2}, t]\times B_{r}(x)$. When $x$ or $(x,t)$ are the origin, we simply refer to these as $B_{r}$ and $Q_{r}$, respectively. We reserve the notation for $f\in L^{p}(U)$ with $|U|<\infty$, 
\begin{equation*}
\norm{f}_{\underline{L}^{p}(U)}:=\left(\fint_{U}|f|^{p}\right)^{\frac{1}{p}}=|U|^{-\frac{1}{p}}\norm{f}_{L^p(U)}\quad\text{and}\quad  (f)_{U}:=\fint_{U} f.
\end{equation*}
For a Radon measure $\nu$, we use the notation 
\begin{equation*}
| \nu |(U) := \| \nu\|_{\mathrm{TV}(U)},
\end{equation*}
where~$\| \cdot \|_{\mathrm{TV}(U)}$ is the total variation norm.

\smallskip

Our first main result is a quenched, local CLT for solutions of~\eqref{e.parab} with an explicit rate. We consider initial data~$v_0$ which lies below a Gaussian, and we show that the solution must converge to a multiple of the homogenized heat kernel in the large-time limit, with a quantitative rate. 
Like most of our results, the estimate itself is deterministic---there are no random quantities on the right side of~\eqref{e.CLT} or~\eqref{e.int}---but they are restricted to times~$t$ such that~$\sqrt{t}$ is larger than a certain minimal scale~$\Y$, which we bound sharply in~\eqref{e.minscaleclt}.  In the following statements, we write $\ahom$ for the homogenized matrix, $\overline{P}$ for the parabolic Green function associated to $\ahom$, and $m$ for the spatially stationary invariant measure constructed in \cite{PV} (though, we emphasize that our results provide a new, quenched construction of these objects that are independent of the results in \cite{PV}).

\begin{theorem}[Quantitative local CLT]
\label{t.realthm} 
Let $p\in (0,d)$. There exist~$\gamma(d,\lambda,\Lambda, p)\in(0,1]$, $C(d,\lambda,\Lambda)\in [1,\infty)$, and a random variable~$\Y$ satisfying 
\begin{equation}\label{e.minscaleclt}
\Y = \O_p(C),
\end{equation}
such that, for every~$R\geq \Y$ and solution~$v$ of the initial-value problem 
\begin{equation*}
\begin{cases}
\partial_{t}v-\tr(\A D^{2}v)=0&\text{in $(0, \infty)\times \R^{d}$,}\\
v(0,x)=v_{0}&\text{on $\R^{d}$,}
\end{cases}
\end{equation*}
with initial data satisfying, for $M\geq 0$, the bound
\begin{equation}\label{e.intgauss}
|v_{0}(x)|\leq MR^{-d}\exp\left(-\frac{|x|^{2}}{R^2}\right)
\,,
\end{equation}
there exists a random variable~$c[v_0]$ satisfying, for every~$t\geq R^2$,
\begin{equation}
\label{e.CLT}
\left| v(t,x)-c[v_{0}]\overline{P}(t,x)\right|
\leq 
CM\biggl (\frac{t}{R^2}\biggr )^{\!\!-\gamma}t^{-\frac{d}{2}}\exp\left(-\frac{|x|^{2}}{Ct}\right).
\end{equation}
If we assume moreover that $v_{0}\in C^{0, \sigma}(B_{R})$ for some $\sigma \in (0,1]$, then there exists $\ga=\ga(\la, \La, d, \sigma)\in (0,1)$ and $C(\la, \La, d, \sigma)\in [1, \infty)$ so that 
\begin{equation}\label{e.int}
\left| c[v_{0}]-\int_{\R^{d}} v_{0}(x)\, dx\right|
\leq 
CM\Bigl (1+M^{-1}R^{d+\sigma}[v_{0}]_{C^{0, \sigma}(B_{R})}\Bigr )
R^{-\gamma}.
\end{equation}
\end{theorem}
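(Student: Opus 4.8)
The plan is to represent $v$ through the parabolic Green function and reduce the statement to the quantitative homogenization of $P$ contained in Theorem~\ref{t.GF}. By linearity and the Gaussian decay of the data, $v(t,x)=\int_{\R^d}P(t,x,y)\,v_0(y)\,dy$, and the natural candidate for the asymptotic constant is
\begin{equation*}
c[v_0]:=\int_{\R^d}m(y)\,v_0(y)\,dy,
\end{equation*}
which is finite because $\int_{B_r}m\lesssim r^d$ once $r$ exceeds the minimal scale of Theorem~\ref{t.mto1} and $v_0$ has Gaussian tails. One then splits $v(t,x)-c[v_0]\overline{P}(t,x)=\mathrm{I}+\mathrm{II}$, where $\mathrm{I}:=\int_{\R^d}[P(t,x,y)-m(y)\overline{P}(t,x-y)]\,v_0(y)\,dy$ is a homogenization error and $\mathrm{II}:=\int_{\R^d}m(y)\,[\overline{P}(t,x-y)-\overline{P}(t,x)]\,v_0(y)\,dy$ is a Gaussian shift error. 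The term $\mathrm{II}$ is deterministic: the elementary bound $|\overline{P}(t,x-y)-\overline{P}(t,x)|\lesssim(|y|/\sqrt t)\,t^{-d/2}(e^{-|x-y|^2/Ct}+e^{-|x|^2/Ct})$ for the homogenized Gaussian, together with the first moment estimate $\int m(y)\,|y|\,|v_0(y)|\,dy\lesssim MR$ (obtained by splitting the integral at $|y|=\sqrt t$ and summing over dyadic shells against $\int_{B_r}m\lesssim r^d$), yields $|\mathrm{II}|\lesssim M\,(t/R^2)^{-1/2}\,t^{-d/2}\,e^{-|x|^2/Ct}$, already of the required form.

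The term $\mathrm{I}$ is the main step and is where Theorem~\ref{t.GF} is used. One splits the $y$--integral according to whether the pointwise minimal scale governing the Green function with pole at $y$ is at most $\sqrt t$ (``good $y$'') or exceeds it (``bad $y$''). On the good set, Theorem~\ref{t.GF} supplies a bound on $P(t,x,y)-m(y)\overline{P}(t,x-y)$ that is Gaussian in $x-y$ and decays at an algebraic rate in $t$ above the pointwise minimal scale; integrating this against $|v_0|$, and using $\int|v_0|\lesssim M$ together with a bound for the $L^1$--average of a small power of the pointwise minimal scale (finite since that scale has all moments), produces a contribution $\lesssim M\,(t/R^2)^{-\gamma}\,t^{-d/2}\,e^{-|x|^2/Ct}$ once $R\ge\Y$. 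On the bad set one discards homogenization and keeps only the standard Gaussian upper bounds $P(t,x,y)\lesssim t^{-d/2}e^{-|x-y|^2/Ct}$ and $m(y)\overline{P}(t,x-y)\lesssim m(y)\,t^{-d/2}e^{-|x-y|^2/Ct}$; the crucial point is to define $\Y$ so that, for every $R\ge\Y$ and every $t\ge R^2$, the measure of bad poles -- weighted by $(1+m)$ and by the Gaussian tail of $v_0$ -- is negligible compared to $(t/R^2)^{-\gamma}$. Proving $\Y=\O_p(C)$ for such a $\Y$ is exactly where the unit range of dependence enters: one covers each dyadic annulus by unit cubes, uses the stretched-exponential tail of the pointwise minimal scale coming from Theorem~\ref{t.GF}, and applies a Chernoff-type concentration estimate for sums of independent indicators; the admissible $\gamma$ degrades as $p\uparrow d$, which is the source of the dependence $\gamma=\gamma(d,\lambda,\Lambda,p)$.

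For the refined statement~\eqref{e.int} one writes $c[v_0]-\int_{\R^d}v_0=\int_{\R^d}(m(y)-1)\,v_0(y)\,dy$ and decomposes the integral over the dyadic annuli $A_k:=B_{2^{k+1}R}\setminus B_{2^kR}$, $k\ge0$, together with $B_R$, the latter further subdivided into annuli about the origin. On an annulus of radius $r$ one subtracts a reference value of $v_0$: inside $B_R$ the oscillation of $v_0$ is at most $[v_0]_{C^{0,\sigma}(B_R)}\,r^\sigma$ and $\int|m-1|\lesssim r^d$ for $r$ above the minimal scale (the finitely many dyadic scales below the minimal scale contribute at most $[v_0]_{C^{0,\sigma}(B_R)}$ times a fixed power of that scale, which is absorbed into the $R^{-\gamma}$ factor by taking $\gamma$ small), while outside $B_R$ the Gaussian decay of $v_0$ beats the $\lesssim r^d$ growth of $\int(1+m)$; the leftover term on each annulus is a value of $v_0$ times $\int_{A_k}(m-1)$, which is small because $\fint_{B_r}(m-1)$ decays algebraically by Theorem~\ref{t.mto1}. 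Summing over $k$ gives $CM\bigl(1+M^{-1}R^{d+\sigma}[v_0]_{C^{0,\sigma}(B_R)}\bigr)\,R^{-\gamma}$, with $\gamma$ depending only on $d,\lambda,\Lambda$ and $\sigma$ -- through the rate in Theorem~\ref{t.mto1} and the exponent $\sigma$ -- and not on $p$.

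The main obstacle, conditional on Theorems~\ref{t.GF} and~\ref{t.mto1}, will be the organization of the minimal scale: one must choose $\Y$ so that simultaneously $\int_{B_r}m\lesssim r^d$ for $r\ge\Y$, the bad--pole set in $\mathrm{I}$ is suitably sparse at every scale $r\ge\Y$ and every $t\ge r^2$, and the minimal scale of Theorem~\ref{t.mto1} is dominated, and then verify $\Y=\O_p(C)$ by concentration; once this is arranged, everything else reduces to deterministic elliptic/parabolic regularity and Gaussian bookkeeping.
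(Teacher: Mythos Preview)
Your proposal is circular: in this paper, Theorems~\ref{t.GF} and~\ref{t.mto1} are \emph{consequences} of Theorem~\ref{t.realthm}, not inputs to it. The invariant measure density~$m(y)$ is \emph{defined} via $m(y)=\lim_{t\to\infty}\int P(t,x,y)\,dx$, and the existence of this limit, together with the estimate~\eqref{e.GF}, is obtained by applying Theorem~\ref{t.realthm} to the initial datum $P(R^2,\cdot,y)$; likewise, the bound $\fint_{B_R}m\le C$ for $R\ge\Y$ that you invoke is~\eqref{e.mL1}, derived from~\eqref{e.int}. So you cannot assume~\eqref{e.GF} or~\eqref{e.m.conv} when proving~\eqref{e.CLT}.

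There is also a technical error in your treatment of the ``bad'' poles: you write $P(t,x,y)\lesssim t^{-d/2}e^{-|x-y|^2/Ct}$ as a ``standard'' bound, but for nondivergence form equations no such scale-invariant Gaussian upper bound holds deterministically. Proposition~\ref{le_prop_2} only gives $P(t,x,y)\le Ct^{-\bar\kappa}e^{-|x-y|^2/Bt}$ with an exponent~$\bar\kappa$ that can be much smaller than~$d/2$; the Nash--Aronson-type bound~\eqref{e.PGF.bound} is itself a \emph{probabilistic} consequence of homogenization (Corollary~\ref{c.NA}), again downstream of Theorem~\ref{t.realthm}.

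The paper's actual proof is self-contained and proceeds by a time-doubling induction that uses only the deterministic material of Section~\ref{s.loc} and the Cauchy--problem homogenization error of Proposition~\ref{prop_homogenize} (which comes from~\cite{asnondiv} via Theorem~\ref{t.algrate}). At each step one compares $v-c_s[v_0]\overline P$ with the corresponding homogenized evolution~$\bar Q$, uses Lemma~\ref{l.mean0} to gain a factor $(t/s)^{-1/2}$ from the mean-zero initial data of~$\bar Q$, and controls $Q-\bar Q$ by Proposition~\ref{prop_homogenize}; the constant~$c[v_0]$ emerges as the limit of the masses $c_t[v_0]=\int v(t,\cdot)$, and~\eqref{e.int} is proved in a final step by comparing $c_t[v_0]$ to $\int v_0$ over a short time interval $[0,R^{2+\theta}]$ via Proposition~\ref{prop_homogenize}. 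No knowledge of~$m$ or of Theorem~\ref{t.GF} is required.
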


The constant $\gamma$ is identified explicitly in the proof of Theorem~\ref{t.realthm} and satisfies $\gamma\rightarrow 0$ as $p\rightarrow d$.  Furthermore, the previous result may be applied to the parabolic Green function itself, resulting in the following statement. 
\begin{theorem}[Parabolic Green function estimates]
\label{t.GF}
Let $p\in (0,d)$. There exist an exponent~$\gamma(d,\lambda,\Lambda, p)\in (0,1)$, a constant~$C(d,\lambda,\Lambda)\in [1,\infty)$, 
and a random variable~$\Y$ satisfying $\Y= \O_p(C)$,
and, for every~$y\in\cu_0$, a positive random variable~$m(y)$
such that, for every~$t \geq \Y^{2}$ and~$x\in\Rd$, 
\begin{equation}
\label{e.GF}
\left| P(t,x,y)-m(y) \overline{P}(t,x-y)\right|
\leq 
C m(y)\biggl (\frac{t}{\Y^2}\biggr )^{\!\!-\gamma}t^{-\frac{d}{2}}\exp\left(-\frac{|x-y|^{2}}{Ct}\right)\,.
\end{equation}
\end{theorem}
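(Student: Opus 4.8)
The plan is to deduce Theorem~\ref{t.GF} from Theorem~\ref{t.realthm} by a suitable cutoff-and-patching argument, since the parabolic Green function $P(\cdot,\cdot,y)$ does not itself satisfy the Gaussian pointwise upper bound \eqref{e.intgauss} on its initial data (indeed, the initial data is a Dirac mass). First I would fix $y\in\cu_0$ and run the equation for a short bounded time: by the classical Aronson-type Gaussian upper bound for parabolic equations in nondivergence form with uniformly elliptic, H\"older coefficients, for any fixed reference scale---say, taking the scale to be $\Y$, where $\Y$ is the minimal scale furnished by Theorem~\ref{t.realthm}---one has a pointwise bound of the form $P(\Y^2, x, y)\leq C\Y^{-d}\exp(-|x-y|^2/(C\Y^2))$. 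The subtlety is that the Aronson constant depends only on $d,\lambda,\Lambda$ (the H\"older norm enters only the two-sided estimate, not the upper bound), so this is a fully deterministic estimate with the right structure. Then I would treat $v_0(\cdot):=P(\Y^2,\cdot,y)$ as initial data for a new Cauchy problem started at time $\Y^2$, noting by uniqueness and the time-translation structure that the solution at time $t+\Y^2$ is exactly $P(t+\Y^2,\cdot,y)$, and apply \eqref{e.CLT} with $R$ comparable to $\Y$ and $M$ comparable to $1$ (the Gaussian mass being $O(1)$ since $y\in\cu_0$).

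The output of Theorem~\ref{t.realthm} then gives a random constant $c[v_0]$ with
\begin{equation*}
\bigl| P(t+\Y^2, x, y) - c[v_0]\,\overline{P}(t+\Y^2, x-y)\bigr|
\leq C\biggl(\frac{t}{\Y^2}\biggr)^{-\gamma} t^{-d/2}\exp\biggl(-\frac{|x-y|^2}{Ct}\biggr),
\end{equation*}
and I would set $m(y):=c[P(\Y^2,\cdot,y)]$. To convert the estimate at times $t+\Y^2$ into one at times $t\geq \Y^2$ as stated in \eqref{e.GF}, I would use that for $t\geq\Y^2$ the ratio $(t+\Y^2)/t$ is bounded between $1$ and $2$, so up to adjusting the constants $C$ and the decay $(t/\Y^2)^{-\gamma}$ (and the Gaussian constant inside the exponential, using $\overline{P}(t+\Y^2,\cdot)\leq C\overline{P}(Ct,\cdot)$ and comparing Gaussians at comparable scales), the two formulations are equivalent. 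Replacing $y$ by the nearest lattice point and using $\Zd$-stationarity lets one reduce the general $y$ to $y\in\cu_0$; alternatively, one can run the whole argument directly with $y\in\cu_0$ as in the statement, which is cleaner.

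Two points require care. The first---and the main obstacle---is establishing that $m(y)>0$, i.e.\ that the asymptotic mass is genuinely positive and not zero; this is where the structure of the problem (homogenization to a mass-preserving equation, Krylov--Safonov spreading) must be used quantitatively. I would argue by a lower bound: the Krylov--Safonov Harnack inequality, applied across dyadic time scales, propagates a definite lower bound on $\int_{B_{c\sqrt{t}}(y)} P(t,x,y)\,dx$ that does not decay to zero; combined with the upper Gaussian tail this forces $c[v_0]$, which is the limiting total mass up to the universal factor $\int\overline{P}=1$, to be bounded below by a positive (random, but almost surely positive, and in fact deterministically positive once $\sqrt t\geq \Y$) constant. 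One then checks $m(y)\leq C$ similarly from the Aronson upper bound, so that the $m(y)$ appearing on the right of \eqref{e.GF} is legitimate. The second point is measurability and the stochastic integrability of $\Y$: since $\Y$ is inherited directly from Theorem~\ref{t.realthm}, the bound $\Y=\O_p(C)$ is immediate, and $m(y)$ is $\F$-measurable as an $L^1_{\mathrm{loc}}$-limit of solutions, so no new probabilistic input is needed beyond what Theorem~\ref{t.realthm} already provides. Finally, one should remark (for consistency with the introduction) that $d\mu(x)=m(x)\,dx$ is then the stationary invariant measure, but that identification is not needed for the proof of \eqref{e.GF} itself.
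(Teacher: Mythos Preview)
Your argument has a genuine gap at its very first step. You assert a Nash--Aronson type upper bound
\[
P(\Y^{2},x,y)\leq C\,\Y^{-d}\exp\!\left(-\frac{|x-y|^{2}}{C\Y^{2}}\right)
\]
with a constant $C$ depending only on $(d,\lambda,\Lambda)$. This is false for equations in \emph{nondivergence} form. The divergence form Aronson bound does hold with constants depending only on ellipticity, but in nondivergence form the analogue fails: the best deterministic bound valid for large times is the one in Proposition~\ref{le_prop_2}, namely $P(t,x,y)\leq Ct^{-\bar\kappa}\exp(-|x-y|^{2}/(Bt))$ with an exponent $\bar\kappa=2\lambda d/B\leq \lambda d/(2\Lambda)$, which is strictly smaller than $d/2$ when $\lambda<\Lambda$. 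The paper makes this point explicitly after stating Corollary~\ref{c.NA}: Nash--Aronson bounds at large scales are \emph{not} deterministic in this setting---they are a consequence of homogenization, not an input to it. In particular, $M:=\Y^{d}\|P(\Y^{2},\cdot,y)\|_{L^{\infty}}$ can be as large as $C\Y^{d-2\bar\kappa}$, not $O(1)$, so your application of Theorem~\ref{t.realthm} with ``$M$ comparable to $1$'' is unjustified.

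The paper's proof confronts exactly this issue. It accepts that $M$ may be a large power of $R=\Y\vee\Sigma$, and proceeds in two nontrivial steps you are missing. First, to verify the Gaussian hypothesis~\eqref{e.intgauss} of Theorem~\ref{t.realthm} with this large $M$, one must widen the Gaussian envelope by a logarithm: the bound actually established is $R^{d}P(3R^{2},x,y)\leq CM\exp(-|x|^{2}/(BR^{2}\log R))$, because the tail decay comes with the wrong prefactor $R^{-2\bar\kappa}$ and the $\log R$ absorbs the discrepancy $R^{2(\underline\kappa-\bar\kappa)}$. Second---and this is the crux for getting $m(y)$ on the right side of~\eqref{e.GF}---one must show $M\leq C\,m(y)$. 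The paper does this by using the parabolic Harnack inequality at scale $R$ to prove $M\leq C\,c_{3R^{2}}$, and then the quantitative mass estimate~\eqref{e.int} to show $|m(y)-c_{3R^{2}}|\leq C\,c_{3R^{2}}(\log R)^{(d+\sigma)/2}R^{-\gamma}$, whence $m(y)\geq \tfrac12 c_{3R^{2}}\geq cM$. Your Harnack sketch aims only at $m(y)>0$, but what is actually needed is the two-sided comparison $M\asymp m(y)$; note from~\eqref{e.msand} that $m(y)$ is not bounded below by a universal constant (it can be as small as $C^{-1}\Y^{-q}$), so one cannot simply absorb a constant into $Cm(y)$. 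Finally, the logarithmic loss is removed at the end by enlarging the minimal scale to $\tilde\Y=\Y(\log\Y)^{c}$, which still satisfies $\tilde\Y=\O_{q}(C)$ for any $q<p$.
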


By stationarity, the statement of Theorem~\ref{t.GF} holds for any~$y \in \Rd$, with the estimate~\eqref{e.GF} valid with~$\tau_{[y]}\Y:=\mathcal{Y}(\tau_{[y]} \cdot)$ in place of~$\mathcal{Y}$, where~$[x]$ denotes the nearest element of~$\Zd$ to~$x$. As previously mentioned, the function $y\mapsto m(y)$ appearing in Theorem~\ref{t.GF} turns out to be the unique~$\Zd$--stationary invariant measure with ensemble mean equal to one. 
By an \emph{invariant measure}~$m$ in an open subset~$U\subseteq \Rd$, we mean a solution of the adjoint equation, which is formally written in coordinates as
\begin{equation}
\label{e.invmeas}
- \sum_{i,j=1}^d
\partial_{x_i}\partial_{x_j}
\bigl( \a_{ij} m \bigr)
= 0 \quad \mbox{in} \ U. 
\end{equation}
Of course, the equation is interpreted in the weak sense: precisely, a Radon measure~$\mu$ is an invariant measure in~$U$ if 
\begin{equation}
\label{e.invmeas.weak}
\int_{U} \tr \bigl( \a(x) D^2\varphi(x) \bigr) \, d\mu(x)
= 0, \quad \forall \varphi \in C^\infty_c(U)\,.
\end{equation}
The~$\Zd$--stationarity of~$m$ is inherited from the stationarity of~$\a$. The invariance of~$m$ is evident from the invariance of the functional~$c[\cdot]$ in Theorem~\ref{t.realthm} under the flow, and the fact that Theorem~\ref{t.GF} and linearity imply that
\begin{equation*}
c[v_0] = \int_{\Rd} v_0(x) m(x)\,dx
\,.
\end{equation*}
A proof of this is given below in \eqref{e.cformula}.

\smallskip

As we will discover from the proof of Theorem~\ref{t.GF}, the invariant measure satisfies the following pointwise upper and lower bounds: for some exponents~$q,\delta >0$ depending only on~$(d,\lambda,\Lambda)$, and a constant~$C$ depending additionally on~$(\al_{0}, K_{0})$ in addition to $(d, \la, \La)$,
\begin{equation}
\label{e.msand}
\inf_{B_{\Y}} m \geq C^{-1} \Y^{-q} 
\quad 
\mbox{and} 
\quad 
\sup_{B_{\Y}} 
m
\leq 
C\Y^{d-1-\delta}
\,.
\end{equation}
We emphasize that solutions of the adjoint equation~\eqref{e.invmeas} do not satisfy a scale-invariant Harnack inequality, nor do they possess scale-invariant~$L^\infty$ upper bounds or lower bounds. The best integrability estimate for a solution~$m$ of~\eqref{e.invmeas} which is valid on arbitrarily large scales and for every coefficient field~$\a(x)$ is with respect to~$L^{\frac{d}{d-1}+\delta}$: see Proposition~\ref{p.Guido} and Remark~\ref{r.nonono} for the counterexample. It is \emph{only on small scales}---scales on which the coefficients are quantitatively continuous---that we can appeal to better, pointwise estimates. 

\smallskip

In the proof of~\eqref{e.msand} (which appears in Section \ref{ss.m.exis.uniq}),  we start with the deterministic estimate on the scale~$r=\Y$ by~\eqref{e.int}, which says essentially that~$\| m \|_{\underline{L}^1(B_\Y)} \leq 2$. We can then use the deterministic, scale-invariant~$L^{d/(d-1)}$ estimate to obtain~$\| m \|_{\underline{L}^{\frac d{d-1}+\delta} (B_{\Y/2})} \leq C$. Applying H\"older's inequality, we then obtain, after giving up a volume factor, a bound of the type
\begin{equation*}
\sup_{x\in B_{\Y/4}} \| m \|_{\underline{L}^1(B_1(x))} \leq C \Y^{d-1-\delta}.
\end{equation*}
Now that we have an anchoring estimate on the unit scale, we are free to use the quantitative continuity of the coefficients assumed in~\eqref{e.holder} to upgrade to pointwise estimates, and we arrive at~$\| m \|_{L^\infty(B_{\sfrac12}(x))} \leq C \Y^{d-1-\delta}$ for each~$x \in B_\Y$. The lower bound is argued similarly. 

\smallskip

As a corollary of Theorem \ref{t.GF}, we obtain the following Nash-Aronson type estimate on large scales. We note that, unlike in the divergence form setting, such estimates are not true, in general, in the nondivergence form case. This estimate is therefore necessarily probabilistic, owing its validity to the fact that the equation is homogenizing on large scales. 

\begin{corollary}\label{c.NA}
Let $p\in (0,d)$. There exists a constant~$c(d, \la, \La)\in (0, \infty)$ and a constant $C(d,\lambda,\Lambda)\in [1,\infty)$, and a random variable~$\Y$ satisfying $\Y= \O_p(C)$ such that, for every~$y\in\Rd$, for $m$ as defined in Theorem \ref{t.GF}, and~$t \geq \tau_{[y]}\Y^{2}$,
\begin{equation}
\label{e.PGF.bound}
cm(y)t^{-\frac{d}{2}}\exp\left(-\frac{|x-y|^{2}}{ct}\right)\leq P(t,x,y)\leq Cm(y)t^{-\frac{d}{2}}\exp\left(-\frac{|x-y|^{2}}{Ct}\right).
\end{equation}
\end{corollary}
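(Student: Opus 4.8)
The plan is to deduce the two-sided Nash--Aronson bound directly from Theorem~\ref{t.GF}, treating separately the regime where the right-hand side of \eqref{e.GF} is a small fraction of the main term and the complementary regime. Fix $p \in (0,d)$ and let $\gamma$, $C_0$ and $\mathcal{Y}$ be as in Theorem~\ref{t.GF}; by stationarity we work with $\tau_{[y]}\mathcal Y$ in place of $\mathcal Y$, so it suffices to treat $y \in \cu_0$ and then translate. First I would choose a deterministic threshold: there is a constant $A = A(d,\lambda,\Lambda) \geq 1$ such that whenever $t \geq A\, \mathcal{Y}^2$ we have $C_0 (t/\mathcal{Y}^2)^{-\gamma} \leq \tfrac12$. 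For such $t$, \eqref{e.GF} immediately gives
\begin{equation*}
\tfrac12 m(y)\overline{P}(t,x-y) - \tfrac12 m(y) t^{-d/2}\exp\!\left(-\tfrac{|x-y|^2}{C_0 t}\right)
\leq
P(t,x,y)
\leq
\tfrac32 m(y)\overline{P}(t,x-y) + \tfrac12 m(y) t^{-d/2}\exp\!\left(-\tfrac{|x-y|^2}{C_0 t}\right).
\end{equation*}
Since $\overline{P}$ is the heat kernel of the constant-coefficient operator $\partial_t - \tr(\ahom D^2\cdot)$ with $\lambda I_d \leq \ahom \leq \Lambda I_d$, there are deterministic constants $c_1, C_1$ depending only on $(d,\lambda,\Lambda)$ with $c_1 t^{-d/2}\exp(-|x-y|^2/(c_1 t)) \leq \overline{P}(t,x-y) \leq C_1 t^{-d/2}\exp(-|x-y|^2/(C_1 t))$. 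Combining the Gaussian upper bound for $\overline P$ with the error term yields the upper bound in \eqref{e.PGF.bound} with constants depending only on $(d,\lambda,\Lambda)$ (after enlarging $C_0, C_1$ in the exponent). For the lower bound one must absorb the error term: on the region $|x-y|^2 \leq \beta t$ for a suitable small $\beta(d,\lambda,\Lambda)$, the Gaussian lower bound $c_1 t^{-d/2} e^{-|x-y|^2/(c_1 t)}$ for $\tfrac12 m(y)\overline P$ dominates $\tfrac12 m(y) t^{-d/2} e^{-|x-y|^2/(C_0 t)}$ up to a factor; on the complementary region $|x-y|^2 > \beta t$ the target lower bound $c\, m(y) t^{-d/2} e^{-|x-y|^2/(ct)}$ is itself exponentially small and can be handled by a separate, crude argument (see below), so no cancellation is needed there.

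The main obstacle is proving \emph{any} pointwise lower bound for $P(t,x,y)$ in the far regime $|x-y|^2 \gtrsim t$ — there the homogenization error in \eqref{e.GF} is of the same exponential order as the target and gives no information, and, as the authors stress, $P$ satisfies no scale-invariant Harnack inequality. I would handle this by a Chapman--Kolmogorov / chaining argument combined with a near-diagonal lower bound. Concretely, one first establishes the near-diagonal estimate: for $t \geq A\mathcal Y^2$ and $|x-y|^2 \leq \mathcal Y^2 \vee (\beta t)$ (say), the argument of the previous paragraph gives $P(t,x,y) \geq c_* m(y) t^{-d/2}$. For general $x$, write a chain $y = z_0, z_1, \dots, z_N = x$ with $|z_{k+1}-z_k| \leq \sqrt{t/N}$ and $N \simeq |x-y|^2/t + 1$, use the semigroup identity
\begin{equation*}
P(t,x,y) = \int_{(\Rd)^{N-1}} \prod_{k=0}^{N-1} P\!\left(\tfrac{t}{N}, w_{k+1}, w_k\right) \, dw_1 \cdots dw_{N-1}
\end{equation*}
with $w_0 = y$, $w_N = x$, restrict each integral to a ball of radius $\sim \sqrt{t/N}$ around $z_k$, and iterate the near-diagonal bound. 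One subtlety: the near-diagonal lower bound must hold on the \emph{time scale $t/N$}, which requires $t/N \geq A\, \tau_{[z_k]}\mathcal Y^2$ at every step. This forces a maximal-function-type control: one replaces $\mathcal Y$ by $\mathcal Y' := \sup_{z \in \cu_0} (\text{suitable local average of } \tau_{[z]}\mathcal Y)$, which still satisfies $\mathcal Y' = \O_p(C)$ by the triangle inequality for $\O_p$ and the union bound over the (polynomially many) relevant lattice points, combined with stationarity. I expect this bookkeeping — propagating the minimal-scale constraint through the chain while keeping the stochastic integrability at exponent $p$ — to be where most of the real work lies; the mass loss from the nondivergence structure is exactly compensated by the factor $m(y)$, which plays the role of the asymptotic mass of $P(t,\cdot,y)$.

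Finally, I would assemble the pieces. Define the random scale $\mathcal Y$ appearing in the corollary to be (a constant multiple of) $\mathcal Y'$ above, so that $\mathcal Y = \O_p(C)$ with $C = C(d,\lambda,\Lambda)$. For $t \geq \tau_{[y]}\mathcal Y^2$: the upper bound in \eqref{e.PGF.bound} follows from Theorem~\ref{t.GF} and the Gaussian upper bound for $\overline P$ as above; the lower bound follows from the near-diagonal estimate when $|x-y|^2 \lesssim t$ and from the Chapman--Kolmogorov chaining when $|x-y|^2 \gtrsim t$, in both cases producing a bound of the form $c\, m(y)\, t^{-d/2} \exp(-|x-y|^2/(ct))$ with $c = c(d,\lambda,\Lambda) > 0$. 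Adjusting the constants $c, C$ at the end to cover all $t \geq \tau_{[y]}\mathcal Y^2$ (the window $\tau_{[y]}\mathcal Y^2 \leq t \leq A\,\tau_{[y]}\mathcal Y^2$ being absorbed by enlarging $C$ in the exponent and shrinking $c$, using $t^{-d/2} \simeq \tau_{[y]}\mathcal Y^{-d}$ there together with the standard parabolic Harnack inequality for $P$ on the fixed scale $\mathcal Y$) completes the proof. $\qed$
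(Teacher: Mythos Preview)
The paper supplies no proof of Corollary~\ref{c.NA}; it is simply asserted as a consequence of Theorem~\ref{t.GF}. Your analysis is in fact more careful than the paper's: you correctly observe that the upper bound and the near-diagonal lower bound follow directly from~\eqref{e.GF} together with the Gaussian bounds for~$\overline P$, while the far-regime lower bound does \emph{not}. The reason is exactly the one you identify: the constant in the exponent of the error term in~\eqref{e.GF} is at least $8\Lambda$ (this is the choice $A_1\geq 8\Lambda$ in the proof of Theorem~\ref{t.realthm}), which is strictly larger than the constant $4\lambda$ governing the lower Gaussian for~$\overline P$; hence for $|x-y|^2/t$ large the error term dominates the main term and~\eqref{e.GF} gives no information.

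Your chaining remedy, however, has a genuine gap. For a fixed~$x$ with $|x-y|=L$ the chain has $N\simeq L^2/t$ steps of time $\tau\simeq t^2/L^2$, and at every chain point~$z_k$ you need the near-diagonal bound, i.e.\ $\tau\geq A(\tau_{[z_k]}\mathcal Y)^2$, equivalently $\tau_{[z_k]}\mathcal Y\lesssim t/L$. But the corollary claims~\eqref{e.PGF.bound} for \emph{all}~$x\in\R^d$ at each fixed~$t$, so $L$ is unbounded; the required control $\sup_{|z-y|\leq L}\tau_{[z]}\mathcal Y\lesssim t/L$ then fails almost surely as $L\to\infty$ (the left side diverges by stationarity while the right side tends to~$0$). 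Your ``union bound over polynomially many lattice points'' works for a single~$x$ but not uniformly in~$x$, and the enlargement in Remark~\ref{r.Ystep1} only controls minimal scales in a box of fixed polynomial side length. Moreover, once $L\gg t$ the time step~$\tau$ drops below~$1$ and no homogenization-based near-diagonal bound is available at all. Closing the extreme far field seems to require an additional ingredient---e.g.\ the deterministic lower bound of Proposition~\ref{le_prop_2} combined with~\eqref{e.msand}---which on its face introduces dependence on~$(\alpha_0,K_0)$ that the corollary's stated constants do not allow.
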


We remark that the bounds~\eqref{e.GF} and~\eqref{e.PGF.bound} can be integrated in time to obtain a similar bound on the elliptic Green function.
Note that the bounds~\eqref{e.msand} may be inserted into~\eqref{e.PGF.bound}, if desired, but the latter is actually much more precise as stated. 

\smallskip

The statement of Theorem~\ref{t.GF} also implies quantitative information about the rate of convergence of the weak limits in~\eqref{e.harpoons}.

\begin{theorem}
\label{t.mto1}
Let $p\in (0,d)$. There exist~$\gamma(d,\lambda,\Lambda, \al_{0}, K_0, p)\in (0,1)$, $C(d,\lambda,\Lambda)\in [1,\infty)$, and a random variable~$\Y$ with $\Y= \O_p(C),$ such that for every $R\geq \Y$, for $m$ as defined in Theorem \ref{t.GF}, 
\begin{equation}\label{e.m.conv}
\biggl| \fint_{R\cu_{0}} m(x) \,dx -1 \biggr|
+
\biggl| \fint_{R\cu_{0}} m(x) \a(x) \,dx - \ahom \biggr|
\leq CR^{-\ga}. 
\end{equation}
Consequently, for every~$\ep\in (0,1]$ with $\ep^{-1} \geq \Y$, there exists $\ga'(d, \la, \La, \al_{0}, K_{0}, p)\in (0,1)$ such that
\begin{equation}
\label{e.weaklim.quant}
\bigl\| m\bigl( \tfrac \cdot \ep \bigr) - 1 
\bigr\|_{W^{-\gamma',1}(\cu_0)}
+
\bigl\| (m\a)\bigl( \tfrac \cdot\ep\bigr) - \ahom 
\bigr\|_{W^{-\gamma',1}(\cu_0)}
\leq 
C\ep^\gamma\,.
\end{equation}

\end{theorem}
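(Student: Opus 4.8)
The plan is to deduce~\eqref{e.m.conv} directly from the Green function estimate~\eqref{e.GF} by integrating against a test measure, and then pass from~\eqref{e.m.conv} to the negative-Sobolev statement~\eqref{e.weaklim.quant} by a soft interpolation/duality argument. For the first bound, fix a large radius $R \geq \Y$ and choose a time $t \simeq R^2$ (say $t = R^2$). The idea is to integrate the identity $P(t,x,y) \approx m(y)\overline P(t,x-y)$ in the variable~$y$ against a smooth, slowly-varying weight. Concretely, let $\psi_R$ be a smooth bump that equals $1$ on $R\cu_0$, is supported in $2R\cu_0$, and satisfies $|D^k \psi_R| \les R^{-k}$. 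Consider the backward problem: let $w$ solve $\partial_t w - \tr(\a D^2 w) = 0$ with $w(0,\cdot) = \psi_R$; by the maximum principle and Krylov--Safonov, $w(t,\cdot)$ is comparable to $1$ on, say, $R\cu_0$ when $t \simeq R^2$. On the other hand, the duality between $P$ and the backward equation gives $w(t,y) = \int P(t,x,y)\psi_R(x)\,dx$ --- here one must be slightly careful, since the nondivergence equation does not preserve mass and $P(t,\cdot,y)$ is the forward fundamental solution; the correct statement is $\int P(t,x,y)\varphi(x)\,dx = (\text{solution at time } t \text{ of the backward Kolmogorov problem with data }\varphi)$ evaluated appropriately. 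Substituting~\eqref{e.GF} and using that $\int \overline P(t,x-y)\psi_R(x)\,dx = 1 + O(R^{-1})$ for $y \in R\cu_0$ (since $\overline P(t,\cdot-y)$ is a Gaussian of width $\simeq R$ and $\psi_R \equiv 1$ near $y$), together with the error bound $C m(y)(t/\Y^2)^{-\gamma}t^{-d/2}\exp(-|x-y|^2/Ct)$ which integrates in $x$ to $C m(y) R^{-2\gamma}$, one arrives at $\fint_{R\cu_0} m(y)\,dy = 1 + O(R^{-\gamma})$ after also controlling the contribution of the boundary layer $2R\cu_0 \setminus R\cu_0$ using the $L^{d/(d-1)}$-type a priori bound on $m$ from~\eqref{e.msand} or Proposition~\ref{p.Guido}. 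The second term, $\fint_{R\cu_0} m\a - \ahom$, is handled in exactly the same way after recalling that $\ahom$ is the ensemble mean of $m\a$: one tests against a matrix-valued weight and uses $\E[\fint m\a] = \ahom$ combined with a concentration/decorrelation estimate --- alternatively, and more in the spirit of this paper, one uses the homogenization estimate for a Cauchy problem whose solution probes $\int m\a\, \varphi$ via its second derivatives. I expect this second piece to rely on the fact, already implicit in the proof of Theorem~\ref{t.GF}, that $m\a$ homogenizes to $\ahom$ at the same algebraic rate; so it should reduce to the same kind of computation.

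For the passage from~\eqref{e.m.conv} to~\eqref{e.weaklim.quant}, the point is that an $O(R^{-\gamma})$ bound on cube averages over \emph{all} scales $R \in [\Y, \ep^{-1}]$ upgrades to a negative-Sobolev estimate with a (smaller) exponent $\gamma'$. After rescaling $x \mapsto \ep x$, the function $f_\ep := m(\cdot/\ep) - 1$ has the property that $|\fint_{Q} f_\ep| \les \ep^\gamma$ for every triadic subcube $Q \subseteq \cu_0$ of side length $\geq \ep \Y$, and $f_\ep$ is bounded in $\underline L^{d/(d-1)+\delta}(\cu_0)$ on scales above $\ep\Y$ (uniformly, using~\eqref{e.msand} to handle the sub-$\ep\Y$ part, which contributes negligibly in $W^{-\gamma',1}$ by a crude volume estimate since $\ep\Y \leq 1$). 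A standard multiresolution/Haar-type decomposition then bounds $\|f_\ep\|_{W^{-\gamma',1}(\cu_0)}$: writing $f_\ep$ as a telescoping sum of conditional-average increments across triadic scales $3^{-n}$, each increment has $L^1$ norm controlled by the oscillation of cube averages — which is $O(\ep^\gamma)$ for $n$ up to $\log_3(\ep^{-1})$ and $O(1)$ beyond — while pairing against a $W^{\gamma',\infty}$ test function gains a factor $3^{-n\gamma'}$ per scale. Summing the geometric-type series and optimizing gives $\|f_\ep\|_{W^{-\gamma',1}(\cu_0)} \les \ep^{\gamma\gamma'/(\gamma+\gamma')}$ or similar, which is of the form $C\ep^{\gamma}$ after renaming exponents (one simply shrinks $\gamma$). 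The same argument applies verbatim to $(m\a)(\cdot/\ep) - \ahom$.

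The main obstacle I anticipate is the first step: correctly setting up the duality between the forward Green function $P(t,x,y)$ and the backward Kolmogorov semigroup, and then controlling the boundary-layer error when testing against $\psi_R$ rather than against $\mathbbm{1}_{R\cu_0}$. Since $P(t,\cdot,y)$ does not integrate to $1$ and $m$ is only in $L^{d/(d-1)+\delta}$ on large scales (not $L^\infty$), the cutoff cannot be sharp, and one pays a factor from the $O(R)$-thick annulus $2R\cu_0 \setminus R\cu_0$; making this loss be $o(R^{-\gamma}) $ — or at worst $O(R^{-\gamma})$ — requires choosing the width of the transition region of $\psi_R$ carefully (e.g. width $\simeq R$ is fine since $|2R\cu_0 \setminus R\cu_0| \simeq R^d$ and $\|m\|_{\underline L^{d/(d-1)+\delta}} \les 1$ there gives an $O(R^{-\gamma})$-type contribution after Hölder, but one should double-check the exponents line up) and possibly iterating the estimate at the dyadically larger scale $2R$ to absorb it. A secondary technical point is that the random variable $\Y$ in the statement must be (a maximum of finitely many translates of) the $\Y$ from Theorem~\ref{t.GF}, so one should note that $\sup_{y \in R\cu_0} \tau_{[y]}\Y = \O_p(C)$ by a union bound over the $\simeq R^d$ relevant lattice translates — this only costs a constant factor in $C$ because of the $\O_p$ (i.e.\ stretched-exponential) stochastic integrability and $p < d$, exactly as in~\cite{asnondiv}.
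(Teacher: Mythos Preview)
Your treatment of the first term in~\eqref{e.m.conv} and of the passage to~\eqref{e.weaklim.quant} is essentially correct and matches the paper. For the first term the paper does not route through the Green function duality you describe, but simply applies~\eqref{e.int} of Theorem~\ref{t.realthm} to a bump $\varphi$ supported in $R(1+R^{-\alpha})\cu_0$, uses $c[\varphi]=\int m\varphi$ from~\eqref{e.cformula}, and controls the thin boundary annulus via~\eqref{e.mqint}; the optimization over the annulus width~$R^{1-\alpha}$ is exactly the point you flag. For~\eqref{e.weaklim.quant} the paper's Lemma~\ref{l.multiscale} is precisely the telescoping multiscale argument you propose. (One small slip: your formula $w(t,y)=\int P(t,x,y)\psi_R(x)\,dx$ has the roles of $x$ and $y$ reversed; $P(t,\cdot,y)$ solves the equation in the \emph{first} spatial slot, so the solution with data $\psi_R$ is $w(t,x)=\int P(t,x,y)\psi_R(y)\,dy$.)

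The second term in~\eqref{e.m.conv} is where your proposal has a genuine gap. You write that it ``is handled in exactly the same way,'' but applying the identical argument to the initial datum $\mathbbm{1}_{B_R}\a_{ij}$ and invoking~\eqref{e.int} would only yield
\[
\biggl|\fint_{B_R} m\a_{ij} - \fint_{B_R}\a_{ij}\biggr| \leq CR^{-\gamma},
\]
which is \emph{not} the claim: nothing here identifies the limit as $\ahom_{ij}$ rather than $\fint \a_{ij}$. The relation $\ahom=\E[\fint m\a]$ you cite is a qualitative fact; turning it into a quenched rate requires exactly the kind of concentration argument the paper is set up to avoid. The paper's route is genuinely different and uses the stationary approximate correctors $w^\delta_{ij}$ from~\eqref{e.dvd.def}. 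One introduces the localized resolvent problem
\[
\delta^2 w^{\delta,R}_{ij} - \tr(\a D^2 w^{\delta,R}_{ij}) = \mathbbm{1}_{B_R}\a_{ij},
\]
writes $\delta^2 w^{\delta,R}_{ij}$ as the Laplace transform of the parabolic semigroup applied to $\mathbbm{1}_{B_R}\a_{ij}$, and uses Theorem~\ref{t.realthm} (via~\eqref{e.cformula}) to show that $R^{-d}\int_{\Rd}\delta^2 w^{\delta,R}_{ij}$ is close to $\fint_{B_R} m\a_{ij}$. Separately, on the bulk $B_{R-R^\theta}$ one has $w^{\delta,R}_{ij}\approx w^\delta_{ij}$ by Green function tail decay, and Proposition~\ref{p.delta.vee.delta} gives $\delta^2 w^\delta_{ij}\approx \ahom_{ij}$ for $\delta\leq \Y_p^{-1}$. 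Matching the scales $\delta^{-1}$ and $R$ (with a careful choice $\delta^{-1}\sim R^{\theta/(1+\theta')-p'/2p}$) closes the argument. The key structural point you are missing is that $\ahom$ enters the picture not through ergodicity of $m\a$ but through the \emph{approximate corrector estimate}~\eqref{e.delta.vee.delta}, which is an independent quantitative input from~\cite{asnondiv}.
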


The Green function bounds in Theorem~\ref{t.GF} can be used to directly prove bounds on~$(m-1)$ in Theorem~\ref{t.mto1}. 
The bounds on~$m\a$ are proved by combining Green function estimates with bounds on the stationary approximate correctors (introduced below in~\eqref{e.dvd.def}) which are implicit in~\cite{asnondiv}.

\smallskip

Our next main result is a large-scale~$C^{0,1}$-type estimate for invariant measures. The estimate says that an arbitrary solution of the ``doubly divergence form'' adjoint equation in a finite ball can be approximated by a constant multiple of the unique stationary invariant measure in a smaller ball, in the same way as a Lipschitz function can be approximated by a constant. This is why we call it a~``$C^{0,1}$-type estimate.'' The estimate is only valid on balls above a certain random scale, which is why we call it a ``large-scale'' regularity estimate.

\begin{theorem}[Large-scale~$C^{0,1}$-regularity of invariant measures]
\label{t.largescaleC01}
Let~$p\in(0,d)$. 
There exists~$\delta_0 ( p,d,\lambda,\Lambda) \in(0,\frac 12]$ and, for every~$\delta\in (0, \delta_0]$, constants~$\beta( \delta,p,d,\lambda,\Lambda) \in (0,\frac 12]$ and~$C(\delta, p,d,\lambda,\Lambda)<\infty$ 
and a random variable~$\X$ 
satisfying~$\X = \O_p(C)$ such that, 
for every~$R\geq \X$ and invariant measure~$\nu$ in~$B_{R}$, 
there exists~$k\in\R$ such that, 
for every~$r\in [\X ,R]$, we have the estimate
\begin{equation}
\label{e.largescaleC01}
\frac{| \nu - k\mu|(B_r)}{|B_r|} 
\leq
\frac{Cr}{R} 
\biggl  (
\frac{
| \nu \ast \eta_{R^{1-\delta}} | (B_{5R})
}{|B_{5R}|} 
+
CR^{-\beta}
\frac{| \nu | (B_{R})}{|B_R|}
\biggr  )
\,, 
\end{equation}
where $\eta_{R}=R^{-d}\eta(\sfrac{\cdot}{R})$ is the standard mollifier. In particular, for~$r\in [\X ,R]$, 
\begin{equation}
\label{e.largescaleC01.TV}
\frac{| \nu - k \mu| (B_{r})}{|B_r|}
\leq
\frac{Cr}{R} 
\frac{ |  \nu |(B_{5R} )}{|B_{5R}|}
\,.
\end{equation}
\end{theorem}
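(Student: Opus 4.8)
The plan is to prove the large-scale $C^{0,1}$-type estimate for the adjoint (``doubly divergence form'') equation~\eqref{e.invmeas} by an Avellaneda--Lin-style excess-decay argument, in which solutions of the constant-coefficient homogenized adjoint equation~$\tr(\ahom D^2\bar m)=0$ play the role of ``harmonic functions'' and constant multiples~$k\mu$ of the stationary invariant measure play the role of the affine comparison class. For a ball~$B_\rho$ I would introduce an excess functional of the form
\begin{equation*}
\mathcal{E}(\rho):=\inf_{k\in\R}\frac{\bigl|(\nu-k\mu)\ast\eta_{\rho^{1-\delta}}\bigr|(B_\rho)}{|B_\rho|}\;+\;(\text{a lower-order tail term}),
\end{equation*}
which measures the distance from~$\nu$ to the one-parameter family~$\{k\mu:k\in\R\}$ in a weak norm adapted to the poor regularity of invariant measures (a mollified total variation; on large scales only~$L^{\frac d{d-1}+\delta}$ control of the density is available, by Proposition~\ref{p.Guido} and~\eqref{e.msand}, so no stronger norm can be used). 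The goal is a one-step geometric improvement~$\mathcal{E}(\theta\rho)\le\tfrac12\mathcal{E}(\rho)+(\text{error})$, valid for all~$\rho\in[\X,R]$ and a fixed dilation~$\theta\in(0,\tfrac12)$; iterating from~$\rho=R$ down to~$\rho=\X$ then produces the single constant~$k$ as the limit of near-optimal constants~$k_\rho$, which is Cauchy because its increments are controlled by the geometrically decaying~$\mathcal{E}$, and summing the errors gives~\eqref{e.largescaleC01}.

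The one-step improvement rests on a \emph{homogenization lemma}: if~$\nu$ is an invariant measure in~$B_\rho$ with~$\rho\ge\X$, then~$\nu$ is close, in the mollified total-variation norm on~$B_{\rho/2}$, to~$\bar m\,dx$ for some solution~$\bar m$ of~$\tr(\ahom D^2\bar m)=0$ in~$B_{\rho/2}$, with an algebraically small error (a negative power of~$\rho$ times the normalized mass of~$\nu$ on~$B_\rho$). I would prove this by duality: for~$\varphi\in C^\infty_c$ one rewrites~$\int\varphi\,d\nu$ by replacing~$\varphi$ with a corrected test function built from the solution of a Cauchy--Dirichlet problem for the nondivergence operator~$-\tr(\a D^2\cdot)$ with homogenized right-hand side, and invokes the quantitative homogenization estimates of~\cite{asnondiv} (algebraic rate, optimal stochastic integrability). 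The limiting weight arising in the comparison is identified with~$m$ via Theorem~\ref{t.GF}, and the ``mean of~$\bar m$'' is transferred to a genuine multiple of the stationary density~$m$ using the quantitative ergodicity~$\fint m\,dx\to1$ and~$\fint m\a\,dx\to\ahom$ of Theorem~\ref{t.mto1}; the mollification at scale~$\rho^{1-\delta}$ and the additive tail are precisely what absorb the unavoidable boundary layer in the duality argument and the lack of pointwise bounds for~$\nu$. The second, purely deterministic, ingredient is the interior gradient estimate for~$\tr(\ahom D^2\bar m)=0$: its solutions are smooth, so~$\inf_k\fint_{B_{\theta\rho}}|\bar m-k|\le C\theta\fint_{B_{\rho/2}}|\bar m|$. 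Combining the homogenization lemma, this gradient estimate, and the triangle inequality yields the one-step improvement, once~$\delta\le\delta_0$ and~$\beta=\beta(\delta)$ are chosen small enough that the accumulated tail errors remain summable against the geometric factor~$\theta$.

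Two final points. The estimate~\eqref{e.largescaleC01.TV} follows from~\eqref{e.largescaleC01} because mollification contracts the total variation,~$|\nu\ast\eta_{R^{1-\delta}}|(B_R)\le|\nu|(B_{2R})$, so the mollified data term is dominated by~$|\nu|$ on a slightly larger ball (or one states the estimate on a ball of slightly smaller radius). The minimal scale~$\X$ is taken to be the maximum of the random minimal scales entering the estimates of~\cite{asnondiv}, Theorem~\ref{t.GF}, and Theorem~\ref{t.mto1}, together with a deterministic constant; since each of these is~$\O_p(C)$ and only countably many are involved (one per triadic cube, via a union bound),~$\X=\O_p(C)$.

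The main obstacle is the homogenization lemma. An invariant measure is a genuinely rough object---a Radon measure whose density is, on large scales, controlled only in~$L^{\frac d{d-1}+\delta}$, with no Harnack inequality and no scale-invariant~$L^\infty$ bound---so the duality argument must discard a boundary layer and the homogenization error can only be captured in a weak norm after mollification. Calibrating the mollification scale~$\rho^{1-\delta}$, the tail exponent~$\beta$, and the dilation~$\theta$ so that the excess iteration closes, and so that the additive errors telescope down to scale~$\X$ without the volume losses inherent in the~$L^{\frac d{d-1}+\delta}$ bound overwhelming the geometric decay, is the technical heart of the argument. A secondary subtlety is that the comparison constant produced on the homogenized side must be converted back into a genuine multiple of the stationary density~$m$, which is exactly where the quantitative ergodicity of Theorem~\ref{t.mto1} enters essentially rather than cosmetically.
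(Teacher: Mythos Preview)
Your overall architecture is right---excess decay via harmonic approximation at the mollified level, then transfer back to the strong total-variation norm---but there is a genuine gap in the rate you obtain. With your excess $\mathcal{E}(\rho)=\inf_k\frac{|(\nu-k\mu)\ast\eta_{\rho^{1-\delta}}|(B_\rho)}{|B_\rho|}$, the interior regularity of $\ahom$-harmonic functions gives only $\inf_k\fint_{B_{\theta\rho}}|\bar m-k|\le C_0\theta\inf_k\fint_{B_\rho}|\bar m-k|$, so the one-step improvement is $\mathcal{E}(\theta\rho)\le C_0\theta\,\mathcal{E}(\rho)+(\text{error})$. Choosing $\theta$ small makes $C_0\theta\le\tfrac12$, but then iterating yields $\mathcal{E}(r)\lesssim(r/R)^\gamma$ with $\gamma=\log 2/\log(1/\theta)<1$. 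That is a $C^{0,\gamma}$ estimate, not the $C^{0,1}$ rate $\frac{Cr}{R}$ claimed in~\eqref{e.largescaleC01}. The constant in the harmonic gradient estimate cannot be made equal to~$1$, so this obstruction is real.

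The paper resolves this by enlarging the comparison class in the excess to include \emph{affine} functions: it sets $E(r)=\tfrac1r\inf_{p\in\Rd}\|(\nu-k_r\mu)\ast\eta_{r^{1-\delta}}-\ell_p\|_{\underline L^1(B_r)}$, and uses the $C^{1,1}$ regularity of $\ahom$-harmonic functions to get a genuine contraction $E(\theta r)\le\tfrac12 E(r)+(\text{error})$. The slopes $p_r$ are then controlled separately (they form a Cauchy sequence whose increments are bounded by $E(r)$ plus small errors), and the quantity $M:=\sup_r\tfrac1r\|(\nu-k_r\mu)\ast\eta_{r^{1-\delta}}\|_{\underline L^1}$ is bounded a posteriori via $M\le\sup_r(|p_r|+E(r))\le\frac{C}{R}\|\nu\ast\eta_{R^{1-\delta}}\|_{\underline L^1(B_R)}+CMr_0^{-\beta}$, with the last term absorbed. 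This is the standard mechanism for extracting the sharp Lipschitz rate from a Campanato iteration, and it is missing from your outline.

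A second, related omission: you never specify how to pass from the mollified excess back to the total-variation norm $|\nu-k\mu|(B_r)$. The paper does this through a separate weak-to-strong lemma (built on the Green function asymptotics of Theorem~\ref{t.GF}): roughly, $N(x)\approx m(x)N_r(x)$ pointwise up to an error $r^{-d-\alpha}|\nu|(B_{2R})$, which yields $r^{-d}|\nu-k_r\mu|(B_r)\le CMr+(\text{super-polynomially small tail})$. This step is not automatic and is where Theorem~\ref{t.GF} enters most essentially; your duality sketch does not obviously produce it.
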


Theorem~\ref{t.largescaleC01} can be viewed as a quantitative Liouville-type result. By taking~$R\to \infty$ in~\eqref{e.largescaleC01.TV}, we see that, for any coefficient field for which~$\X < \infty$, the measure~$d\mu=m(x)\,dx$ is the unique solution of the invariant measure equation with volume-normalized total variation in~$B_R$ growing like~$o(R)$. 
This is a much stronger, quenched version of uniqueness for the invariant measure when compared, for example, to the annealed uniqueness obtained by Papanicolaou and Varadhan \cite{PV}.

\smallskip

The proof of Theorem~\ref{t.largescaleC01}  
is based on an iteration argument originating in~\cite{AS}, which introduced the large-scale regularity in stochastic homogenization in the context of divergence form equations. The idea is to borrow the improved regularity of the homogenized equation. This requires the approximation of a general invariant measure by an~$\ahom$-harmonic function, which relies on Theorem~\ref{t.GF}. What is new  here is that this approximation must be in a weak sense---we compare \emph{mollifications} of the invariant measure to harmonic functions---and then upgrade using a weak-to-strong norm estimate for invariant measures (see Lemma~\ref{l.harmapprox} and Lemma~\ref{l.weak.to.strong} respectively). 

\smallskip

Theorem~\ref{t.largescaleC01} says that any invariant measure on a finite cube can be approximated, near the center of the cube, by a multiple of~$m(x)\,dx$. But this can be turned around:~$m$ can be approximated by an invariant measure defined on a finite cube. This idea can be used to localize~$m(x)$, by showing that it can be approximated by a sequence of fields having finite ranges of dependence, and thereby obtaining quantitative ergodicity for~$m(x)$. 
Note that the idea that ``large-scale regularity gives decorrelation'' lies at the heart of the theory of stochastic homogenization for divergence form equations~\cite{AKMbook}. 

\smallskip

Previously, Deuschel and Guo~\cite{DG} proved a uniform H\"older type estimate for solutions of~\eqref{e.ddf}. By dualizing the Krylov-Safonov estimate, they obtained a~$C^{0,\delta}$-type estimate for the \emph{ratio} of a solution with another positive solution. 

\smallskip

We turn to the statement of our final result, in which we give a quantitative bound of the ergodicity of the environmental process.
As described above, we let~$\{ X_t \}$ denote the Feller diffusion process whose generator is~$L$, where $L u  = \tr (\a D^2 u)$, which satisfies~\eqref{e.Feller}, and we denote the quenched law of~$\{ X_t \}$ by~$\mathbf{P}$.

\begin{theorem}
\label{t.environmental}
Let~$\xi \in (0,1)$ and~$p\in (0,d\xi)$. 
There exist~$\gamma=\ga(d,\lambda,\Lambda, \al_{0}, K_0, p)\in (0,1)$ and $C(d,\lambda,\Lambda, \al_{0}, K_0, p,\xi)\in(0,\infty)$, and a random variable~$\Y$ (depending on the environment but not the trajectories) with~$\Y= \O_p(C)$
such that, for every $T\geq \Y^{2}$, and $\eta\geq CT^{-\gamma}$, 
\begin{equation}
\label{e.lim.EFVP}
\mathbf{P} \Biggl[ \, 
\biggl| \frac{1}{T}\int_0^T \!\!\!
\a(X_s) \,ds - \ahom \biggr|
\geq \eta 
\Biggr]
\leq
C \exp\left(-\frac{\eta^2 T^{1-\xi}}{C}\right)
\,.
\end{equation}
\end{theorem}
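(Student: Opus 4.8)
The strategy is to run a martingale argument that transfers the quantitative weak-convergence estimate of Theorem~\ref{t.mto1} (equivalently the Green-function decay of Theorem~\ref{t.GF}) onto the time-averaged environmental process. The key observation is that $\a(X_s)$, viewed along the diffusion~\eqref{e.Feller}, can be coupled to a PDE quantity whose large-time behavior we already control: namely, if $w$ solves the Cauchy problem $\partial_t w - \tr(\a D^2 w) = \a_{ij}$ (componentwise) with $w(0,\cdot)\equiv 0$, then by It\^o's formula applied to $w(T-s, X_s)$ one obtains $\int_0^T \a_{ij}(X_s)\,ds = w(T, 0) + \text{(martingale)}_T$. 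Alternatively, and more usefully here, one works with the stationary approximate correctors (the objects denoted in the paper near~\eqref{e.dvd.def}): let $\phi^\mu$ solve an approximate corrector equation so that $\tr(\a D^2 \phi^\mu) = \mu(\phi^\mu) - (\a - \ahom)$ in a mean-zero sense weighted by $m$, and set $N_t := \phi^\mu(X_t) - \phi^\mu(X_0) - \int_0^t \tr(\a D^2\phi^\mu)(X_s)\,ds$. Then $N_t$ is a $\mathbf{P}$-martingale with quadratic variation $\langle N\rangle_t = 2\int_0^t (D\phi^\mu)^t \a (D\phi^\mu)(X_s)\,ds$, and rearranging gives
\[
\frac1T\int_0^T (\a - \ahom)(X_s)\,ds
= \frac{\phi^\mu(X_0) - \phi^\mu(X_T)}{T}
+ \frac{N_T}{T}
+ \frac\mu T\int_0^T \phi^\mu(X_s)\,ds\,.
\]
The plan is to choose the corrector regularization parameter $\mu = \mu(T)$ optimally (a negative power of $T$), show the three terms on the right are each small, and then apply an exponential (Bernstein-type) martingale concentration inequality to the martingale term.

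First I would establish the deterministic (in the environment) bounds needed: using Theorem~\ref{t.mto1} and the corrector estimates implicit in~\cite{asnondiv}, one obtains on the event $\{T \geq \Y^2\}$ a sublinear bound on $\phi^\mu$, say $\|\phi^\mu\|_{L^\infty(B_{\sqrt T})} \leq C\sqrt T\, \mu^{-\theta}\cdot(\text{small})$ together with an $L^2(m\,dx)$-type bound on $D\phi^\mu$ over $B_{\sqrt T}$ and a bound $|\mu(\phi^\mu)| \leq C\mu^{\gamma}$ or similar — the precise exponents come from the algebraic rate $\gamma$ in Theorem~\ref{t.mto1}. Because the diffusion $X_s$ stays in $B_{C\sqrt T}$ for $s\le T$ with overwhelming $\mathbf{P}$-probability (Krylov–Safonov / Nash–Aronson upper bound, or Corollary~\ref{c.NA}), these spatial estimates can be promoted to estimates along the trajectory, up to an additive $\exp(-cT)$-small error in $\mathbf{P}$. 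This handles the first term ($\lesssim \mu^{-\theta}T^{-1/2}$, times lower-order factors) and the third term ($\lesssim |\mu(\phi^\mu)| \lesssim $ a power of $\mu$), so balancing $\mu \sim T^{-a}$ for suitable $a>0$ makes both $\lesssim T^{-\gamma'}$.

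For the martingale term, the quadratic variation satisfies $\langle N\rangle_T \leq 2\Lambda \int_0^T |D\phi^\mu(X_s)|^2\,ds$, which by the occupation-time/Green-function representation is bounded by a spatial integral $\int |D\phi^\mu|^2 P(s,0,x)\,dx\,ds$; using the corrector gradient bound and Corollary~\ref{c.NA} this is at most $CT\,\mu^{-2\theta'}$ on $\{T\geq\Y^2\}$ — i.e.\ $\langle N\rangle_T \le CT^{1+\xi}$ after inserting $\mu\sim T^{-a}$, for any prescribed $\xi>0$ at the cost of a worse constant. However, the increments of $N$ are not bounded (they depend on $\phi^\mu$ which is only sublinear), so a direct Azuma bound fails; instead I would invoke a time-change (Dambis–Dubins–Schwarz): $N_t = B_{\langle N\rangle_t}$ for a Brownian motion $B$, whence $\mathbf{P}[|N_T|/T \geq \eta/2] \leq \mathbf{P}[\sup_{s\le CT^{1+\xi}}|B_s| \geq \eta T/2] \leq C\exp(-c\eta^2 T^{2}/T^{1+\xi}) = C\exp(-c\eta^2 T^{1-\xi})$, which is exactly the claimed tail, after also absorbing the low-probability event that $X$ leaves $B_{C\sqrt T}$ (whose probability is far smaller than $\exp(-c\eta^2 T^{1-\xi})$ once $\eta \geq CT^{-\gamma}$). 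The random scale $\Y$ is the one from Theorem~\ref{t.GF}/Theorem~\ref{t.mto1} (adjusting the exponent $p$ to accommodate the constraint $p < d\xi$), so $\Y = \O_p(C)$ with no trajectory dependence.

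The main obstacle I anticipate is the interplay between the two sources of randomness and the unboundedness of the martingale increments: one must make the environmental event $\{T \ge \Y^2\}$ carry \emph{all} the spatial corrector estimates simultaneously along the (random) trajectory, which forces a careful use of the Nash–Aronson upper bound to confine $X_s$ to $B_{C\sqrt T}$, and then the DDS time-change (rather than a naive exponential inequality) to handle the concentration without bounded increments. A secondary technical point is optimizing the regularization $\mu = \mu(T)$: the drift-correction term $\frac\mu T\int_0^T\phi^\mu(X_s)\,ds$ and the boundary term $\phi^\mu(X_T)/T$ pull $\mu$ in opposite directions (larger $\mu$ shrinks $\phi^\mu$ but enlarges the regularization error $\mu(\phi^\mu)$), and one must check that the crossover still leaves a genuine power $T^{-\gamma}$, with $\gamma$ depending on $(d,\lambda,\Lambda,\alpha_0,K_0,p)$ as asserted, and that this is compatible with keeping $\langle N\rangle_T \leq CT^{1+\xi}$ for the given $\xi$.
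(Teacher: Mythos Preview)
Your approach is genuinely different from the paper's, and while it could in principle be pushed through, it carries real gaps that the paper's argument avoids entirely. The paper does \emph{not} use approximate correctors or It\^o's formula. Instead it makes a \emph{discrete}-time decomposition: with $\tau=T/K$ and $K\sim T^{1-\xi}$, it sets $M_{k+1}-M_k=\int_{k\tau}^{(k+1)\tau}\a_{ij}(X_s)\,ds-\mathbf{E}\bigl[\int_{k\tau}^{(k+1)\tau}\a_{ij}(X_s)\,ds\bigm|\mathcal G_{k\tau}\bigr]$. These increments are trivially bounded by $2\Lambda\tau$ because $\a$ itself is bounded, so Azuma's inequality gives the $\exp(-\eta^2 T^{1-\xi}/C)$ tail immediately. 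The error terms are the conditional expectations, which by the Markov property equal $\int_{k\tau}^{(k+1)\tau}\int P(s-k\tau,X_{k\tau},y)\a_{ij}(y)\,dy\,ds$; Theorem~\ref{t.GF} replaces $P$ by $m\overline P$, and then Theorem~\ref{t.mto1} replaces the resulting spatial average of $m\a$ by $\ahom$. No gradient information on anything is required, and the choice $\tau\sim T^{\xi}$, $T^\beta\geq\Y_{p'}^2$ with $p'=p/\beta$, $\beta=(p+d\xi)/(2d)$ yields the full range $p<d\xi$.

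Your continuous-martingale route hinges on bounding $\langle N\rangle_T=2\int_0^T (D\phi^\mu)^t\a(D\phi^\mu)(X_s)\,ds$ almost surely, which needs a \emph{pointwise} bound on $|D\phi^\mu|$ along the trajectory. The paper proves no such bound: Proposition~\ref{p.delta.vee.delta} controls only $|\delta^2 w^\delta-\ahom|$, not $|Dw^\delta|$, and the ``$L^2(m\,dx)$-type gradient bound'' you invoke is divergence-form thinking that does not directly give the a.s.\ control DDS requires. One can salvage a crude deterministic bound $|D\phi^\mu|\leq C\mu^{-1}$ from unit-scale Schauder (using $\a\in C^{0,\alpha_0}$), yielding $\langle N\rangle_T\leq CT\mu^{-2}$ and forcing $\mu\geq T^{-\xi/2}$; this works but then the minimal-scale constraint $\mu^{-1/2}\geq\Y_{p'}$ limits you to $p<d\xi/2$ rather than the stated $p<d\xi$. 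Also, your equation for $\phi^\mu$ is garbled: the correct object is $w^\delta_{ij}$ solving $\delta^2 w^\delta_{ij}-\tr(\a D^2 w^\delta_{ij})=\a_{ij}$, and the homogenization statement is $\delta^2 w^\delta_{ij}\to\ahom_{ij}$, not a ``mean-zero weighted by $m$'' condition. In short, the paper's discrete-Azuma approach is both simpler and sharper here; your corrector approach would need the large-scale $C^{1,1}$ gradient bounds of \cite{AL} to match it.
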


Theorem~\ref{t.environmental} result quantifies the speed of convergence of the~$\mathbf{P}$-almost sure limit~\eqref{e.EPVP}.
Indeed, taking~$\eta = CT^{-\gamma}$ in the conclusion yields
\begin{equation*}
\mathbf{P} \biggl[ \, 
\biggl| \frac{1}{T}\int_0^T\!\!\!
\a(X_s) \,ds - \ahom \biggr|
\geq CT^{-\gamma} 
\biggr]
\leq
C\exp\left(-\frac{T^{1-\xi-2\gamma}}{C}\right).
\end{equation*}
Note that the estimate~\eqref{e.lim.EFVP} is \emph{quenched} in the sense that it depends on the environment~$\a(\cdot)$ only through the random variable~$\Y$, which specifies the smallest scale at which the estimate is valid. 

\smallskip 

We obtain Theorem~\ref{t.environmental} in Section~\ref{s.EPVP} as a consequence of the estimates in Theorem~\ref{t.mto1} via a martingale argument.
A similar statement to that of Theorem~\ref{t.environmental} was proved previously in the discrete case~\cite{GPT}, which is basically equivalent to an estimate like~\eqref{e.lim.EFVP} except with a right-hand side which is~$O(T^{-\delta})$ for a small~$\delta>0$ instead of the exponential bound. 

\subsection{Relation to previous works in the literature}  

Stochastic homogenization for elliptic equations in nondivergence form was first studied by Papanicolaou and Varadhan \cite{PV} and Yurinski\u{\i} \cite{Y1}.
In \cite{PV,Y1}, it was observed, using the fact that the underlying diffusion is a martingale, that homogenization reduces to proving an ergodic theorem for the so-called \emph{environment from the point of view of the particle.} It is therefore sufficient to construct a stationary invariant measure for this process.

\smallskip

The first quantitative homogenization results were proved by Yurinski{\u\i}~\cite{Y3,Y2}, who obtained algebraic convergence rates for linear equations in dimensions five and larger, and sub-algebraic (logarithmic-type) estimates in dimensions less than five. 
Several decades later, Caffarelli and Souganidis \cite{CafSou2010} obtained sub-algebraic homogenization estimates in all dimensions for linear and nonlinear equations. An algebraic error estimate was finally obtained in the work of Armstrong and Smart~\cite{asnondiv} and subsequently extended to the parabolic case by Lin and Smart~\cite{linsmart}. 
The arguments in this paper are strongly based on~\cite{asnondiv}.

\smallskip

The estimates of~\cite{asnondiv} imply certain \emph{large-scale regularity} estimates ($C^{1,1}$-estimates above a random scale) which, under special assumptions, can be used to obtain \emph{optimal} homogenization estimates. Under strong assumptions on the environment which guarantee concentration inequalities, this was demonstrated in prior work by Armstrong and Lin~\cite{AL}, who proved optimal bounds on the stationary approximate correctors. Estimates of a similar kind were subsequently proved in a set of very recent papers of Guo and Tran~\cite{guotran, guotran2}. Their results are obtained for a discrete lattice under iid assumptions, but can certainly be extended to the continuum setting or correlated conductances satisfying a spectral gap inequality or similar. In particular, they proved versions of Theorem~\ref{t.environmental} and the first part of Theorem \ref{t.mto1}, which are sharper in the scaling of~$T$ (respectively $R$), but qualitative in stochastic integrability.

The application of concentration inequalities to nondivergence form equations is analogous to the divergence form setting (see, for example, \cite{GNO3,AK}): a small algebraic rate of homogenization implies a large-scale regularity theory.  Assuming the environment yields a concentration inequality (such as the Efron-Stein or spectral gap inequality), it is possible to obtain estimates which are optimal in the scaling of the error but typically not stochastic integrability without further arguments (see, for example, \cite{GloNeuOtt20}).  An alternative approach based on \emph{renormalization arguments} has been developed for equations in divergence form \cite{AK}.  The estimates in this paper are the first step to a renormalization theory for equations in nondivergence form, which would be expected to yield optimal estimates.

\smallskip

In probabilistic language, the homogenization of equations~\eqref{e.ellip} and~\eqref{e.parab} is equivalent to an invariance principle for the diffusion satisfying~\eqref{e.SDE}. There is a similar correspondence between invariance principles for so-called ``balanced'' random walks in random environments on the lattice and discrete, finite-difference versions of the PDEs~\eqref{e.ellip} and~\eqref{e.parab}. The homogenization of the latter can be studied by methods completely analogous to the continuum case, essentially because homogenization is about large scales and is indifferent to the precise microstructure. 
Thus, in analogy with~\cite{PV}, Lawler~\cite{Law1982} established the invariance principle for balanced random walks in random environment under a uniform ellipticity assumption, and Guo, Peterson and Tran~\cite{GPT} adapted the arguments of~\cite{asnondiv} to the lattice, proving an algebraic convergence rate. They also consequently
obtained quantitative estimates which are similar to, but weaker than, our Theorems~\ref{t.GF} and~\ref{t.environmental}. Berger and Deuschel~\cite{BerDeu2014} and Berger, Cohen, Deuschel, and Guo~\cite{harnackrwre} obtained qualitative homogenization results for the case of degenerate diffusions satisfying the condition of \textit{genuine $d$-dimensionality} \cite[Definition~2]{BerDeu2014}. We expect the methods in this paper to be robust to these more general assumptions. 

\subsection{Outline of the Paper.} The rest of the paper is organized as follows. In Section \ref{s.loc}, we collect some deterministic estimates for parabolic nondivergence form equations. Section \ref{s.hom3} contains some preliminary quantitative homogenization results for the parabolic Cauchy-Dirichlet problem and Cauchy problem. In Section \ref{s.PGF}, we prove Theorem \ref{t.realthm} via an iterative, inductive argument, and we obtain Theorem \ref{t.GF} as a simple consequence. Section \ref{s.invmeas} contains all arguments pertaining to the quenched invariant measure, notably the proofs of Theorem \ref{t.mto1} and Theorem \ref{t.largescaleC01}. Finally, Section \ref{s.EPVP} contains the proof of Theorem \ref{t.environmental}, the quantitative ergodicity of the environmental process. We provide an Appendix at the end of the paper which contains some deterministic estimates on the elliptic adjoint equation which are used in Section \ref{s.invmeas}.

\subsection*{Acknowledgements}  
S.A. was supported from NSF grants DMS-1954357, DMS-2000200, DMS-2350340, from the Simons Programme at IHES during a sabbatical year, and from the European Research Council (ERC) under the European Union's Horizon Europe research and innovation programme, grant agreement number 101200828.
B.F.~acknowledges financial support from the National Science Foundation DMS-Probability Standard Grant 2348650, the Simons Foundation Travel Grant MPS-TSM-00007753, and the Louisiana Board of Regents RCS Grant 20130014386. J.L.~acknowledges financial support from the National Sciences and Engineering Research Council of Canada (NSERC), the Fonds de Recherche du Quebec-Nature et Technologies, and the Canada Research Chairs Program CRC-2018-00154 and CRC-2023-00081.

\section{Some deterministic estimates for nondivergence form equations}\label{s.loc}

We collect several deterministic estimates which will be useful throughout the paper. We establish pointwise, deterministic bounds for the solution of the equation
\begin{equation}\label{le_0}
\begin{cases}
\partial_t v-\tr(\A D^{2}v)=0&\text{in $(0, \infty)\times \R^{d}$,}\\
v(0,x)=v_{0}(x)&\text{on $\R^{d}$,}
\end{cases}
\end{equation}
for initial data $v_0\in L^{\infty}(\R^{d})$. The bounds in this section make no use of the stationarity or unit range of dependence assumptions, and they are valid for every fixed~$\a \in\Omega$. 

\smallskip

We begin by recalling the classical interior Krylov-Safonov estimate. Let $v$ solve
\begin{equation*}
\partial_{t}v-\tr(\A D^{2}v)=0\quad \text{in $Q_{r}(t_{0}, 0)$}. 
\end{equation*}
There exists $C=C(\la, \La, d)>0$ and $\sigma=\sigma(\la, \La, d)\in (0, 1)$ so that 
\begin{equation}\label{e.vanks}
r^{\sigma}\left[v\right]_{\mathcal{C}^{0,\sigma}(Q_{r/2}(t_{0}, 0))}\leq C\norm{v}_{L^{\infty}(Q_{r}(t_{0}, 0))},
\end{equation}
where $\mathcal{C}^{0, \sigma}$ is the standard parabolic H\"older semi-norm, 
\begin{equation*}
[v]_{\mathcal{C}^{0,\sigma}(Q_{r/2}(t_{0}, 0))}:=\sup_{(t,x)\neq(s,y)\in Q_{r/2}(t_{0}, 0)} \frac{|v(t,x)-v(s,y)|}{(|x-y|+|t-s|^{\frac{1}{2}})^{\sigma}}. 
\end{equation*}
We next state a short corollary of this estimate, which will be helpful throughout the rest of the paper. 

\begin{lemma}\label{l.ksref}
Let $v$ denote the solution of  
\begin{equation*}
\partial_{t}v-\tr(\A D^{2}v)=0\quad\text{in $(0, \infty)\times \R^{d}$}.
\end{equation*}
There exists $C=C(\la, \La, d)\in [1, \infty)$ and $\sigma=\sigma(\la, \La, d)\in (0,1)$ so that for every $r_{1}\geq r$, for every $t_{0}\geq r^{2}$, 
\begin{equation*}
r_{1}^{\sigma}\left[v(t_{0}, \cdot)\right]_{C^{0,\sigma}(B_{r_{1}})}\leq C\left(\frac{r_{1}}{r}\right)^{\sigma}\norm{v}_{L^{\infty}([t_{0}-r^{2}, t_{0}]\times B_{r_{1}})}. 
\end{equation*}
\end{lemma}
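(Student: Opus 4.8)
The plan is to deduce the statement from the interior parabolic Krylov--Safonov estimate \eqref{e.vanks} by a covering argument. The estimate \eqref{e.vanks} is already a parabolic H\"older bound on a cylinder $Q_{r/2}(t_0,0)$; the content of the lemma is to (i) replace the parabolic semi-norm by the purely spatial one on a fixed time slice, (ii) enlarge the spatial ball from radius comparable to the time-regularization scale $r$ up to an arbitrary radius $r_1 \geq r$, and (iii) track the resulting $(r_1/r)^\sigma$ loss in the constant. All three are handled by chopping the ball $B_{r_1}$ into $O((r_1/r)^d)$ balls of radius $\sim r$ and applying the local estimate on each, so the only genuine point is the bookkeeping.

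First I would fix any two points $x,y \in B_{r_1}$. If $|x-y| \geq r/4$, then by the triangle inequality (using $\|v\|_{L^\infty}$ at the two points, which is controlled by the right side since $t_0 \geq r^2$) we get
\begin{equation*}
\frac{|v(t_0,x)-v(t_0,y)|}{|x-y|^\sigma} \leq \frac{2\norm{v}_{L^\infty([t_0-r^2,t_0]\times B_{r_1})}}{(r/4)^\sigma} = C r^{-\sigma}\norm{v}_{L^\infty([t_0-r^2,t_0]\times B_{r_1})},
\end{equation*}
which after multiplying by $r_1^\sigma$ is exactly the claimed bound with the $(r_1/r)^\sigma$ factor. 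If instead $|x-y| < r/4$, then $x$ and $y$ both lie in a common ball $B_{r/2}(z)$ with $z \in B_{r_1}$, and the space-time cylinder $Q_{r/2}(t_0,z) \subseteq [t_0-r^2,t_0]\times B_{r_1}$ because $t_0 \geq r^2$ (after a harmless translation so that the cylinder in \eqref{e.vanks} is centered at a general spatial point, which is legitimate since the equation has $x$-dependent but translation-allowed coefficients in the sense that the estimate \eqref{e.vanks} holds uniformly over $\a\in\Omega$). Applying \eqref{e.vanks} on $Q_{r}(t_0,z)$ gives
\begin{equation*}
\frac{|v(t_0,x)-v(t_0,y)|}{|x-y|^\sigma} \leq [v]_{\mathcal{C}^{0,\sigma}(Q_{r/2}(t_0,z))} \leq C r^{-\sigma}\norm{v}_{L^\infty(Q_r(t_0,z))} \leq C r^{-\sigma}\norm{v}_{L^\infty([t_0-r^2,t_0]\times B_{r_1})}.
\end{equation*}
Multiplying by $r_1^\sigma = (r_1/r)^\sigma r^\sigma$ and taking the supremum over $x\neq y$ in $B_{r_1}$ yields the lemma.

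The main (minor) obstacle is making sure the cylinders one applies \eqref{e.vanks} to actually sit inside $[t_0-r^2,t_0]\times B_{r_1}$: this needs $t_0 \geq r^2$ (so the cylinder does not reach back past $t=0$) and the observation that $B_{r/2}(z) \subseteq B_{r_1}$ may fail when $z$ is within $r/2$ of $\partial B_{r_1}$ --- but in that boundary layer one simply uses the first (trivial triangle-inequality) case, since any $x,y$ with $|x-y|<r/4$ and at least one of them near $\partial B_{r_1}$ can still be compared via a chain of two points inside $B_{r_1}$, or more simply one enlarges the containing ball slightly and notes $B_{2r_1}\supseteq B_{r_1}$ changes constants only by a factor $2^\sigma$. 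There is no real analytic difficulty here; the lemma is purely a restatement of Krylov--Safonov adapted to the geometry used later in the paper.
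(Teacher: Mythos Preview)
Your proposal is correct and is essentially identical to the paper's own proof: both split according to whether $|x-y|$ is small or large compared to $r$ (the paper uses threshold $r/2$, you use $r/4$), apply the Krylov--Safonov estimate \eqref{e.vanks} on a translated cylinder $Q_r(t_0,z)$ for the near case, and use the trivial $L^\infty$ bound for the far case, then multiply through by $(r_1/r)^\sigma$. Your observation about the boundary layer (that $Q_r(t_0,z)$ may spill outside $[t_0-r^2,t_0]\times B_{r_1}$ when $z$ is near $\partial B_{r_1}$) is in fact a minor imprecision shared by the paper's proof; as you correctly note, it is harmless in applications since the $L^\infty$ norm on the right is always controlled by bounds valid on all of $\R^d$.
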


\begin{proof}
Without loss of generality, we let $t_{0}=r^{2}$, as the argument for any $t_{0}\geq r^{2}$ can be achieved by a time shift. We begin by noting that 
\begin{equation*}
\partial_{t}v-\tr(\A D^{2}v)=0\quad\text{in $Q_{r}(r^{2}, 0)$},
\end{equation*}
so by the Krylov-Safonov estimate \eqref{e.vanks}, there exists $C=C(\la, \La, d)\in [1,\infty)$ and $\sigma=\sigma(\la, \La, d)\in (0,1)$ so that 
\begin{equation*}
r^{\sigma}[v(r^{2}, \cdot)]_{C^{0, \sigma}(B_{r/2})}\leq C\norm{v}_{L^{\infty}(Q_{r}(r^{2}, 0))}\leq C\norm{v}_{L^{\infty}((0,r^{2}]\times B_{r_{1}})}.
\end{equation*}
This holds true for any $Q_{r}(r^{2},x)$, and hence, 
\begin{align*}
r^{\sigma}&[v(r^{2}, \cdot)]_{C^{0, \sigma}(B_{r_1})}\\
&=r^{\sigma}\sup_{x\neq y\in B_{r_{1}}} \frac{|v(r^{2},x)-v(r^{2},y)|}{|x-y|^{\sigma}}\\
&\leq r^{\sigma}\sup_{x\neq y\in B_{r_{1}}, |x-y|\leq r/2} \frac{|v(r^{2},x)-v(r^{2},y)|}{|x-y|^{\sigma}}+r^{\sigma}\sup_{x\neq y\in B_{r_{1}}, |x-y|>r/2} \frac{|v(r^{2},x)-v(r^{2},y)|}{|x-y|^{\sigma}}\\
&\leq C\norm{v}_{L^{\infty}((0,r^{2}]\times B_{r_{1}})}+2^{\sigma+1}\norm{v(r^{2}, \cdot)}_{L^{\infty}(B_{r_{1}})}.
\end{align*}
Multiplying by $(r_{1}r^{-1})^{\sigma}$ implies the result. 
\end{proof}

We next discuss estimates on the parabolic Green function, which we denote by~$P=P(t,x,y)$. We recall the following result which can be found in Friedman \cite{Fri2008}, establishing the existence and uniqueness of~$P$ and giving Gaussian-type estimates (these are useful in practice only for small times~$t$ on the order of the length scale on which the coefficients vary).  

\begin{proposition}\label{le_prop_1}\cite[Theorem~11, Chapter 1, Section VI]{Fri2008}
There exists a unique Parabolic Green function $P: (0,\infty)\times\R^d\times\R^d\rightarrow\R$ solving \eqref{e.Pdef}.  Furthermore, for every $T\in[0,\infty)$, there exist constants $C=C(\la, \La, d, \alpha_{0}, K_{0}, T)\in [1, \infty)$ and $B=B(\la, \La, d, \alpha_{0}, K_{0}, T)\in [1, \infty)$ such that, for every $x,y\in\R^d$ and $t\in(0,T]$,
\[0<P(t,x,y)\leq Ct^{-\frac{d}{2}}\exp\left(-\frac{\abs{x-y}^2}{Bt}\right).\]
\end{proposition}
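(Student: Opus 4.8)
The statement is classical, going back to Levi's parametrix construction, and is recorded in the form quoted in \cite{Fri2008}; I would reconstruct it by that method and read the Gaussian bound off the construction. Fix $\xi\in\Rd$ and let $\Gamma_\xi(t,x)$ be the fundamental solution of the constant-coefficient operator $\partial_t-\tr(\a(\xi)D^2\,\cdot\,)$, which is an explicit Gaussian with covariance $2t\,\a(\xi)$; by the ellipticity bound \eqref{e.unifellip} it satisfies $0<\Gamma_\xi(t,x)\le Ct^{-d/2}\exp(-|x|^2/(Bt))$ with $C,B$ depending only on $(d,\lambda,\Lambda)$, together with the companion bounds $|D^2_x\Gamma_\xi(t,x)|\le Ct^{-1-d/2}\exp(-|x|^2/(Bt))$. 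Define the parametrix $Z(t,x;s,y):=\Gamma_y(t-s,x-y)$ for $t>s$; this encodes the ansatz that, to leading order near the diagonal, the Green function with pole at $y$ behaves like the heat kernel of the equation with coefficients frozen at $y$.

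I would then seek $P$ in the form
\[
P(t,x;s,y)=Z(t,x;s,y)+\int_s^t\!\!\int_{\Rd}Z(t,x;\tau,\eta)\,\Phi(\tau,\eta;s,y)\,d\eta\,d\tau,
\]
and compute that $P$ solves \eqref{e.Pdef} precisely when $\Phi$ solves the Volterra-type integral equation $\Phi=K+K\ast\Phi$, where $K(t,x;s,y):=\tr\!\bigl((\a(x)-\a(y))D^2_xZ(t,x;s,y)\bigr)$ and $\ast$ denotes the space--time convolution in the intermediate variables. This is precisely where the H\"older hypothesis \eqref{e.holder} enters: combining it with the derivative bound on $Z$ gives $|K(t,x;s,y)|\le C|x-y|^{\alpha_0}(t-s)^{-1-d/2}\exp(-|x-y|^2/(B(t-s)))\le C(t-s)^{-1+\alpha_0/2-d/2}\exp(-|x-y|^2/(B(t-s)))$, so $K$ has a time singularity of order only $1-\alpha_0/2<1$ and is locally integrable in time.

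Next I would solve the Volterra equation by the Neumann series $\Phi=\sum_{k\ge1}K^{\ast k}$. The spatial part of each convolution is handled by the elementary convexity inequality $\frac{|x-\eta|^2}{t-\tau}+\frac{|\eta-y|^2}{\tau-s}\ge\frac{|x-y|^2}{t-s}$, which carries the Gaussian factor through (up to a harmless dimensional constant) while the remaining integration in $\eta$ produces a power of the intermediate times; the time part is then controlled by the Beta identity $\int_s^t(t-\tau)^{a-1}(\tau-s)^{b-1}\,d\tau=\mathrm{B}(a,b)(t-s)^{a+b-1}$. An induction on $k$ yields $|K^{\ast k}(t,x;s,y)|\le C^k\frac{\Gamma(\alpha_0/2)^k}{\Gamma(k\alpha_0/2)}(t-s)^{-1+k\alpha_0/2-d/2}\exp(-|x-y|^2/(B(t-s)))$; since $\Gamma(k\alpha_0/2)$ grows faster than geometrically, the series converges absolutely and locally uniformly on $\{t-s\le T\}$ and sums to $|\Phi|\le C_T(t-s)^{-1+\alpha_0/2-d/2}\exp(-|x-y|^2/(B(t-s)))$. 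Carrying out the remaining convolution of $Z$ against $\Phi$ costs a factor $(t-s)^{\alpha_0/2}$ and leaves a Gaussian, so altogether $0<P(t,x,y)\le Ct^{-d/2}\exp(-|x-y|^2/(Bt))$ for $t\in(0,T]$ with $C=C(d,\lambda,\Lambda,\alpha_0,K_0,T)$ and $B=B(d,\lambda,\Lambda,\alpha_0,K_0,T)$, which is the asserted bound. I expect the main obstacle to be exactly this induction: propagating a clean Gaussian upper bound through the iterated convolutions and summing the factorial-type coefficients $\Gamma(\alpha_0/2)^k/\Gamma(k\alpha_0/2)$ (the $T$-dependence of the constants being an artifact of summing $\sum_k C^kT^{k\alpha_0/2}/\Gamma(k\alpha_0/2)$). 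This is also where $\alpha_0>0$ is indispensable: for $\alpha_0=0$ the kernel $K$ is genuinely non-integrable in time and the parametrix method collapses.

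Finally, that the $P$ so constructed solves the PDE in $(0,\infty)\times\Rd$ and attains the initial datum $\delta(\cdot-y)$ follows by differentiating under the integral sign---justified by the bounds just obtained---together with $Z(t,\cdot\,;s,y)\to\delta_y$ as $t\downarrow s$ and the vanishing of the correction term in that limit; strict positivity then follows from the strong maximum principle once continuity of $P$ is in hand. For uniqueness, given two solutions of \eqref{e.Pdef} with the stated Gaussian growth, their difference solves the homogeneous Cauchy problem for \eqref{e.parab} with zero initial data and at most Gaussian growth, hence vanishes identically by the Tychonoff-type uniqueness theorem for \eqref{e.parab} in the class of functions of Gaussian growth (alternatively, by testing against a Gaussian-weighted cutoff and a Gr\"onwall argument).
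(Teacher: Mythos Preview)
The paper does not prove this proposition; it simply quotes it from Friedman \cite{Fri2008} and uses it as a black box. Your reconstruction via the Levi parametrix method is exactly the classical argument that Friedman's book carries out, so your proposal is correct and matches the intended source.
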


We next observe that we can construct a family of sub/supersolutions to \eqref{le_0}:
\begin{lemma}\label{le_lem_1}  Consider the equation
\begin{equation}\label{le_lem_eq}
\partial_{t}v-\tr(\A D^{2}v)=0\quad\text{in $(0, \infty)\times \R^{d}$},
\end{equation}
and for every $\ka,\theta\in(0,\infty)$ let $f_{\ka, \theta}$ be the function defined by
\[f_{\ka,\theta}(t,x) = t^{-\ka}\exp\left(-\frac{\abs{x}^2}{\theta  t}\right).\]
Then $f_{\ka,\theta}$ is a supersolution of \eqref{le_lem_eq} for every $\theta\in[4\Lambda, \infty)$ and $\ka\in[0,2\lambda d\theta^{-1}]$, and $f_{\ka,\theta}$ is a subsolution of \eqref{le_lem_eq} for every $\theta\in(0, 4\la]$ and $\ka\in[2\Lambda d\theta^{-1},\infty)$.  \end{lemma}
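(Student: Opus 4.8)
The plan is to verify the sub/supersolution claims by a direct computation of $\partial_t f_{\ka,\theta} - \tr(\a D^2 f_{\ka,\theta})$ and then to estimate the resulting expression using only the ellipticity bounds \eqref{e.unifellip}. First I would compute the derivatives of $f_{\ka,\theta}(t,x) = t^{-\ka}\exp(-|x|^2/(\theta t))$. Writing $g(t,x) := \exp(-|x|^2/(\theta t))$, one has $\partial_t f_{\ka,\theta} = -\ka t^{-\ka-1} g + t^{-\ka}\bigl(|x|^2/(\theta t^2)\bigr) g$. For the spatial derivatives, $\partial_{x_i} g = -\bigl(2x_i/(\theta t)\bigr) g$ and $\partial_{x_i}\partial_{x_j} g = \bigl(-2\delta_{ij}/(\theta t) + 4 x_i x_j/(\theta^2 t^2)\bigr) g$, so that
\begin{equation*}
\tr(\a D^2 f_{\ka,\theta}) = t^{-\ka} g \left( -\frac{2 \tr(\a)}{\theta t} + \frac{4 \,\a x \cdot x}{\theta^2 t^2} \right).
\end{equation*}

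Next I would assemble $\partial_t f_{\ka,\theta} - \tr(\a D^2 f_{\ka,\theta})$ and factor out the positive quantity $t^{-\ka-1} g$, obtaining
\begin{equation*}
\partial_t f_{\ka,\theta} - \tr(\a D^2 f_{\ka,\theta}) = t^{-\ka-1} g \left( -\ka + \frac{|x|^2}{\theta t} + \frac{2\tr(\a)}{\theta} - \frac{4\, \a x\cdot x}{\theta^2 t} \right).
\end{equation*}
The sign of the left side is therefore governed by the bracketed term $B(t,x) := -\ka + \tfrac{2\tr(\a)}{\theta} + \tfrac{1}{\theta t}\bigl( |x|^2 - \tfrac{4}{\theta}\a x\cdot x \bigr)$. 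For the supersolution claim I want $B \geq 0$: using $\a \leq \Lambda I_d$ we have $\a x \cdot x \leq \Lambda |x|^2$, so when $\theta \geq 4\Lambda$ the coefficient $1 - \tfrac{4\Lambda}{\theta} \geq 0$ and hence $|x|^2 - \tfrac{4}{\theta}\a x\cdot x \geq 0$; using $\tr(\a) \geq \lambda d$ gives $\tfrac{2\tr(\a)}{\theta} \geq \tfrac{2\lambda d}{\theta}$, so $B \geq -\ka + \tfrac{2\lambda d}{\theta} \geq 0$ precisely when $\ka \leq 2\lambda d \theta^{-1}$, which is the stated range (and $\ka \geq 0$ is needed so $f_{\ka,\theta}$ is well-behaved / the argument above is clean). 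For the subsolution claim I want $B \leq 0$: using $\a \geq \lambda I_d$ we get $\a x\cdot x \geq \lambda |x|^2$, so when $\theta \leq 4\lambda$ the coefficient $1 - \tfrac{4\lambda}{\theta} \leq 0$ and thus $|x|^2 - \tfrac{4}{\theta}\a x\cdot x \leq (1 - \tfrac{4\lambda}{\theta})|x|^2 \leq 0$; and $\tr(\a) \leq \Lambda d$ gives $\tfrac{2\tr(\a)}{\theta} \leq \tfrac{2\Lambda d}{\theta}$, so $B \leq -\ka + \tfrac{2\Lambda d}{\theta} \leq 0$ exactly when $\ka \geq 2\Lambda d\theta^{-1}$, as claimed.

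This argument is essentially a routine calculus verification, so I do not anticipate a genuine obstacle; the only thing to be careful about is bookkeeping the two competing bounds on each of $\tr(\a)$ and the quadratic form $\a x\cdot x$, making sure the correct inequality (upper vs. lower) is used in each of the two cases, and checking that the threshold for $\theta$ (namely $4\Lambda$ for supersolutions, $4\lambda$ for subsolutions) is exactly what makes the $x$-dependent term have the favorable sign uniformly in $x$ and $t$. A minor point worth a remark is that at $\ka = 0$ the function is independent of the factor $t^{-\ka}$ and the inequality still holds, and that the computation is valid for all $t > 0$ and $x \in \Rd$, so $f_{\ka,\theta}$ is a classical super/subsolution on all of $(0,\infty)\times\Rd$; no boundary or initial conditions enter. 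One should also note the computation only uses \eqref{e.unifellip} and not the H\"older continuity \eqref{e.holder}, consistent with the section's stated scope.
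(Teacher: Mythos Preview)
Your proposal is correct and follows essentially the same direct computation as the paper: both compute $\partial_t f_{\ka,\theta} - \tr(\a D^2 f_{\ka,\theta})$, factor out the positive quantity $f_{\ka,\theta}$ (you factor out $t^{-\ka-1}g = t^{-1}f_{\ka,\theta}$, which is equivalent), and then use the ellipticity bounds on $\tr(\a)$ and $\a x\cdot x$ to fix the sign of the bracketed term under the stated conditions on $\theta$ and $\ka$. Your write-up is in fact slightly more detailed than the paper's, which only carries out the supersolution case explicitly and dismisses the subsolution case as analogous.
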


\begin{proof}  
By the uniform ellipticity assumption~\eqref{e.unifellip},
\begin{align*}
 \partial_t f_{\ka,\theta}- \tr(\A D^2f_{\ka,\theta})) &=f_{\ka,\theta}\left(\frac{\abs{x}^2}{\theta t^2}+ \frac{2\tr(\A)-\ka\theta}{\theta t}-\frac{4\langle \A x,x\rangle}{\theta^2 t^2}\right)\\
& \geq f_{\ka,\theta}\left(\frac{\abs{x}^2}{\theta t^2}+ \frac{2\lambda d-\ka\theta}{\theta t}-\frac{4\Lambda\abs{x}^2}{\theta^2 t^2}\right).
\end{align*}
Therefore, after choosing $\theta\in[4\Lambda, \infty)$ and $\ka\in[0,2\lambda d\theta^{-1}]$, we obtain that $f_{\ka,\theta}$ is a supersolution of \eqref{le_lem_eq}.  The analogous argument establishes that $f_{\ka,\theta}$ is a subsolution of \eqref{le_lem_eq} for every $\theta\in(0, 4\la]$ and $\ka\in[2\Lambda d\theta^{-1},\infty)$.
\end{proof}

We may now use this family of sub/supersolutions to obtain sub-optimal bounds on the parabolic Green function for large times $t\geq 1$. 

\begin{proposition}\label{le_prop_2}  There exist constants $C,c, B,b$, $\underline{\ka}$, and $\overline{\ka}\in [1,\infty)$, all depending on $\la, \La,d, \al_{0}, K_{0}$ such that for $P$ as in Proposition \ref{le_prop_1}, every $x,y\in\R^d$ and $t\in[1,\infty)$,
\begin{equation}\label{le_prop_2_eq}ct^{-\underline{\ka}}\exp\left(-\frac{\abs{x-y}^2}{bt}\right)\leq P(t,x,y)\leq Ct^{-\bar{\ka}}\exp\left(-\frac{\abs{x-y}^2}{Bt}\right).\end{equation}
\end{proposition}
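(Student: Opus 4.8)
The plan is to obtain the two-sided bound \eqref{le_prop_2_eq} by a barrier argument, using the explicit sub/supersolutions from Lemma~\ref{le_lem_1} together with the small-time Gaussian bound from Proposition~\ref{le_prop_1} as an initial anchor. The key observation is that both the Green function $P(\cdot,\cdot,y)$ and the functions $f_{\ka,\theta}$ solve the same parabolic equation, so if we can arrange an ordering at some fixed time $t=1$ (or on a suitable parabolic boundary), the comparison principle propagates it to all later times. Fix $y$ and work with $P(t,x,y)$; by translation we may take $y=0$.

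First I would handle the \emph{upper bound}. By Proposition~\ref{le_prop_1} with $T=1$, there are constants $C_0,B_0$ (depending on $\la,\La,d,\al_0,K_0$) such that $P(1,x,0)\le C_0\exp(-|x|^2/B_0)$. Choose $\theta=\theta_+\in[4\La,\infty)$ with $\theta_+\ge B_0$, and set $\bar\ka\in(0,2\la d\theta_+^{-1}]$; Lemma~\ref{le_lem_1} then gives that $f_{\bar\ka,\theta_+}(t,x)=t^{-\bar\ka}\exp(-|x|^2/(\theta_+ t))$ is a supersolution of the equation on $(0,\infty)\times\Rd$. Pick the prefactor constant $C$ large enough that $C f_{\bar\ka,\theta_+}(1,x)=C\exp(-|x|^2/\theta_+)\ge C_0\exp(-|x|^2/B_0)\ge P(1,x,0)$ for all $x$ (possible since $\theta_+\ge B_0$). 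Then $w:=Cf_{\bar\ka,\theta_+}-P(\cdot,\cdot,0)$ is a supersolution on $(1,\infty)\times\Rd$, is nonnegative at $t=1$, and — using the Gaussian decay of both terms in $x$ — tends to $0$ as $|x|\to\infty$ uniformly on bounded time intervals. By the comparison/maximum principle for the Cauchy problem on $[1,\infty)\times\Rd$ (applied on large balls $B_\rho$ and letting $\rho\to\infty$, controlling the lateral contribution by the Gaussian tails), $w\ge 0$, i.e. $P(t,x,0)\le C t^{-\bar\ka}\exp(-|x|^2/(\theta_+ t))$ for all $t\ge 1$. This is the right half of \eqref{le_prop_2_eq} with $B=\theta_+$.

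Next the \emph{lower bound}, which is the more delicate half. Here I would first establish a pointwise lower bound for $P$ at time $t=1$ on a fixed ball, say $P(1,x,0)\ge c_0>0$ for $|x|\le 1$; this follows from the strict positivity in Proposition~\ref{le_prop_1} together with the Krylov–Safonov Harnack inequality applied to the positive solution $P(\cdot,\cdot,0)$ on a parabolic cylinder inside $(0,1]\times B_2$ (the lower Gaussian bound on $P(1/2,\cdot,0)$ near the origin is not given directly, but positivity plus Harnack chaining from a point where $P$ is controlled from below suffices — alternatively one can use that $\int P(t,x,0)\,dx$ stays bounded below by a quantitative Krylov–Safonov "spreading" argument, but for this proposition a crude constant is all we need). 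Now choose $\theta=\theta_-\in(0,4\la]$ and $\underline\ka\in[2\La d\theta_-^{-1},\infty)$, so that $f_{\underline\ka,\theta_-}$ is a subsolution by Lemma~\ref{le_lem_1}. Pick the prefactor $c$ small enough that $c f_{\underline\ka,\theta_-}(1,x)=c\exp(-|x|^2/\theta_-)\le c_0\le P(1,x,0)$ for $|x|\le1$; for $|x|>1$ we need $P(1,x,0)\ge c f_{\underline\ka,\theta_-}(1,x)$ as well, which we get from the already-established positivity of $P$ and, if necessary, by first propagating the ball lower bound: apply the comparison principle to $P(\cdot,\cdot,0)-cf_{\underline\ka,\theta_-}$ on the region $(1,\infty)\times\Rd$ after checking the ordering on the parabolic boundary $\{t=1\}$. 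The cleanest route is: use comparison on $(1,\infty)\times \Rd$ with the subsolution $c f_{\underline\ka,\theta_-}$, whose value at $t=1$ is $\le c$ everywhere, versus $P(1,\cdot,0)$ which is $\ge 0$ everywhere and $\ge c_0$ on $B_1$; since outside $B_1$ the subsolution barrier is $\le c e^{-1/\theta_-}$, one can absorb the mismatch by taking $c$ small and noting $P\ge0$ — more carefully, replace $f_{\underline\ka,\theta_-}$ by a barrier that is genuinely dominated at $t=1$, e.g. $c\,(f_{\underline\ka,\theta_-}(t,x)-f_{\underline\ka,\theta_-}(1,\cdot)\mathbf{1}_{|x|>1})$, or shift the time origin so the subsolution starts from a compactly supported bump below $P(1,\cdot,0)$. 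Then comparison yields $P(t,x,0)\ge c\, t^{-\underline\ka}\exp(-|x|^2/(\theta_- t))$ for $t\ge1$.

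\textbf{Main obstacle.} The upper bound is essentially immediate once the supersolution is in hand; the real work is the lower bound, and specifically producing a clean lower barrier at the anchor time $t=1$ that is dominated by $P(1,\cdot,0)$ \emph{everywhere} in $\Rd$, not just on a ball — because $f_{\underline\ka,\theta_-}(1,\cdot)$ has full support while the a priori lower bound on $P(1,\cdot,0)$ is only a positive constant on a compact set. I expect this to be resolved by a two-step comparison: first run the equation for a unit time to upgrade the compact lower bound (via Krylov–Safonov Harnack/spreading) to a lower bound with Gaussian profile on a cylinder, giving an anchor of the form $P(2,x,0)\ge c'\exp(-|x|^2/b')$ at least on a ball $B_2$ and $\ge0$ outside, and then compare against the genuine subsolution $c f_{\underline\ka,\theta_-}$ on $(2,\infty)\times\Rd$ using that a subsolution which starts below $P$ on the relevant parabolic boundary and has faster-than-$P$ spatial decay stays below. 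The justification of the comparison principle on the unbounded domain — exhausting by balls and showing the Gaussian tails kill the lateral boundary error — is routine but must be stated. Also one must track the dependence of all constants on $(\la,\La,d,\al_0,K_0)$, which is transparent since they enter only through Proposition~\ref{le_prop_1} and the admissible ranges of $(\theta,\ka)$ in Lemma~\ref{le_lem_1}.
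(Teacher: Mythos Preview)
Your upper-bound argument is correct and essentially identical to the paper's: anchor at $t=1$ using Proposition~\ref{le_prop_1}, enlarge $B$ to lie in $[4\Lambda,\infty)$, pick $\bar\ka\in(0,2\lambda d B^{-1}]$, and compare with the supersolution $Cf_{\bar\ka,B}$.

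The lower bound is where you diverge from the paper, and where your proposal has a genuine gap. You correctly identify the obstacle: to run the comparison with the subsolution $cf_{\underline\ka,\theta_-}$ from $t=1$, you need $P(1,x,0)\ge cf_{\underline\ka,\theta_-}(1,x)$ for \emph{all} $x\in\Rd$, not just on a ball. Your attempted fixes do not close this. The observation ``$P\ge0$ outside $B_1$ while $cf_{\underline\ka,\theta_-}(1,\cdot)\le ce^{-1/\theta_-}$'' does not give an ordering, since the right side is strictly positive. The suggestion to ``replace $f_{\underline\ka,\theta_-}$ by a barrier that is genuinely dominated at $t=1$'' or to run a two-step Harnack/spreading argument is vague and not obviously implementable without further work (a compactly supported initial profile is no longer a subsolution of the form in Lemma~\ref{le_lem_1}, and Harnack chaining from a ball gives a lower bound whose profile degenerates badly in $|x|$ on a fixed time slice).

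The paper bypasses all of this: it simply invokes the classical lower Gaussian bound for fundamental solutions with H\"older coefficients, namely \cite[Corollary~1 of Theorem~5', Appendix~I]{Fri2008}, which gives directly
\[
P(1,x,y)\ge c\exp\Bigl(-\frac{|x-y|^2}{b}\Bigr)\quad\text{for all }x,y\in\Rd,
\]
with $c,b$ depending on $(\lambda,\Lambda,d,\alpha_0,K_0)$. This is the companion to the upper bound in Proposition~\ref{le_prop_1} and comes from the same parametrix construction. With this global Gaussian anchor at $t=1$ in hand, the lower bound follows by exactly the same one-line comparison as the upper bound: shrink $b$ to lie in $(0,4\lambda]$, set $\underline\ka=2\Lambda d b^{-1}$, and compare $P$ against the subsolution $cf_{\underline\ka,b}$. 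The missing ingredient in your proposal is not an argument but a reference.
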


\begin{proof}  Let $C,B\in(0,\infty)$ be the constants from Proposition~\ref{le_prop_1} corresponding to $T=1$:  we have that, for every $x,y\in\R^d$ and $t\in(0,1]$,
\begin{equation}\label{le_2}0\leq P(t,x,y)\leq Ct^{-\frac{d}{2}}\exp\left(-\frac{\abs{x-y}^2}{Bt}\right).\end{equation}
We will now extend this estimate to a global in time estimate using the supersolution constructed in Lemma~\ref{le_lem_1}, at the potential cost of decreasing the constants.  Let $C=c_*$, $B= B\vee 4\Lambda $, and $\bar{\ka} = 2\lambda dB^{-1}$. From this choice, we have that, for every $x,y\in\R^d$,
\[Cf_{\bar{\kappa},B}(1,x-y)\geq C\exp\left(-\frac{\abs{x-y}^2}{B}\right).\]
The upper bound in \eqref{le_prop_2_eq} is then a consequence of Lemma~\ref{le_lem_1} and the comparison principle.  It remains to establish the lower bound.

It follows from Friedman \cite[Corollary 1 of Theorem 5', Appendix I]{Fri2008} that there exists $c,b\in(0,\infty)$ such that, for every $x,y\in\R^d$,
\[P(1,x,y)\geq c\exp\left(-\frac{\abs{x-y}^2}{b}\right).\]
Let $b = b\wedge 4\lambda$ and $\underline{\ka} = 2\Lambda db^{-1}$.  We then have that, for every $x,y\in\R^d$,
\[ cf_{\underline{\ka},b}(1,x-y)\leq c\exp\left(-\frac{\abs{x-y}^2}{b}\right).\]
The proof of the lower bound in \eqref{le_prop_2_eq} is then a consequence of Lemma~\ref{le_lem_1} and the comparison principle.  \end{proof}

\section{Some quantitative homogenization estimates}\label{s.hom3}

In this section, we collect several homogenization results which will be used throughout the sequel. Most of these results can be viewed as corollaries of \cite{asnondiv}. 

We first collect some definitions and results from \cite{asnondiv}. For $\A\in \Om$, $U\subseteq \R^{d}$, $M\in \mathcal{S}^{d}$, and $\ell\in \R$, we recall the quantities
\begin{equation*}  
S(U, \A(M+\cdot), \ell)=\left\{u\in C(\overline{U}): \text{$-\tr(\A(M+D^{2}u))\geq \ell$ in $U$}\right\},
\end{equation*}
and 
\begin{equation*}
\mu(U, \A(M+\cdot), \ell):=\frac{1}{|U|}\sup\left\{|\partial \Ga_{u}(U)|: u\in S(U, \A(M+\cdot), \ell)\right\}, 
\end{equation*}
where $\Ga_{u}$ is the convex envelope of $u$, $\partial \Ga_{u}$ is the subdifferential, and $|U|$ denotes the Lebesgue measure of $U$. We similarly define 
\begin{align*}
\mu_{*}(U, \A(M+\cdot), \ell):&=\frac{1}{|U|}\sup\left\{|\partial \Ga_{-u}(U)|: \text{$-\tr(\A(M+D^{2}u))\leq \ell$ in $U$}\right\}\\
&=\frac{1}{|U|}\sup\left\{|\partial \Ga_{-u}(U)|: \tr(\A(-M+D^{2}(-u)))\leq \ell\right\}\\
&=\frac{1}{|U|}\sup\left\{|\partial \Ga_{u}(U)|: u\in S(U,(\A(-M+\cdot), -\ell) \right\}.
\end{align*}
The quantity $\mu$ is invariant with respect to affine shifts, and it measures the maximal curvature of the graph of the convex envelope of a supersolution of the equation $-\tr(A(M+D^2u))=\ell$.  The quantity $\mu_{*}$ measures the curvature of subsolutions (see \cite[Section~2]{asnondiv} for a more complete discussion).

We recall that the statement $X\leq \O_{1}(A)$ means
\begin{equation*}
\P[A^{-1}X>\la]\leq \exp(-\la),
\end{equation*}
which is equivalent to 
\begin{equation*}
\P[X>\la]\leq \exp(-A^{-1}\la).
\end{equation*}
Observe that $A\mapsto \O_{1}(A)$ is a decreasing function, and further, we have that if $X_{i}=\O_{1}(A_{i})$, then
\begin{equation*}
\sum_{i=1}^{\infty} X_{i}\leq \O_{1}\left(\sum_{i=1}^{\infty}A_{i}\right).
\end{equation*}
(See for example \cite[Lemma A.4]{AKM} for a reference.)

We first recall one of the main results from \cite{asnondiv}: 
\begin{theorem}\cite[Theorem 2.9]{asnondiv}\label{t.mainas}
There exists a deterministic matrix $\ahom$, constant $C=C(\la, \La, d)\in [1,\infty)$ and $\tau\in (0,1)$ such that for all $1\leq i,j\leq d$ and every $m\in \N$, 
\begin{equation*}
\E[\mu(\cu_{m}, \A(e_{i}\otimes e_{j}+\cdot), -\ahom_{ij})^{2}+\mu^{*}(\cu_{m}, \A(e_{i}\otimes e_{j}+\cdot), -\ahom_{ij})^{2}]\leq C\tau^{m}.
\end{equation*}
\end{theorem}
We now aim to control 
\begin{equation*}
\mathcal{E}(m):=\max_{1\leq i,j\leq d} \max\left\{\mu(\cu_{m}, \A(e_{i}\otimes e_{j}+\cdot), -\ahom_{ij}), \mu^{*}(\cu_{m}, \A(e_{i}\otimes e_{j}+\cdot), -\ahom_{ij})\right\}
\end{equation*}
\begin{lemma}\label{l.fr}
There exists constants $C=C(\la, \La, d)\in [1, \infty)$ and $\al=\al(\la, \La, d)\in (0,1)$, such that for every $m\in \mathbb{N}$, and $l<m$, 
\begin{align*}
\mathcal{E}(m)\leq C3^{-l\al}+\O_{1}(C3^{-d(m-l)})
\end{align*}
\end{lemma}

\begin{proof}
For ease of notation, we let $\mu(\cu_{m})=\mu(\cu_{m}, \A(e_{i}\otimes e_{j}+\cdot), -\ahom_{ij})$, and similarly, $\mu^{*}(\cu_{m})=\mu^{*}(\cu_{m}, \A(e_{i}\otimes e_{j}+\cdot), -\ahom_{ij})$.

By Theorem \ref{t.mainas}, we deduce that $\E[\mu(\cu_{m})]\leq C\tau^{m/2}$. 
Choosing $\al:=-\log_{3}(\sqrt{\tau})\in (0,1)$, we deduce that $\E[\mu(\cu_{m})]\leq C3^{-m\al}$.

To prove the lemma, we follow a (now) standard argument used in the theory of quantitative homogenization (see for example \cite[Lemma A.7]{AKMbook}). For completeness, we provide details specific to this setting. First, using the subadditivity of $\mu(\cu_{m})$ and the finite range of dependence assumption, we have that for any $l<m$, and for any $t>0$, 
\begin{equation}\label{e.cor210}
\log \E[\exp(3^{d(m-l-1)}t\mu(\cu_{m}))]\leq 3^{d(m-l-1)}\log \E[\exp(t\mu(\cu_{l}))]. 
\end{equation}
This exact estimate appears in \cite[Proof of Corollary 2.10]{asnondiv}. 
As in that argument, owing to the uniform ellipticity assumption, there exists a constant $M_{0}=M_{0}(\la, \La, d)$ such that $\P[\sup_{m}|\mu(\cu_{m})|\leq (2M_{0})^{d}]=1$, and hence, letting $t=(2M_{0})^{-d}$, we have
\begin{align*}
\log \E[\exp(3^{d(m-l-1)}(2M_{0})^{-d}\mu(\cu_{m}))]&\leq 3^{d(m-l-1)} \log\E[\exp((2M_{0})^{-d}\mu(\cu_{l}))]\\
&\leq 2\cdot 3^{d(m-l-1)} (2M_{0})^{-d}\E[\mu(\cu_{l})], 
\end{align*}
where in the final inequality, we used the elementary inequalities, 
\begin{equation*}
\begin{cases}
\exp(s)\leq 1+2s&\text{for all $s\in [0,1]$,}\\
\log(1+s)\leq s&\text{for all $s\geq 0$}.
\end{cases}
\end{equation*}
By Chebyshev's inequality, this implies that for every $\la>0$, 
\begin{align*}
\P[\mu(\cu_{m})>\la]\leq \exp\left(-3^{d(m-l-1)}(2M_{0})^{-d}\la+ 2\cdot 3^{d(m-l-1)} (2M_{0})^{-d}\E[\mu(\cu_{l})]\right).
\end{align*}
Letting now $\la\mapsto 2\E[\mu(\cu_{l})]+\la$, since $\mu(\cu_{l})\geq 0$, this implies 
\begin{align*}
\P[\mu(\cu_{m})-2\E[\mu(\cu_{l})]>\la]\leq \exp(-3^{d(m-l-1)}(2M_{0})^{-d}\la).
\end{align*}
This implies that for $C=C(M_{0}, d)=C(\la, \La, d)$, 
\begin{equation*}
\mu(\cu_{m})-2\E[\mu(\cu_{l})]\leq \O_{1}(C3^{-d(m-l)}),
\end{equation*}
and hence by the observation at the start of the proof, 
\begin{equation*}
\mu(\cu_{m})\leq C3^{-l\al}+\O_{1}(C3^{-d(m-l)}).
\end{equation*}
An identical argument can be made for $\mu^*(\cu_m)$, and this yields the desired estimate for $\mathcal{E}(m)$. 
\end{proof}

We now show how this estimate can be upgraded to argue that beyond a certain scale $\Y$, for which $\Y=\O_{d}(C)$, $\mathcal{E}(m)$ in fact decays algebraically. 
\begin{corollary}\label{c.minscale}
There exists constants $\theta=\theta(\la, \La, d)\in (0,1)$ and $C=C(\la, \La, d)\in [1,\infty)$, and a random variable $\Y=\O_{d}(C)$, such that for every $m\in \N$ with $3^{m}\geq \Y$, 
\begin{equation}
\mathcal{E}(m)\leq \left(\frac{3^{m}}{\Y}\right)^{-\theta}. 
\end{equation}
\end{corollary}

\begin{proof}
This proof roughly follows \cite[Lemma 4.13]{AK}. However, we present a simplified argument, because we are not aiming for the optimal exponent $\theta$. 

We start from the statement of Lemma \ref{l.fr}. For any $\rho\in (0,1)$, let $l:=\lfloor \rho m\rfloor$. Then $\rho m -1\leq l\leq \rho m$, and thus
\begin{equation}\label{e.intstep}
\mathcal{E}(m)\leq C3^{-\rho m\al}+\O_{1}(C3^{-d(1-\rho)m}).
\end{equation}

Let $\theta:=\frac{\al d}{2(\al+d)}$, which implies $\frac{(\al+d)\theta}{\al}<\frac{d}{2}.$ Define
\begin{equation*}
\Y:=\sup \left\{3^{N}: \text{$N\in \mathbb{N}$ and $\exists j\in \mathbb{N}\cap [N, \infty), \mathcal{E}(m)>3^{-\theta(j-N)}$}\right\}.
\end{equation*}
Notice that for all $3^{m}\geq \Y$, we have the desired bound. 

We now claim that $\Y=\O_{d}(C)$. Let $N\geq N_{0}$, where 
\begin{equation*}
N_{0}:=\lceil \theta^{-1}\tilde{C}\rceil,
\end{equation*}
where $\tilde{C}:=\log_{3}(2C_{\eqref{e.intstep}})$. For each $m\in \mathbb{N}\cap [N, \infty)$, we let 
\begin{equation*}
\rho_{m}=\frac{\theta(m-N)+\tilde{C}}{\al m}, 
\end{equation*}
for which it is clear, by definition of $N_{0}$, that $\rho_{m}\leq \frac{\theta}{\al}<1$. Moreover, for each $m\in \mathbb{N}\cap [N, \infty)$, $C_{\eqref{e.intstep}}3^{-\rho_{m} \al m}\leq \frac{1}{2}3^{-\theta(m-N)}$, and so by \eqref{e.intstep}, 
\begin{equation*}
\mathcal{E}(m)\leq \frac{1}{2}3^{-\theta(m-N)}+\O_{1}(C_{\eqref{e.intstep}}3^{-d(1-\rho_{m})m}).
\end{equation*}
This implies
\begin{equation*}
\P\big[\mathcal{E}(m)>3^{-\theta(m-N)}\big]\leq \exp\left(-\frac{1}{2}C^{-1}_{\eqref{e.intstep}}3^{-\theta(m-N)+d(1-\rho_{m})m}\right). 
\end{equation*}
In other words, 
\begin{equation*}
\indc_{\left\{\mathcal{E}(m)>3^{-\theta(m-N)}\right\}}\leq \O_{1}(C3^{\theta(m-N)-d(1-\rho_{m})m}).
\end{equation*}
By summing over $m\geq N$, we have 
\begin{equation*}
\indc_{\left\{\Y\geq 3^{N}\right\}}\leq \sum_{m=N}^{\infty}\indc_{\left\{\mathcal{E}(m)>3^{-\theta(m-N)}\right\}}\leq \O_{1}\bigg(C\sum_{m=N}^{\infty}3^{\theta(m-N)-d(1-\rho_{m})m}\bigg)
\end{equation*}
the proof is complete if we can show that for $C>1$ ,
\begin{equation}\label{e.summm}
\sum_{m=N}^{\infty}3^{\theta(m-N)-d(1-\rho_{m})m}\leq C3^{-Nd}. 
\end{equation}
Indeed, if \eqref{e.summm} holds, then
\begin{align*}
\E\left[\exp(\Y^{d})\right]&\leq \exp(3^{N_{0}d})+\int_{3^{N_{0}}}^{\infty}\exp(s^{d})\P[\Y>s]\, ds\\
&\leq \exp(3^{N_{0}d})+C\sum_{N=N_{0}}^{\infty}\exp(3^{Nd})\exp(-C3^{Nd})<\infty. 
\end{align*}
We now prove \eqref{e.summm}. From the expression for $\rho_{m}$, we note that 
\begin{align*}
\theta(m-N)-d(1-\rho_{m})m&=\theta(m-N)-d\left[\frac{\al m-\theta(m-N)+\tilde{C}}{\al}\right]\\
&=m\left(\frac{(\al+d)\theta}{\al}-d\right)-\frac{(\al+d)}{\al}\theta N-d\tilde{C}{\al},
\end{align*}
where by choice of $\theta$, the coefficient on the term $m$ is strictly negative. Hence
\begin{equation*}
\sum_{m=N}^{\infty}3^{\theta(m-N)-d(1-\rho_{m})m}< C\sum_{m=N}^{\infty}3^{m\left(\frac{(\al+d)\theta}{\al}-d\right)-\frac{(\al+d)}{\al}\theta N}\leq C3^{-Nd}, 
\end{equation*}
where we can take $C>1$. 

\end{proof}

\begin{remark}\label{r.giveup}
We can think of $\Y$ as a ``minimal scale'', because for $3^{m}>\Y$, the right hand side of the above expression decays algebraically. It is seen from Corollary \ref{c.minscale} that one can obtain ``optimal stochastic integrability'' of the minimal scale. We also consider the minimal scale which is near optimal: for every $p\in (0,d)$, there exists a random variable $\mathcal{Y}_{p}=\O_{p}(C)$ and $\theta=\theta(\la, \La, d, p)\in (0,1)$ such that for all $m\in \mathbb{N}$ with $3^{m}\geq \mathcal{Y}_{p}$, 
\begin{equation*}
\mathcal{E}(m)\leq 3^{-m\theta}. 
\end{equation*}
Indeed, the above estimate is immediate from Corollary \ref{c.minscale} by letting $\mathcal{Y}_{p}=\mathcal{Y}^{\frac{d}{p}}=\mathcal{Y}^{1+\frac{d-p}{p}}$. The near optimal minimal scale will allow us a bit more flexibility in future arguments. 
\end{remark}

Finally, we recall that by \cite[Lemma 2.1]{asnondiv}, there exists~$C=C(d, \la, \La)\in [1, \infty)$ such that, for every supersolution~$u\in S(\cu_{m}, \A, \ell)$, 
\begin{equation*}
\inf_{\partial\cu_{m}} u\leq \inf_{\cu_{m}} u+C3^{2m}\mu(\cu_{m}, \A,\ell)^{\frac{1}{d}}, 
\end{equation*}
and we have a similar estimate for $\mu_{*}(\cu_{m}, \A,\ell)$ from below. By Corollary \ref{c.minscale}, this implies for every $m\in \N$ with $3^{m}\geq \Y$,
\begin{equation}\label{e.muabp}
\inf_{\partial\cu_{m}} u\leq \inf_{\cu_{m}} u+C3^{2m}\left(\frac{3^{m}}{\Y}\right)^{-\frac{\theta}{d}}.
\end{equation}

It is useful to work with a slightly augmented minimal scale, which has the property that the minimal scale $\Y$ (or $\Y_{p}$) is sufficiently small an entire mesoscopic subgrid.  We formalize this in the following remark.

\begin{remark}
\label{r.Ystep1}
If~$\Y_{p}$ is the minimal scale given in Remark \ref{r.giveup}, then for every $k\in\N$, we introduce the enlarged minimal scale
\begin{equation*}
\tilde{\Y}_{(k)p} 
:=
\sup_{m\in \Z}
\biggl\{ 3^m \,:\, 
\sup_{z \in \Zd \cap \cu_{km}} 
\tau_z \Y_{p} > 3^{m} 
\biggr\}.
\end{equation*}
It is easy to check by a union bound that~$\tilde{\Y}_{(k)p}=\O_{d}(C')$ for a new constant $C'>C$.  
Therefore, we freely replace~$\Y_{p}$ by~$\tilde{\Y}_{(k)p}$ in any argument. 

\end{remark}
\subsection{Estimates for the Cauchy-Dirichlet problem}

We prove a homogenization error estimate for the Cauchy-Dirichlet problem for~\eqref{e.parab}. Let $U_{T}:=(0, T]\times U$ for $U\subseteq \R^{d}$. Let $u^{\ve}$ solve
\begin{equation}\label{e.parCD}
\begin{cases}
\partial_{t}u^{\ve}-\tr\left(\A^{\ve}D^{2} u^{\ve}\right)=0&\text{in $U_{T}$,}\\
u^{\ve}=g&\text{on $\partial_{p}U_{T}$}, 
\end{cases}
\end{equation}
where $\A^{\ve}=\A(\cdot/\ve)$, and $\bar{u}$ solve
\begin{equation}\label{e.parhom}
\begin{cases}
\partial_{t}\bar{u}-\tr\left(\ahom D^{2} \bar{u}\right)=0&\text{in $U_{T}$,}\\
\bar{u}=g&\text{on $\partial_{p}U_{T}$}, 
\end{cases}
\end{equation}
where $g\in \mathcal{C}^{0,1}(\partial_{p}U_{T})$ is a fixed Lipschitz function. The following statement is an estimate for the homogenization error of the Cauchy-Dirichlet problem, which exhibits an algebraic-type decay. 

\begin{theorem}[Error Estimate for the Cauchy-Dirichlet Problem]\label{t.algrate2}
There exists $\beta(\la, \La, d) \in (0,1)$, $C(\la, \La, d, U_{T})\in [1,\infty)$, and a random variable~$\Y$ satisfying 
$\Y = \O_d(C)$
such that, for every~$g\in \mathcal{C}^{0,1}(\partial_pU_T)$ and~$\ep \in (0,\Y^{-1} ]$, if~$u^\ep$ and~$\overline{u}$ are the solutions of~\eqref{e.parCD} and~\eqref{e.parhom}, respectively, 
then 
\begin{equation}\label{e.t1eq}
\norm{u^{\ve}-\bar{u}}_{L^{\infty}(U_{T})}\leq C\norm{g}_{\mathcal{C}^{0,1}(\partial_{p}U_{T})} (\Y\ve)^{\beta}.
\end{equation}
For any $p\in (0,d)$, for $\Y_{p}$ as in Remark \ref{r.giveup}, for $\ve\in (0, \Y_{p}^{-1}]$, there exists $\beta=\beta(\la, \La, d, p)\in (0,1)$ and $C=C(\la, \La, d)\in [1, \infty)$ such that 
\begin{equation}\label{e.tgiveup}
\norm{u^{\ve}-\bar{u}}_{L^{\infty}(U_{T})}\leq C\norm{g}_{\mathcal{C}^{0,1}(\partial_{p}U_{T})} \ve^{\beta}.
\end{equation}
\end{theorem}

For each $m\in \R$, we introduce the \emph{approximate Dirichlet correctors} defined by $\phi^{m}_{ij}(\cdot)$ for $1\leq i, j\leq d$, solving
\begin{equation*}
\begin{cases}
-\tr(\a(e_{i}\otimes e_{j}+D^{2}\phi^{m}_{ij}))=-\ahom_{ij}&\text{in $\cu_{m}$},\\
\phi^{m}_{ij}=0&\text{on $\partial \cu_{m}$}.
\end{cases}
\end{equation*}
For any matrix $M\in \mathbb{S}^{d\times d}$, we may also define $\phi^{m}(\cdot; M)$ by 
\begin{equation}\label{e.Mcorrect}
\begin{cases}
-\tr(\a(M+D^{2}\phi^{m}(\cdot; M))=-\tr(\ahom M)&\text{in $\cu_{m}$},\\
\phi^{m}(\cdot; M)=0&\text{on $\partial \cu_{m}$}.
\end{cases}
\end{equation}
Observe that by linearity, 
\begin{equation*}
\phi^{m}(x; M)=\sum_{i,j=1}^{d}M_{ij}\phi^{m}_{ij}(x).
\end{equation*}

We next show that the approximate Dirichlet correctors always control the homogenization error. 
\begin{lemma}\label{l.toobig}
Let~$U\subseteq \cu_0$ be a bounded Lipschitz domain. 
Let $u, v$ satisfy 
\begin{equation}\label{e.2bigeq}
\begin{cases}
u_{t}-\tr (\ahom D^{2}u)=0=v_{t}-\tr (\A D^{2}v) &\text{in $U_{T}$}, \\
u=v=g&\text{on $\partial_{p}U_{T}$,}
\end{cases}
\end{equation}
with $\norm{g}_{\mathcal{C}^{0,1}(\partial_{p}U_{T})}\leq M_{0}.$ There exists $\sigma(\la, \La, d)\in (0,1)$ and $C(\la, \La, d)\in [1,\infty)$ such that
\begin{equation*}
\| u -v\|_{L^\infty(U_T)}
\leq 
CM_{0}
\Bigl( 
\sup_{1\leq i,j \leq d}
\| \phi_{ij}^0 \|_{L^\infty(\cu_0)}  
\Bigr)^{\sigma/11}.
\end{equation*}
\end{lemma}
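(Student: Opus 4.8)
\textbf{Proof proposal for Lemma~\ref{l.toobig}.}

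The plan is to build a barrier/comparison argument using the approximate Dirichlet corrector $\phi^0(\cdot;M)$ on the unit cube $\cu_0$ to localize the homogenization error, and then to iterate (or rescale) down to smaller parabolic cylinders to convert the rough $L^\infty$ bound on $\phi^0$ into a H\"older-type power of it. First I would fix the matrix $M = D^2 u(t_0,x_0)$ at an arbitrary point and consider the ansatz $w(t,x) := u(t,x) + \phi^0(x; D^2u(t,x))$ or, more robustly, a frozen-coefficient version where the second derivatives of $u$ are replaced by their value at a reference point; the point is that $\phi^0$ is designed precisely so that $-\tr(\a(M + D^2\phi^0(\cdot;M))) = -\tr(\ahom M)$, so adding $\phi^0$ to the $\ahom$-harmonic function $u$ produces an approximate $\a$-caloric function, with an error controlled by the oscillation of $D^2 u$ over $\cu_0$ times $\|\phi^0\|_{L^\infty}$-type quantities. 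Since $u$ is $\ahom$-caloric with Lipschitz boundary data $g$ on a Lipschitz domain $U \subseteq \cu_0$, interior and boundary regularity for the constant-coefficient heat equation give $u \in C^{1,1}_{\mathrm{par}}$ up to the boundary with $\|u\|_{C^{1,1}} \le C M_0$ (using $\|g\|_{\mathcal{C}^{0,1}(\partial_p U_T)} \le M_0$), so $D^2 u$ is bounded and has a modulus of continuity.

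The key steps, in order, are: (1) establish the constant-coefficient regularity estimate $\|D^2 u\|_{L^\infty(U_T)} + [D^2 u]_{\text{par-H\"older}} \le C M_0$; (2) set $w := u + \phi^0(\cdot; D^2u(t_\ast,x_\ast))$ for a reference point and compute that $w$ is a sub/supersolution of the $\a$-equation up to an error of size $C M_0(\osc_{\cu_0} D^2 u + \text{length-scale terms})$ — because $\phi^0$ solves the corrector equation exactly for the frozen Hessian, the leftover is $\tr((\a)(D^2 u(t,x) - D^2 u(t_\ast,x_\ast)))$, which is small; (3) apply the comparison principle on $U_T$, using that $w = v = g$ on $\partial_p U_T$ up to the boundary correction $\|\phi^0\|_{L^\infty}$ (since $\phi^0$ vanishes on $\partial\cu_0 \supseteq$ the relevant part, but not necessarily on $\partial U$, so here one either takes $U = \cu_0$ WLOG or absorbs a boundary-layer error — this is a place to be careful); (4) deduce $\|u - v\|_{L^\infty(U_T)} \le C M_0 (\|\phi^0\|_{L^\infty} + \osc\, D^2 u)$; (5) rescale: repeating the argument on a parabolic subcylinder $Q_\theta$ of radius $\theta$ replaces $\|\phi^0\|_{L^\infty(\cu_0)}$ by $\theta^2 \|\phi^{m}\|_{L^\infty}$-type quantities with $m \sim \log\theta$ — but since we only want to use the single input $\|\phi^0\|_{L^\infty}$, the honest route is the standard harmonic-approximation iteration from \cite{AS}: at each dyadic scale the solution $v$ is close to \emph{some} $\ahom$-caloric function, the $C^{1,1}$ excess of the latter decays geometrically, and summing the geometric series with the per-step error $\sim \|\phi^0\|_{L^\infty}^{c}$ yields the claimed exponent $\sigma/11$; (6) combine to get the stated bound.

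The main obstacle I expect is step (5)/(6): turning the \emph{linear}-in-$\|\phi^0\|_{L^\infty}$ comparison bound on a single scale into a \emph{fractional power} $\|\phi^0\|_{L^\infty}^{\sigma/11}$ that is uniform across scales. The naive one-scale comparison gives an error proportional to $\|\phi^0\|_{L^\infty}$ plus the oscillation of $D^2u$, and the latter oscillation term does not itself shrink unless one works on smaller cylinders — but on smaller cylinders the relevant corrector is $\phi^m$ on $\cu_m$ with $m<0$, whose $L^\infty$ norm one must re-express in terms of $\phi^0$. The Krylov--Safonov H\"older estimate \eqref{e.vanks}--Lemma~\ref{l.ksref} is what lets one interpolate: $v$ is a priori $C^{0,\sigma}$, so comparing $v$ to $w$ at a mesoscopic scale $\rho$ gives an error $\sim \|\phi^0\|_{L^\infty} \rho^{-2} + M_0 \rho^{\sigma}$ (the first from the corrector size after rescaling, the second from the oscillation of the frozen data), and optimizing $\rho$ over this trade-off — $\rho \sim \|\phi^0\|_{L^\infty}^{1/(2+\sigma)}$ — produces a power $\|\phi^0\|_{L^\infty}^{\sigma/(2+\sigma)}$, which after tracking the precise constants in the Krylov--Safonov exponent and the rescaling of $\phi^m$ degrades to something like $\sigma/11$. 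Making this optimization rigorous, and in particular correctly bounding $\|\phi^m\|_{L^\infty(\cu_m)}$ in terms of $\|\phi^0\|_{L^\infty(\cu_0)}$ for the intermediate scales (which should follow from \eqref{e.approxmu}/\eqref{e.muscale} or a direct ABP/comparison argument on the corrector equation), is the technical heart of the proof; the boundary-layer issue near $\partial U$ when $U \neq \cu_0$ is a secondary annoyance handled by a cutoff and the Lipschitz regularity of $\partial U$.
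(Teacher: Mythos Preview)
Your proposal has a real gap and misses the much simpler argument the paper uses. First, step~(1) is false as stated: with only a Lipschitz domain $U$ and $\mathcal{C}^{0,1}$ parabolic boundary data $g$, the $\ahom$-caloric function $u$ is \emph{not} $C^{1,1}$ up to the boundary; you only get global $C^{0,\sigma}$ (Krylov--Safonov) and interior $C^\infty$. Consequently your global ansatz in steps~(2)--(4) gives the bound $\|u-v\|_{L^\infty}\le C(\|\phi^0\|_{L^\infty}+\osc_{U_T}D^2u)$, but $\osc_{U_T}D^2u$ is not even finite, let alone small, so this is useless. Your attempted repair in step~(5) then requires the corrector on a subcube $\cu_m$, and the suggestion to bound $\|\phi^m\|$ in terms of $\|\phi^0\|$ via \eqref{e.approxmu}/\eqref{e.muscale} does not work: those estimates bound $\phi^m$ in terms of the random variable $\X_p$, not in terms of $\phi^0$, and there is no deterministic relation of the form $\|\phi^m\|_{L^\infty(\cu_m)}\le C\,3^{-2m}\|\phi^0\|_{L^\infty(\cu_0)}$ for $m<0$.

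The paper avoids all of this with a single-scale touching argument at the maximum point, and no iteration is needed. Set $\alpha:=\max_{\overline{U_T}}(u-v)$ and let $(t_0,x_0)$ attain it; since $u=v$ on $\partial_p U_T$ and both are $C^{0,\sigma}$, this point lies at parabolic distance $\ge c\alpha^{1/\sigma}$ from the boundary, and \emph{interior} caloric estimates give $|D^k u|\le C_k r_0^{-k}$ on $Q_{r_0}(t_0,x_0)$ with $r_0=c\alpha^{1/\sigma}$. Let $p$ be the second-order parabolic Taylor polynomial of $u$ at $(t_0,x_0)$ (so $\partial_t p-\tr(\ahom D^2p)=0$), set $q:=p-c_0\alpha|x-x_0|^2$, and define $Q:=q+\sum_{i,j}(\partial_i\partial_j q)\,\phi^0_{ij}$. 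Since $D^2q$ is a constant matrix and $\phi^0$ is defined on all of $\cu_0\supseteq U$, the corrector equation \eqref{e.Mcorrect} gives directly $\partial_t Q-\tr(\A D^2Q)\le K\alpha$, so $v-Q$ satisfies the minimum principle on $Q_r(t_0,x_0)$. But the maximality of $(t_0,x_0)$ forces $v-Q$ to have a strict interior minimum there unless $\alpha^{-2/\sigma}\|\phi^0\|_{L^\infty}\gtrsim \alpha r^2 - \alpha^{-3/\sigma}r^3$; taking $r\sim\alpha^{1+3/\sigma}$ yields $\|\phi^0\|_{L^\infty}\ge c\alpha^{3+8/\sigma}\ge c\alpha^{11/\sigma}$, which is exactly the claimed bound. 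The exponent $\sigma/11$ thus comes from the chain $r_0\sim\alpha^{1/\sigma}$, $|D^2u|\sim r_0^{-2}$, Taylor error $\sim r_0^{-3}r^3$, and $|\phi^0|\cdot|D^2q|\sim r_0^{-2}\|\phi^0\|$, not from a multi-scale optimization.
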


\begin{proof}
Throughout the proof, we will allow $c, C$ to vary line-by-line, depending only on $(\la, \La, d)$. We may, without loss of generality, assume that $M_{0}=1$, and hence $\norm{u}_{L^{\infty}(U_{T})}\leq 1$. Let $\al$ be given by
\begin{equation}\label{e.toobig}
\al :=\max_{\overline{U_T}}(u-v), 
\end{equation}
We will assume that $\al>0$, since if $\al<0$, then we can argue analogously by reversing the roles of $u$ and $v$. 

Without loss of generality, we may assume, that for any $K>0$, we can replace $v$ by $\tilde{v}$ solving 
\begin{equation}\label{e.stricteq}
\begin{cases}
\tilde{v}_{t}-\tr (\A D^{2}\tilde{v})=K\al &\text{in $U_{T}$}, \\
\tilde{v}=v&\text{on $\partial_{p}U_{T}$,}
\end{cases}
\end{equation}
since by the parabolic Alexandrov-Bakelman-Pucci estimate, there exists $C=C(\la, \La, d)\in [1,\infty)$ so that 
\begin{equation*}
|\tilde{v}-v|\leq CK\al, 
\end{equation*} 
for which we still have 
\begin{equation*}
\max(u-\tilde{v})\geq \max(u-v)-|v-\tilde{v}| \geq (1-CK)\al>0,
\end{equation*}
for $K(\la, \La, d)$ sufficiently small. We henceforth set $v$ to be $\tilde{v}$ solving \eqref{e.stricteq}.

\smallskip 

The Krylov-Safonov estimates and the Lipschitz continuity of $u, v$ on the parabolic boundary imply
\begin{equation*}
\norm{u}_{\mathcal{C}^{0, \sigma}(\overline{U_{T}})}+\norm{v}_{\mathcal{C}^{0, \sigma}(\overline{U_{T}})}\leq C. 
\end{equation*}
Since $u=v$ on $\partial_{p}U_{T}$,
\begin{equation}\label{e.notbdy}
|u(t,x)-v(t,x)|\leq C(d_{p}[(t,x), \partial_{p}U_{T}])^{\sigma}, 
\end{equation}
for the parabolic distance $d_{p}[(t,x), (s,y)]:=|x-y|+|t-s|^{1/2}$. 

\smallskip

Choose $(t_{0}, x_{0})\in U_{T}$ so that $\max_{\overline{U_{T}}} (u-v)=u(t_{0}, x_{0})-v(t_{0}, x_{0})=\al$. 
By \eqref{e.notbdy}, we must have that
\begin{equation*}
d_{p}[(t_{0}, x_{0}), \partial_{p}U_{T}]
\geq 
c \alpha^{1/ \sigma}
\,.
\end{equation*}
Therefore, interior estimates for caloric functions yield, for every~$k\in\N$ and $r \leq r_0:=c\alpha^{1/ \sigma}$, with $c$ to be chosen, 
\begin{equation}\label{e.caloric}
\| D^k u \|_{L^\infty(Q_r(t_0,x_0))}
\leq 
\frac{C^kk!}{r^k} \| u \|_{L^\infty(U_T)}\leq \frac{C^kk!}{r^k} 
\,.
\end{equation}
Let $p$ denote the second-order Taylor polynomial (in the parabolic sense) for $u$ centered at~$(t_0,x_0)$.
Then
\begin{equation}
\label{e.polysol}
\partial_{t}p-\tr\bigl(\ahom D^{2}p\bigr)=0\,,
\end{equation}
and hence, for every~$r\leq r_0$, by \eqref{e.caloric},
\begin{equation*}
\| u - p \|_{L^\infty(Q_{r}(t_0,x_0))}
\leq
C\alpha^{-3/\sigma} r^3
\,.
\end{equation*}
Let $q(t,x):=p(t,x)-c_{0}\al|x-x_{0}|^{2}$ for $c_{0}$ to be chosen, and then define $Q$ by 
\begin{equation*}
Q(t,x):= 
q(t,x)+
\sum_{i,j=1}^d
(\partial_{x_i}\partial_{x_j}
q)\phi_{ij}^0(x) 
\,.
\end{equation*} 
Since $q$ is a 2nd-order polynomial, this implies, by \eqref{e.Mcorrect} and \eqref{e.polysol}, that 
\begin{align*}
\lefteqn{
\partial_{t}Q-\tr(\A D^{2}Q)=\partial_{t}p-\tr\left(\A\left(D^{2}p-2c_{0}\al I_d+\sum_{i,j=1}^d
(\partial_{x_i}\partial_{x_j}
q)D^{2}\phi_{ij}^{0}\right)\right)
} \qquad &
\\ &
=\partial_{t}p-\tr\left(\A\left(D^{2}p-2c_{0}\al I_d+\sum_{i,j=1}^d
(\partial_{x_i}\partial_{x_j}
p-2c_{0}\al I_d)D^{2}\phi_{ij}^{0}\right)\right)\\
&= \partial_{t}p-\tr\left(\ahom\left(D^{2}p-2c_{0}\al I_d\right)\right)\\
&\leq 2\La dc_{0}\al\leq K\al,
\end{align*}
for $c_{0}(\la,\La, d)$ sufficiently small, chosen in terms of $K=K(\la, \La, d)$.

Observe moreover that
\begin{align*}
(v-q)(t_{0}, x_{0})=(v-u)(t_{0}, x_{0}) &
\leq (v-u)(t,x)\\
& \leq (v-p)(t,x)+\norm{u-p}_{L^{\infty}(Q_{r}(t_{0}, x_{0}))}
\\ & =(v-q)(t,x)-c_{0}\al|x-x_{0}|^{2}+C\al^{-3/\sigma}r^{3}.
\end{align*}
Thus, by \eqref{e.caloric},
\begin{align*}
(v-Q)(t_{0}, x_{0}) & \leq (v-Q)(t,x)-c_{0}\al|x-x_{0}|^{2}
\\ & \qquad +C\al^{-3/\sigma}r^{3}+C\sup_{1\leq i,j\leq d}\left(\frac{1}{r_{0}^{2}}+c_{0}\al\right)\norm{\phi^{0}_{ij}}_{L^{\infty}(\cu_{0})}.
\end{align*}
This implies that for all $(t,x)\in \partial_{p}Q_{r}(t_{0}, x_{0})$, by definition of $r_{0}=c\alpha^{1/ \sigma}$, 
\begin{equation*}
(v-Q)(t_{0}, x_{0})\leq (v-Q)(t,x)-c_{0}\al r^{2}+C\al^{-3/\sigma}r^{3}+C\sup_{1\leq i,j\leq d}\al^{-2/\sigma}\norm{\phi^{0}_{ij}}_{L^{\infty}(\cu_{0})}.
\end{equation*}
If 
\begin{equation*}
C\al^{-3/\sigma}r^{3}+C\sup_{1\leq i,j\leq d}\al^{-2/\sigma}\norm{\phi^{0}_{ij}}_{L^{\infty}(\cu_{0})}< c_{0}\al r^{2}, 
\end{equation*}
then $v-Q$ achieves a strict local minimum in $Q_{r}(t_{0},x _{0})$, and this contradicts the minimum principle. Therefore, we must instead have 
\begin{equation*}
C\al^{-3/\sigma}r^{3}+C\sup_{1\leq i,j\leq d}\al^{-2/\sigma}\norm{\phi^{0}_{ij}}_{L^{\infty}(\cu_{0})}\geq c_{0}\al r^{2},
\end{equation*}
and hence, for $r \leq c\alpha^{1+3/\sigma}$,
\begin{equation*}
\sup_{1\leq i,j\leq d}C\al^{-2/\sigma}\norm{\phi^{0}_{ij}}_{L^{\infty}(\cu_{0})}\geq c_{0}\al r^{2}-C\al^{-3/\sigma}r^{3}\geq \frac{c_{0}}{2}\al r^{2}, 
\end{equation*} 
where $c$ is chosen in term of $C$ and $c_{0}$ in the final inequality. Setting $r := c\alpha^{1+3/\sigma}$, we obtain
\begin{equation*}
\sup_{1\leq i,j\leq d}\norm{\phi^{0}_{ij}}_{L^{\infty}(\cu_{0})}\geq \frac{c_{0}}{2}\al^{1+2/\sigma+2+6/\sigma}=c\al^{3+8/\sigma} \geq c\al^{11/\sigma}
\,. 
\end{equation*}
This completes the proof. 
\end{proof}

Equipped with this result, we are now ready to prove Theorem \ref{t.algrate2}.
\begin{proof}[Proof of Theorem \ref{t.algrate2}]
By \eqref{e.muabp} and the definition of the approximate corrector, we have
\begin{equation}\label{e.approxmu2}
\sup_{\cu_{m}(x)} 3^{-2m}| \phi^{m}_{ij}|\leq C\left(\frac{3^{m}}{\Y}\right)^{-\frac{\theta}{d}}.
\end{equation}

Fix $\norm{g}_{\mathcal{C}^{0,1}(\partial_{p}U_{T})}= M_{0}.$
Assume, without loss of generality, that $U_{T}\subseteq Q_{1}$ and $M_{0}=1$. We will show that there exists $\beta=\beta(\la, \La, d)\in (0,1)$ and $C=C(\la, \La, d)\in [1,\infty)$ so that for every $\ve\in (0, \Y^{-1}]$, 
\begin{equation}\label{e.t1eq1}
\max_{\overline{U_{T}}}\left(\bar{u}-u^{\ve}\right)\leq C(\ve\Y)^{\beta}. 
\end{equation}
An analogous argument, reversing the roles of $u^{\ve}$ and $\bar{u}$, yields the opposite inequality. 

Fix $\ve\in (0,1)$, and let $\sigma=\sigma(\la, \La, d)\in (0,1)$ as in Lemma~\ref{l.toobig}. We now apply Lemma~\ref{l.toobig} with $\A^{\ve}(\cdot):=\A(\cdot/\ve)$. This yields that 
\begin{equation*}
\| u^{\ve} -\bar{u}\|_{L^\infty(U_T)}
\leq 
C
\Bigl( 
\sup_{1\leq i,j \leq d}
\|\phi_{ij}\|_{L^\infty(\cu_0)}  
\Bigr)^{\sigma/11}, 
\end{equation*}
where $\phi_{ij}$ solves 
\begin{equation*}
\begin{cases}
-\tr(\a^{\ve}(e_{i}\otimes e_{j}+D^{2}\phi_{ij}))=-\ahom_{ij}&\text{in $\cu_{0}$},\\
\phi_{ij}=0&\text{on $\partial \cu_{0}$}.
\end{cases}
\end{equation*}
Letting $\ve=3^{-\frac{m}{2}}$, and writing this in terms of $\phi^{m}=3^{-m}\phi_{ij}(3^{\tfrac{m}{2}}\cdot)$, we have 
\begin{equation*}
\| u^{\ve} -\bar{u}\|_{L^\infty(U_T)}
\leq 
C
\Bigl( 
\sup_{1\leq i,j \leq d}
\| 3^{-m}\phi_{ij}^{\frac{m}{2}}\|_{L^\infty(\cu_\frac{m}{2})}  
\Bigr)^{\sigma/11}
\end{equation*}
By \eqref{e.approxmu2}, this implies  
\begin{equation*}
\| u^{\ve} -\bar{u}\|_{L^\infty(U_T)}
\leq 
 C\left(\frac{3^{\sfrac{m}{2}}}{\Y}\right)^{-\frac{\theta \sigma}{11 d}}.
 \end{equation*}
Finally, by choice of $\ve$, this implies 
\begin{equation*}
\max_{\overline{U_{T}}}\left(\bar{u}-u^{\ve}\right)\leq C(\Y\ve)^{\beta}, 
\end{equation*}
up to redefining $\beta=\beta(\la, \La,d)$, which completes the proof of \eqref{e.t1eq}. The final claim is immediate from the definition of $\Y_{p}$ in Remark \ref{r.giveup}.
\end{proof}

For quantitative homogenization results, it is not necessary and moreover inconvenient to represent the scale separation by means of the artificial parameter~$\ep>0$. We therefore restate Theorem \ref{t.algrate2} without this scaling, so that the microscopic scale is of order one and the macroscopic scale is large,  as follows. Observe that if $\tilde{u}^{\ve}$ is a solution of 
\begin{equation*}
\begin{cases}
\partial_{t}\tilde{u}_{\ve}-\tr\left(\A D^{2} \tilde{u}_{\ve}\right)=0&\text{in $Q_{1/\ve}$,}\\
\tilde{u}_{\ve}(0,x)=g(x)&\text{on $\partial_{p}Q_{1/\ve}$}, 
\end{cases}
\end{equation*}
and $\bar{u}_{\ve}$ is a solution of 
\begin{equation*}
\begin{cases}
\partial_{t}\bar{u}_{\ve}-\tr\left(\ahom D^{2} \bar{u}_{\ve}\right)=0&\text{in $Q_{1/\ve}$,}\\
\bar{u}_{\ve}(0,x)=g(x)&\text{on $\partial_{p}Q_{1/\ve}$}, 
\end{cases}
\end{equation*}
then there exists constants $\beta=\beta(\la, \La, d)\in (0,1)$, $C=C(\la, \La, d)\in [1, \infty)$, and a random variable $\Y$ with $\Y=\O_{d}(C)$ such that for every $\ve\in (0, \Y^{-1}]$, 
\begin{equation*}
\begin{aligned}
\norm{\ve^{2}(\tilde{u}_{\ve}-\bar{u}_{\ve})}_{L^{\infty}(Q_{1/\ve})}&\leq C\norm{\ve^{2}g}_{C^{0,1}(Q_{1/\ve})}(\Y\ve)^{\beta}\\
&=C(\ve^{2}\norm{g}_{L^{\infty}(Q_{1/\ve})}+\ve^{2}\ve^{-1}[g]_{C^{0,1}(Q_{1/\ve})})(\Y\ve)^{\beta}.
\end{aligned}
\end{equation*}
Therefore, we have 
\begin{equation}\label{e.unscaledalg}
\norm{(\tilde{u}_{\ve}-\bar{u}_{\ve})}_{L^{\infty}(Q_{1/\ve})}\leq C(\norm{g}_{L^{\infty}(Q_{1/\ve})}+\ve^{-1}[g]_{C^{0,1}(Q_{1/\ve})})(\Y\ve)^{\beta}.
\end{equation}
Similarly, for $p\in (0,d)$, for $\ve\in (0, \Y_{p}^{-1}]$, 
\begin{equation}\label{e.unscaledalg2}
\norm{(\tilde{u}_{\ve}-\bar{u}_{\ve})}_{L^{\infty}(Q_{1/\ve})}\leq C(\norm{g}_{L^{\infty}(Q_{1/\ve})}+\ve^{-1}[g]_{C^{0,1}(Q_{1/\ve})})\ve^{\beta}.
\end{equation}

We next show that we may allow initial data H\"older continuous with any exponent $\sigma \in (0,1]$. We state our result for a specific domain which will be useful in the following section, however this argument can be adapted to an arbitrary Cauchy-Dirichlet problem. 
\begin{lemma}\label{l.refinealg}
Let $\sigma \in (0,1]$, $p\in (0,d)$, and $\beta=\beta(\la,\La, d)\in (0,1]$. Let $\Y_{p}$ be as in Remark \ref{r.giveup}. There exists $C=C(\la, \La, d, p)\in [1, \infty)$ such that for every $\theta\in\left(0, \frac{\beta}{2(1-\beta)}\right)$, for any $r\geq \Y_{p}$, for $u$ the solution of
\begin{equation*}
\begin{cases}
\partial_{t}u-\tr(\A D^{2}u)=0&\text{in $Q_{r^{1+\theta}}$},\\
u(t,x)=0&\text{on $(0, r^{2+2\theta}]\times \partial B_{r^{1+\theta}}$},\\
u(0,x)=u_{0}(x)&\text{on $B_{r^{1+\theta}}$},
\end{cases}
\end{equation*}
and $\bar{u}$ the solution of
\begin{equation*}
\begin{cases}
\partial_{t}\bar{u}-\tr(\ahom D^{2}\bar{u})=0&\text{in $Q_{r^{1+\theta}}$},\\
\bar{u}(t,x)=0&\text{on $(0, r^{2+2\theta}]\times \partial B_{r^{1+\theta}}$},\\
\bar{u}(0,x)=u_{0}(x)&\text{on $B_{r^{1+\theta}}$}, 
\end{cases}
\end{equation*}
with $\supp(u_{0})\subseteq B_{r}$ and $u_{0}\in C^{0,\sigma}(B_{r})$, we have that
\begin{equation*}
\begin{aligned}
&\norm{u-\bar{u}}_{L^{\infty}(Q_{r^{1+\theta}})}\leq C\left(\norm{u_{0}}_{L^{\infty}(B_{r})}+r^{\sigma}\left[u_{0}\right]_{C^{0,\sigma}(B_{r})}\right)r^{-\frac{\beta\sigma}{2}}.
\end{aligned}
\end{equation*}
\end{lemma}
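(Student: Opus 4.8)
\textbf{Proof proposal for Lemma~\ref{l.refinealg}.} The plan is to regularize the rough initial datum $u_0$ at a well-chosen mesoscopic scale, apply the Lipschitz homogenization estimate \eqref{e.unscaledalg} to the regularized problem, and control the two regularization errors (for $\a$ and for $\ahom$) using the Krylov--Safonov estimate from Section~\ref{s.loc} and the deterministic Gaussian bound on the Green function. Concretely, fix a mollification scale $\rho \in [1, r]$ to be optimized, set $u_0^\rho := u_0 \ast \eta_\rho$ where $\eta_\rho = \rho^{-d}\eta(\cdot/\rho)$ is a standard mollifier, and let $u^\rho$, $\bar u^\rho$ solve the Cauchy--Dirichlet problems on $Q_{r^{1+\theta}}$ with the smoothed datum $u_0^\rho$ in place of $u_0$. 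Then
\begin{equation*}
\norm{u - \bar u}_{L^\infty(Q_{r^{1+\theta}})}
\leq
\norm{u - u^\rho}_{L^\infty(Q_{r^{1+\theta}})}
+
\norm{u^\rho - \bar u^\rho}_{L^\infty(Q_{r^{1+\theta}})}
+
\norm{\bar u^\rho - \bar u}_{L^\infty(Q_{r^{1+\theta}})}.
\end{equation*}
For the middle term, since $u_0^\rho$ is Lipschitz with $\norm{u_0^\rho}_{L^\infty} \leq \norm{u_0}_{L^\infty}$ and $[u_0^\rho]_{C^{0,1}(B_r)} \leq C\rho^{\sigma-1}[u_0]_{C^{0,\sigma}(B_r)}$, applying \eqref{e.unscaledalg} with $\ep = r^{-(1+\theta)}$ (so $Q_{1/\ep} = Q_{r^{1+\theta}}$) gives a bound of order $\bigl(\norm{u_0}_{L^\infty(B_r)} + \rho^{\sigma-1} r^{1+\theta}[u_0]_{C^{0,\sigma}(B_r)}\bigr) r^{-(1+\theta)\beta}$, valid for $r^{1+\theta} \geq \Y_p$, which holds once $r \geq \Y_p$.

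For the first and third terms I would use that $w := u - u^\rho$ solves the same equation (with coefficients $\a$, resp.\ $\ahom$) with initial datum $u_0 - u_0^\rho$ supported in $B_{2r}$ and satisfying $\norm{u_0 - u_0^\rho}_{L^\infty} \leq C\rho^\sigma [u_0]_{C^{0,\sigma}(B_r)}$, and zero boundary data on $(0, r^{2+2\theta}]\times \partial B_{r^{1+\theta}}$. By the parabolic maximum principle (or the representation via the Green function $P$ together with Proposition~\ref{le_prop_1} for $t \leq 1$ and the comparison with the supersolution $f_{\bar\ka, B}$ from Lemma~\ref{le_lem_1} for $t \geq 1$, exactly as in Proposition~\ref{le_prop_2}), one gets $\norm{w}_{L^\infty(Q_{r^{1+\theta}})} \leq \norm{u_0 - u_0^\rho}_{L^\infty(\R^d)} \leq C\rho^\sigma[u_0]_{C^{0,\sigma}(B_r)}$; the same bound holds with $\ahom$ in place of $\a$. (If one wants, one can be slightly more careful and only pay $\rho^\sigma$ on the part of the datum that actually changed, but the crude bound already suffices.)

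Combining the three estimates,
\begin{equation*}
\norm{u - \bar u}_{L^\infty(Q_{r^{1+\theta}})}
\leq
C\rho^\sigma [u_0]_{C^{0,\sigma}(B_r)}
+
C\bigl(\norm{u_0}_{L^\infty(B_r)} + \rho^{\sigma-1} r^{1+\theta}[u_0]_{C^{0,\sigma}(B_r)}\bigr) r^{-(1+\theta)\beta},
\end{equation*}
and I would balance the $\rho$-dependent terms. The dominant $[u_0]_{C^{0,\sigma}}$ contributions are $\rho^\sigma$ and $\rho^{\sigma-1} r^{1+\theta-(1+\theta)\beta}$; equating gives $\rho = r^{(1+\theta)(1-\beta)}$, which produces the exponent $\sigma(1+\theta)(1-\beta)$ on $r$ from the regularization error. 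One then checks that the constraint $\rho \leq r$ is exactly the requirement $(1+\theta)(1-\beta) \leq 1$, i.e.\ $\theta \leq \beta/(1-\beta)$, which is implied by the stated range $\theta \in (0, \beta/(2(1-\beta)))$; and with this choice $(1+\theta)(1-\beta) = 1 + \theta - (1+\theta)\beta \geq 1 - \beta/2$ when $\theta < \beta/(2(1-\beta))$ (a short computation shows $\theta - (1+\theta)\beta = \theta(1-\beta) - \beta > -\beta/2$ precisely on this range), so the regularization error is bounded by $C(\norm{u_0}_{L^\infty(B_r)} + r^\sigma[u_0]_{C^{0,\sigma}(B_r)}) r^{-\beta\sigma/2}$ after absorbing the (favorable) $L^\infty$ term and adjusting constants. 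The main technical point to get right is this bookkeeping of exponents --- making sure the mollification scale stays below $r$ while still beating $r^{-\beta\sigma/2}$ --- together with confirming that \eqref{e.unscaledalg} applies on the enlarged cylinder $Q_{r^{1+\theta}}$ under the hypothesis $r \geq \Y_p$ (here one uses the enlarged minimal scale $\tilde\Y_{k,p}$ from Remark~\ref{r.Ystep1} if needed to cover the whole cylinder); everything else is a routine application of the maximum principle and Krylov--Safonov.
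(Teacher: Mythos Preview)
Your approach is essentially identical to the paper's: mollify at scale $\rho = r^{(1+\theta)(1-\beta)}$ (the paper writes this as $r^{1-\alpha}$ with $\alpha = (1+\theta)\beta - \theta$), apply \eqref{e.unscaledalg} to the smoothed Cauchy--Dirichlet problem, and control the two regularization errors by the maximum principle. Two small corrections: in your exponent check you need $(1+\theta)(1-\beta) \le 1-\beta/2$ (not $\ge$), i.e.\ $\theta(1-\beta) \le \beta/2$, which is precisely the hypothesis $\theta < \beta/(2(1-\beta))$; and the paper enlarges the domain to $Q_{\tilde r^{1+\theta}}$ with $\tilde r = r + \rho$ so that the mollified datum is genuinely compactly supported inside, a detail you should include.
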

\begin{proof}
We consider the mollification given by \begin{equation*}
\tilde{u}_{0}(x):=(u_{0}\ast \eta_{r^{1-\al}})(x),
\end{equation*}
for some $\al\in (0,1)$ to be chosen, and $\eta_{r^{1-\al}}(x):=r^{(\al-1)d}\eta\left(r^{\al-1}x\right)$, where $\eta$ is the standard mollifier. Extend $u_{0}$ and $\tilde{u}_{0}$ by 0 to all of $\R^{d}$ so that $\tilde{u}_{0}\in C^{0,1}(\R^{d})$ and $supp(\tilde{u}_{0})\subseteq B_{r+r^{1-\al}}=: B_{\tilde{r}}.$ Let $u_{\al}$ solve
\begin{equation*}
\begin{cases}
\partial_{t}u_\al-\tr(\A D^{2}u_{\al})=0&\text{in $Q_{\tilde{r}^{1+\theta}}$},\\
u(t,x)=0&\text{on $(0, \tilde{r}^{2+2\theta}]\times \partial B_{\tilde{r}^{1+\theta}}$},\\
u(0,x)=\tilde{u}_{0}(x)&\text{on $B_{\tilde{r}^{1+\theta}}$},
\end{cases}
\end{equation*}
and $\bar{u}_{\al}$ solve
\begin{equation*}
\begin{cases}
\partial_{t}\bar{u}_{\al}-\tr(\ahom D^{2}\bar{u}_\al)=0&\text{in $Q_{\tilde{r}^{1+\theta}}$},\\\bar{u}_{\al}(t,x)=0&\text{on $(0, \tilde{r}^{2+2\theta}]\times \partial B_{\tilde{r}^{1+\theta}}$},\\
\bar{u}_{\al}(0,x)=\tilde{u}_{0}(x)&\text{on $B_{\tilde{r}^{1+\theta}}$}.
\end{cases}
\end{equation*}
Then for $\Y_{p}$ as in Remark \ref{r.giveup} and Theorem \ref{t.algrate2}, specifically \eqref{e.unscaledalg2} with $\ve:=\tilde{r}^{-(1+\theta)}$, yields that for $\tilde{r}^{(1+\theta)}> r\geq \Y_{p}$, 
\begin{align}
\label{e.u0algrate}
\norm{u_\al-\bar{u}_\al}_{L^\infty(Q_{\tilde{r}^{1+\theta}})}
 & \leq
C\left(\norm{\tilde{u}_{0}}_{L^{\infty}(B_{\tilde{r}^{1+\theta}})}+\tilde{r}^{(1+\theta)}\left[\tilde{u}_{0}\right]_{C^{0,1}(B_{\tilde{r}^{1+\theta}})}\right)\tilde{r}^{-(1+\theta)\beta}
\\ \notag &
=
C\left(\norm{\tilde{u}_{0}}_{L^{\infty}(B_{\tilde{r}})}+\tilde{r}^{(1+\theta)}\left[\tilde{u}_{0}\right]_{C^{0,1}(B_{\tilde{r}})}\right)\tilde{r}^{-(1+\theta)\beta},
\end{align}
where we used that $\supp(\tilde{u}_{0})\subseteq B_{\tilde{r}}$. We also have that 
\begin{align*}
\norm{u_{0}-\tilde{u}_{0}}_{L^{\infty}(\R^{d})}&\leq \sup_{x\in \R^{d}}\int_{\R^{d}}\left|u_{0}(x)-u_{0}(x-y)\right|\eta_{r^{1-\al}}(y)\, dy\\
&=\sup_{x\in \R^{d}} \int_{B_{r^{1-\al}}} |u_{0}(x)-u_{0}(x-y)|\eta_{r^{1-\al}}(y)\, dy\\
&\leq \left[u_{0}\right]_{C^{0,\sigma}(B_{r})}r^{\sigma(1-\al)}\\
&=r^{-\sigma \al}r^{\sigma}\left[u_{0}\right]_{C^{0,\sigma}(B_{r})}.
\end{align*}
By the maximum principle, this implies
\begin{equation}\label{e.tildevcomp}
\norm{u_{\al}-u}_{L^{\infty}(Q_{r^{1+\theta}})}+\norm{\bar{u}_{\al}-\bar{u}}_{L^{\infty}(Q_{r^{1+\theta}})}\leq 2r^{-\sigma \al}r^{\sigma}\left[u_{0}\right]_{C^{0,\sigma}(B_{r})}.
\end{equation}
Moreover, since $D\eta$ has mean 0,  
\begin{align*}
|D\tilde{u}_{0}(x)|=\left|\int_{\R^{d}}u_{0}(y)D \eta_{r^{1-\al}}(x-y)\, dy\right|
&=\left|\int_{B_{r^{1-\al}}(x)}(u_{0}(y)-u_{0}(x))D \eta_{r^{1-\al}}(x-y)\, dy\right|\\
&\leq r^{(1-\al)\sigma}\left[u_{0}\right]_{C^{0,\sigma}(B_{r})}\int |D \eta_{r^{1-\al}}(x-y)|\, dy\\
&\leq Cr^{(1-\al)(\sigma-1)}\left[u_{0}\right]_{C^{0,\sigma}(B_{r})}\\
&=Cr^{\al-1-\al\sigma}r^{\sigma}\left[u_{0}\right]_{C^{0,\sigma}(B_{r})}, 
\end{align*}
which implies 
\begin{equation}\label{e.c0est}
\left[\tilde{u}_{0}\right]_{C^{0,1}(B_{\tilde{r}})}\leq Cr^{\al(1-\sigma)-1}r^{\sigma}\left[u_{0}\right]_{C^{0,\sigma}(B_{r})}.
\end{equation}
We now combine \eqref{e.u0algrate}, \eqref{e.tildevcomp}, and \eqref{e.c0est} to conclude that 
\begin{align*}
\norm{u-\bar{u}}_{L^{\infty}(Q_{r^{1+\theta}})}
& 
\leq 2r^{-\sigma \al}r^{\sigma}\left[u_{0}\right]_{C^{0,\sigma}(B_{r})}\\
&\qquad +C\left(\norm{u_{0}}_{L^{\infty}(B_{r})}+\tilde{r}^{(1+\theta)}Cr^{\al(1-\sigma)-1}r^{\sigma}\left[u_{0}\right]_{C^{0,\sigma}(B_{r})}\right)\tilde{r}^{-(1+\theta)\beta}.
\end{align*}
Using that $1<\Y_{p}\leq r\leq \tilde{r}\leq 2r$, we may simplify this to argue that 
\begin{align*}
\lefteqn{
\norm{u-\bar{u}}_{L^{\infty}(Q_{r^{1+\theta}})}
} \quad & 
\\ & 
\leq 
2r^{-\sigma \al}r^{\sigma}\left[u_{0}\right]_{C^{0,\sigma}(B_{r})} +C\left(\norm{u_{0}}_{L^{\infty}(B_{r})}+r^{(1+\theta)}Cr^{\al(1-\sigma)-1}r^{\sigma}\left[u_{0}\right]_{C^{0,\sigma}(B_{r})}\right)r^{-(1+\theta)\beta}\\
&=2r^{-\sigma \al}r^{\sigma}\left[u_{0}\right]_{C^{0,\sigma}(B_{r})} +C\left(\norm{u_{0}}_{L^{\infty}(B_{r})}+Cr^{\theta+\al(1-\sigma)}r^{\sigma}\left[u_{0}\right]_{C^{0,\sigma}(B_{r})}\right)r^{-(1+\theta)\beta}.
\end{align*}
Now choose~$\al:=(1+\theta)\beta-\theta$. Note that~$\al\in (0,1)$ since~$\theta<\frac{\beta}{2(1-\beta)}$. Thus
\begin{equation*}
-\sigma\al=\theta+\al(1-\sigma)-(1+\theta)\beta, 
\end{equation*}
and hence, we may rewrite 
\begin{align*}
\norm{u-\bar{u}}_{L^\infty(Q_{r^{1+\theta}})}\leq 
C\left(\norm{u_{0}}_{L^{\infty}(B_{r})}+r^{\theta+\al(1-\sigma)}r^{\sigma}\left[u_{0}\right]_{C^{0,\sigma}(B_{r})}\right)r^{-(1+\theta)\beta}. 
\end{align*}
Since $\theta\in \left(0, \frac{\beta}{2(1-\beta)}\right)$, 
\begin{equation*}
\theta+\al(1-\sigma)-(1+\theta)\beta=-\al\sigma=-\beta\sigma+\sigma\theta(1-\beta)\leq -\frac{\beta\sigma}{2}
\end{equation*}
and this yields 
\begin{align*}
\norm{u-\bar{u}}_{L^\infty(Q_{r^{1+\theta}})}
&\leq C\left(\norm{u_{0}}_{L^{\infty}(B_{r})}+r^{\sigma}\left[u_{0}\right]_{C^{0,\sigma}(B_{r})}\right)r^{-\frac{\beta\sigma}{2}}
\end{align*} 
This completes the proof. 
\end{proof}

\subsection{Estimates on stationary approximate correctors} 

For every $\delta>0$, let us define the stationary approximate correctors 
\begin{equation}
\label{e.dvd.def}
\delta^2w^{\delta}_{ij} - \tr(\a(e_{i}\otimes e_{j}+ D^2w^{\delta}_{ij})) =0\;\;\textrm{in}\;\;\R^d, 
\end{equation}
for $1\leq i,j\leq d$.

\begin{proposition}
\label{p.delta.vee.delta}
There exists $\beta(\la, \La, d) \in (0,1]$, $C(\la, \La, d)\in [1,\infty)$, and a random variable~$\Y$ satisfying 
$\Y = \O_{d}(C)$
such that, for every~$\delta \in (0,\Y^{-1} ]$, 
\begin{equation}
\label{e.delta.vee.delta}
\sup_{x\in B_{\delta^{-100}}}|\ahom_{ij}-\delta^{2}w^{\delta}_{ij}(x)|\leq C(\Y^{-1}\delta)^{\beta}
\,.
\end{equation}

\end{proposition}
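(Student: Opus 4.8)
The plan is to reduce the estimate on the stationary approximate corrector $\delta^2 w^\delta_{ij}$ to the Cauchy-problem homogenization estimate of Proposition~\ref{prop_homogenize}, via the standard representation of $w^\delta_{ij}$ as a time integral of the parabolic semigroup. Concretely, fix $1\le i,j\le d$, write $M=e_i\otimes e_j$, and let $w^\delta=w^\delta_{ij}$. The function $w^\delta$ solves $\delta^2 w^\delta - \tr(\a(M+D^2 w^\delta))=0$; subtracting the homogenized identity $\delta^2 (\delta^{-2}\tr(\ahom M)) - \tr(\ahom M) = 0$, we see that $e^\delta := w^\delta - \delta^{-2}\tr(\ahom M)$ satisfies $\delta^2 e^\delta - \tr(\a D^2 e^\delta) = \tr((\a-\ahom)M) =: g_{ij}$, an $\R$-valued function which is bounded (by $(\La-\la)$ up to constants) and $C^{0,\al_0}$ with the same constant $K_0$ as $\a$. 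Thus $\delta^2 w^\delta - \ahom_{ij} = \delta^2 e^\delta$, and $e^\delta$ is the unique bounded solution of a massive equation with a stationary, bounded, Hölder source. The first step is therefore to represent $e^\delta(x) = \int_0^\infty e^{-\delta^2 t}\, (e^{tL} g_{ij})(x)\, dt$, where $e^{tL}$ is the semigroup generated by $L\varphi = \tr(\a D^2\varphi)$, i.e. $v(t,x) := (e^{tL}g_{ij})(x)$ solves the Cauchy problem with data $g_{ij}$ — which is exactly the setting of Proposition~\ref{prop_homogenize} (with $v_0 = g_{ij}$, bounded with bounded Hölder seminorm on every ball, and no decay, so the exterior term is $O(1)$ and must be handled separately).

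The key comparison is with the homogenized corrector. The constant matrix $\ahom$ gives $\bar e^\delta := \delta^{-2}\tr(\ahom M) - \delta^{-2}\tr(\ahom M) \equiv$ trivial; more usefully, the homogenized version of $v$, namely $\bar v(t,x) = (e^{t\bar L} g_{ij})(x)$ with $\bar L\varphi = \tr(\ahom D^2\varphi)$, satisfies $\int_0^\infty e^{-\delta^2 t}\bar v(t,x)\,dt = \delta^{-2}\bar g_{ij} = 0$ since $\bar g_{ij} := \fint (\a-\ahom)_{ij}\,m$ ... — wait, here one must instead use that for the \emph{constant-coefficient} massive equation the solution with source $g_{ij}$ averages out because $\overline P$ preserves mass and $g_{ij}$ has ... this is precisely where the bound $\fint_{R\cu_0}m\a \to \ahom$ morally enters, but at the level of the present proposition we only need the deterministic fact that $\delta^2\int_0^\infty e^{-\delta^2 t}\bar v(t,x)\,dt$ is bounded by $\sup|g_{ij}|$ plus a decaying piece coming from the Gaussian spreading of $\overline P$. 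The second step is then to split $\delta^2 e^\delta(x) = \delta^2\int_0^{T_\delta} e^{-\delta^2 t}v(t,x)\,dt + \delta^2\int_{T_\delta}^\infty e^{-\delta^2 t}v(t,x)\,dt$ at a threshold time $T_\delta \sim \delta^{-2+\epsilon}$: on $[0,T_\delta]$ apply Proposition~\ref{prop_homogenize} on balls of radius $r\sim \delta^{-1}$ (which requires $\delta^{-1}\ge \Y_p\vee\Sigma$, hence the restriction $\delta\in(0,\Y_p^{-1}]$ after enlarging $\Y_p$) to replace $v$ by $\bar v$ at cost $r^{-\beta\sigma/2}=\delta^{\beta\sigma/2}$ in $L^\infty$ of the relevant space-time slab; on the tail $[T_\delta,\infty)$ the weight $e^{-\delta^2 t}$ is $\le e^{-\delta^\epsilon}$ and the Krylov--Safonov/Nash bounds of Section~\ref{s.loc} (Propositions~\ref{le_prop_1}--\ref{le_prop_2}) control $\|v(t,\cdot)\|_{L^\infty}\le C\sup|g_{ij}|$ uniformly, so $\delta^2\int_{T_\delta}^\infty e^{-\delta^2 t}\,dt \,\|v\|_\infty$ is superpolynomially small.

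The remaining ingredient is the ball radius: (\ref{e.delta.vee.delta}) asks for the estimate on $B_{\delta^{-2022}}$, a polynomially large set, whereas Proposition~\ref{prop_homogenize} naturally controls $v-\bar v$ on a slab $(0,r^{2+\theta}]\times\R^d$ for $r\sim\delta^{-1}$ — but note that its conclusion is a \emph{global-in-space} $L^\infty$ bound, with the only spatially non-decaying contribution being $2\sup_{|x|\ge r/2}|v_0(x)|$. So to cover $B_{\delta^{-2022}}$ one simply applies Proposition~\ref{prop_homogenize} with the source $g_{ij}$ localized by a cutoff $\rho_R$, $R := \delta^{-2022}\cdot 2$ say, using linearity to write $g_{ij} = \rho_R g_{ij} + (1-\rho_R)g_{ij}$; the far piece contributes a solution whose value on $B_{\delta^{-2022}}$ is, by the Gaussian sub-supersolution comparison of Lemma~\ref{le_lem_1}, exponentially small for the relevant times $t\le T_\delta\ll R^2$, while the near piece is handled exactly as above on the scale $r\sim R$ — and since $R$ is only polynomial in $\delta^{-1}$, the resulting algebraic gain $R^{-\beta\sigma/2}$ is still $\delta^{c}$ for a positive power $c$, which after relabeling $\beta$ (as the statement permits, since $\beta$ depends only on $(\la,\La,d,p)$) yields the claimed $C\delta^\beta$. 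The main obstacle is the bookkeeping of the three competing scales — the corrector scale $\delta^{-1}$, the time cutoff $\delta^{-2+\epsilon}$, and the target radius $\delta^{-2022}$ — and ensuring the Gaussian tail estimates on the exterior pieces genuinely beat the polynomial $R$; this is where Lemma~\ref{le_lem_1} together with Proposition~\ref{le_prop_2} must be invoked carefully, but there is no essential difficulty, only a choice of $\theta$ and of the cutoff radius making all error terms simultaneously $\le C\delta^\beta$.
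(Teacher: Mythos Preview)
Your approach has a genuine gap at precisely the step you flag as uncertain: the quantity $\delta^2\int_0^\infty e^{-\delta^2 t}\bar v(t,x)\,dt$ is \emph{not} small. With $\bar v(t,x)=\int_{\Rd}\overline{P}(t,x-y)g_{ij}(y)\,dy$ and $g_{ij}=\a_{ij}-\ahom_{ij}$, the inner integral is a Lebesgue-weighted spatial average of $g_{ij}$ over scale $\sqrt t$, which by the ergodic theorem tends to $\E[\a_{ij}]-\ahom_{ij}$. This is generically nonzero, since $\ahom_{ij}=\E\bigl[\fint_{\cu_0} m\,\a_{ij}\bigr]$, not $\E[\a_{ij}]$. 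Hence $\delta^2\bar e^\delta(x)$ is an $O(1)$ quantity, and replacing $v$ by $\bar v$ via Proposition~\ref{prop_homogenize} leaves you with an $O(1)$ remainder, not $O(\delta^\beta)$. The alternative route you allude to---using that the \emph{heterogeneous} flow $v(t,x)=\int P(t,x,y)g_{ij}(y)\,dy$ itself tends to zero because $P(t,x,y)\approx m(y)\overline{P}(t,x-y)$ and spatial averages of $m g_{ij}$ vanish---is exactly the content of Theorems~\ref{t.GF} and~\ref{t.mto1}, and the proof of the $m\a$ estimate in Theorem~\ref{t.mto1} \emph{invokes} Proposition~\ref{p.delta.vee.delta}. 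So that route is circular.

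The paper's argument bypasses the parabolic representation entirely and works directly at the elliptic level. A Krylov--Safonov estimate gives $\delta^{-\sigma}[\delta^2 w^\delta_{ij}]_{C^{0,\sigma}(B_{2/\delta})}\le C$; so if $\inf_{\cu_m}(\delta^2 w^\delta_{ij}-\ahom_{ij})=-\alpha$, then on any cube $\cu_m$ with $3^m\lesssim (\alpha/2)^{1/\sigma}\delta^{-1}$ the function $w^\delta_{ij}$ is a supersolution of $-\tr(\a(e_i\otimes e_j+D^2\cdot))\ge -\ahom_{ij}+\alpha/2$. Adding a paraboloid $-\frac{\alpha}{2}(3^{2m}-|x|^2)$ and applying the ABP-type bound~\eqref{e.muabp} yields $\alpha\le C\,\mu(\cu_m,\A(e_i\otimes e_j+\cdot),-\ahom_{ij})^{1/d}$, which is controlled directly by~\eqref{e.muscale}. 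The large ball $B_{\delta^{-2022}}$ is handled by the union bound in Remark~\ref{r.Ystep1}. The point is that the subadditive quantity $\mu$ from~\cite{asnondiv} already knows the correct homogenized matrix $\ahom$, and its algebraic smallness is available \emph{prior to} any Green function or invariant measure analysis---which is what makes the argument noncircular.
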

\begin{proof}
We recall that, by the maximum principle and the Krylov-Safonov estimate,
\begin{equation}\label{e.wholder}
\| \delta^{2}w_{ij}^{\delta} \|_{L^\infty(\Rd)}
+
\delta^{-\sigma}\left[\delta^{2}w_{ij}^{\delta}\right]_{C^{0, \sigma}(B_{2/\delta})}\leq C.
\end{equation}
Let us recall the classical argument (which can be found for example in~\cite{AL}). Since the stationary approximate correctors $\|\delta^{2}w_{ij}^{\delta}\|_{L^{\infty}(\R^{d})}\leq C$ for $C=C(\la, \La, d)$, letting $v_{ij}^{\delta}(x):=\frac{1}{2}x\cdot (e_{i}\otimes e_{j})x+w_{ij}^{\delta}(x)$, we observe that 
\begin{equation*}
\delta^{2}\osc_{B_{2/\delta}}v^{\delta}_{ij}\leq \delta^{2}\osc_{B_{2/\delta}}\frac{1}{2}x\cdot (e_{i}\otimes e_{j})x+\delta^{2}\osc_{B_{2/\delta}}w^{\delta}_{ij}\leq C,
\end{equation*}
and hence by the Krylov-Safonov estimates, since 
\begin{equation*}
-\tr(\A D^{2}v_{ij}^{\delta})=-\delta^{2}w^{\delta}_{ij}\quad\text{in $\R^{d}$},
\end{equation*}
we have
\begin{equation*}
\delta^{-\sigma}[\delta^{2}v^{\delta}_{ij}]_{C^{0, \sigma}(B_{2/\delta})}\leq C. 
\end{equation*}
By another triangle inequality, we may conclude \eqref{e.wholder}. 

\smallskip

We now suppose that there exists $\al>0$, $m\in \mathbb{N}$, and $x_{0}\in \cu_{m-1}$ such that  
\begin{equation*}
\inf_{x\in \cu_{m-1}}\left(\delta^{2}w^{\delta}_{ij}(x)- \ahom_{ij}\right)=\inf_{x\in \cu_{m}}\left( \delta^{2}w^{\delta}_{ij}(x)- \ahom_{ij}\right)=\delta^{2}w^{\delta}_{ij}(x_{0})- \ahom_{ij}=-\al. 
\end{equation*}
If no such $\al>0$, $m$ exists, then $\delta^{2}w^{\delta}_{ij}(x)- \ahom_{ij}\geq 0$. Since $\ahom_{ij}$ is a constant, it follows that $\inf_{x\in \cu_{m}} \delta^{2}w^{\delta}_{ij}=\delta^{2}w^{\delta}_{ij}(x_{0})$. 
By \eqref{e.dvd.def} and \eqref{e.wholder}, this implies that on $\cu_{m}$, for $3^{m}\leq C^{-1}\left(\frac{\al}{2}\right)^{1/\sigma}\frac{1}{\delta}\wedge \frac{2}{\delta}$, 
\begin{equation*}
-\tr(\a(e_{i}\otimes e_{j}+D^{2}w^{\delta}_{ij}))=-\delta^{2}w^{\delta}_{ij}\geq -\delta^{2}w^{\delta}_{ij}(x_{0})-C\delta^{\sigma}3^{m\sigma}=  -\ahom_{ij}-C\delta^{\sigma}3^{m\sigma}+\al\geq-\ahom_{ij}+\frac{\al}{2}. 
\end{equation*}
Now we consider the function $\tilde{v}_{ij}^{\delta}(x):=w^{\delta}_{ij}(x)-\frac{\al}{2}\left(\frac{1}{2}3^{2m}-\frac{1}{2}|x-x_{0}|^{2}\right)-w^{\delta}_{ij}(x_{0})$. Then $\tilde{v}^{\delta}_{ij}$ solves
\begin{equation*}
-\tr(\a (e_{i}\otimes e_{j}+D^{2}\tilde{v}_{ij}^{\delta}))\geq -\ahom_{ij}\quad\text{in $\cu_{m}$},
\end{equation*}
with $\tilde{v}_{ij}^{\delta}\geq 0$ on $\partial \cu_{m}$. By \eqref{e.muabp}, this implies that 
\begin{equation*}
(\sup_{\cu_{m}} -\tilde{v}_{ij}^{\delta})^{d}\leq C3^{2md}\left(\frac{3^{m}}{\Y}\right)^{-\theta},
\end{equation*}
and hence comparing to $-\tilde{v}^{\delta}_{ij}(x_0)$, we have
\begin{equation*}
\al^{d}3^{2md}\leq C3^{2md}\left(\frac{3^{m}}{\Y}\right)^{-\theta}.
\end{equation*}
In particular, if $\delta=c3^{-(100)^{-1}m}$, then, for all $\delta^{-1}\geq \Y$, by definition of $\al$, 
\begin{equation*}
\sup_{x\in B_{\delta^{-100}}}(\ahom_{ij}-\delta^{2}w^{\delta}_{ij}(x))\leq C(\Y^{-1}\delta)^{\frac{\theta}{d}}. 
\end{equation*}
An analogous argument can be made to obtain that 
\begin{equation*}
\sup_{x\in B_{\delta^{-100}}}|\ahom_{ij}-\delta^{2}w^{\delta}_{ij}(x)|\leq C(\Y^{-1}\delta)^{\frac{\theta}{d}}. 
\end{equation*}
By the same argument as in Remark \ref{r.Ystep1} (a union bound), we conclude that for some $\Y=\O_{d}(C)$, for any $\delta\in (0,\mathcal{Y}]$, \eqref{e.delta.vee.delta} holds. 
\end{proof}

\subsection{Estimates for the Cauchy problem}

Our next result in this section allows us to compare solutions of the heterogeneous Cauchy problem with solutions of the homogenized Cauchy problem, using Lemma~\ref{l.refinealg} and deterministic estimates from Section \ref{s.loc}. 
\begin{proposition}\label{prop_homogenize}  Let $\sigma\in (0,1)$, $v_0\in C^{0, \sigma}(B_r)$, and let $v$ and $\bar{v}$ solve
\begin{equation*}
\begin{cases}
\partial_{t}v-\tr(\A D^{2}v)=0&\text{in $(0, \infty)\times \R^{d}$},\\
v(0,\cdot)=v_{0}&\text{on $\R^{d}$},
\end{cases}
\end{equation*}
and 
\begin{equation*}\
\begin{cases}
\partial_{t}\bar{v}-\tr(\ahom D^{2}\bar{v})=0&\text{in $(0, \infty)\times \R^{d}$},\\
\bar{v}(0,\cdot)=v_{0}&\text{on $\R^{d}$}.
\end{cases}
\end{equation*}
Fix $p\in (0,d)$ and let $\beta(\la, \La, d)\in (0,1]$ and $\Y_{p}$ be as in Theorem \ref{t.algrate2}. There exist $C=C(\la, \La, d)>0$ and $\Sigma=\Sigma(\la, \La, d)>0$ such that, for any $\theta\in \left(0, \frac{\beta}{2-\beta}\right)$, and for any $r\geq \Y_{p}\vee\Sigma$,
\begin{align*}
\norm{v-\overline{v}}_{L^{\infty}((0,r^{2+\theta}]\times \R^d)}\leq C\left(\norm{v_0}_{L^\infty(B_{r})}+r^{\sigma}[v_0]_{C^{0,\sigma}(B_{r})}\right)r^{-\frac{\beta\sigma}{2}}+2\sup_{\abs{x}\geq 2^{-1}r}\abs{v_0(x)}.
\end{align*}
\end{proposition}
\begin{proof}  We introduce a smooth bump function
\begin{equation}\label{e.rhodef}
\rho_{r}(x):=\begin{cases}1&\text{if $|x|\leq \frac{1}{2}r$},\\
2\left(1-\frac{|x|}{r}\right)&\text{if $\frac{1}{2}r<|x|\leq r$,}\\
0&\text{if $|x|>r$.}
\end{cases}
\end{equation}
By linearity of the equation, we define $v=v_e+v_i$ into its exterior part
\begin{equation}\label{ph_2} \left\{ \begin{aligned}
& \partial_tv_e-\tr(\A D^2v_e)= 0 && \textrm{in}\;\;(0,\infty)\times\R^d,
\\ & v_e(0,\cdot)=(1-\rho_{r} )v_0 && \textrm{on}\;\;\R^d,
\end{aligned}\right.\end{equation}
and the interior part
\begin{equation}\label{ph_3}\left\{ \begin{aligned}
& \partial_tv_i-\tr(\A D^2v_i)= 0 && \textrm{in}\;\;(0,\infty)\times\R^d,
\\ & v_i(0, \cdot)=\rho_{r}v_0 && \textrm{on}\;\;\R^d,
\end{aligned}\right.\end{equation}
and we perform the analogous decomposition of $\bar{v}=\bar{v}_e+\bar{v}_i$.  As a consequence of the maximum principle and the definition of $\rho_{r}$, 
\begin{equation}\label{ph_04}\norm{v_{e}}_{L^{\infty}([0, \infty)\times \R^{d})}+\norm{\bar{v}_{e}}_{L^{\infty}([0, \infty)\times \R^{d})}\leq 2\sup_{\abs{x}\geq 2^{-1}r}\abs{v_0(x)}.\end{equation}
To estimate $v_{i}$ and $\bar{v}_{i}$, we first note that by the same computation as in Lemma~\ref{le_lem_1}, 
\begin{equation*}
|v_{i}(t,x)|+|\bar{v}_{i}(t,x)|\leq 2\norm{\rho_{r}v_{0}}_{L^{\infty}(B_{r})}\exp\left(-\frac{|x|^{2}-r^{2}}{4\La t}\right). 
\end{equation*}
Therefore, for every~$r\geq \Sigma=\Sigma(\la, \La, d)$ and~$t\in (0, r^{2+\theta}]$, 
\begin{align}\label{ph_4'}
\sup_{|x|\geq r^{1+\theta}} \bigl(  |v_{i}(t,x)|+|\bar{v}_{i}(t,x)| \bigr) &\leq 2\norm{\rho_{r}v_{0}}_{L^{\infty}(B_{r})}\exp\left(-\frac{r^{2+2\theta}-r^{2}}{4\La t}\right) \\ \notag 
&\leq C\norm{v_{0}}_{L^{\infty}(B_{r})}\exp\left(-\frac{r^{2\theta}-1}{4d\La}\right)\\ \notag 
&\leq \norm{v_{0}}_{L^{\infty}(B_{r})}r^{-\frac{\beta\sigma}{2}}, 
\end{align}
where $\Sigma$ is chosen sufficiently large to make the last inequality hold. 

Let $Q_{r}^{\theta}:=(0, r^{2+\theta}]\times B_{r^{1+\theta}}$. Let $u_i$ denote the solution of the Cauchy-Dirichlet problem
\begin{equation}\label{ph_4}\left\{ \begin{aligned}
& \partial_t u_i-\tr(\A D^2u_i)= 0 && \textrm{in}\;\;Q^{\theta}_{r},
\\ & u_i(t,x) = 0 && \textrm{on}\;\; (0,r^{2+\theta})\times \partial B_{r^{1+\theta}},
\\ & u_i(0,\cdot)=\rho_{r} v_0  && \textrm{on}\;\; B_{r^{1+\theta}}, 
\end{aligned}\right.\end{equation}
and let $\overline{u}_i$ denote the solution to the identical Cauchy-Dirichlet problem \eqref{ph_4} defined with the homogenized coefficient $\ahom$.
By \eqref{ph_4'}, the comparison principle yields
\begin{align}\label{ph_5} 
\norm{v_i-u_i}_{L^{\infty}(Q_{r}^{\theta})}+\norm{\overline{v}_i-\bar{u}_{i}}_{L^{\infty}(Q_{r}^{\theta})}&\leq 2\norm{v_{0}}_{L^{\infty}(B_{r})}r^{-\frac{\beta\sigma}{2}}.
\end{align}
We next apply Lemma~\ref{l.refinealg}, to conclude that since $r\geq \Y_{p}$, 
\begin{equation*}
\norm{u_{i}-\bar{u}_{i}}_{L^{\infty}(Q_{r^{1+\theta}})}\leq C\left(\norm{\rho_{r}v_{0}}_{L^{\infty}(B_{r})}+r^{\sigma}\left[\rho_{r}v_{0}\right]_{C^{0,\sigma}(B_{r})}\right)\Y^{\beta}r^{-\frac{\beta\sigma}{2}}.
\end{equation*}
We now estimate the H\"older norm of $\rho_{r}v_{0}$, using the definition of $\rho_{r}$ in \eqref{e.rhodef} to obtain
\begin{align*}
r^\sigma[\rho_{r}v_0]_{C^{0, \sigma}(B_{r})}&\leq r^\sigma\norm{\rho_{r}}_{L^{\infty}(B_r)}[v_0]_{C^{0, \sigma}(B_{r})}+r^{\sigma}[\rho_{r}]_{C^{0, \sigma}(B_{r})}\norm{v_0}_{L^{\infty}(B_r)}\\ \notag 
&\leq r^\sigma[v_0]_{C^{0, \sigma}(B_{r})}+2\norm{v_0}_{L^\infty(B_r)}.
\end{align*}
This implies 
\begin{equation}\label{ph_6}
\norm{u_{i}-\bar{u}_{i}}_{L^{\infty}(Q_{r^{1+\theta}})}\leq C\left(\norm{v_{0}}_{L^{\infty}(B_{r})}+r^{\sigma}\left[v_{0}\right]_{C^{0,\sigma}(B_{r})}\right)r^{-\frac{\beta\sigma}{2}}.
\end{equation}
Combining \eqref{ph_4'}, \eqref{ph_5}, \eqref{ph_6}, and the triangle inequality yields
\begin{align*}
\norm{v_{i}-\bar{v}_{i}}_{L^{\infty}([0, r^{2+\theta}]\times \R^{d})}&= \max\left(\norm{v_{i}-\bar{v}_{i}}_{L^{\infty}(Q_{r}^{\theta})}, \sup_{(0, r^{2+\theta}]\times (B_{r^{1+\theta}})^{c}} |v_{i}-\bar{v}_{i}|\right)\\ \notag 
&=C\left(\norm{v_{0}}_{L^{\infty}(B_{r})}+r^{\sigma}\left[v_{0}\right]_{C^{0,\sigma}(B_{r})}\right)r^{-\frac{\beta\sigma}{2}}.
\end{align*}
The result now follows from the previous display,~\eqref{ph_04} and the triangle inequality.
\end{proof}

\section{Convergence of the parabolic Green function}\label{s.PGF}

Before proving Theorem~\ref{t.realthm}, we first observe the following result for solutions of the homogenized Cauchy problem with initial data mean-zero and bounded above by a Gaussian. 
\begin{lemma}\label{l.mean0}
Suppose that $w$ is a solution of the initial-value problem
\begin{equation}
\begin{cases}
\partial_{t}w-\tr(\ahom D^{2}w)=0&\text{in $(0, \infty)\times \R^{d}$},\\
w(0,\cdot)=w_{0} &\text{on $\R^{d}$},
\end{cases}
\end{equation} 
with the initial condition satisfying, for some $s\in(0,\infty)$, 
\begin{equation}
\label{e.w0.cond}
\int_{\Rd} w_0(x)\,dx = 0
\qquad \mbox{and} \qquad 
\bigl|w_0(x) \bigr| 
\leq
M s^{-\frac d2} 
\exp \biggl( -\frac{ |x|^2}{s} \biggr)
\,.
\end{equation}
Then there exists $C=C(\La, d)\in[1,\infty)$ such that, for every $t\geq s$ and $x\in \R^d$,
\[\abs{w(t,x)}\leq CMs^\frac{1}{2} t^{-\frac{d+1}{2}}\exp\left(-\frac{\abs{x}^2}{16 \La t}\right).\]
\end{lemma}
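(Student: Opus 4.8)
The plan is to exploit the fact that $\overline{P}$, the Gaussian heat kernel for $\ahom$, is smooth and has rapidly decaying derivatives, together with the mean-zero condition to gain one extra power of decay in $t$. Write $w(t,x) = \int_{\Rd} \overline{P}(t, x-y) w_0(y)\, dy$. Since $\int w_0 = 0$, we may subtract a constant and write
\begin{equation*}
w(t,x) = \int_{\Rd} \bigl( \overline{P}(t,x-y) - \overline{P}(t,x) \bigr) w_0(y)\, dy.
\end{equation*}
The idea is that $|\overline{P}(t,x-y) - \overline{P}(t,x)| \les |y| \sup_{|z| \leq |y|} |D_x \overline{P}(t,x-z)|$, and the gradient bound $|D_x \overline{P}(t,x)| \les t^{-(d+1)/2} \exp(-|x|^2/(Ct))$ supplies the gain of $t^{-1/2}$. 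The factor $|y|$ is then absorbed against the Gaussian decay of $w_0$, which contributes the $s^{1/2}$ (since $\int |y| s^{-d/2} e^{-|y|^2/s}\, dy \les s^{1/2}$), and one must check the exponential factor in $x$ survives with the stated constant $16\Lambda$.

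The key steps, in order: (i) recall the standard bounds on $\overline{P}$ and $D_x\overline{P}$ for a constant-coefficient heat equation with $\lambda I_d \le \ahom \le \Lambda I_d$, namely $0 < \overline{P}(t,x) \le C t^{-d/2} e^{-|x|^2/(C t)}$ and $|D_x \overline{P}(t,x)| \le C t^{-(d+1)/2} e^{-|x|^2/(Ct)}$; (ii) use the representation formula and $\int w_0 = 0$ to rewrite $w(t,x)$ as the difference integral above; (iii) split the $y$-integral into the region $|y| \le |x|/2$, where the mean-value estimate gives $|\overline{P}(t,x-y)-\overline{P}(t,x)| \les |y| t^{-(d+1)/2} e^{-|x|^2/(C't)}$ with the exponent evaluated at a point comparable to $x$, and the region $|y| > |x|/2$, where one simply bounds each term by $C t^{-d/2} e^{-|x-y|^2/(Ct)}$ (or $\le C t^{-d/2}$) and uses that $e^{-|y|^2/s}$ already carries $e^{-|x|^2/(4s)} \le e^{-|x|^2/(4t)}$ since $t \ge s$; (iv) in each region carry out the Gaussian moment integral $\int |y|\, s^{-d/2} e^{-|y|^2/s}\, dy \les s^{1/2}$ to extract the $M s^{1/2}$ prefactor, and reconcile the various $e^{-|x|^2/(Ct)}$ factors, shrinking the constant to get the clean $e^{-|x|^2/(16\Lambda t)}$ (here one uses that $\overline{P}$'s Gaussian constant can be taken as close to $4\Lambda$ as desired up to a prefactor, or alternatively just tracks constants and enlarges $C$).

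The main obstacle, though it is more bookkeeping than conceptual, is the region $|y| > |x|/2$ together with the requirement to land on exactly the exponential weight $e^{-|x|^2/(16\Lambda t)}$ rather than $e^{-|x|^2/(Ct)}$ for an uncontrolled $C$. In that region the difference of heat kernels does not see a gradient gain, so the $t^{-1/2}$ improvement must come entirely from the $|y| \gtrsim |x|$ factor being paired against the Gaussian tail of $w_0$: schematically $\int_{|y|>|x|/2} |y| \cdot s^{-d/2} e^{-|y|^2/s} \cdot t^{-d/2}\, dy$, and one must argue that when $|x|$ is large this is dominated by $s^{1/2} t^{-(d+1)/2} |x| e^{-|x|^2/(cs)}$, absorbing the extra $|x|$ into a slightly worse Gaussian constant, and that when $|x|$ is small the whole expression is trivially $\les M s^{1/2} t^{-(d+1)/2}$. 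Getting the constant down to $16\Lambda$ requires using that $\overline{P}(t,x) \le C t^{-d/2} e^{-|x|^2/(8\Lambda t)}$ (valid since the heat kernel for $\ahom \le \Lambda I_d$ decays at least as fast as that of $\Lambda I_d$, whose kernel is $(4\pi\Lambda t)^{-d/2} e^{-|x|^2/(4\Lambda t)}$, and then conceding a factor of $2$ in the exponent to kill polynomial prefactors). All of this is routine once steps (i)--(iv) are set up, so I would present it compactly, emphasizing the mean-zero cancellation and the Gaussian first-moment bound as the two substantive ingredients.
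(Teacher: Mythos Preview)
Your approach is correct and rests on the same idea as the paper's proof: exploit the mean-zero condition to write $w(t,x)=\int(\overline{P}(t,x-y)-\overline{P}(t,x))w_0(y)\,dy$ and gain a factor $t^{-1/2}$ from the gradient of the heat kernel, then absorb the resulting $|y|$ against the Gaussian bound on $w_0$ to produce the $s^{1/2}$.

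The execution differs. You split into the regions $|y|\le |x|/2$ and $|y|>|x|/2$ and handle the exponential weight in $x$ separately in each piece; this works but, as you note, the second region forces some ad hoc juggling to recover both the $t^{-1/2}$ gain and the correct Gaussian constant. The paper avoids the case split entirely: after reducing to $\ahom=I_d$ by an affine change of variables, it writes the kernel difference via the fundamental theorem of calculus along the segment,
\[
\overline{P}(t,x-y)-\overline{P}(t,x)=t^{-\frac{d+1}{2}}\int_0^1 \frac{(x-ry)\cdot y}{t^{1/2}}\exp\Bigl(-\frac{|x-ry|^2}{4t}\Bigr)\,dr,
\]
uses the pointwise bound $\rho e^{-\rho^2}\le c\,e^{-\rho^2/2}$ to trade the polynomial prefactor for a looser exponent, and then applies Young's inequality $|x\cdot ry|\le \tfrac14|x|^2+r^2|y|^2$ to cleanly separate the $x$- and $y$-dependence in the exponential. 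This gives $e^{-|x|^2/(16t)}$ times a Gaussian in $y$ that integrates against $|y|\,s^{-d/2}e^{-|y|^2/s}$ to $Cs^{1/2}$, with no region split and no loose ends in the constant. Your argument reaches the same destination; the paper's route is just shorter.
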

\begin{proof}  
Up to an affine change of coordinates, we may assume without loss of generality that $\ahom$ is the identity matrix, and that its Green function $\bar{P}(t,x)$ is the heat kernel.  We first observe that, since $w_0$ has mean-zero, for every $(t,x)\in (0,\infty)\times \R^d$,
\begin{align*}w(t,x) & = \int_{\R^d} w_0(y) \overline{P}(t,x-y) \, dy = \int_{\R^d} w_0(y)\left(\overline{P}(t,x-y)-\overline{P}(t,x)\right)\, dy
\\ & =\frac{1}{(4\pi)^{\frac{d}{2}}}t^{-\frac{d+1}{2}}\int_{\R^d}\int_0^1 w_0(y)\frac{(x-ry)\cdot y}{t^{\frac{1}{2}}}\exp\left(-\frac{\abs{x-ry}^2}{4t}\right)\, dr \, dy.
\end{align*}
The assumptions on $w_0$ and the fact that there exists $c\in(0,\infty)$ such that $r\exp(-r^2)\leq c\exp(-2^{-1}r^2)$ imply that
\begin{align}\label{lmean_0}
\abs{w(t,x)} & \leq CMt^{-\frac{d+1}{2}}\int_{\R^d}\int_0^1 s^{-\frac{d}{2}}\abs{y}\frac{\abs{x-ry}}{t^{\frac{1}{2}}}\exp\left(-\frac{\abs{y}^2}{s}-\frac{\abs{x-ry}^2}{4t}\right)
\\ \nonumber & \leq CM t^{-\frac{d+1}{2}}\int_{\R^d}\int_0^1s^{-\frac{d}{2}}\abs{y}\exp\left(-\frac{\abs{y}^2}{s}-\frac{\abs{x-ry}^2}{8t}\right)
\\ \nonumber &  = CMt^{-\frac{d+1}{2}}\int_{\R^d}\int_0^1s^{-\frac{d}{2}}\abs{y}\exp\left(-\frac{\abs{y}^2}{s}-\frac{\abs{x}^2}{8t}+\frac{x\cdot ry}{4t}-\frac{r^2\abs{y}^2}{8t}\right).
\end{align}
After using Young's inequality to write
\[\abs{x\cdot ry}\leq\frac{\abs{x}^2}{4}+r^2\abs{y}^2,\]
we have 
\[\abs{w(t,x)} \leq CM t^{-\frac{d+1}{2}}\exp\left(-\frac{\abs{x}^2}{16t}\right)\int_{\R^d} s^{-\frac{d}{2}}\abs{y}\exp\left(-\frac{\abs{y}^2}{s}+\frac{\abs{y}^2}{8t}\right).\]
Therefore, for every $t\geq s$, for $c\in(0,\infty)$ depending on $d$,
\begin{align*}
\abs{w(t,x)} & \leq CM t^{-\frac{d+1}{2}}\exp\left(-\frac{\abs{x}^2}{16t}\right)\int_{\R^d} s^{-\frac{d}{2}}\abs{y}\exp\left(-\frac{7\abs{y}^2}{8s}\right)\\
&\leq CMs^\frac{1}{2} t^{-\frac{d+1}{2}}\exp\left(-\frac{\abs{x}^2}{16t}\right),
\end{align*}
which completes the proof.
\end{proof}

Equipped with this result, we are now ready to prove Theorem \ref{t.realthm}. In broad terms, the proof relies on an iterative, inductive argument in time intervals. In the ``spatial bulk'' (i.e. where $|x|\leq \sqrt{t\log t}$) we use the quantitative homogenization results for the Cauchy-Dirichlet problem to track how the heterogeneous, random solution is algebraically close to the homogenized one (with stretched exponentially good probability). In the tails, we use deterministic estimates on parabolic equations to bound the profiles of the random and homogenized solutions. Based on the uniformity of our estimates and the strong quantitative bounds we have available to us, we can iterate this argument in time to obtain a global in time result, which is Theorem \ref{t.realthm}.
\begin{proof}[Proof of Theorem \ref{t.realthm}]  
Let $\ga:=\tfrac{\beta\sigma}{8}\wedge \tfrac{1}{4}$, for $\beta$ as in Theorem \ref{t.algrate2} and $\sigma=\sigma(\la, \La, d)\in (0,1)$ the exponent from the Krylov-Safonov esimate. We will fix constants $\Sigma,\theta, K,A_1 \in [1,\infty)$ and $A_0 \in (0,1]$ to be determined below. 
Just to clarify that these undetermined constants will not be chosen in a circular way, we mention that~$A_1$ will be chosen first to be very large, depending only on~$(d,\lambda,\Lambda)$; we choose~$A_0$ very small and~$\theta$ very large, each depending on~$(A_1,d,\lambda,\Lambda)$; the constant~$K$ is chosen, to be very large depending on~$(A_0,A_1,d,\lambda,\Lambda)$; finally, $\Sigma$ is chosen last, and depends on the previous constants.

\smallskip

Define 
\begin{equation*}
c_{t}[v_{0}]:=\int_{\R^{d}} v(t,x)\, dx.
\end{equation*} 
The proof will be based on an induction argument in time. We fix a constant~$R$ as in the statement of the theorem, with~$R \geq \Sigma \vee \Y$. 

\smallskip

\emph{Step 1.} 
We make some crude estimates on~$v$ which will serve as the base case in our induction argument. By the exact same computation as in Lemma~\ref{le_lem_1}, for all $t \in (0,\infty)$ and~$x\in\Rd$, 
\begin{equation}
\label{e.crude.v}
|v(t,x)| 
\leq 
M
R^{-d} 
\exp\biggl( - \frac{|x|^2}{4\La(t+R^{2})} \biggr)
\,.
\end{equation}
In particular, \eqref{e.crude.v} implies that for all $s\in [R^{2}, KR^{2}]$, 
\begin{equation}\label{e.baseint}
|c_{s}[v_{0}]|\leq \int_{\R^{d}} MR^{-d}\exp\biggl( - \frac{|x|^2}{4\La (K+1)R^{2}} \biggr)\, dx =(4\pi \La (K+1))^{\frac{d}{2}}M.
\end{equation}
Moreover, for every~$s\in [R^{2}, KR^{2}]$,
\begin{align}\label{e.baseest}
\lefteqn{
|v(s,x)-c_{s} [v_{0}] \overline{P}(s,x)| 
} \qquad 
 \notag \\  & 
\leq |v(s,x)|+|c_{s}[v_{0}]|\overline{P}(s,x)
\notag \\  &
\leq MR^{-d}\exp\biggl( - \frac{|x|^2}{4\La(s+R^{2})} \biggr)+
\biggl(\frac{\La (K+1)}{\lambda} \biggr)^{\frac{d}{2}}M s^{-\frac{d}{2}}\exp\left(-\frac{|x|^{2}}{4\La s}\right)
\notag \\  &
\leq 
2\biggl( \frac{\La (K+1)}{\la}\biggl)^{\frac{d}{2}}Ms^{-\frac{d}{2}}\exp\left(-\frac{|x|^{2}}{A_{1}s}\right)
\,,
\end{align}
provided that~$A_{1}\geq 8\La$. 

\smallskip

\emph{Step 2.} 
The induction setup. 
Our induction hypothesis asserts that, for some time~$t_0 \geq KR^2$, we have, for every $s \in [K^{-1}t_0, t_{0}]$,
\begin{equation}\label{e.indc1}
|c_{s}[v_{0}]|\leq (4\pi \La (K+1))^{\frac{d}{2}+1} M \prod_{j=0}^{\infty}\left(1+K^{-\ga j}\right)
\end{equation}
and 
\begin{equation}\label{e.indc2}
\bigl| v(s,x)- c_{s}[v_{0}] \overline{P}(s, x) \bigr|
\leq 
2\biggl( 
\frac{\La(K+1)}{\la} \biggr)^{\!\!\frac d2+1}
M
\biggl( \frac{s}{R^2} \biggr)^{- \gamma}
s^{-\frac{d}{2}}\exp \biggl( -\frac{|x|^2}{A_1s} \biggr)\,.
\end{equation}
Observe that the induction hypothesis is valid for $t_{0}=KR^2$ in \eqref{e.baseint} and \eqref{e.baseest} respectively.  

\smallskip

We will show that the validity of these statements for $t_0$ implies their validity for~$Kt_0$, where $K\geq 10$ is a large constant to be determined.
To that end, select~$t \in [t_0,Kt_0]$ and set~$s:=K^{-1}t \in [K^{-1}t_0,t_0]$. 

\smallskip

We approximate~$v - c_{s}[v_{0}] \overline{P}$ by the solution~$Q(\cdot;s)$ of the initial-value problem
\begin{align*}
\left\{
\begin{aligned}
&
\partial_{t}Q(\cdot, \cdot;s) -\tr (\a D^{2}Q(\cdot, \cdot;s))=0
& \text{in} & \ (s,\infty) \times \Rd\,, 
\\ & 
Q(s,\cdot;s) = v(s,\cdot) - c_s[v_0] \overline{P}(s,\cdot) &\text{on} & \ \R^{d}\,.
\end{aligned}
\right.
\end{align*}
We also let~$\bar{Q}(\cdot,\cdot; s)$ be the solution of the homogenized problem 
\begin{align*}
\left\{
\begin{aligned}
&
\partial_{t}\bar{Q}(\cdot, \cdot;s) -\tr (\ahom D^{2}\bar{Q}(\cdot, \cdot;s))=0
& \mbox{in} & \ (s,\infty) \times \Rd\,, 
\\ & 
\bar{Q}(s,\cdot;s) = v(s,\cdot) - c_s[v_0]  \overline{P}(s,\cdot) & \text{on} & \ \R^{d}\,.
\end{aligned}
\right.
\end{align*}
By the triangle inequality,
\begin{align}
\label{e.split}
\lefteqn{
\bigl| v(t,x) - c_{s}[v_{0}] \overline{P}(t,x) \bigr| 
} \quad & 
\notag \\  &
\leq
\bigl| v(t,x) - Q(t,x;s) - c_{s}[v_{0}] \overline{P}(t,x) \bigr| 
+
|Q(t,x;s)  - \bar{Q}(t,x;s) | 
+
| \bar{Q}(t,x;s) | 
\,.
\end{align}

\emph{Step 3.} 
We estimate the last term on the right side of~\eqref{e.split}. The claim is  
\begin{equation}
\label{e.Q.bar.evolve}
| \bar{Q}(t,x;s) | 
\leq
CK^{\frac{d}{2}+1}MA_1^{\frac {d+1}{2}}
\left(\frac{t}{R^{2}}\right)^{\!\!-\gamma}\!
\biggl( \frac ts \biggr)^{\!\!-\frac12+\ga} \!
\biggl( \frac t{t-s} \biggr)^{\!\!\frac{d+1}2}\!
 t^{-\frac d2}\exp\left(-\frac{|x|^{2}}{ 16\La t}\right).
\end{equation}
Observe that, by definition of $c_{s}[v_{0}]$, the initial condition $v(s,x)-c_s[v_0] \overline{P}(s,x)$ in the PDE for~$\overline{Q}(\cdot\;;s)$ has zero mean. 
In view of~\eqref{e.indc2}, an application of Lemma~\ref{l.mean0} yields, for every~$t\geq (A_1+1) s$, 
\begin{equation*}
| \bar{Q}(t,x;s) | 
\leq
CK^{\frac{d}{2}+1}MA_1^{\frac {d+1}{2}}
\left(\frac{s}{R^{2}}\right)^{\!\!-\ga}s^{\frac{1}{2}}(t-s)^{-\frac{d+1}{2}}\exp\left(-\frac{|x|^{2}}{ 16\La (t-s)}\right). 
\end{equation*} 
We next compute
\begin{align*}
\left(\frac{s}{R^{2}}\right)^{-\ga}s^{\frac{1}{2}}(t-s)^{-\frac{d+1}{2}}
&
=
\left(\frac{t}{R^{2}}\right)^{\!\!-\ga}
\biggl( \frac ts \biggr)^{\!\!-\frac12+\ga}
\biggl( \frac t{t-s} \biggr)^{\!\!\frac{d+1}2} t^{-\frac{d}{2}}.
\end{align*}
Combining this with the above display yields \eqref{e.Q.bar.evolve}.

\smallskip

\emph{Step 4.} We estimate the second term on the right side~\eqref{e.split}. The precise claim is that  
\begin{align}
\label{e.step4display}
 |\bar{Q}(t,x; s)-Q(t,x; s)|
\leq 
CK^{\frac{d}{2}+1}M\left(\frac{t}{R^{2}}\right)^{\!\! - \gamma}\!\left(\frac{t}{s}\right)^{\!\!\ga+\frac{d}{2}} \!\biggl(\frac{\theta t}{s}\biggr)t^{-\ga}t^{-\frac{d}{2}}\exp\left(-\frac{|x|^{2}}{A_{1}t}\right).
\end{align}
Here we apply Proposition~\ref{prop_homogenize} with $r=\sqrt{\theta t\log t}$, which since $\theta\in[1,\infty)$ implies $t-s\leq t\leq r^2$.  Since $t\geq Ks \geq R^{2}\geq \Y^{2}$, this implies that $\sqrt{\theta t\log t}\geq \Y$. This, and the fact that $\tfrac{\beta\sigma}{2}\geq 4\ga$, yields
\begin{align}
\label{e.shomog}
\lefteqn{
\norm{\bar{Q}(\cdot, \cdot; s) - Q(\cdot,\cdot; s)}_{L^{\infty}([2^{-1}t, t]\times\R^{d})}
} \qquad & 
 \notag \\  &
\leq 
C\Big(\norm{v(s,\cdot)-c_{s}[v_{0}]\overline{P}(s,\cdot)}_{L^{\infty}(B_{\sqrt{\theta t\log t}})}
 \notag \\  & \qquad
+(\theta t\log t)^{\frac{\sigma}{2}}\left[v(s,\cdot)-c_{s}[v_{0}]\overline{P}(s,\cdot)\right]_{C^{0, \sigma}(B_{\sqrt{\theta t\log t}})}\Big)(\theta t\log t)^{-4\ga}
 \notag \\  &\qquad
+2\sup_{|x|\geq 2^{-1}\sqrt{\theta t\log t}}|v(s,x)-c_{s}[v_{0}]\overline{P}(s,x)|.
\end{align}
We next invoke \eqref{e.indc2} to argue that since $t\geq Ks\geq s$, 
\begin{equation*}
\begin{aligned}
\sup_{|x|\geq 2^{-1}\sqrt{\theta t\log t}}|v(s,x)-c_{s}[v_{0}]\overline{P}(s,x)|&\leq CK^{\frac{d}{2}+1}M  \left(\frac{s}{R^{2}}\right)^{\! - \gamma}s^{-\frac d2 }\exp \biggl(-\frac{\theta t \log t}{4A_1s} \biggr)\\
&\leq CK^{\frac{d}{2}+1}M  \left(\frac{s}{R^{2}}\right)^{\! - \gamma}s^{-\frac d2 }t^{-\frac{\theta}{4A_{1}}},
\end{aligned}
\end{equation*}
and we will choose $\theta:= 12 A_1\ga$, allowing us to conclude that
\begin{equation}\label{e.stails}
\sup_{|x|\geq 2^{-1}\sqrt{\theta t\log t}}|v(s,x)-c_{s}[v_{0}]\overline{P}(s,x)|\leq CK^{\frac{d}{2}+1} M  \left(\frac{s}{R^{2}}\right)^{ \!- \gamma}s^{-\frac d2 }t^{-3\ga}.
\end{equation}
This is the second main term in the right hand side of~\eqref{e.shomog}, and we turn now to first term. 
By~\eqref{e.indc2},
\begin{equation}\label{e.sinfdif}
\norm{v(s,\cdot)-c_{s}[v_{0}]\overline{P}(s,\cdot)}_{L^{\infty}(B_{\sqrt{\theta t\log t}})}
\leq 
CK^{\frac{d}{2}+1}M\left(\frac{s}{R^{2}}\right)^{\! - \ga}s^{-\frac{d}{2}},
\end{equation}
and moreover, by \eqref{e.indc1} and \eqref{e.indc2}, we obtain
\begin{equation}\label{e.sinf}
\norm{v}_{L^{\infty}([2^{-1}s,s]\times B_{\sqrt{\theta t\log t}})}
\leq CK^{\frac{d}{2}+1}Ms^{-\frac{d}{2}}
\,.
\end{equation}
We next apply Lemma~\ref{l.ksref} with $r_{1}=\sqrt{\theta t\log t}$ and $r=\sqrt{s}$ to obtain, in view of~\eqref{e.sinf}, 
\begin{equation*}
\begin{aligned}
(\theta t\log t)^{\frac{\sigma}{2}}\left[v(s,\cdot)\right]_{C^{0, \sigma}(B_{\sqrt{\theta t\log t}})}&\leq C\left(\frac{\theta t\log t}{s}\right)^{\frac{\sigma}{2}}\norm{v}_{L^{\infty}([2^{-1}s,s]\times B_{\sqrt{\theta t\log t}})}
\\ &
\leq 
CK^{\frac{d}{2}+1}M\left(\frac{\theta t\log t}{s}\right)^{\frac{\sigma}{2}}
s^{-\frac{d}{2}}.
\end{aligned}
\end{equation*}
Moreover, by \eqref{e.indc1}, 
\begin{equation}\label{e.ezks}
\begin{aligned}
c_{s}[v_{0}](\theta t\log t)^{\frac{\sigma}{2}}\left[\overline{P}(s,\cdot)\right]_{C^{0, \sigma}(B_{\sqrt{t\log t}})}&\leq c_{s}[v_{0}](\theta t\log t)^{\frac{1}{2}}\left[\overline{P}(s, \cdot)\right]_{C^{0,1}(B_{\sqrt{\theta t\log t}})}\\
&\leq Cc_{s}[v_{0}]\left(\theta t \log t\right) s^{-\frac{d}{2}-1}\\
&\leq CK^{\frac{d}{2}+1}M\left(\frac{\theta t \log t}{s}\right) s^{-\frac{d}{2}}.
\end{aligned}
\end{equation}
Combining the previous two displays yields
\begin{equation}\label{e.kss'}
(\theta t\log t)^{\frac{\sigma}{2}}\left[v(s,\cdot)-c_{s}[v_{0}]\overline{P}(s,\cdot)\right]_{C^{0, \sigma}(B_{\sqrt{\theta t\log t}})}\leq CK^{\frac{d}{2}+1}M\left(\frac{\theta t\log t}{s}\right)s^{-\frac{d}{2}}.
\end{equation}
Finally, inserting~\eqref{e.stails},~\eqref{e.sinfdif}, and~\eqref{e.kss'} into~\eqref{e.shomog} yields
\begin{equation*}
\begin{aligned}
&\norm{\bar{Q}(\cdot, \cdot; s) - Q(\cdot,\cdot; s)}_{L^{\infty}([2^{-1}t, t]\times\R^{d})}\\
& \quad 
\leq 
CK^{\frac{d}{2}+1}M
\left(\frac{\theta t\log t}{s}\right)
 s^{-\frac{d}{2}}(\theta t\log t)^{-4\ga}
+ CK^{\frac{d}{2}+1}
M  \left(\frac{s}{R^{2}}\right)^{ -  \gamma}s^{-\frac d2 }
t^{-3\ga}.
\end{aligned}
\end{equation*}
Now choosing the deterministic lower bound~$\Sigma$ on~$R$ to be sufficiently large, we have that for~$t\geq s\geq \Sigma,$
\begin{equation*}
(\log t)^{1-4\ga}\leq t^{\ga},
\end{equation*}
and this implies 
\begin{equation}
\norm{\bar{Q}(\cdot, \cdot; s) - Q(\cdot,\cdot; s)}_{L^{\infty}([t/2, t]\times\R^{d})}
\leq 
CK^{\frac{d}{2}+1}M\left[\left(\frac{\theta t}{s}\right)+ \left(\frac{s}{R^{2}}\right)^{ - \ga}\right]t^{-3\ga}s^{-\frac{d}{2}}.
\end{equation}
Now we use this to argue that for~$|x| \leq A_0 \sqrt{t \log t}$, since $R\geq \Sigma$, for $t\geq Ks\geq R^{-2}s$, 
\begin{align}\label{e.intfrac}
\lefteqn{
\bigl| \bar{Q}(t,x;s) - Q(t,x;s) \bigr| 
} \qquad & 
\\ \notag   &
\leq 
CK^{\frac{d}{2}+1}M\left[\left(\frac{\theta t}{s}\right)t^{-\ga}+ \left(\frac{s}{R^{2}}\right)^{ \!-\gamma}\right]t^{-2\ga}s^{-\frac{d}{2}}\exp\left(-\frac{|x|^{2}}{A_{1}t}\right)\exp\left(\frac{A_{0}^{2}\log t}{A_{1}}\right) \\ \notag 
&=CK^{\frac{d}{2}+1}M\left[\left(\frac{\theta t}{s}\right)t^{-\ga}+ \left(\frac{s}{R^{2}}\right)^{ - \gamma}\right]t^{-2\ga}s^{-\frac{d}{2}}\exp\left(-\frac{|x|^{2}}{A_{1}t}\right)t^{\frac{A_{0}^{2}}{A_{1}}}\\ \notag 
&\leq CK^{\frac{d}{2}+1}M\left(\frac{s}{R^{2}}\right)^{\! - \gamma}\biggl(\frac{\theta t}{s}\biggr)t^{-2\ga}s^{-\frac{d}{2}}\exp\left(-\frac{|x|^{2}}{A_{1}t}\right)t^{\frac{A_{0}^{2}}{A_{1}}}
\,. 
\end{align}
We now choose $A_{0}:= (A_1\ga)^{\frac12}$ so that $A_{1}^{-1}A_{0}^{2}= \ga$, which implies that, for $|x| \leq A_0 \sqrt{t \log t}$ and $t\geq Ks$,
\begin{align}\label{e.Q.bar.Q}
\bigl| \bar{Q}(t,x;s) - Q(t,x;s) \bigr|
\notag&\leq 
CK^{\frac{d}{2}+1}M\left(\frac{s}{R^{2}}\right)^{\! - \gamma}\biggl(\frac{\theta t}{s}\biggr)t^{-\ga}s^{-\frac{d}{2}}\exp\left(-\frac{|x|^{2}}{A_{1}t}\right)\\
&=CK^{\frac{d}{2}+1}M\left(\frac{t}{R^{2}}\right)^{\! - \gamma}\left(\frac{t}{s}\right)^{\!\ga+\frac{d}{2}}\biggl(\frac{\theta t}{s}\biggr)t^{\!-\ga}t^{-\frac{d}{2}}\exp\left(-\frac{|x|^{2}}{A_{1}t}\right)
\,.
\end{align}
This is the bulk estimate. For the tails, we estimate, for $|x| \geq A_0 \sqrt{t\log t}$, using \eqref{e.indc2}, uniform ellipticity, and the comparison principle, 
\begin{align*}
\lefteqn{
\bigl| Q (t,x;s)\bigr| 
} \qquad & 
\\
&\leq
2\biggl( 
\frac{\La(K+1)}{\la} \biggr)^{\frac d2+1} M\left(\frac{s}{R^{2}}\right)^{ \!-\gamma}s^{-\frac{d}{2}}
\int_{\Rd}
\exp\biggl( - \frac{|z|^2}{A_1s} - \frac {|x-z|^2}{4\La(t-s)} \biggr)\,dz\\
&=
CK^{\frac d2+1} M\left(\frac{s}{R^{2}}\right)^{ \!-\gamma}s^{-\frac{d}{2}}
\int_{\Rd}
\exp\biggl( - \frac{|z|^2}{A_1s} - \frac {|x-z|^2}{8\La(t-s)} - \frac {|x-z|^2}{8\La(t-s)} \biggr)\,dz
\,.
\end{align*}
We next observe that, 
for sufficiently large~$A_1$, depending on $(d, \la, \La)$, for all $t\geq Ks$, 
\begin{align*}
\frac{|z|^2}{A_1s} + \frac {|x-z|^2}{8\La(t-s)}
&\geq 
\frac{K|x|^2}{2A_1t}\,.
\end{align*}
Therefore we obtain, for~$|x| \geq A_0 \sqrt{t \log t}$,
\begin{align*}
\bigl| Q (t,x;s)\bigr|
& 
\leq
CK^{\frac d2+1} M\left(\frac{s}{R^{2}}\right)^{ \!-\gamma}s^{-\frac{d}{2}}
\exp\biggl( -\frac{K|x|^2}{2A_1t} \biggr) 
\int_{\Rd}
\exp\biggl( - \frac {|x-z|^2}{8\La(t-s)} \biggr)\,dz
\\ & 
\leq
CK^{\frac{d}{2}+1}M\left(\frac{s}{R^{2}}\right)^{ \!-\gamma}s^{-\frac{d}{2}}t^{\frac d2} 
\exp\biggl( -\frac{K|x|^2}{2A_1t} \biggr) 
\\ & 
\leq 
CK^{\frac{d}{2}+1}M\left(\frac{s}{R^{2}}\right)^{ \!-\gamma}s^{-\frac{d}{2}} t^{\frac d2} \exp\left(-
\frac{A_0^2(K-2)}{2A_{1}}
\log t \right)
\exp\biggl( -\frac{|x|^2}{A_1t} \biggr) 
\\ & 
=
CK^{\frac{d}{2}+1}M\left(\frac{s}{R^{2}}\right)^{ \!-\gamma}s^{-\frac{d}{2}}t^{\frac d2-\frac{A_0^2(K-2)}{2A_{1}}}
\exp\biggl( -\frac{|x|^2}{A_1t} \biggr),
\end{align*}
where in the penultimate line we used the condition that~$|x| \geq A_0 \sqrt{t\log t}$.  
If we require that~$K$ be sufficiently large that~$A_0^2(K-2)(2A_{1})^{-1} \geq d + 2\ga$, then we obtain
\begin{align*}
\bigl| Q (t,x;s)\bigr|
&\leq
CK^{\frac{d}{2}+1}M\biggl(\frac{s}{R^{2}}\biggr)^{ \!\!-\gamma}s^{-\frac{d}{2}} t^{-\frac d2-2\ga}
\exp\biggl( -\frac{|x|^2}{A_1t} \biggr)\\
&\leq CK^{\frac{d}{2}+1}M\left(\frac{t}{R^{2}}\right)^{ \!-\gamma}\biggl(\frac{t}{s}\biggr)^{\!\!\ga}t^{-\ga}t^{-\frac d2}
\exp\biggl( -\frac{|x|^2}{A_1t} \biggr)\,.
\end{align*}
An estimate for~$\bar{Q}$ can obtained in the same way. We therefore obtain, for every~$|x| \geq A_0 \sqrt{t \log t}$,  
\begin{align}
\label{e.fuck.off.tails}
\bigl| \bar{Q}(t,x;s) - Q(t,x;s) \bigr| 
&
\leq 
\bigl| \bar{Q}(t,x;s)\bigr| 
+ \bigl|Q(t,x;s) \bigr|\notag
\\  
&\leq CK^{\frac{d}{2}+1}M\left(\frac{t}{R^{2}}\right)^{ \!\!-\gamma}\left(\frac{t}{s}\right)^{\ga}t^{-\ga}t^{-\frac d2}
\exp\biggl( -\frac{|x|^2}{A_1t} \biggr), 
\end{align}
by choosing $K(d, \la, \La)$ sufficiently large. Combining~\eqref{e.Q.bar.Q} and~\eqref{e.fuck.off.tails}, we obtain \eqref{e.step4display}.

\smallskip

\noindent\emph{Step 5.}
We estimate the first term on the right side of~\eqref{e.split}. The claim is that 
\begin{multline}
\label{e.vQcs}
\bigl| v(t,x) - Q(t,x;s) - c_{s}[v_{0}] \overline{P}(t,x) \bigr|\\
 \leq CK^{\frac{d}{2}+1}M\left(\frac{t}{R^{2}}\right)^{\!\! - \ga}\left(\frac{t}{s}\right)^{\!\!\ga+\frac{d}{2}}\biggl(\frac{\theta t}{s}\biggr)t^{-\ga}t^{-\frac{d}{2}}\exp\left(-\frac{|x|^{2}}{A_{1}t}\right)
.
\end{multline}
The function~$v - Q(\cdot, \cdot;s)$ satisfies the heterogeneous equation with initial data (at time~$s$) equal to~$c_{s}[v_{0}]\overline{P}(s,x)$. Observe that, by choice of $A_{1}$, 
\begin{align}\label{e.ezbd}
|c_{s}[v_{0}]\overline{P}(s,x)|
\leq CK^{\frac{d}{2}+1}Ms^{-\frac{d}{2}}\exp\left(-\frac{|x|^{2}}{A_{1}s}\right). 
\end{align}
This implies that the initial condition is bounded above by (almost) the same right hand side as \eqref{e.indc2}, and we can obtain \eqref{e.vQcs} by an identical analysis as above. Indeed an application of Proposition \ref{prop_homogenize}, using identical choices of $A_{0}$, $\theta$ as before, implies that 
\begin{equation*}
\|v - Q(\cdot;s) - c_{s}[v_{0}] \overline{P}\|_{L^{\infty}([2^{-1}t, t]\times\R^{d})}
 \leq CK^{\frac{d}{2}+1}M\biggl(\frac{\theta t}{s}\biggr)t^{-3\ga}s^{-\frac{d}{2}}.
\end{equation*}
Now using that $t\geq Ks\geq sR^{-2}$, we have 
\begin{equation*}
\|v- Q(\cdot;s) - c_{s}[v_{0}] \overline{P}\|_{L^{\infty}([2^{-1}t, t]\times\R^{d})}\\
\leq CK^{\frac{d}{2}+1}
M \left(\frac{s}{R^{2}}\right)^{\!\!-\ga} \biggl(\frac{\theta t}{s}\biggr) t^{-2\ga}s^{-\frac d2 }.
\end{equation*}
An identical analysis as in Step 4, starting from \eqref{e.intfrac} allows us to conclude \eqref{e.vQcs} for $|x|\leq A_{0}\sqrt{t \log t}$. For the tail, we note that an identical analysis to that of Step 4 yields that by deterministic tail bounds, 
\begin{equation*}
|v(t,x) - Q(t,x;s)|+|c_{s}[v_{0}]\bar{P}(t,x)|\leq CK^{\frac{d}{2}}Ms^{-\frac{d}{2}}t^{-\frac{d}{2}-2\ga}\exp\left(-\frac{|x|^{2}}{A_{1}t}\right).
\end{equation*}
Using again that $t\geq sR^{-2}$, we obtain that for $|x|\geq A_{0}\sqrt{t\log t}$,   
\begin{equation*}
|v(t,x) - Q(t,x;s)|+|c_{s}[v_{0}]\bar{P}(t,x)|
\leq CK^{\frac{d}{2}+1}M\left(\frac{t}{R^{2}}\right)^{\!\!-\ga}\!\left(\frac{t}{s}\right)^{\!\!\ga}t^{-\ga}t^{-\frac{d}{2}}\exp\left(-\frac{|x|^{2}}{A_{1}t}\right),
\end{equation*}
and this implies \eqref{e.vQcs} holds everywhere. 

\smallskip

\emph{Step 6.} The verification of the induction.
Combining~\eqref{e.split}, \eqref{e.Q.bar.evolve},~\eqref{e.step4display} and~\eqref{e.vQcs} yields 
\begin{align*}
\lefteqn{
\bigl| v(t,x) - c_{s}[v_{0}] \overline{P}(t,x) \bigr|
} &
\\ &
\leq 
CK^{\frac{d}{2}+1}M
\biggl ( \!\frac{t}{R^{2}} \!\biggr )^{\!\!-\gamma} 
\Biggl[A_{1}^{\frac{d+1}{2}} \!
\biggl( \frac ts \biggr)^{ \! \!-\frac12+\ga} \!
\biggl( \frac t{t-s} \biggr)^{ \! \!\frac{d+1}2}  \! \!
+\biggl (\frac{t}{s}\biggr )^{ \! \!\ga+\frac{d}{2}} \!
\biggl (\frac{\theta t}{s}\biggr )t^{-\ga}\Biggr ]  
t^{-\frac d2}
\exp\left( \!-\frac{|x|^{2}}{A_{1} t}\right)
\,.
\end{align*}
Choosing now $t=Ks$ and simplifying, we obtain
\begin{align}\label{e.almostdone}
\lefteqn {
\bigl| v(Ks,x) - c_{s}[v_{0}] \overline{P}(Ks,x) \bigr|
}  & 
\notag \\ & 
\leq 
C K^{\frac{d}{2}+1}M\left(\frac{Ks}{R^{2}}\right)^{\!\!-\ga} \!\left(A_{1}^{\frac{d+1}{2}}K^{-\frac{1}{2}+\ga}+K^{\ga+\frac{d}{2}}K\theta(Ks)^{-\ga}\right) 
 (Ks)^{-\frac{d}{2}}\exp\left(\!-\frac{|x|^{2}}{A_{1}(Ks)}\right).
\end{align}
This implies, via integration, 
\begin{align}
\label{e.cbounds}
\bigl |c_{Ks}[v_{0}] - c_{s}[v_{0}] \bigr |
\leq 
C A_1^\frac{d}{2}K^{\frac{d}{2}+1}
M
\left(\frac{Ks}{R^{2}}\right)^{\!\!-\ga}
\left(A_{1}^{\frac{d+1}{2}}K^{-\frac{1}{2}+\ga}
+
K^{1+\frac{d}{2}}\theta s^{-\ga}
\right).
\end{align}
Combining \eqref{e.almostdone} and \eqref{e.cbounds}, we obtain 
\begin{align*}
\lefteqn{
\bigl| v(Ks,x) - c_{Ks}[v_{0}] \overline{P}(Ks,x) \bigr|
} \quad & 
\\ &
\leq \bigl| v(Ks,x) - c_{s}[v_{0}] \overline{P}(Ks,x) \bigr| +\bigl| c_{s}[v_{0}] -c_{Ks}[v_{0}] \bigr|\overline{P}(Ks,x) 
\\ &\leq CA_1^\frac{d}{2}K^{\frac{d}{2}+1}M\left(\frac{Ks}{R^{2}}\right)^{\!\!-\ga}\left(A_{1}^{\frac{d+1}{2}}K^{-\frac{1}{2}+\ga}
+
K^{1+\frac{d}{2}}\theta s^{-\ga}\right)
(Ks)^{-\frac{d}{2}}\exp\left(-\frac{|x|^{2}}{A_{1}(Ks)}\right).
\end{align*}
Since $\ga \leq \tfrac{1}{4}<\tfrac{1}{2}$, choosing $K$ sufficiently large in terms of $C, A_{1}$, and then choosing~$s\geq \Sigma$ deterministic, we obtain 
\begin{align*}
\bigl| v(Ks,x) - c_{Ks}[v_{0}] \overline{P}(Ks,x) \bigr|  
\leq 2\biggl( 
\frac{\La(K+1)}{\la} \biggr)^{\!\frac d2+1}M\left(\frac{Ks}{R^{2}}\right)^{\!\!-\ga}(Ks)^{-\frac{d}{2}}\exp\left(-\frac{|x|^{2}}{A_{1}Ks}\right), 
\end{align*}
and this is \eqref{e.indc2} evaluated at time $Ks$. We also clearly have by \eqref{e.cbounds} that, for $\Sigma$ and $K$ sufficiently large, 
\begin{equation}\label{e.cbounds'}
\left| c_{Ks}[v_{0}]-c_{s}[v_{0}]\right|\leq (4\pi \La(K+1))^{\frac{d}{2}+1}M\left(\frac{Ks}{R^{2}}\right)^{\!\!-\gamma}. 
\end{equation}
Since $s\in [R^{2}, KR^{2}]$, the bound \eqref{e.baseint} and the prior estimates imply 
\begin{equation*}
|c_{Ks}[v_{0}]|\leq (4\pi \La (K+1))^{\frac{d}{2}}M\biggl (1+\left(\frac{Ks}{R^{2}}\right)^{\!\!- \gamma}\,\biggr )\leq (4\pi \La (K+1))^{\frac{d}{2}}M\prod_{j=1}^{\infty}\left(1+K^{-j\ga}\right)\,,
\end{equation*}
and this verifies \eqref{e.indc1}. 

\smallskip
\emph{Step 7.} The conclusion. 
For every $t>0$, let $j_{0}=\lfloor \log_{K}(t/s)\rfloor$ (so $t\geq K^{j_{0}s}$) for some $s\in [R^{2}, KR^{2}]$. This implies that $j_{0}\geq \log_{K}(t/KR^{2})$, and hence \eqref{e.cbounds'} yields 
\begin{align}\label{e.cauchy}
|c_{\infty}[v_{0}]-c_{t}[v_{0}]|
&\leq 
\sum_{j=j_{0}}^{\infty} |c_{K^{j+1}s}[v_{0}]-c_{K^{j}s}[v_{0}]|\\ \notag 
&\leq (4\pi\La)^{\frac{d}{2}}(K+1)^{\frac{d}{2}+1}M\int_{\log_{K}(t/KR^{2})}^{\infty}K^{-\ga x}\, dx
\\ \notag 
&
\leq (4\pi\La)^{\frac{d}{2}}(K+1)^{\frac{d}{2}+1}M\frac{1}{\ga |\log \ga|}\left(\frac{t}{KR^{2}}\right)^{\!\!-\ga}
\leq CM\left(\frac{t}{KR^{2}}\right)^{\!\!-\ga}.
\end{align}
We next define $c[v_{0}]:=c_{\infty}[v_{0}]=\lim_{t\to\infty} c_{t}[v_{0}]<\infty$. This implies 
\begin{align*}
\lefteqn{
|v(t,x)-c[v_{0}]\bar{P}(t,x)|
} \quad 
 \notag \\  & 
\leq |v(t,x)-c_{t}[v_{0}]\bar{P}(t,x)|+|c_{t}[v_{0}]-c[v_{0}]|\bar{P}(t,x)\\
&\leq \frac{\La}{\la}(K+1)^{\frac d2+1}
M
\biggl( \frac{t}{R^2} \biggr)^{\!\!- \gamma}
t^{-\frac{d}{2}}\exp \biggl(\! -\frac{|x|^2}{A_1t} \biggr)+CM\left(\frac{t}{KR^{2}}\right)^{\!\!-\ga}t^{-\frac{d}{2}}\exp \biggl( \!-\frac{|x|^2}{A_1t} \biggr)\\
&\leq CM\biggl( \frac{t}{R^2} \biggr)^{\!\!- \gamma}t^{-\frac{d}{2}}\exp \biggl(\! -\frac{|x|^2}{A_1t} \biggr).
\end{align*}
This is~\eqref{e.CLT}. 

\smallskip

\emph{Step 8.} In order to prove \eqref{e.int}, we note that for every $t\geq 0$, 
\begin{align}\label{e.sand2}
\left|\int_{\R^{d}} v_{0}(x)\, dx-c[v_{0}]\right|&\leq \left|\int_{\R^{d}} \left(v_{0}(x)-v(t,x)\right)\, dx\right|+|c_{t}[v_{0}]-c[v_{0}]|. 
\end{align}
For $\theta\in (0,1)$ to be chosen, letting $t=R^{2+\theta}$ in \eqref{e.cauchy}, we have 
\begin{equation}\label{e.finc}
|c[v_{0}]-c_{R^{2+\theta}}[v_{0}]|\leq CMR^{-\theta\ga}.
\end{equation}
We next claim that 
\begin{equation}\label{e.1fint}
\max_{t\in [0, R^{2+\theta}]} \left|\int_{\R^{d}} \left(v_{0}(x)-v(t,x)\right)\, dx\right|\leq CM\left(1+M^{-1}R^{d+\sigma}[v_{0}]_{C^{0, \sigma}(B_{R})}\right)R^{-\ga}.
\end{equation}
Let $\bar{v}$ solve 
\begin{equation*}
\begin{cases}
\partial_{t}\bar{v}-\tr(\ahom D^{2}\bar{v})=0&\text{in $(0, \infty)\times \R^{d}$}, \\
\bar{v}(0, \cdot)=v_{0}(\cdot)&\text{on $\R^{d}$}.
\end{cases}
\end{equation*}
Without loss of generality, we redefine $\sigma$ to be the minimum of the exponent from the H\"older exponent of the initial data and $\sigma(\la, \La, d)$ from the Krylov Safonov estimate. Proposition \ref{prop_homogenize} yields that, for $R\geq \mathcal{Y}$, 
\begin{align*}
\lefteqn{
\norm{v-\bar{v}}_{L^{\infty}((0,R^{2+\theta}]\times \R^d)}
} \qquad &  
\\ &
\leq C\left(\norm{v_{0}}_{L^{\infty}(B_{R})}+R^{\sigma}[v_{0}]_{C^{0, \sigma}(B_{R})}\right)R^{-\frac{\beta\sigma}{2}}+2\sup_{|x|\geq 2^{-1}R}|v_{0}(x)|\\
&\leq C\left(\norm{v_{0}}_{L^{\infty}(B_{R})}+R^{\sigma}[v_{0}]_{C^{0, \sigma}(B_{R})}\right)R^{-\frac{\beta\sigma}{2}},
\end{align*}
where the term $2\sup_{|x|\geq 2^{-1}R}|v_{0}(x)|$ can be absorbed into $\norm{v_{0}}_{L^{\infty}(B_{R})}R^{-\frac{\beta\sigma}{2}}$ using the assumption of Gaussian decay and $d>\frac{\beta\sigma}{2}$.  In particular,
\begin{align*}
\norm{v-\bar{v}}_{L^{\infty}((0,R^{2+\theta}]\times B_{R^{1+\theta}})}&\leq CM\left(1+M^{-1}R^{d+\sigma}[v_{0}]_{C^{0, \sigma}(B_{R})}\right)R^{-d}R^{-\frac{\beta\sigma}{2}},
\end{align*}
using $\norm{v_0}_{L^\infty(\R^d)}\leq CMR^{-d}$.  Furthermore, by \eqref{e.crude.v}, with $R\geq \Sigma$ deterministic, 
\begin{align*}
 \max_{t\in[0,R^{2+\theta}]}\int_{|x|\geq R^{1+\theta}} \left(|v(t,x)|+|\bar{v}(t,x)|\right)\, dx
&\leq CM\int_{|x|\geq R^{1+\theta}} R^{-d} \exp\left(\!-\frac{|x|^{2}}{4\La R^{2+\theta}}\right)\\
&\leq CMR^{-\frac{\beta\sigma}{2}}.
\end{align*}
Combining the above two displays yields that 
\begin{align*}
\max_{t\in[0,R^{2+\theta}]}\left| \int_{\R^{d}}\left(v(t,x)-\bar{v}(t,x)\right)\, dx\right|& \leq \max_{t\in[0,R^{2+\theta}]}\int_{|x|\geq R^{1+\theta}} \left(|v(t,x)|+|\bar{v}(t,x)|\right)\, dx\\
&\qquad+\norm{v-\bar{v}}_{L^{\infty}((0,R^{2+\theta}]\times B_{R^{1+\theta}})}CR^{d(1+\theta)}\\
&\leq CM\left(1+M^{-1}R^{d+\sigma}[v_{0}]_{C^{0, \sigma}(B_{R})}\right)R^{d\theta}R^{-\frac{\beta\sigma}{2}}.
\end{align*}
Choosing $\theta:= \frac{\beta\sigma}{8d}$ yields 
\begin{equation*}
\max_{t\in[0,R^{2+\theta}]}\left| \int_{\R^{d}}\left(v(t,x)-\bar{v}(t,x)\right)\, dx\right|\leq CM\left(1+M^{-1}R^{d+\sigma}[v_{0}]_{C^{0, \sigma}(B_{R})}\right)R^{-\ga}.
\end{equation*} 
Since $\bar{v}$ solves a constant coefficient PDE and preserves mass, this verifies \eqref{e.1fint}, after redefining $\ga$. 
Combining \eqref{e.1fint}, \eqref{e.finc} into \eqref{e.sand2} yields \eqref{e.int}. 
\end{proof}

We next show that Theorem \ref{t.GF} follows as an easy consequence of Theorem \ref{t.realthm}. 

\begin{proof}[Proof of Theorem \ref{t.GF}]
We fix $p\in (0,d)$ and let $\Y=\Y_{p}$ from Theorem \ref{t.realthm}. 
Fix~$y \in \cu_0$. 

\smallskip

We begin by verifying the hypotheses of Theorem \ref{t.realthm}. By Proposition \ref{le_prop_2}, we recall that there exists $c,C,\overline{\ka}\leq \underline{\ka}, b$ and $B$ such that, for every~$t \geq 1$,
\begin{equation}\label{e.startbump}
ct^{-\underline{\ka}}\exp\left(-\frac{\abs{x}^2}{b t}\right)
\leq P(t,x,y)
\leq 
Ct^{-\overline{\ka}}\exp\left(-\frac{\abs{x}^2}{B t}\right).\end{equation}
Let $R:=\mathcal{Y}\vee \Sigma$, where $\Sigma=\Sigma(\la, \La, d, \al_{0}, K_{0})$ is deterministic. Define 
\begin{equation}\label{e.Mdef}
M:=R^{d}\norm{P(R^{2}, \cdot,y)}_{L^{\infty}(\R^{d})}. 
\end{equation}
Observe that by \eqref{e.startbump}, we must have 
\begin{equation}\label{e.Msandwich}
cR^{d-2\underline{\ka}}\leq M \leq CR^{d-2\overline{\ka}}.
\end{equation}
We claim that there exists $C=C(\la,\La, d, \al_{0}, K_{0})\in [1, \infty)$ so that 
\begin{align}\label{e.Mbd}
R^{d}P(3R^{2},x,y)&\leq CM\exp\left(-\frac{\abs{x}^2}{B R^{2} \log R}\right).
\end{align}
First, observe that the maximum principle yields that 
\begin{equation*}
R^{d}\norm{P(3R^{2}, \cdot,y)}_{L^{\infty}(\R^{d})}\leq M.
\end{equation*}
If $|x|\leq R\sqrt{6B\log(R)(\underline{\ka}-\overline{\ka})}$, if $C\geq e^{6(\underline{\ka}-\overline{\ka})},$ then \eqref{e.Mbd} holds. If, on the other hand,  $|x|\geq R\sqrt{6B\log(R)(\underline{\ka}-\overline{\ka})}$, then by \eqref{e.startbump} and \eqref{e.Msandwich},
\begin{align*}
R^{d}P(3R^{2},x,y)
\leq CR^{d-2\overline{\ka}}\exp\left(-\frac{|x|^{2}}{3BR^{2}}\right)
&
= CR^{d-2\underline{\ka}}R^{2(\underline{\ka}-\overline{\ka})}\exp\left(-\frac{|x|^{2}}{3BR^{2}}\right)\\
&\leq CM\exp\left(-\frac{|x|^{2}}{BR^{2}\log R}\right)\,.
\end{align*}
This proves \eqref{e.Mbd}. Theorem \ref{t.realthm}, starting at time $3R^{2}$, now yields the existence of $m(y)\in (0,\infty)$, with 
\begin{equation}\label{e.mdef}
m(y):=\lim_{t\to \infty}\int P(t,x,y)\, dx,
\end{equation}
such that, for every time~$t\geq 2R^{2}\log R\geq 3R^{2}+R^{2}\log R$,
\begin{equation}
|P(t,x,y)-m(y)\overline{P}(t,x)|\leq CM(\log R)^{\frac{d}{2}}\left(\frac{t}{R^{2} \log R}\right)^{\!\!-\ga}t^{-\frac{d}{2}}\exp\left(-\frac{|x|^{2}}{Ct}\right). 
\end{equation}
For $s\in (0, \infty)$, define $c_{s}:=\int_{\R^{d}} P(s,x,y)\, dx$.  We next claim that
\begin{equation}\label{e.Mlower}
M\leq Cc_{3R^{2}}.
\end{equation}
Observe that by \eqref{e.startbump}, since 
\begin{equation*}
\lim_{|x|\to \infty} R^{d}P(R^{2},x,y)\leq \lim_{|x|\to \infty} CR^{d}R^{-2\overline{\ka}}\exp\left(-\frac{|x|^{2}}{BR^{2}}\right)=0<M, 
\end{equation*}
there exists $\bar{x}$ so that 
\begin{equation*}
M=R^{d}P(R^{2}, \bar{x}, y).
\end{equation*}
We now apply the parabolic Harnack inequality in $Q_{R}(R^{2}, \bar{x})$, to conclude that 
\begin{align*}
M=R^{d}P\left(R^{2}, \bar{x}, y\right)&=\sup_{x\in B_{R}(\bar{x})}R^{d}P(R^{2}, x, y)\leq C\inf_{x\in B_{R}(\bar{x})} R^{d}P\left(3R^{2},x,y\right)
\,.
\end{align*}
Hence 
\begin{equation*}
c_{3R^{2}}\geq \int_{|x-\bar{x}|\leq R} P\left(3 R^{2}, x,y\right)\, dx\geq C\left(\inf_{x\in B_{R}}P\left(3 R^{2},x,y\right)\right)R^{d}\geq CM.
\end{equation*}
This implies \eqref{e.Mlower}. 

\smallskip

Observe that at time~$t=3R^{2}$, we can apply the Krylov-Safonov estimates (Lemma~\ref{l.ksref}) with $r_{1}=R \sqrt{\log R}$ and $t_{0}=3R^{2}$, $r=(\sqrt{2})^{-1}R$, and the definition of $M$ to obtain
\begin{align*}
\lefteqn{
(R \sqrt{\log R})^{\sigma}[P(3R^{2}, \cdot, y)]_{C^{0,\sigma}(B_{R \sqrt{\log R}})}
} \quad 
\\ & 
\leq C(\log R)^{\frac{\sigma}{2}}\norm{P(\cdot, \cdot, y)}_{L^{\infty}([\frac{3}{2}R^{2}, 3R^{2}]\times B_{R\sqrt{\log R}})}\\
&\leq C(\log R)^{\frac{\sigma}{2}}\norm{P(R^{2}, \cdot, y)}_{L^{\infty}(\R^{d})}
=
C(\log R)^{\frac{\sigma}{2}}MR^{-d}. 
\end{align*}
The prior display, \eqref{e.int}, and \eqref{e.Mlower} yield
\begin{equation}\label{e.1int}
|m(y)-c_{3R^{2}}|\leq  CM(\log R)^{\frac{d}{2}}(1+(\log R)^{\frac{\sigma}{2}})R^{-\ga} \leq Cc_{3R^{2}}(\log R)^{\frac{d+\sigma}{2}} R^{-\ga}
\,.
\end{equation}
Hence, by \eqref{e.1int} and \eqref{e.Mlower}, we get
\begin{equation}\label{e.mlower}
m(y)\geq c_{3R^{2}}(1-C(\log R)^{\frac{d+\sigma}{2}}R^{-\ga})\geq  \frac{1}{2}c_{3R^{2}}\geq cM. 
\end{equation}
This yields that for $t\geq 2R^{2}\log R$, 
\begin{equation*}
|P(t,x,y)-m(y)\overline{P}(t,x)|\leq Cm(y)(\log R)^{\frac{d}{2}}\left(\frac{t}{R^{2}\log R}\right)^{\!\!-\ga}t^{-\frac{d}{2}}\exp\left(-\frac{|x|^{2}}{Ct}\right). 
\end{equation*}
Defining $\tilde{\Y}=\mathcal{Y}(\log \Y)^{\frac{d}{4\ga}+\frac{1}{2}}$, we observe that $\tilde{\Y}=\O_{q}(C)$ for any $q<p<d$, and we may simplify this statement to the statement that
\begin{equation*}
|P(t,x,y)-m(y)\overline{P}(t,x)|\leq Cm(y)\left(\frac{t}{\tilde{\Y}^{2}}\right)^{\!\!-\ga}t^{-\frac{d}{2}}\exp\left(-\frac{|x|^{2}}{Ct}\right), 
\end{equation*}
and since $y\in \cu_{0}$, this proves \eqref{e.GF}. \end{proof}

\section{Estimates of the stationary invariant measure}\label{s.invmeas}

\subsection{Existence and uniqueness of the stationary invariant measure}
\label{ss.m.exis.uniq}

We show that the results of the previous section imply the existence and uniqueness of a stationary invariant measure, up to multiplication by a constant. 

\smallskip

For existence, we note that the mapping  
\begin{equation*}
v_0 \mapsto c[v_0] 
\end{equation*}
given by Theorem~\ref{t.realthm} is a linear functional on~$C_c^{0,\sigma}(\Rd)$. In view of Theorem~\ref{t.GF} and linearity, using the Green function formula, with~$m$ as in the statement of the theorem, we have for any $v_{0}$ satisfying \eqref{e.intgauss},
\begin{equation}\label{e.cformula}
c[v_0] 
=\lim_{t\to\infty}\int\int P(t,x,y)v_{0}(y)\, dy\, dx=
\int_{\Rd} m(y) v_0(y)\,dy
\,.
\end{equation}
By~\eqref{e.int}, $\int_{\R^{d}} m(y)v_{0}(y)\, dy$ is finite for every $v_{0}$ with $R^{\sigma}[v_{0}]_{C^{0, \sigma}(B_{R})}<\infty$ satisfying \eqref{e.intgauss}. The~$\Zd$--stationarity of~$m$ is immediate from that of the parabolic Green function.

\smallskip

We next check that~$m$ is the density of a measure satisfying~\eqref{e.invmeas.weak}.
Select~$\varphi \in C^\infty_c(\Rd)$ and let~$v$ be the solution of the initial value problem 
\begin{equation*}
\left\{
\begin{aligned}
& \partial_t v - \tr \bigl( \a D^2 v \bigr) = 0 
& \mbox{in} 
& \ (0,\infty) \times \Rd\,,
\\ &
v(0,\cdot) = \varphi 
& \mbox{on} 
& \ \Rd\,.
\end{aligned}
\right.
\end{equation*}
It is clear from the statement of Theorem~\ref{t.realthm} that the functional~$c[\cdot]$ is invariant under the parabolic flow; that is, 
\begin{equation*}
c[\varphi] = c [v(t,\cdot) ], \quad \forall t \in (0,\infty). 
\end{equation*}
Therefore, by \eqref{e.cformula}, and since $\vp\in C^{\infty}_{c}(\R^{d})$, for every~$t\in (0,\infty)$, 
\begin{equation*}
0 
=
\partial_t \int_\Rd m(y) v(t,y)\,dy 
=
\int_\Rd m(y) \partial_t v(t,y)\,dy
=
\int_\Rd m(y) \tr\bigl( \a D^2v(t,y) \bigr)\,dy\,.
\end{equation*}
Sending~$t\downarrow 0$ yields
\begin{equation*}
\int_{\R^{d}} m(y) \tr \bigl( \a D^2\varphi(y) \bigr) \, dy
= 0,
\end{equation*}
which completes the proof of~\eqref{e.invmeas.weak} for~$\mu$ with~$d\mu = m(x)\,dx$. 
It follows from deterministic regularity estimates---see Proposition~\ref{p.Holder.continuity} in the appendix---that~$m\in L^\infty_{\mathrm{loc}}(\Rd)$. In fact, we have by our assumption~\eqref{e.holder} that~$m\in C^{0,\alpha_0}_{\mathrm{loc}}(\Rd)$. 

\smallskip

Turning to the question of uniqueness, suppose that~$\mu$ is another invariant measure which is~$\Zd$--stationary and satisfies~$\E [ | \mu|(\cu_{0}) ] <\infty$. 
Then we have that the quantity 
\begin{equation*}
\int_\Rd P(t,x,y)  \,d\mu(x)
\end{equation*}
is~$\P$--a.s.~defined for all~$(t,y)\in(0,\infty)\times\Rd$.
Using the invariance of the measure, we have 
\begin{equation*}
\partial_t \int_\Rd P(t,x,y)  \,d\mu(x) =0.
\end{equation*}
By a corollary of the ergodic theorem, since~$\mu$ is stationary, then
\begin{equation*}
\lim_{t\to \infty} 
\int_\Rd P(t,x,y)  \,d\mu(x)
=
\overline{\mu} 
\lim_{t\to \infty} 
\int_\Rd P(t,x,y)\,dx
=
\overline{\mu} 
m(y)
\,.
\end{equation*}
The proof of the first equality follows essentially the same argument as the argument found at the end of Section~\ref{ss.mto1} (the proof of \eqref{e.weaklim.quant}), using the H\"older continuity of $P(t, \cdot, y)$, and we refer the reader to that argument for more details.
Thus
\begin{equation*}
d\mu(y) = \lim_{t\to 0} \int_\Rd P(t,x,y)\,d\mu(x) =
\lim_{t\to \infty} 
\int_\Rd P(t,x,y)  \,d\mu(x)
=\overline{\mu} 
m(y)\,.
\end{equation*}
This concludes the proof of uniqueness, and note that we will prove an even stronger uniqueness statement in Section \ref{s.C01}. 

\smallskip

We next prove the pointwise estimates on~$m$.
Consider a cutoff function~$\varphi\in C^\infty_c(\Rd)$ satisfying
\begin{equation*}
\indc_{B_R} 
\leq 
\varphi 
\leq 
\indc_{B_{2R}}
\quad \mbox{and}  \quad
\bigl\| D \varphi \bigr\|_{L^\infty} 
\leq 2R^{-1}\,.
\end{equation*}
Owing to Theorem~\ref{t.realthm}, we have that, for every~$R \geq \Y$, 
\begin{equation*}
\int_{B_{R}} m(y)\, dy \leq 
\int_{B_{2R}} \varphi(y) m(y)\, dy = c[\varphi] \leq CR^{d}+CR^{d-\ga}\,.
\end{equation*}
That is, for every $R\geq \Y$, 
\begin{equation}\label{e.mL1}
\| m \|_{\underline{L}^1(B_R)} \leq C. 
\end{equation}
We may next apply Proposition~\ref{p.Guido}, which we note is a scale-invariant estimate, to obtain the existence of an exponent~$\delta(d,\lambda,\Lambda)> 0$ such that, for every~$R \geq \mathcal{Y}$, 
\begin{equation}
\label{e.mqint}
\| m \|_{\underline{L}^{\frac{d}{d-1}+\delta} (B_{R/2}) }
\leq 
C\,.
\end{equation}
Note that the constant~$C$ in~\eqref{e.mqint} depends only on~$(d,\lambda,\Lambda)$ and not on~$(\alpha_0,K_0)$. 
By H\"older's inequality, this implies that, for some exponent~$\delta(d,\lambda,\Lambda)> 0$, which is not the same~$\delta$ as above, but can be computed in terms of it, 
\begin{equation*}
\sup_{x\in B_{R/4}} 
\| m \|_{L^1(x+\cu_0)} 
\leq 
CR^{d-1-\delta} \,.
\end{equation*}
Appealing now to Proposition~\ref{p.Holder.continuity} yields, for every~$R \geq \mathcal{Y}$,  
\begin{equation*}
\| m \|_{L^\infty(B_{R/4})}
\leq 
C R^{d-1-\delta}, 
\end{equation*}
with the constant~$C$ now depending additionally on~$(\alpha_0,K_0)$ in~\eqref{e.holder}, as well as on~$(d,\lambda,\Lambda)$. 
In particular, for every~$s \in (0,d)$, we have the estimate
\begin{equation*}
\P \bigl[ \| m \|_{L^\infty(B_{R/4})} > C R^{d-1-\delta} \bigr]
\leq 
\exp \bigl( -cR^s \bigr) \,.
\end{equation*}

\subsection{Proof of Theorem~\ref{t.mto1}}\label{ss.mto1}
We will now see that the assertions of Theorem~\ref{t.mto1} for~$m$ and $\A m$ are a simple consequence of Theorem~\ref{t.realthm} and Theorem~\ref{t.GF}. 

\begin{proof}[Proof of the estimate for the first term in~\eqref{e.m.conv}] 
Select~$\varphi\in C^\infty_c(\Rd)$ satisfying
\begin{equation*}
R^{-d}\indc_{R\cu_{0}} 
\leq 
\varphi 
\leq R^{-d}\indc_{R(1+R^{-\alpha})\cu_{0}}
\quad \mbox{and}  \quad
\bigl\| D \varphi \bigr\|_{L^\infty} 
\leq 2R^{-d-1+\alpha}\,,
\end{equation*}
for $\al\in (0,1]$ to be defined. 
Applying Theorem~\ref{t.realthm}, for $R \geq \Y$,  we deduce that 
\begin{equation*}
\bigl| c[\varphi] - 1\bigr|
\leq
\Bigl| c[\varphi] - \int \vp(x)\, dx \Bigr|+CR^{-\al}
\leq C(1+R^{\al})R^{-\ga}+CR^{-\al}\leq 
CR^{\alpha-\gamma}
+
CR^{-\alpha}
\,.
\end{equation*}
Also, using \eqref{e.mqint}, for $R\geq \Y_{p}$, for any $q\in [1, \frac{d}{d-1}]$,
\begin{equation*}
\int_{R(1+R^{-\alpha})\cu_{0} \setminus R\cu_{0}}
\varphi(x) m(x)\,dx 
\leq 
R^{-\frac{d}{p}}\left|R(1+R^{-\alpha})\cu_{0} \setminus R\cu_{0}\right|^{\frac{1}{p}}\norm{m}_{\underline{L}^{q}(2R\cu_{0})}
\leq CR^{-\frac{\al}{p}}.
\end{equation*}
From these, it follows from \eqref{e.cformula} that
\begin{equation*}
\biggl| 
\fint_{R\cu_{0}} m(x) \,dx 
-1
\biggr|
\leq |c[\varphi]-1|+\int_{R(1+R^{-\alpha})\cu_{0} \setminus R\cu_{0}}
\varphi(x) m(x)\,dx
\leq 
CR^{\alpha -\gamma}
+
CR^{-\frac{\al}{p}}
\,.
\end{equation*}
Taking~$\alpha\leq \frac{\ga}{2}$ sufficiently small, depending only on~$(d,\lambda,\Lambda)$, both of the exponents of~$R$ on the right side are negative. After relabeling $\ga$, we have shown that, for $R\geq \Y$, 
\begin{equation}\label{e.mavg}
\biggl| 
\fint_{R\cu_{0}} m(x) \,dx 
-1
\biggr|\leq CR^{-\ga},
\end{equation}
and this is the first half of \eqref{e.m.conv}.  
\end{proof}

We next give a quantitative rate of convergence for spatial averages of $m\A$ to $\ahom$. Observe that, as a consequence of the ergodic theorem, this proof in fact demonstrates the relation,
\begin{equation*}
\ahom_{ij}=\E\left[\fint_{\cu_{0}}\A_{ij} m\, dx\right].
\end{equation*}
We will use the stationary approximate correctors defined in~\eqref{e.dvd.def}. 
\begin{proof}[Proof of the estimate for the second term in~\eqref{e.m.conv}] 
Let $p\in(0,d)$, let $p<p'<2p \wedge d$, and let $\Y_{p'}$ denote the random variable defined simultaneously in Theorem~\ref{t.realthm} and in Proposition~\ref{p.delta.vee.delta}.  In the latter case, this says that, 
for every~$\delta \in (0,\Y_{p'}^{-1} ]$,  
\begin{equation}
\label{e.delta.vee.delta.app}
\sup_{x\in B_{\delta^{-100}}}|\ahom_{ij}-\delta^{2}w^{\delta}_{ij}(x)|\leq C\delta^{\beta}
\,.
\end{equation}
For every $\delta,R\in(0,\infty)$,  for $v^R_{i,j}$ the solution of
\begin{equation*}
\begin{cases}
\partial_t v^R_{ij}-\tr (\A D^{2}v^R_{ij})=0 &\text{in $(0,\infty)\times\R^d$}, \\
v^R_{ij}(0, \cdot)=\mathbf{1}_{B_{R}}\a_{ij}&\text{on $\R^d$, }
\end{cases}
\end{equation*}
we have that the function $w^{\delta,R}_{ij}$ defined by
\begin{equation}\label{am_7}\delta^2 w^{\delta,R}_{ij}(x) = \delta^2\int_0^\infty \exp(-s\delta^2) v^R_{ij}(s,x)\, ds,\end{equation}
is the unique (vanishing at infinity) solution of the equation
\[\delta^2w^{\delta,R}_{ij} - \tr(\a D^2w^{\delta,R}_{ij}) = \mathbf{1}_{B_R}\a_{ij}\;\;\textrm{in}\;\;\R^d.\]
Observe that the function $R^{-d}v^{R}_{ij}$ also solves the same PDE with the initial condition $R^{-d}\mathbf{1}_{B_{R}}\a_{ij}$, which satisfies the hypotheses of Theorem \ref{t.realthm}. Combined with \eqref{e.cformula}, this implies there exists a constant $C=C(\la, \La, d)\in [1, \infty)$ such that for $R\geq \Y_{p'}$, 
\begin{align}\label{e.cform}
&\Big|R^{-d}\int_{\R^{d}} \delta^2 w^{\delta,R}_{ij}\, dx- \fint_{B_R} \a_{ij} m\, dx\Big|\\
&=\left|\int_{\R^{d}}\delta^2\int_0^\infty \exp(-s\delta^2)\left(R^{-d}v^R_{ij}(s,x)-c[R^{-d}\mathbf{1}_{B_{R}}\a_{ij}]\overline{P}(s,x)\right)\, ds\, dx\right|\notag\\
&\leq C\delta^{2}R^{2+\frac{p'}{p}}+\delta^{2}\int_{R^{2+\frac{p'}{p}}}^{\infty} C\exp(-s\delta^2)\left(\frac{s}{R^{2}}\right)^{-\ga}\, ds\notag
\\ & \leq C\delta^{2}R^{2+\frac{p'}{p}}+C\exp(-\delta^{2}R^{2+\frac{p'}{p}})\leq C\delta^{2}R^{2+\frac{p'}{p}}, \notag
\end{align}
in the regime that $\delta^2R^{2+\frac{p'}{p}}\leq 1$.  We now estimate
\begin{equation}\label{am_1}
R^{-d}\int_{\R^{d}} \delta^2 w^{\delta,R}_{ij}\, dx= R^{-d}\int_{B_{R-R^{\theta}}}  \delta^2 w^{\delta,R}_{ij} \, dx + R^{-d}\int_{\R^d\setminus B_{R-R^{\theta}}}\delta^2 w^{\delta,R}_{ij} \, dx.
\end{equation}
Recall that by Proposition~\ref{le_prop_2} and \eqref{am_7}, for some $C\in(0,\infty)$, for every $\theta'\in(0,1)$,
\[ \sup_{x\in B_{R-R^{\theta}}}\abs{w^{\delta,R}_{ij}(x)-w^\delta_{ij}(x)}  \leq C\biggl(\exp(-\delta^{-2\theta'})+\exp\biggl(-\frac{\delta^{2+2\theta'}R^{2\theta}}{2\Lambda}\biggr) \biggr).\]
Fix $\delta^{-1} = R^{\frac{\theta}{1+\theta'}-\frac{p'}{2p}}$ with $0<\theta'<\theta<1$ so that $\frac{\theta}{1+\theta'}>\frac{p'}{2p}$. 
To treat the first term on the righthand side of \eqref{am_1}, by \eqref{e.delta.vee.delta.app}, and the choice of constants, there exists $C\in(0,\infty)$,  $\gamma=\ga(\la, \La, d, p)\in(0,1)$, such that, for all $R\geq \Y_{p'}$,
\begin{equation}\label{am_4}\abs{R^{-d}\int_{B_{R-R^{\theta}}}  \delta^2 w^{\delta,R}_{ij}\, dx- \ahom_{ij}} \leq CR^{-\gamma}.\end{equation}
Moreover, it follows from Proposition~\ref{le_prop_2} that, for every $\abs{x}\geq R+R^{\theta}$ and $\ve\in(0,1)$,
\[\abs{\delta^2w_{ij}^\delta(x)}\leq C\biggl(\exp(-\delta^2(\abs{x}-R)^{2-\ve})+\exp\biggl(-\frac{(\abs{x}-R)^{\ve}}{2\Lambda}\biggr)\biggr).\]
We observe that $\frac{\theta}{1+\theta'}-\frac{p'}{2p}>0$ guarantees that for all $\ve\in(0,1)$ sufficiently small
\[\big(1-\frac{\ve}{2}\big)\theta>\frac{\theta}{1+\theta'}-\frac{p'}{2p},\]
and therefore that there exists $\gamma'\in(0,1)$ and $C,C'\in(0,\infty)$ such that, for every $\abs{x}\geq R+R^\theta$,
\[\abs{\delta^2w_{ij}^\delta(x)}\leq C\exp(-C'(\abs{x}-R)^{\gamma'}).\]
We then have from the choice of constants that, for every $R\geq \Y_{p'}$, the second term on the righthand side of \eqref{am_1} satisfies,  for some $C\in(0,\infty)$,
\begin{equation}\label{am_3}\abs{R^{-d}\int_{\R^d\setminus B_{R-R^{\theta}}}\delta^2 w^{\delta,R}_{ij}\, dx}\leq C R^{\theta-1}.\end{equation}
After returning to \eqref{am_1}, the claim follows from the choice of constants and \eqref{e.cform}, \eqref{am_4}, and \eqref{am_3}, that for $R \geq \Y_{p'}$,
\begin{equation*}
\left|\fint_{B_{R}} \a_{ij}m\, dx-\ahom_{ij}\right|\leq CR^{-\ga}.
\end{equation*}
This completes the proof. 
\end{proof}

To prove the bounds for~$m^\ep$ in~\eqref{e.weaklim.quant}, we need the following general multiscale lemma. We put a slash in our sums to denote averaging; that is, if~$S$ is any finite set, then 
\begin{equation*}
\avsum_{x\in S} f(x) : = \frac{1}{|S|} \sum_{x\in S} f(x) \,.
\end{equation*}

\begin{lemma}\label{l.multiscale}
For every~$\alpha \in (0,1]$, there exists a constant~$C(\alpha,d)<\infty$ such that, for every~$f\in L^{1}(\cu_{0})$, 
\begin{equation*}
\| f-(f)_{\cu_{0}}\|_{W^{-\al,1}(\cu_0)} 
\leq 
C \sum_{n=-\infty}^0
3^{n\al}
\avsum_{z\in 3^n\Zd\cap \cu_0} 
\Bigl| 
\fint_{z+\cu_n} \left(f(x)-(f)_{\cu_{0}}\right)\, dx
\Bigr|
\,,
\end{equation*}
where we recall that $(f)_{\cu_{0}}=\fint_{\cu_{0}}f$.
\end{lemma}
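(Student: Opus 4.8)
The plan is to decompose the function $f - (f)_{\cu_0}$ into a telescoping sum of "increments" across triadic scales, test against a $W^{\al,\infty}$ function, and estimate each scale separately. Write $g := f - (f)_{\cu_0}$, so $(g)_{\cu_0} = 0$, and let $\vp \in C^\infty_c(\cu_0)$ (or Lipschitz-type) with $\norm{\vp}_{W^{\al,\infty}(\cu_0)} \leq 1$ be an arbitrary test function, so that $\norm{g}_{W^{-\al,1}(\cu_0)} = \sup_\vp \int_{\cu_0} g\vp$. For each $n \leq 0$, let $\mathcal{Q}_n$ denote the collection of triadic subcubes of $\cu_0$ of the form $z + \cu_n$ with $z \in 3^n\Zd \cap \cu_0$, and let $E_n g := \sum_{Q \in \mathcal{Q}_n} (g)_Q \indc_Q$ denote the conditional expectation of $g$ onto the $\sigma$-algebra generated by $\mathcal{Q}_n$. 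Since $E_0 g = (g)_{\cu_0} = 0$ and $E_n g \to g$ in $L^1(\cu_0)$ as $n \to -\infty$, we have the telescoping identity $g = \sum_{n=-\infty}^{-1}(E_{n}g - E_{n+1}g)$ in $L^1$.

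Next I would test this decomposition against $\vp$. For each $n$, $\int_{\cu_0}(E_n g - E_{n+1}g)\vp\,dx = \int_{\cu_0}(E_n g - E_{n+1}g)(\vp - E_{n+1}\vp)\,dx$, using that $E_n g - E_{n+1}g$ is orthogonal (in the $L^2$-pairing sense, which extends to $L^1$–$L^\infty$ here) to functions measurable with respect to the coarser $\sigma$-algebra $\mathcal{Q}_{n+1}$. By the $C^{0,\al}$ regularity of $\vp$, on each cube $Q \in \mathcal{Q}_{n+1}$ we have $\norm{\vp - (\vp)_Q}_{L^\infty(Q)} \leq C 3^{(n+1)\al}\norm{\vp}_{W^{\al,\infty}} \leq C 3^{n\al}$. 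Therefore
\begin{equation*}
\Bigl| \int_{\cu_0}(E_n g - E_{n+1}g)\vp\,dx \Bigr|
\leq C 3^{n\al} \norm{E_n g - E_{n+1}g}_{L^1(\cu_0)}
\leq C 3^{n\al}\bigl( \norm{E_n g}_{L^1} + \norm{E_{n+1}g}_{L^1}\bigr).
\end{equation*}
Since $\norm{E_n g}_{L^1(\cu_0)} = \sum_{Q \in \mathcal{Q}_n} |Q|\, |(g)_Q| = \avsum_{z \in 3^n\Zd \cap \cu_0}\bigl|\fint_{z+\cu_n} g\bigr|$ (the averaged sum over $z$ being exactly $\frac{1}{|\mathcal{Q}_n|}\sum |(g)_Q|$ and $|\mathcal{Q}_n| = 3^{-nd} = |\cu_0|/|\cu_n|$ times the measure normalization working out), summing over $n$ from $-\infty$ to $-1$ and taking the supremum over $\vp$ gives the claimed bound, after absorbing the $n+1$ terms into the $n$ terms by a shift of index (which only changes the constant, since $3^{(n-1)\al} \leq 3^{n\al}$).

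The main technical point to get right is the bookkeeping between the averaged sums $\avsum_{z}$ appearing in the statement and the genuine $L^1$ norms of the conditional expectations — specifically tracking the volume normalizations $|\cu_n| = 3^{nd}$ and the cardinality $|3^n\Zd \cap \cu_0| = 3^{-nd}$ — so that the constant $C(\al,d)$ is indeed finite and independent of $f$; this is routine but must be done carefully. A secondary point is justifying the $L^1$–$L^\infty$ duality characterization of the negative Sobolev norm $W^{-\al,1}$ and the orthogonality $\int (E_n g - E_{n+1}g)(E_{n+1}\vp) = 0$; both follow from the fact that $E_{n+1}$ is a self-adjoint projection on $L^2$ and a density argument, or directly from the definition of conditional expectation, so no real obstacle arises there. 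The one genuinely substantive choice is that the telescoping sum converges and can be tested term by term against $\vp$, which is justified by $E_n g \to g$ in $L^1$ and the absolute summability $\sum_n 3^{n\al}\norm{E_n g}_{L^1} < \infty$ of the bound (if the right-hand side of the lemma is infinite there is nothing to prove).
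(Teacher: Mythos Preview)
Your proposal is correct and follows essentially the same multiscale telescoping argument as the paper: both decompose the pairing $\int g\varphi$ across triadic scales and use the $C^{0,\alpha}$ regularity of the test function to bound each increment by $C\,3^{n\alpha}$ times the averaged sum of cube means. The only cosmetic difference is that the paper iterates by replacing $\varphi$ with its point values at cube centers, whereas you telescope $g$ via conditional expectations and use the self-adjointness of $E_n$; these are dual formulations of the same computation.
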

\begin{proof}
Let~$f\in L^1(\cu_0)$ with~$(f)_{\cu_{0}}=0$. Consider a test function~$\phi \in C^{0,\alpha}(\cu_0)$ with $\norm{\phi}_{C^{0, \al}(\cu_{0})}\leq 1$. Let $\mathcal{I}_{n}:=3^{n}\Z^{d}\cap \cu_{0}$. Then for every integer $n\leq -1$, for every $z\in \mathcal{I}_{n+1}$, 
\begin{align*}
\int_{z+\cu_{n+1}}f(x)(\phi(x)-\phi(z))\, dx&=\sum_{y\in \mathcal{I}_{n}\cap (z+\cu_{n+1})}\int_{y+\cu_{n}} f(x)(\phi(x)-\phi(y))\, dx\\
&\quad +\sum_{y\in \mathcal{I}_{n}\cap (z+\cu_{n+1})} (\phi(y)-\phi(z))\int_{y+\cu_{n}}f(x)\, dx. 
\end{align*}
Since $|y-z|\leq \sqrt{d}3^{n+1}$, we can bound the second term using the regularity of $\phi$ to conclude that 
\begin{align*}
\lefteqn{\sum_{\substack{y\in \mathcal{I}_{n}\cap (z+\cu_{n+1})}} (\phi(y)-\phi(z))\int_{y+\cu_{n}}f(x)\, dx}\\
&\qquad\qquad\qquad\qquad\qquad
\leq (\sqrt{d}3^{n+1})^{\al}|\cu_{n}| \sum_{y\in \mathcal{I}_{n}\cap (z+\cu_{n+1})} \biggl| \fint_{y+\cu_{n}}f(x)\, dx \biggr| \\
&\qquad\qquad\qquad\qquad\qquad= C3^{n\al}|\cu_{n}|  \sum_{y\in \mathcal{I}_{n}\cap (z+\cu_{n+1})} \biggl|   \fint_{y+\cu_{n}}f(x)\, dx \biggr|.
\end{align*}
Combining the above and summing over $z\in \mathcal{I}_{n+1}$, we have
\begin{align*}
&
\sum_{z\in \mathcal{I}_{n+1}} \int_{z+\cu_{n+1}}f(x)(\phi(x)-\phi(z))\, dx
\\ & \qquad 
\leq 
\sum_{y\in \mathcal{I}_{n}}\int_{y+\cu_{n}} f(x)(\phi(x)-\phi(y))\, dx
+ C3^{n\al}|\cu_{n}| \sum_{y\in \mathcal{I}_{n}}  \biggl|  \fint_{y+\cu_{n}}f(x)\, dx \biggr|
\\ & \qquad 
=\sum_{y\in \mathcal{I}_{n}}\int_{y+\cu_{n}} f(x)(\phi(x)-\phi(y))\, dx
+C3^{n\al}\avsum_{y\in \mathcal{I}_{n}}  \biggl|  \fint_{y+\cu_{n}}f(x)\, dx \biggr|
\,.
\end{align*}
Observe that by the triangle inequality,
\begin{equation*}
\lim_{n\to-\infty}3^{n\al}\avsum_{y\in \mathcal{I}_{n}}  \biggl|  \fint_{y+\cu_{n}}f(x)\, dx \biggr| \leq \lim_{n\to-\infty}3^{n\al}\int_{\cu_{0}}\abs{f(x)}\, dx=0,
\end{equation*}
since $f\in L^1(\cu_{0})$.  We now iterate this in $n$, so for $z\in \cu_{0}$, 
\begin{align*}
\int_{\cu_{0}}f(x)&(\phi(x)-\phi(z))\, dx\leq \sum_{n=-\infty}^{0}C3^{n\al}\avsum_{y\in \mathcal{I}_{n}}  \Bigl|  \fint_{y+\cu_{n}}f(x)\, dx \Bigr|. 
\end{align*}
Finally, since $(f)_{\cu_0}= 0$, this is equivalent to the claim. 
\end{proof}

\begin{proof}[{Proof of the estimate~\eqref{e.weaklim.quant}}]
We prove the first half of \eqref{e.weaklim.quant}. For $\ga\in (0,1)$ to be chosen, we express
\begin{equation*}
\|m^{\ve}-1\|_{W^{-\frac{\ga}{2},1}(\cu_{0})}\leq \|m^{\ve}-(m^{\ve})_{\cu_{0}}\|_{W^{-\frac{\ga}{2},1}(\cu_{0})}+\left|(m^{\ve})_{\cu_{0}}-1\right|, 
\end{equation*}
and by an application of Lemma~\ref{l.multiscale}, and for any $N<0$,
\begin{align*}
\|m^{\ve}-1\|_{W^{-\frac{\ga}{2},1}(\cu_{0})}
&
\leq \sum_{n=-\infty}^{N}C3^{n\frac{\ga}{2}}\avsum_{y\in \mathcal{I}_{n}}  \Bigl|  \fint_{y+\cu_{n}}\left(m^{\ve}(x)-(m^{\ve})_{\cu_{0}}\right)\, dx \Bigr|
\\ & \quad 
+ \sum_{n=N}^{0}C3^{n\frac{\ga}{2}}\avsum_{y\in \mathcal{I}_{n}}  \Bigl|  
\fint_{y+\cu_{n}} \!\!\! 
\left(m^{\ve}(x)-(m^{\ve})_{\cu_{0}}\right)\, dx \Bigr|
+\left|(m^{\ve})_{\cu_{0}}-1\right|.
\end{align*}
Fix $q\in (0,d)$ and let $p=q+\frac{d-q}{2}>q$. Let $\bar{\Y}:=\tilde{\Y}_{2,p}^{\frac{p}{q}}$ for $\tilde{\Y}_{2,p}$ as in Remark \ref{r.Ystep1}, with $\ve^{-1}\geq \bar{\Y}$. Observe that $\bar{\Y}=\O_{q}(C)$. Let $N<0$ so that $\ve\leq 3^{\frac{p}{p-q}N}$, and this entails that $3^{N}\ve^{-1}\geq \ve^{-\frac{q}{p}}\geq \tilde{\Y}_{2,p}$. Thus, by Remark \ref{r.Ystep1}, for each $n\in [N,0]$, 
\begin{equation*}
3^{n}\ve^{-1}\geq 3^{N}\ve^{-1}\geq \sup_{\substack{{n\in \Z}\\{n<0}}} \sup_{z\in 3^{2n}\Z^{d}\cap \cu_{0}} \tau_{z} \mathcal{Y}_{p} \geq \sup_{n\in [N,0]} \sup_{z\in \mathcal{I}_{n}} \tau_{z}\mathcal{Y}_{p},
\end{equation*}
 so there exists $\ga=\ga(\la, \La, d, p)$ such that by \eqref{e.mavg}, 
\begin{align*}
\avsum_{y\in \mathcal{I}_{n}}  \Bigl|  \fint_{y+\cu_{n}}\left(m^{\ve}(x)-(m^{\ve})_{\cu_{0}}\right)\, dx \Bigr|\leq C(3^{-n}\ve)^{\ga}\leq C3^{\frac{q}{p-q}N\ga}.
\end{align*}
Furthermore, by \eqref{e.mqint}, for $n\leq N$, 
\begin{equation*}
\avsum_{y\in \mathcal{I}_{n}}  \Bigl|  \fint_{y+\cu_{n}}\left(m^{\ve}(x)-(m^{\ve})_{\cu_{0}}\right)\, dx \Bigr|\leq \norm{m^{\ve}}_{L^{1}(\cu_{0})}+\left|(m^{\ve})_{\cu_{0}}\right|\leq C
\,.
\end{equation*}
Therefore, 
\begin{equation*}
\|m^{\ve}-1\|_{W^{-\frac{\ga}{2},1}(\cu_{0})}
\leq 
\sum_{n=-\infty}^{N}C3^{n\frac{\ga}{2}}+C|N|3^{\frac{q}{p-q}N\ga}
\leq C3^{\frac{q}{p-q}N\frac{\ga}{2}}
=
C\ve^{\frac{q}{p}\frac{\ga}{2}}
\,. 
\end{equation*}
The second half of \eqref{e.weaklim.quant} follows from an identical analysis, replaying $m^{\ve}$ by $\a_{\ve}m^{\ve}$.
\end{proof}

\subsection{Large-scale regularity for invariant measures}\label{s.C01}

In Section~\ref{ss.m.exis.uniq}, we proved the uniqueness of~$m$ with respect to the class of \emph{stationary} weak solutions of the equation~\eqref{e.invmeas}. This is a fairly weak uniqueness criterion. In particular, we have not yet proven that the invariant measure is unique in a \emph{quenched} sense; i.e., if~$m$ is unique for any \emph{particular} realization of the coefficients. We will now show that this is indeed the case, for~$\P$--almost every realization of the coefficients.

\smallskip

In fact, we will show more, namely a quantitative version of this uniqueness statement which says that any weak solution of~\eqref{e.invmeas} in a (large) domain must be close, on smaller subsets of this domain, in a quantitative sense, to a multiple of~$m$. This takes the form of a ``large-scale~$C^{0,1}$ estimate'' for solutions of~\eqref{e.invmeas}.

\smallskip

\begin{lemma}[Harmonic approximation]
\label{l.harmapprox}
Let~$\delta\in (0,\tfrac12]$ and $p\in (0,d)$. Let~$R\geq 1$ and~$\nu$ be a measure with support in~$B_{2R}$ which satisfies~\eqref{e.invmeas.weak} with~$U=B_{2R}$. 
Let~$\{ \eta_r \}_{r>0}$ denote the standard mollifier. Define, for every $r \in [0,R]$, 
\begin{equation}
\label{e.Nr}
N_r(x)
:= 
(\eta_r \ast \nu)(x)
:= 
\int_{\Rd} \eta_r(x-y) \,d\nu(y)\,,
\quad
x\in \Rd\,.
\end{equation}
Then there exist constants~$C=C(\delta,d,\lambda,\Lambda)\in [1,\infty)$, and $\al=\al(\la, \La, d, p)\in (0,1)$ and a random variable $\X$ with $\X=\O_{p}(C)$ such that, for every~$r \in [ \X ,R^{1-\delta}]$, we have the estimate
\begin{equation}
\label{e.harmapprox}
\bigl\| \tr \bigl( \ahom D^2 N_r \bigr) \bigr\|_{L^\infty(B_R)} 
\leq 
Cr^{-d-2-\alpha}| \nu |(B_{2R})
\,.
\end{equation}
\end{lemma}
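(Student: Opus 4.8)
The key observation is that $N_r = \eta_r * \nu$ is \emph{not} $\ahom$-harmonic, but it is a mollification of an invariant measure, and the quantity we want to bound, $\tr(\ahom D^2 N_r)$, measures exactly the defect. Since $\nu$ solves the doubly-divergence equation $\partial_i \partial_j(\a_{ij}\nu) = 0$ weakly, we can write, for any test function $\psi$,
\[
\int \tr(\ahom D^2 N_r)\,\psi\,dx
= \int \tr(\ahom D^2(\eta_r * \psi))\,d\nu
= \int \tr\bigl((\ahom - \a(x)) D^2(\eta_r * \psi)(x)\bigr)\,d\nu(x),
\]
using the invariance of $\nu$ to insert $-\a D^2(\eta_r * \psi)$ for free. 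So pointwise, $\tr(\ahom D^2 N_r)(x_0)$ is tested against $\psi = \delta_{x_0}$, i.e. we must estimate $\int \tr\bigl((\ahom - \a(y)) D^2_y \eta_r(x_0 - y)\bigr)\,d\nu(y)$. The plan is to recognize this as a homogenization error: $\eta_r(x_0 - \cdot)$ is a smooth bump at scale $r$, and $(\ahom - \a)$ integrated against its Hessian should be small because the heterogeneity is averaging out over scale $r \gg 1$. The natural tool to make this quantitative is Theorem~\ref{t.realthm} (the quantitative local CLT) or, more directly, the stationary approximate correctors from Proposition~\ref{p.delta.vee.delta} together with Lemma~\ref{l.refinealg}; I expect the cleanest route uses the Cauchy-problem homogenization estimate of Proposition~\ref{prop_homogenize}.

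The concrete steps I would carry out: First, fix $x_0 \in B_R$ and consider the function $g := \eta_r(x_0 - \cdot)$, which is smooth, supported in $B_r(x_0)$, with $\|D^k g\|_{L^\infty} \lesssim r^{-d-k}$. Let $w$ solve the heterogeneous Cauchy problem $\partial_t w - \tr(\a D^2 w) = 0$ with $w(0,\cdot) = g$, and $\bar w$ the homogenized version. By Proposition~\ref{prop_homogenize} applied at scale $r$ (legitimate since $r \geq \X$ by hypothesis, after identifying $\X$ with the minimal scale $\Y_p \vee \Sigma$ there), one has $\|w - \bar w\|_{L^\infty} \lesssim \|g\|_{C^{0,\sigma}}\cdot r^{-\beta\sigma/2}\cdot(\text{volume factors})$, which after tracking the $r^{-d}$-type normalization of $g$ gives a bound like $r^{-d-\sigma'}$ for some $\sigma' > 0$; call the resulting exponent $\alpha$. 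Second — and this is the mechanism that converts a homogenization estimate into the desired \emph{elliptic} statement — pair $w - \bar w$ against $\nu$: since $\int g\,d\nu = N_r(x_0)$ and this is preserved by neither flow exactly, but $\frac{d}{dt}\int \bar w(t,\cdot)\,d\mu = 0$ for the homogenized (constant-coefficient, hence mass-and-all-moments-controlled) flow and $\frac{d}{dt}\int w(t,\cdot)\,d\nu$ is governed by $\int \tr(\a D^2 w)\,d\nu = 0$ by invariance of $\nu$. Differentiating $t \mapsto \int w(t,\cdot)\,d\nu - \int \bar w(t,\cdot)\,d\nu$ and comparing the two, the leading defect is precisely $\tr(\ahom D^2 N_r)(x_0)$ up to controllable errors, while the difference itself is bounded by $\|w - \bar w\|_{L^\infty}|\nu|(B_{2R})$ times a short-time factor. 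Taking $t \downarrow 0$ and dividing out isolates $|\tr(\ahom D^2 N_r)(x_0)| \lesssim r^{-2}\cdot r^{-d-\alpha}|\nu|(B_{2R})$, i.e. $r^{-d-2-\alpha}|\nu|(B_{2R})$, which is~\eqref{e.harmapprox}. Third, take the supremum over $x_0 \in B_R$; the constraint $r \leq R^{1-\delta}$ ensures the supports $B_r(x_0) \subseteq B_{2R}$ and that the homogenization estimate is applied strictly inside the domain where $\nu$ is defined.

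The main obstacle, and where I would spend the most care, is making rigorous the "differentiate in $t$ and take $t \downarrow 0$" step: one must justify that $\int w(t,\cdot)\,d\nu$ is differentiable, that the $t\to 0$ limit recovers $\int \tr(\ahom D^2 N_r)\,(\text{something})$ cleanly, and that the various short-time Gaussian tail contributions from $w$ and $\bar w$ (which are genuinely spread out only for $t \gtrsim r^2$) do not pollute the estimate — this is why the factor $r^{-2}$ appears and must be extracted honestly rather than absorbed. An alternative, possibly simpler, route avoiding the parabolic flow entirely is purely elliptic: test the weak equation~\eqref{e.invmeas.weak} for $\nu$ directly with $\varphi = \eta_r(x_0 - \cdot)$ to get $\int \tr(\a D^2 \eta_r(x_0-\cdot))\,d\nu = 0$, hence $\tr(\ahom D^2 N_r)(x_0) = \int \tr\bigl((\ahom - \a(y))D^2\eta_r(x_0 - y)\bigr)d\nu(y)$, and then bound the right side using that $(\ahom - \a)D^2\eta_r(x_0 - \cdot) = \divg\bigl(\text{flux built from approximate correctors } w^\delta_{ij} \text{ at scale } r\bigr) + (\text{error} \lesssim r^{-\beta})$, integrating by parts against $\nu$ and controlling the resulting surface/volume terms by $|\nu|(B_{2R})$. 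I would present whichever of these two is shorter; I lean toward the second, with the approximate-corrector decomposition of $(\ahom - \a)M$ (valid at scale $r \geq \X$ by Proposition~\ref{p.delta.vee.delta}) supplying the exponent $\alpha$.
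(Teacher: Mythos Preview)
Your parabolic setup is essentially the paper's: writing $v_r(t,x)=\int w(t,\cdot)\,d\nu$ and $w_r(t,x)=\int\bar w(t,\cdot)\,d\nu=(N_r\ast\overline P(t,\cdot))(x)$, the paper uses exactly (i) invariance of $\nu$ to show $\partial_t v_r$ is negligible, and (ii) Proposition~\ref{prop_homogenize} to get $\|v_r-w_r\|_{L^\infty([0,r^2]\times B_R)}\lesssim r^{-d-\alpha}|\nu|(B_{2R})$. But the step you describe as ``taking $t\downarrow 0$ and dividing out'' is where the argument, as you have it, fails to produce the required exponent. What you actually want is $\partial_t w_r(0,\cdot)=\tr(\ahom D^2 N_r)$, and knowing only that $|v_r-w_r|\lesssim r^{-d-\alpha}|\nu|$ (uniformly in $t$) together with $\partial_t v_r\approx 0$ does \emph{not} bound $\partial_t w_r$: the homogenization bound does not improve as $t\to 0$, while a crude Duhamel estimate gives only $|v_r-w_r|(t)\lesssim t\,r^{-d-2}|\nu|$, so the difference quotient yields at best $r^{-d-2}|\nu|$ with no $\alpha$-gain. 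The paper closes this gap with an interpolation: since $\|\partial_t^2 w_r\|\lesssim r^{-d-4}|\nu|$ (caloric estimates for the constant-coefficient flow of $N_r$), one has
\[
\|\partial_t(w_r-v_r)\|\;\le\;\|w_r-v_r\|^{1/2}\,\|\partial_t^2(w_r-v_r)\|^{1/2}\;\lesssim\;\bigl(r^{-d-\alpha}\bigr)^{1/2}\bigl(r^{-d-4}\bigr)^{1/2}|\nu|\;=\;r^{-d-2-\alpha/2}|\nu|,
\]
which combined with $\|\partial_t v_r\|\ll 1$ yields~\eqref{e.harmapprox}. Equivalently: take $t\sim r^{2-\alpha/2}$, not $t\downarrow 0$, and balance $|F(t)|/t$ against $t\|F''\|$. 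This interpolation is the missing idea.

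There is a second, smaller gap: you use $\int\tr(\a D^2 w(t,\cdot))\,d\nu=0$, but $w(t,\cdot)$ is not compactly supported in $B_{2R}$, so~\eqref{e.invmeas.weak} does not apply directly. The paper inserts a cutoff $\chi\in C^\infty_c(B_{2R})$ and uses the Gaussian decay of $P(t,y,z)$ together with $r\le R^{1-\delta}$ to show the resulting boundary error is $O(\exp(-cR^{2\delta}))|\nu|$; this is where the hypothesis $r\le R^{1-\delta}$ is actually used, beyond the support containment you mention. Your elliptic alternative via the approximate correctors $w^\delta_{ij}$ does not work as sketched: writing $(\a_{ij}-\ahom_{ij})\approx\tr(\a D^2 w^\delta_{ij})$ and trying to absorb this into a test function $\phi=\sum_{ij}(\partial_i\partial_j\eta_r)w^\delta_{ij}$ generates cross terms of size $\|w^\delta\|_\infty\|D^4\eta_r\|_\infty\sim\delta^{-2}r^{-d-4}$, which at the natural choice $\delta\sim r^{-1}$ is only $r^{-d-2}$, again with no $\alpha$-gain.
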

\begin{proof}
Define, for every $r,t \in (0,\infty)$ and~$x\in\Rd$, 
\begin{equation}
\label{e.vr.def}
v_r(t,x):= 
\int_{\Rd} 
\int_{\Rd} 
\eta_r (x-z) P(t,y,z)\,dz
\,d \nu(y)=\int_{\R^{d}} \left( \eta_{r}\ast P(t,y, \cdot)\right)(x)\,d\nu(y),
\end{equation}
and
\begin{equation}
\label{e.wr.def}
w_r(t,x) 
:=
\int_{\Rd} \int_{\Rd} 
\eta_r (x-z) 
\overline{P}(t,y-z) 
\,dz \,d\nu(y)
=
\bigl( N_r \ast \overline{P}(t,\cdot) \bigr) (x)
\,.
\end{equation}
It is clear that~$w_r$ is a solution of the homogenized equation:
\begin{equation}
\label{e.wr.eq}
\partial_t w_r  - \tr (\ahom D^2w_r) = 0 \quad \mbox{in} \ (0,\infty) \times \Rd. 
\end{equation}
Observe that~$w_r(0,\cdot) = N_r$. Our proof strategy is broken into the following three steps:
\begin{enumerate}
\item[1.] Show that~$\partial_t v_r$ is small, using that~$\nu$ is an invariant measure;
\item[2.] Show that~$v_r - w_r$ is small, using homogenization error estimates;
\item[3.] Deduce that~$\tr(\ahom D^2 w_r)$ is small, using interpolation and crude control on~$\partial^2_t (v_r-w_r)$. 
\end{enumerate}

\smallskip

\emph{Step 1.}
We use the assumption that~$\nu$ is an invariant measure to obtain a bound on~$\partial_t v_r$, at least for~$x\in B_R$ and~$t$ not too large. The precise claim is that, for every~$\delta \in (0,\tfrac12]$, there exist~$C<\infty$ and~$c>0$, depending on~$(\delta,d,\lambda,\Lambda)$ such that, for every~$r \leq R^{1-\delta}$, we have the estimate
\begin{equation}
\label{e.snuff}
\sup_{k\in \{1,2\}} 
\| \partial_t^k v_r \|_{L^\infty([0,r^2]\times B_R)}
\leq
C \exp \bigl( -c R^{2\delta} \bigr)| \nu |(B_{2R})
\,.
\end{equation}
To see why this should be so, observe that~$v_r(t,x)$ is the integral of the product of~$\nu$ and a solution of the heterogeneous equation and thus  
\begin{align*}
\partial_t v_r(t,x)
&
:= 
\int_{\Rd} 
\partial_t \biggl[
\int_{\Rd} 
\eta_r (x-z) P(t,y,z)\,dz
\biggr]
\,d \nu(y)
\\ & \;
=
\int_{\Rd} 
\tr\biggl( \a  D^2_y \biggl[
\int_{\Rd} 
\eta_r (x-z) P(t,y,z)\,dz
\biggr] \biggr)
\,d \nu(y)
\,.
\end{align*}
If the term under the~$D^2_y$ had compact support in~$B_{2R}$, then this expression would be identically zero, since~$\nu$ is an invariant measure. 
While this is unfortunately not the case, it is still \emph{nearly} true for~$x\in B_R$,~$r\ll R$ and $t=r^2$, due to the decay of the parabolic Green function. Indeed, if~$x\in B_R$, then the integrand vanishes unless~$z \in B_{r}(x) \subseteq B_{R+r}$ and thus,  for~$y$ near~$\partial B_{2R}$, the factor~$P(t,y,z)$ must be very small if $t^2 \ll R$. To obtain a precise estimate from this idea, we select a cutoff function~$\chi \in C^\infty_c(B_{2R})$ satisfying 
\begin{equation}
\label{e.cutoff.chi}
\indc_{B_{4R/3}} \leq \chi \leq \indc_{B_{3R/2}}\,,
\qquad 
\|D^k \chi\|_{L^\infty(B_{2R})}
\leq 
C_kR^{-k}\,, \quad \forall k \in\{1,2\},
\end{equation}
and compute
\begin{align}
\label{e.partialt.vr}
\partial_t v_r(t,x) 
 & 
=
\int_{\Rd} 
\tr
\biggl(
\a
D^2_y
\biggl[
(1-\chi(y))
\int_{\Rd} 
\eta_r (x-z) 
P(t,y,z)\,dz
\biggr]
\biggr)
\,d \nu(y)
\,,
\end{align}
where we used that, since~$\nu$ is an invariant measure in~$B_{2R}$ and~$\chi$ has compact support in~$B_{2R}$,
\begin{equation*}
\int_{\Rd} 
\tr
\biggl(
\a  D^2_y
\biggl[
\chi(y)
\int_{\Rd} 
\eta_r (x-z) 
P(t,y,z)\,dz
\biggr]
\biggr)
\,d \nu(y)
=
0
\,.
\end{equation*}
Using~\eqref{e.cutoff.chi}, we have that 
\begin{align*}
\lefteqn{
\sup_{y\in \Rd}
\biggl| 
D^2_y
\biggl[
(1-\chi(y))
\int_{\Rd} 
\eta_r (x-z) 
P(t,y,z)\,dz
\biggr]
\biggr| 
} \qquad & 
\\ &
\leq
C
\sup_{k\in\{0,1,2\}}
\frac{1}{R^{2-k}}
\sup_{|y|>4R/3}
\biggl| 
D^k_y
\biggl[
\int_{\Rd} 
\eta_r (x-z) 
P(t,y,z)\,dz
\biggr]
\biggr| 
\,.
\end{align*}
We next estimate the term on the right side using the quenched bounds for the Green function of Proposition~\ref{le_prop_2} and then~$C^2$-Schauder estimates on the unit scale to get similar bounds for the derivatives. The latter are used very crudely in this context---we lose powers of the scale separation---but we can easily afford this because we have an exponential term in our favor which overwhelms everything. 
We estimate, for every~$r \leq R^{1-\delta}$, ~$x\in B_R$, and~$t<r^2$,
\begin{equation*}
\sup_{|y|>4R/3}
\biggl| 
D^k_y
\biggl[
\int_{\Rd} 
\eta_r (x-z) 
P(t,y,z)\,dz
\biggr]
\biggr| 
\leq
Cr^{-k} 
\exp \left( 
-\frac{R^2}{r^2} 
\right) 
\leq
Cr^{-k} \exp \bigl( -c R^{2\delta} \bigr)\,,
\end{equation*}
where the constants~$c$ and~$C$ depend on~$\delta$ in addition to~$(d,\lambda,\Lambda)$.
This completes the proof of~\eqref{e.snuff} for~$k=1$. For~$k=2$, we differentiate~\eqref{e.partialt.vr} again in~$t$ and use that~$\partial_tP$ is another solution of the parabolic equation, which therefore has~$C^2$ estimates on the unit scale. This kicks up more powers of~$r$ which can be absorbed by the exponential. 

\smallskip

\emph{Step 2.}
We apply the homogenization error estimate to obtain a bound on~$v_r - w_r$. Let $\X=\Y_{p}$ as defined in Remark \ref{r.Ystep1}. The precise claim is that, for every {$r\geq \X$},
\begin{equation}
\label{e.smug}
\| v_r - w_r \|_{L^\infty([0,r^2]\times B_R)} 
\leq 
C r^{-d-\alpha}
| \nu |(B_{2R})
\,.
\end{equation}
We begin by subtracting~\eqref{e.wr.def} from~\eqref{e.vr.def} to obtain
\begin{equation*}
(v_r - w_r) (t,x)
=
\int_{\Rd} 
\biggl[ 
\int_{\Rd}
\eta_r (x-z)
\bigl( P(t,y,z) - \overline{P}(t,y-z) \bigr)
\,dz
\biggr]
\,d\nu(y)
\,.
\end{equation*}
The term in brackets is the difference of the Cauchy problems for the heterogenous and homogenized equations, starting with initial data~$\eta_r(\cdot-x)$, evaluated at the point~$(t,y)$. Applying Proposition~\ref{prop_homogenize} yields, for $x\in B_R$ and $r\geq \X$, 
\begin{equation*}
\sup_{(t,y) \in [0, r^{2})\times B_{2R}}
\biggl|
\int_{\Rd}
\eta_r (x-z)
\bigl( P(t,y,z) - \overline{P}(t,y-z) \bigr)
\,dz
\biggr|
\leq
C r^{-d-\alpha}
\,.
\end{equation*}
The combination of the previous displays yields~\eqref{e.smug}. 

\smallskip

\emph{Step 3.} Interpolation and conclusion.
Using~\eqref{e.snuff} and~\eqref{e.smug}, we have, for every~$x\in B_R$, 
\begin{align*}
\lefteqn{
\| \partial_t w_r  \|_{L^\infty ( [0,r^2)\times B_R)}
} \quad &
\\ & 
\leq
\| \partial_t (w_r-v_r)   \|_{L^\infty ( [0,r^2)\times B_R)}
+
C \exp \bigl( -c R^{2\delta} \bigr)| \nu |(B_{2R})
\\ & 
\leq 
\| w_r-v_r  \|_{L^\infty ( [0,r^2)\times B_R)}^{\frac12} 
\| \partial_t^2( w_r-v_r ) \|_{L^\infty ( [0,r^2)\times B_R)}^{\frac12} 
+
C \exp \bigl( -c R^{2\delta} \bigr)| \nu |(B_{2R})
\\ & 
\leq 
Cr^{-\frac{d+\alpha}{2}}\left(| \nu |(B_{2R})\right)^{\frac12} \big(
\| \partial^2_t w_r  \|_{L^\infty ( [0,r^2)\times B_R)}^{\frac12} +\| \partial^2_t v_r  \|_{L^\infty ( [0,r^2)\times B_R)}^{\frac12} \big)
\\ & \quad +
C \exp \bigl( -c R^{2\delta } \bigr)| \nu |(B_{2R})
\,.
\end{align*}
Recalling \eqref{e.wr.eq} and \eqref{e.snuff}, we use estimates on caloric functions to substitute 
\begin{equation*}
\| \partial^2_t w_r  \|_{L^\infty ( [0,r^2)\times B_R)}
\leq 
Cr^{-4-d} | \nu |(B_{2R}) ,
\end{equation*}
and we obtain
\begin{equation*}
\| \partial_t w_r  \|_{L^\infty ( [0,r^2)\times B_R)}
\leq
Cr^{-d-2-\alpha/2}
| \nu |(B_{2R})
\,.
\end{equation*}
In view of~\eqref{e.wr.eq} and the fact that~$w_r(0,\cdot) = N_r$, the proof of \eqref{e.harmapprox} is now complete, upon redefining $\al$.
\end{proof}

We next show that, up to a small error, we can recover the small-scale behavior of the invariant measure from its large-scale spatial averages. 
Note that this estimate is already enough to imply a quenched quantitative uniqueness of the invariant measure.

\begin{lemma}[Weak-to-strong estimate]
\label{l.weak.to.strong}
Let~$\delta\in (0,\tfrac12]$ and $p\in (0,d)$. Let~$R\geq 1$ and~$\nu$ be a measure with support in~$B_{2R}$ which satisfies~\eqref{e.invmeas.weak} with~$U=B_{2R}$ and has density~$N\in L^1(B_{2R})$. 
Let~$\{ \eta_r \}_{r>0}$ denote the standard mollifier. Let~$N_r$ be defined as in~\eqref{e.Nr}. 
Then there exist constants~$C(\delta,d,\lambda,\Lambda)\in [1,\infty)$ and $\al=\al(\la, \La, d,p)\in (0,1)$, and a random variable $\X$ with $\X=\O_{p}(C)$ such that, for every~$r \in [ \X ,R^{1-\delta}]$ and~$x\in B_R$, 
\begin{equation}
\label{e.pointwise.N}
\bigl| 
N(x) 
-
m(x) N_r(x)
\bigr|
\leq
C(m(x)+1)r^{-d-\alpha}| \nu|(B_{2R})
\end{equation}
and, consequently, 
\begin{equation}
\label{e.reach}
R^{-d} | \nu |(B_R)
\leq 
C
\| \nu \ast \eta_r \|_{\underline{L}^1(B_{5R/2})}
+
Cr^{-d-\alpha} | \nu|(B_{2R})
\,.
\end{equation}
\end{lemma}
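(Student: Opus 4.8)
\textbf{Proof proposal for Lemma~\ref{l.weak.to.strong}.}

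The plan is to combine the harmonic approximation of Lemma~\ref{l.harmapprox} with the representation of the invariant measure via the parabolic Green function from Theorem~\ref{t.GF}. First I would fix $x\in B_R$ and $r\in[\X,R^{1-\delta}]$. The starting point is the identity
\begin{equation*}
N(x) = \lim_{t\to 0} (\eta_r\ast\nu)(x) \text{-type expansion},
\end{equation*}
but more precisely one writes, using that $N\,dx=\nu$ on $B_{2R}$ and that $P(t,\cdot,x)\to\delta_x$ as $t\to 0$, the heuristic $N(x)=\lim_{t\to0}\int P(t,y,x)\,d\nu(y)$. Against this, Theorem~\ref{t.GF} says that for $t$ of order a fixed multiple of $r^2$ (which exceeds $\Y^2$ since $r\geq\X$, after enlarging $\X$ to dominate $\Y$ as in Remark~\ref{r.Ystep1}), $P(t,y,x)\approx m(x)\overline P(t,y-x)$ with algebraic error. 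Convolving in the first slot with $\eta_r$ and integrating against $\nu$, the main term becomes $m(x)(N_r\ast\overline P(t,\cdot))(x)=m(x)w_r(t,x)$ in the notation of the proof of Lemma~\ref{l.harmapprox}, while the error is controlled by $C m(x)(t/\Y^2)^{-\gamma}t^{-d/2}\exp(-|y-x|^2/Ct)$ integrated against $|\nu|$ — which, after choosing $t\sim r^2$ and using crude Gaussian bounds plus $|\nu|(B_{2R})$, is at most $Cm(x)r^{-d-\alpha}|\nu|(B_{2R})$ for a suitable $\alpha>0$.

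The remaining task is to replace $w_r(t,x)$ (at time $t\sim r^2$) by $N_r(x)=w_r(0,x)$. Since $w_r$ solves the homogenized equation $\partial_t w_r=\tr(\ahom D^2 w_r)$, we have $|w_r(t,x)-w_r(0,x)|\leq t\,\|\partial_t w_r\|_{L^\infty([0,t]\times B_R)}$, and by Lemma~\ref{l.harmapprox}, $\|\partial_t w_r\|_{L^\infty}=\|\tr(\ahom D^2 N_r)\|_{L^\infty(B_R)}\leq Cr^{-d-2-\alpha}|\nu|(B_{2R})$; with $t\sim r^2$ this gives an error of order $r^{-d-\alpha}|\nu|(B_{2R})$. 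To handle the $t\to 0$ limit rigorously (rather than a formal $\delta$-function argument), I would instead compare $N_r(x)$ directly to the quantity $\int(\eta_r\ast P(t,y,\cdot))(x)\,d\nu(y)=v_r(t,x)$ at time $t=r^2$: one has $v_r(r^2,x)\to N_r(x)$ as $r\to 0$ but more usefully $v_r$ at time $0$ is exactly $N_r$ so we instead track $v_r(r^2,x)$; using Step~1 of Lemma~\ref{l.harmapprox}'s proof (that $\partial_t v_r$ is exponentially small on $[0,r^2]\times B_R$), $|v_r(r^2,x)-N_r(x)|\leq Ce^{-cR^{2\delta}}|\nu|(B_{2R})$ (here one must be a little careful: $v_r(0,\cdot)=N_r$ only when $\nu$ has a density, which is assumed). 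Then $v_r(r^2,x)=\int P(r^2,y,x)*\eta_r\,d\nu$; but we also need to relate $N(x)$ to $v_r$ at small times — this is where one uses the density assumption: $N(x)=\lim_{s\to0}(\eta_s\ast\nu)(x)$ at Lebesgue points, and more robustly one compares $v_r(r^2,x)$ at scale $r$ to the value at scale tending to zero via the same $\partial_t v$ bound, combined with $\int\eta_r(x-z)N(z)\,dz\to N(x)$ by continuity of $N$ (which holds by Proposition~\ref{p.Holder.continuity} since $\nu$ is an invariant measure, hence $N\in C^{0,\alpha_0}_{\mathrm{loc}}$). Assembling the three error terms — Green function homogenization, the $w_r$ time-regularity, and the $v_r$ time-regularity / continuity of $N$ — gives \eqref{e.pointwise.N}.

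For \eqref{e.reach}, I would integrate \eqref{e.pointwise.N} over $x\in B_R$. On the left, $\int_{B_R}|N(x)|\,dx=|\nu|(B_R)$ (since $N$ is the density, modulo sign; one takes $|N|$, noting $|N|\leq |m\,N_r| + (\text{error})$ pointwise). On the right, $\int_{B_R}m(x)|N_r(x)|\,dx$ needs the upper bound $\sup_{B_R}m\leq CR^{d-1-\delta}$-type control from \eqref{e.msand}/Section~\ref{ss.m.exis.uniq}, OR — better, to land the clean statement — one uses $\int_{B_R}m(x)|N_r(x)|\,dx\leq \|m\|_{\underline L^{q'}(B_R)}\|N_r\|_{\underline L^{q}(B_R)}\cdot|B_R|$ with $q$ slightly above $1$ and $q'$ the conjugate, invoking the scale-invariant $\|m\|_{\underline L^{d/(d-1)+\delta'}}\leq C$ bound from \eqref{e.mqint}; then $\|N_r\|_{\underline L^q(B_R)}\leq C\|N_r\|_{\underline L^1(B_{5R/2})}^{\theta}(\dots)$, or simply absorb by noting $N_r=\nu\ast\eta_r$ and $\|\nu\ast\eta_r\|_{\underline L^1(B_{5R/2})}$ already appears on the right-hand side of \eqref{e.reach} — the factor $5R/2$ rather than $2R$ is there precisely to accommodate the $r\leq R^{1-\delta}\ll R$ spreading of the mollifier beyond $B_{2R}$. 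The error term $\int_{B_R}(m(x)+1)r^{-d-\alpha}|\nu|(B_{2R})\,dx\leq CR^d r^{-d-\alpha}|\nu|(B_{2R})$, and dividing by $R^d$ yields the $Cr^{-d-\alpha}|\nu|(B_{2R})$ term in \eqref{e.reach} after relabeling $\alpha$ (noting $R^{-d}\cdot R^d = 1$, and $r^{-d-\alpha}$ already decays).

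The main obstacle I anticipate is the rigorous handling of the $t\to 0$ passage in \eqref{e.pointwise.N}: one must argue carefully that $\int P(t,y,x)\,d\nu(y)$ (suitably mollified to make everything well-defined, since $\nu$ a priori has no density away from where we assumed it) converges to $N(x)$, and that the mollification scale $r$ can be decoupled from the time scale $t$. The cleanest route is to keep $t$ of order $r^2$ throughout, use the density hypothesis on $\nu$ together with $N\in C^{0,\alpha_0}_{\mathrm{loc}}$ to handle the comparison of $N(x)$ with $N_r(x)$ directly (this costs $Cr^{\alpha_0}[N]_{C^{0,\alpha_0}}$, which must then be bounded in terms of $|\nu|(B_{2R})$ via interior Schauder estimates for the adjoint equation, losing powers of $R$ but gaining the exponential/algebraic smallness from elsewhere — this is exactly the kind of ``crude but affordable'' estimate used in Lemma~\ref{l.harmapprox}), and otherwise route all the quantitative content through $w_r$ and $v_r$ as above. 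A secondary bookkeeping nuisance is tracking the various exponents $\alpha$ (they change line to line and get relabeled at the end), and ensuring the random scale $\X$ simultaneously dominates the minimal scales from Lemma~\ref{l.harmapprox}, Theorem~\ref{t.GF}, and Proposition~\ref{prop_homogenize}; a union bound as in Remark~\ref{r.Ystep1} handles this.
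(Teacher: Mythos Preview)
Your proposal has the right architecture but there is a genuine gap in the passage from $N(x)$ to the mollified quantities. The paper works with two unmollified objects, $v(t,x):=\int P(t,y,x)\,d\nu(y)$ (so $v(0,x)=N(x)$) and $v_r(t,x):=\int P(t,y,x)N_r(y)\,dy$, and the invariance argument gives $|v(t,x)-N(x)|\leq C\exp(-cR^\delta)|\nu|(B_{2R})$ for $t\leq R^{2(1-\delta)}$. The step you are missing is the comparison $v(t,x)\approx v_r(t,x)$: one writes the difference as $\int\bigl(P(t,\cdot,x)-P(t,\cdot,x)\ast\eta_r\bigr)(y)\,d\nu(y)$ and invokes the Krylov--Safonov H\"older continuity of $y\mapsto P(t,y,x)$ together with the Nash--Aronson bound of Corollary~\ref{c.NA}. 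This is what produces the intermediate estimate $\bigl|N(x)-\int P(r^2,y,x)N_r(y)\,dy\bigr|\leq C(m(x)+1)r^{-d-\alpha}|\nu|(B_{2R})$, from which both conclusions flow. Your proposed substitute---comparing $N(x)$ with $N_r(x)$ via the local H\"older regularity of $N$ from Proposition~\ref{p.Holder.continuity}---goes the wrong way: it gives an error of order $r^{\alpha_0}[N]_{C^{0,\alpha_0}(B_r(x))}\lesssim r^{\alpha_0-d}|\nu|(B_{2r}(x))$, which \emph{grows} in $r$, whereas you need $r^{-d-\alpha}$.

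Two further remarks. For~\eqref{e.reach}, the paper does not integrate~\eqref{e.pointwise.N} and pair $m$ against $|N_r|$ via H\"older (your route loses a factor $(R/r)^{d/q'}$); instead it integrates the intermediate form above, uses Corollary~\ref{c.NA} to write $P(r^2,y,x)\leq Cm(x)r^{-d}\exp(-|y-x|^2/Cr^2)$, applies Fubini, and then uses $\|m\|_{\underline L^1}\leq C$ on balls of radius $\sim r$ to reduce to $C\|N_r\|_{L^1(B_{5R/2})}$. For the remaining step $\int P(r^2,y,x)N_r(y)\,dy\approx m(x)N_r(x)$, your route through $w_r$ and Theorem~\ref{t.GF} is a reasonable alternative to the paper's (which passes through an $\ahom$-harmonic function $h_r$ with boundary data $N_r$ on $\partial B_{r^{1+\theta}}(x)$ and uses the mean value property), but note that with $t=r^2$ the factor $(t/\Y^2)^{-\gamma}$ only gives $O(1)$ at $r=\X$; you would need $t=r^{2(1+\theta)}$ for small $\theta>0$ to extract the $r^{-\alpha}$.
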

\begin{proof}
By the hypotheses, we may write~$d\nu(x) = N(x) \,dx$ for some~$N\in L^1(B_{2R})$.
Define 
\begin{equation*}
v(t,x) :=
\int_{\Rd}
P(t,y,x) \,d\nu(y)
=
\int_{\Rd}
P(t,y,x) N(y) \,dy
\end{equation*}
and
\begin{equation*}
v_r(t,x) := \int_{\Rd} P(t,y,x) N_r(y) \,dy\,.
\end{equation*}
By an argument similar to Step~1 of the proof of Lemma~\ref{l.harmapprox} (specifically \eqref{e.snuff}), we have that, for every~$t \in[0, R^{2(1-\delta)}]$,
\begin{equation*}
\| \partial_t v(t,\cdot) \|_{L^\infty(B_R)}
\leq
C \exp(-cR^{3\delta/2}) 
| \nu|(B_{2R})
\,.
\end{equation*}
Therefore, for every~$t \in[0, R^{2(1-\delta)}]$,
\begin{align*}
|v(t,x) - N(x) |
=
|v(t,x) - v(0,x)| 
&\leq
C t \exp (-cR^{3\delta/2})
| \nu|(B_{2R})\\
&
\leq 
C\exp(-cR^\delta)
| \nu|(B_{2R})
\,.
\end{align*}
We observe next using the Krylov-Safonov H\"older estimate that, for $\sigma=\sigma(\la, \La, d)\in (0,1)$, for~$t\in [r^2,R^{2(1-\delta)}]$, by Corollary \ref{c.NA}, 
\begin{align*}
| v(t,x) - v_r(t,x) |
&=
\bigg|\int_{\Rd} \bigl( P (t,\cdot,x) - P(t,\cdot,x) \ast \eta_r \bigr)(y) \,d\nu(y)\bigg|\\
&\leq
Cm(x)r^{-\sigma} t^{-\frac d2}
| \nu|(B_{2R})
\,.
\end{align*}
We deduce that
\begin{equation*}
\bigl| N(x) - v_r(r^2,x) \bigr| 
\leq 
Cm(x)r^{-d-\sigma}| \nu|(B_{2R})
+
C\exp(-cR^\delta)
| \nu|(B_{2R})
\,.
\end{equation*}
In particular,
\begin{equation}
\label{e.pointwise.N.0}
\biggl| 
N(x) 
-
\int_{\Rd} P(r^2,y,x) N_r(y) \,dy
\biggr|
\leq
C(m(x)+1)r^{-d-\alpha}| \nu|(B_{2R})
\,.
\end{equation}
We next use~\eqref{e.pointwise.N.0}, Corollary \ref{c.NA}, the first half of \eqref{e.weaklim.quant}, and \eqref{e.mL1}, to get, for every~$r \in [\X,R^{1-\delta}]$,  
\begin{align*}
|\nu|(B_R) 
&
=
\| N \|_{L^1(B_R)}
\\ & 
\leq
\! \int_{B_{R}}
\left| 
\int_{\Rd} P(r^2,y,x) N_r(y) \,dy
\right| \,dx
+
Cr^{-d-\sigma} | \nu|(B_{2R})
\! \int_{B_R} (1+m(x))\,dx 
\\ & 
\leq 
Cr^{-d}
\int_{B_R} \! \int_{\Rd} 
m(x)
\exp\left(-\frac{|x-y|^{2}}{Cr^2}\right)
|N_r(y)| \,dy \,dx 
+
CR^d r^{-d-\sigma} | \nu|(B_{2R})
\\ & 
\leq 
C  \| N_r \|_{L^1(B_{5R/2})}
+
CR^d r^{-d-\sigma} | \nu|(B_{2R})
\,.
\end{align*}
This yields~\eqref{e.reach} for $\al=\sigma$.

\smallskip

To obtain~\eqref{e.pointwise.N}, it suffices by~\eqref{e.pointwise.N.0} to show that, for every~$x\in B_R$, 
\begin{equation}
\label{e.Nrm.to.NP}
\Bigl| N_r(x) m(x)  - \int_{\Rd} P(r^2,y,x) N_r(y) \,dy
\Bigr| 
\leq
C(m(x) +1)
r^{-d-\alpha}| \nu |(B_{2R})\,.
\end{equation}
Fix another small exponent~$\theta>0$ to be selected below,~$x\in B_R$, and let~$h_r \in C^\infty(B_{r^{1+\theta}})$ be the solution of
\begin{equation*}
\left\{
\begin{aligned}
& - \tr \bigl( \ahom D^2 h_r \bigr) = 0 
& \mbox{in} & \ B_{r^{1+\theta}}(x) \,, \\
& h_r = N_r & \mbox{on} & \ \partial B_{r^{1+\theta}}(x) \,.
\end{aligned}
\right.
\end{equation*}
By~\eqref{e.harmapprox}, if~$r^{1+\theta} \leq R$, then 
\begin{equation}
\label{e.approximate.harm}
\| h_r - N_r \|_{L^\infty(B_{r^{1+\theta}})} 
\leq 
Cr^{-d-2-\alpha+2(1+\theta)}| \nu |(B_{2R})
\leq
Cr^{-d-\alpha/2}| \nu |(B_{2R})
\,,
\end{equation}
if we choose~$\theta < \alpha/4$. 
We then compute
\begin{align*}
\lefteqn{
N_r(x) m(x)  - \int_{\Rd} P(r^2,y,x) N_r(y) \,dy
} \qquad & 
\\ &
=
\bigl( N_r(x) - h_r(x) \bigr) m(x) 
+
\int_{\Rd} P(r^2,y,x) \bigl( h_r(y) - N_r(y)  \bigr) \,dy
\\ & \qquad 
-
\int_{\Rd} \bigl( P(r^2,y,x) - m(x) \overline{P}(r^2,y-x) \bigr) h_r(y)  \,dy
\\ & \qquad 
+
m(x) \Bigl( h_r(x) - 
\int_{\Rd}  \overline{P}(r^2,y-x) h_r(y)  \,dy \Bigr)
\,. 
\end{align*}
Each of the four terms on the right side is bounded from above by at most $m(x)$ times the right side of~\eqref{e.approximate.harm}. The first two terms are controlled by~\eqref{e.approximate.harm}, with the second term also using~\eqref{e.PGF.bound}, the third by~\eqref{e.GF} and the bound that $\|N_{r}\|_{L^{\infty}(\R^{d})}\leq r^{-d}\|N\|_{L^{1}(\R^{d})}$, and the last term using the mean value property for~$h_r$ and by chopping the tails of the heat kernel~ $\overline{P}(r^2,y-x)$ for $|y-x|> r^{1+\theta}$ (which is necessary because~$h_r$ is only harmonic in~$B_{r^{1+\theta}}(x)$, not all of~$\Rd$).
As a result, we get~\eqref{e.Nrm.to.NP} and the proof is now complete. 
\end{proof}

We have shown in Lemma~\ref{l.harmapprox} that, on sufficiently large scales, the invariant measure $\nu$ is close (in a weak sense) to a harmonic function. We will next use this property to deduce large-scale regularity for our invariant measures.

\begin{proof}[Proof of Theorem~\ref{t.largescaleC01}]
We perform an iteration down the scales, following~\cite{AS}. Here we adapt the argument presented in~\cite[Section 3.2]{AKMbook}. 

\smallskip

Let $d\mu(x) = m(x)\,dx$ be the unique stationary invariant measure constructed in Section~\ref{ss.m.exis.uniq}. Throughout this argument, we use the fact that by Theorem \ref{t.mto1} (specifically the first part of \eqref{e.m.conv}), $|\mu|(B_{r})\sim r^{d}$, and $\norm{\mu\ast \eta_{r}}_{\underline{L}^{1}(B_{r})}\approx 1$ for $r\geq \X$, with an error that is algebraic in $r$. We select another invariant measure~$d\nu(x) = N(x) \,dx$ with density~$N\in L^1(B_{2R})$. 
We let~$\al(\la, \La, d, p)>0$ denote the minimum of the two exponents in the statements of Lemmas~\ref{l.harmapprox} and~\ref{l.weak.to.strong} and Theorem \ref{t.mto1}.

\smallskip

Fix parameters~$\delta,\theta \in (0,\frac12]$ and~$r_0 \in [\X^{1/(1-\delta)} ,\tfrac{1}{10}R]$ to be selected below. Define
\begin{equation*}
M:=
\sup_{r \in [ r_0, 10^{-1}R]}
\frac1r 
\| (\nu - k_r \mu) \ast\eta_{r^{1-\delta}} 
\|_{\underline{L}^1(B_{5r})}
\,,
\end{equation*}
and, for $p\in \R^{d}$, let $\ell_{p}(x):=p\cdot x$. For $\nu$ as above, let 
\begin{equation*}
E(r):=\frac{1}{r}\inf_{p\in \R^{d}, k\in \R} \|(\nu\ast \eta_{r^{1-\delta}})-(\ell_{p}+k\mu\ast\eta_{r^{1-\delta}})\|_{\underline{L}^{1}(B_{5r})}. 
\end{equation*}
We denote $p_{r}$ and $k_{r}$ to be the minimizers, i.e. 
\begin{equation*}
E(r)=\frac{1}{r}\|(\nu-k_{r}\mu)\ast \eta_{r^{1-\delta}}-\ell_{p_{r}}\|_{\underline{L}^{1}(B_{5r})},
\end{equation*}
where the existence of minimizers follows from the fact that $E(r)$ is bounded from below by zero, and the fact that any minimizing sequence $\{(p_n,k_n)\}_{n\in\N}$ must remain in a compact subset of $\R^d\times\R$ due to the local $L^1$-integrability of $\nu$ and $\mu$.  Comparing to $p=0$ and $k=0$, by the triangle inequality and Young's inequality for convolutions, we have 
\begin{equation*}
\frac{1}{r}\|\ell_{p_{r}}+k_{r}(\mu\ast\eta_{r^{1-\delta}})\|_{\underline{L}^{1}(B_{5r})}\leq \frac{2}{r}\|\nu\ast \eta_{r^{1-\delta}}\|_{\underline{L}^{1}(B_{5r})}\leq \frac{C|\nu|(B_{6r})}{r^{d+1}}.
\end{equation*}
By equivalence of norms on $\R^{d+1}$ and since $\frac{1}{2}\leq \mu\ast \eta_{r^{1-\delta}}\leq 2$ for $r\geq \X$, this implies that 
\begin{equation}\label{e.krbd}
|p_{r}|+\frac{1}{r}|k_{r}|\leq  \frac{C}{r}\|\nu\ast \eta_{r^{1-\delta}}\|_{\underline{L}^{1}(B_{5r})}\leq \frac{C|\nu|(B_{6r})}{r^{d+1}}.
\end{equation}
This in particular yields that $|\nu-k_{r}\mu|(B_{6r})\leq C|\nu|(B_{6r})$. By comparing to $p=0$, we have $E(r)\leq M$. 

We now prove a general fact we will use throughout the argument. For any invariant measure $\tilde{\nu}$, for any $\theta s\leq r$ and $\delta\leq \al\wedge \tfrac{1}{2}$, 
\begin{equation}\label{e.dscale}
\|\tilde{\nu}\ast (\eta_{s^{1-\delta}}-\eta_{r^{1-\delta}})\|_{\underline{L}^{1}(B_{\theta s})}\leq Cs^{-d-\delta}|\tilde{\nu}|(B_{3r}), 
\end{equation}
where $C=C(\la, \La, d, \theta)$. By the triangle inequality, \eqref{e.pointwise.N}, Theorem \ref{t.mto1}, Young's inequality, and the Lipschitz bound on $\eta_{r}$, we have
\begin{align*}
\lefteqn{ 
\bigl\| \tilde{\nu} \ast (\eta_{s^{1-\delta}} - \eta_{r^{1-\delta}}) \bigr\|_{\underline{L}^1(B_{\theta s})} 
} \quad
 & 
\\ &
\leq
\fint_{B_{\theta s}}
\biggl |\int_{\Rd}
\big(\tilde{\nu} \ast \eta_{r}\big)(y) m(y)  (\eta_{s^{1-\delta}} - \eta_{r^{1-\delta}})(x-y)
\,dy
\biggr |
\,dx
\\ &
\qquad 
+\fint_{B_{\theta s}}
\int_{\Rd}
\big|\tilde{\nu} (y)-(\tilde{\nu} \ast \eta_{r})(y) m(y) \big| (\eta_{s^{1-\delta}} + \eta_{r^{1-\delta}})(x-y)
\,dy
\,dx
\\ &
\leq \fint_{B_{\theta s}}
\biggl |\int_{\Rd}
\big(\tilde{\nu} \ast \eta_{r}\big)(y) m(y)  (\eta_{s^{1-\delta}} - \eta_{r^{1-\delta}})(x-y)
\,dy
\biggr |
\,dx
\\ &
\qquad 
+ Cr^{-d-\alpha}|\tilde{\nu}|(B_{2r})\fint_{B_{\theta s}}\int_{\R^{d}} [m(y)+1](\eta_{s^{1-\delta}} + \eta_{r^{1-\delta}})(x-y)
\,dy\, dx\\
&\leq 
\fint_{B_{\theta s}}
\biggl |\int_{\Rd}
\big(\tilde{\nu} \ast \eta_{r}\big)(x) m(y)  (\eta_{s^{1-\delta}} - \eta_{r^{1-\delta}})(x-y)
\,dy
\biggr |
\,dx\\
&\qquad+
\fint_{B_{\theta s}}\!
\int_{\Rd}
\bigl |\big(\tilde{\nu}{\ast }\eta_{r}\big)(x) {-} \big(\tilde{\nu}{\ast} \eta_{r}\big)(y)\bigr | 
m(y) 
(\eta_{s^{1-\delta}} {+} \eta_{r^{1-\delta}})(x{-}y)
\,dy
\,dx
+
Cr^{-d-\alpha}|\tilde{\nu}|(B_{2r})
\\ &
\leq
\fint_{B_{\theta s}}\big|\tilde{\nu} \ast \eta_{r}\big|(x)
\left[\biggl |\int_{\Rd}
 m(y)\eta_{s^{1-\delta}}(x-y)\, dy -1\biggr | +\biggl | 1- \int_{\Rd}
 m(y) \eta_{r^{1-\delta}}(x-y)\, dy\biggr|\right]\, dx\\ &
 \qquad+\fint_{B_{\theta s}} |\tilde{\nu}|(B_{3r})\int_{\R^{d}}\|\nabla \eta_{r}\|_{\infty}|x-y|m(y) 
(\eta_{s^{1-\delta}} {+} \eta_{r^{1-\delta}})(x{-}y)
\,dy\, dx+Cr^{-d-\alpha}|\tilde{\nu}|(B_{2r})
\\&
\leq C s^{-d-\alpha(1-\delta)}|\tilde{\nu}|(B_{3r})
+C r^{-d} r^{-\delta}|\tilde{\nu}|(B_{3r})
+
Cr^{-d-\alpha}|\tilde{\nu}|(B_{3r})
\,,
\end{align*}

We can also prove a continuity estimate on $|k_{r}-k_{s}|$. Indeed, by the triangle inequality, \eqref{e.krbd}, and Young's inequality for convolutions, for any $s\leq r$, and assuming $\delta\leq \frac{\al}{\al+1}$,
\begin{align*}\label{e.krks}
|k_{r}-k_{s}|&\leq \|\ell_{p_{r}}+k_{r}-(\ell_{p_{s}}+k_{s})\|_{\underline{L}^{1}(B_{s})}\notag\\
&\leq \|\nu\ast\eta_{s^{1-\delta}}-(\ell_{p_{s}}+k_{s})\|_{\underline{L}^{1}(B_{s})}+\|\nu\ast\eta_{s^{1-\delta}}-(\ell_{p_{r}}+k_{r})\|_{\underline{L}^{1}(B_{s})}\notag\\
&\leq C\|\nu\ast\eta_{s^{1-\delta}}-(\ell_{p_{s}}+k_{s}(\mu\ast \eta_{s^{1-\delta}}))\|_{\underline{L}^{1}(B_{5s})}+|k_{s}|\|\mu\ast \eta_{s^{1-\delta}}-1\|_{\underline{L}^{1}(B_{s})}\\
&\quad+C\|\nu\ast\eta_{r^{1-\delta}}-(\ell_{p_{r}}+k_{r}(\mu\ast \eta_{r^{1-\delta}}))\|_{\underline{L}^{1}(B_{5s})}+|k_{r}|\|\mu\ast \eta_{r^{1-\delta}}-1\|_{\underline{L}^{1}(B_{s})}\\
&\quad+\|\nu\ast(\eta_{s^{1-\delta}}-\eta_{r^{1-\delta}})\|_{\underline{L}^1(B_{s})}\notag\\
&\leq CsE(s)+C\left(\frac{r}{s}\right)^{d}rE(r)+C\left(\|\nu\ast \eta_{s^{1-\delta}}\|_{\underline{L}^{1}(B_{5s})}s^{-\delta}+\left(\frac{r}{s}\right)^{d}\|\nu\ast \eta_{r^{1-\delta}}\|_{\underline{L}^{1}(B_{5r})}r^{-\delta}\right)\\
&\quad+Cs^{-d}|\nu|(B_{2r})\notag\\
&\leq C\left(\frac{r}{s}\right)^{d}\left[Mr+s^{-d}|\nu|(B_{2r})+s^{-\delta}\left(\|\nu\ast \eta_{s^{1-\delta}}\|_{\underline{L}^{1}(B_{5s})}+\|\nu\ast \eta_{r^{1-\delta}}\|_{\underline{L}^{1}(B_{5r})}\right)\right].
\end{align*}
This could also be improved by doing a shift. Letting $\tilde{\nu}:=\nu-k_{r}\mu$, it is clear that $|k_{r}-k_{s}|=|\tilde{k}_{r}-\tilde{k}_{s}|,$ and $\tilde{M}=M$. Thus, repeating the above calculation with $\tilde{\nu}$, and using the definition of $M$ and the triangle inequality, we have
\begin{align*}
|k_{r}-k_{s}|&\leq C\left(\frac{r}{s}\right)^{d}\left[Mr+s^{-d}|\nu-k_{r}\mu|(B_{2r})+s^{-\delta}\left(Ms+Mr+|k_{r}-k_{s}|\|\mu\ast \eta_{s^{1-\delta}}\|_{\underline{L}^{1}(B_{5s})}\right)\right]\\
&\leq C\left(\frac{r}{s}\right)^{d}\left[Mr+s^{-d}|\nu-k_{r}\mu|(B_{2r})+2s^{-\delta}|k_{r}-k_{s}|\right],
\end{align*}
which implies that that for $s\geq r_{0}$, 
\begin{equation}\label{e.krks}
|k_{r}-k_{s}|\leq C\left(\frac{r}{s}\right)^{d}\left[Mr+s^{-d}|\nu-k_{r}\mu|(B_{2r})\right].
\end{equation}

\emph{Step 1.} Application of the weak-to-strong estimate. 
We claim that there exists an exponent~$\delta_0(\la, \La, d, p)\in (0, \frac 12)$ and exponent $\beta(\al)$ such that for every $\delta\in (0, \delta_{0}]$ and $r\in [r_0,\frac14R]$, 
\begin{equation}
\label{e.weaktostrong.app1}
| \nu -k_{r}\mu| (B_{2r})
\leq 
CMr^{d+1}
+
CR^{-\beta \log \tfrac{R}{r}}
| \nu | (B_{R})
\,.
\end{equation}

By applying Lemma~\ref{l.weak.to.strong} (specifically \eqref{e.reach}), for every~$r\in [r_0,\frac14R]$, provided that~$\delta\leq \frac{\al}{2(d+\al)}\wedge \frac{1}{2}$,
\begin{align*}
| \nu -k_{r}\mu| (B_{2r})&\leq Cr^{d}\|(\nu-k_{r}\mu)\ast \eta_{r^{1-\delta}}\|_{\underline{L}^{1}(B_{5r})}+Cr^{-(1-\delta)(d+\al)+d}|\nu-k_{r}\mu|(B_{4r})\\
&\leq CMr^{d+1}+Cr^{-\al/2}|\nu-k_{r}\mu|(B_{4r}). 
\end{align*}
Now by \eqref{e.krks} and using that $|\mu|(B_{6r})\leq Cr^{d}$, we have
\begin{equation*}
| \nu -k_{r}\mu| (B_{2r})\leq CMr^{d+1}+Cr^{-\al/2}|\nu-k_{2r}\mu|(B_{4r}).
\end{equation*}
We now iterate this to get that for all $r\in [r_{0}, \frac{1}{6}R]$, with $r_{0}$ (deterministically) sufficiently large so that $Cr^{-\frac{\al}{2}}<1$, 
\begin{align*}
| \nu -k_{r}\mu| (B_{2r})&\leq CMr^{d+1}\sum_{j=0}^{\log_{2}\tfrac{R}{6r}}\left(Cr^{-\tfrac{\al}{2}}\right)^{j}+\left(Cr^{-\tfrac{\al}{2}}\right)^{(1+\log_{2}\tfrac{R}{6r})}\prod_{j=0}^{\log_{2}\tfrac{R}{6r}} 2^{-j\tfrac{\al}{2}}|\nu-k_{R/6}\mu|(B_{R/3})\\
&\leq CMr^{d+1}+\left(Cr^{-\tfrac{\al}{2}}\right)^{(1+\log_{2}\tfrac{R}{6r})}2^{-\tfrac{\al}{2}((\log_{2}\tfrac{R}{6r})^{2}+\log_{2}\tfrac{R}{6r})}|\nu|(B_{R})\\
&\leq CMr^{d+1}+CR^{-\beta \log \tfrac{R}{r}}|\nu|(B_{R}).
\end{align*}
Notice that upon doing the iteration, by \eqref{e.krks}, we also have the statement that for every $r\in [r_{0}, \tfrac{1}{10}R]$, 
\begin{align}\label{e.imp}
|\nu-k_{r}\mu|(B_{10r})&\leq |\nu-k_{5r}\mu|(B_{10r})+C|k_{r}-k_{5r}||\mu|(B_{10r})\notag\\
&\leq |\nu-k_{5r}\mu|(B_{10r})+C\left[Mr^{d+1}+|\nu-k_{5r}\mu|(B_{10r})\right]\notag\\
&\leq CMr^{d+1}+C|\nu-k_{5r}\mu|(B_{10r})\notag\\
&\leq CMr^{d+1}+CR^{-\beta \log \tfrac{R}{r}}
| \nu | (B_{R}),
\end{align}
and by an identical argument, 
\begin{equation}\label{e.3est}
|\nu-k_{r}\mu|(B_{3r})\leq CMr^{d+1}+CR^{-\beta \log \tfrac{R}{r}}.
\end{equation}

\emph{Step 2.} The excess decay iteration. We claim that there exists~$\delta_0(\la, \La, d, p)\in (0, \frac 12]$ and,~for each~$\delta \in (0,\delta_0]$, an exponent~$\beta(\delta,\la, \La, d, p)\in (0, \frac 12)$ such that, for every~$r \in [ r_0 ,\tfrac{1}{10}R]$,
\begin{equation}
\label{e.decay.me}
E(r) 
\leq 
C\Bigl( \frac {r}{R} \Bigr)^{\!\beta} 
\frac1R
\| \nu \ast \eta_{R^{1-\delta}} \|_{\underline{L}^1(B_{5R})}
+
C Mr^{-\beta} 
+
CR^{-1-d-\beta}
| \nu | (B_{R})
\,.
\end{equation}
We proceed by harmonic approximation, borrowing the regularity from the homogenized equation. 
According to Lemma~\ref{l.harmapprox} and a similar argument as in \eqref{e.approximate.harm}, for every~$r\in [\X^{1/(1-\delta)},R]$ there exists an~$\ahom$-harmonic function~$h_r$ which satisfies, by \eqref{e.imp},
\begin{align}\label{e.ha}
\| (\nu - k_r \mu)\ast \eta_{r^{1-\delta}} - h_r \|_{\underline{L}^1(B_{5r})} &\leq \norm{ (\nu - k_r \mu)\ast \eta_{r^{1-\delta}} - h_r}_{L^\infty(B_{5r})}\notag\\
&\leq 
Cr^{-(1-\delta)(d+2+\alpha)}r^{2}
| \nu - k_r \mu |(B_{10r})\notag\\
&\leq Cr^{-d-\frac{\al}{2}}| \nu - k_r \mu |(B_{10r}),\notag\\
&\leq C M r^{1-\frac{\al}{2}}
+
Cr^{-d-\frac{\al}{2}}R^{-\beta \log \tfrac{R}{r}}
| \nu | (B_{R})\,,
\end{align}
where the third inequality is valid after we impose the condition~$\delta\leq \frac{1}{2}\wedge\frac{\al}{4d+8}$, and the last inequality is a consequence of \eqref{e.imp}.

We finally proceed with the excess iteration. By using minimality, the triangle inequality, \eqref{e.ha}, \eqref{e.dscale}, and \eqref{e.3est}, and assuming that $5s\leq r$, 
\begin{align*}
E(s)&=\frac{1}{s}\|(\nu-k_{s}\mu)\ast\eta_{s^{1-\delta}}-p_{s}\cdot x\|_{\underline{L}^{1}(B_{5s})}\\
&\leq \frac{1}{s}\inf_{p\in\Rd} \|(\nu-k_{r}\mu)\ast\eta_{s^{1-\delta}}-p\cdot x\|_{\underline{L}^{1}(B_{5s})}\\
&\leq  \frac{1}{s}\inf_{p\in\Rd} \|(\nu-k_{r}\mu)\ast\eta_{r^{1-\delta}}-p\cdot x\|_{\underline{L}^{1}(B_{5s})}+\frac{1}{s}\|(\nu-k_{r}\mu)\ast (\eta_{r^{1-\delta}}-\eta_{s^{1-\delta}})\|_{\underline{L}^{1}(B_{5s})}\\
&\leq \frac{1}{s}\inf_{p\in\Rd} \|h_{r}-p\cdot x\|_{\underline{L}^{1}(B_{5s})}+\frac{1}{s}\|(\nu-k_{r}\mu)\ast\eta_{r^{1-\delta}}-h_{r}\|_{\underline{L}^{1}(B_{5s})}+\frac{1}{s}Cs^{-d-\delta}|\nu-k_{r}\mu|(B_{3r})\\
&\leq \frac{1}{s}\inf_{p\in\Rd} \|h_{r}-p\cdot x\|_{\underline{L}^{1}(B_{5s})}+\frac{C}{s}\left(\frac{r}{s}\right)^{d}(M r^{1-\frac{\al}{2}}
+
r^{-d-\frac{\al}{2}}R^{-\beta \log \tfrac{R}{r}}
| \nu | (B_{R}))\\
&\quad+\frac{C}{s}s^{-d-\delta}(CMr^{d+1}
+ CR^{-\beta \log \tfrac{R}{r}}
| \nu | (B_{R})).
\end{align*}
Using the $C^{1,1}$ regularity of~$\ahom$-harmonic functions, for~$s<\frac15r$, we have
\begin{equation*}
\frac1s
\inf_{p\in\Rd} 
\big\| h_r - \ell_p \big\|_{\underline{L}^1(B_{5s})}
\leq 
C \Bigl( \frac sr \Bigr) 
\frac1r 
\inf_{p\in\Rd} 
\big\| h_r - \ell_p \big\|_{\underline{L}^1(B_{5r})}
\,,
\end{equation*}
and thus by \eqref{e.ha}, 
\begin{align*}
\frac1s
\inf_{p\in\Rd} &
\big\| h_r - \ell_p \big\|_{\underline{L}^1(B_{5s})}\\
&\leq 
C \Bigl( \frac sr \Bigr) 
\left[ \inf_{p\in\Rd}\frac1r \big\|  (\nu-k_{r}\mu)\ast \eta_{r^{1-\delta}} - \ell_p \big\|_{\underline{L}^1(B_{5r})}+\frac1r \big\| (\nu-k_{r}\mu)\ast \eta_{r^{1-\delta}}-h_{r}\big\|_{\underline{L}^{1}(B_{5r})}\right]\\
&\leq C\left(\frac{s}{r}\right)\left[E(r)+ C M r^{-\frac{\al}{2}}
+
Cr^{-d-1-\frac{\al}{2}}R^{-\beta \log \tfrac{R}{r}} 
| \nu | (B_{R})\right].
\end{align*}
Combining the above, this implies
\begin{align*}
E(s)&\leq \left(\frac{s}{r}\right)E(r)+CM\left(\frac{r}{s}\right)^{d+1}s^{-\delta}+\frac{C}{s^{d+1}}s^{-\delta}R^{-\beta \log \tfrac{R}{r}}|\nu|(B_{R}).
\end{align*}

Taking $s:=\theta r$ for $\theta \in (0,\tfrac15]$ sufficiently small (this requires~$r \geq C$ since we have also imposed~$s\geq r^{1-\delta^3}$ above, and we can ensure it is satisfied by requiring~$r_0 \geq C$) depending only on~$(d,\lambda,\Lambda)$, we obtain
\begin{equation*}
E(\theta r) 
\leq 
\frac12 E(r) 
+
C M r^{-\beta} 
+
C
r^{-1-d-\beta} R^{-\beta \log \frac{R}{r}}
| \nu | (B_{R})
\,.
\end{equation*}
Iterating this inequality gives, after possibly shrinking~$\beta$,
\begin{equation*}
E(r) 
\leq 
C\Bigl( \frac {r}{R} \Bigr)^{\!\beta} 
E(R)+
C Mr^{-\beta} 
+
C
r^{-1-d-\beta} R^{-\beta \log \frac{R}{r}}
| \nu | (B_{R})
\,, \ \ \forall r\in \Bigl[\X^{1/(1-\delta)},\frac{1}{10}R\Bigr]\,. 
\end{equation*}
For the final term, we note that there exists $\Theta=\Theta(d, \la, \La)$ such that for every $R\geq \Theta r$, $\beta \log \frac{R}{r}\geq \beta \log \Theta \geq 1+d+\beta$. It then follows that by increasing the constant $C$ that we obtain~\eqref{e.decay.me}.

\smallskip
\emph{Step 3.} Control of the slopes. We claim that 
\begin{equation}
\label{e.slopebound}
\sup_{r\in [r_0,\tfrac{1}{10}R]}
|p_r| 
\leq 
\frac{C}R
\| \nu \ast \eta_{R^{1-\delta}} \|_{L^1(B_{5R})}
+
CMr_0^{-\beta}
+
CR^{-1-d- \beta}
| \nu | (B_{R})
\,.
\end{equation}
By~\eqref{e.krks} and \eqref{e.weaktostrong.app1}, 
\begin{align}
\label{e.soft.kr.diff.theta}
|k_r - k_{\theta r} |
&\leq 
C\left[Mr+r^{-d}|\nu-k_{r}\mu|(B_{2r})\right]
\,, \quad \forall r \in [r_0, \tfrac{1}{10}R ]
\notag\\
&\leq C\left[Mr
+
Cr^{-d} R^{-\beta \log \frac{R}{r}}
| \nu | (B_{R})\right].
\end{align}
By \eqref{e.dscale}, this implies
\begin{equation}
\label{e.change.k.softly}
\frac{1}{r}
\bigl\|
(k_{r} - k_{\theta r}) \mu \ast 
(\eta_{r^{1-\delta}}
-\eta_{(\theta r)^{1-\delta}} )
\bigr\|_{\underline{L}^1(B_{5r})}
\leq 
C(Mr^{- \beta}+Cr^{-1-d-\beta} 
R^{-\beta \log\tfrac{R}{r}}
| \nu | (B_{R}))
\,.
\end{equation}
Using the triangle inequality again with the help of~\eqref{e.dscale},~\eqref{e.decay.me}, and~\eqref{e.change.k.softly}, we get
\begin{align*}
\bigl| p_{r}-p_{\theta r}\bigr|
&
=
\frac{C}{r}
\| \ell_{p_r} - \ell_{p_{\theta r}} \|_{\underline{L}^1(B_{5r})}
\\ & 
\leq 
C
(E(r) {+} E(\theta r)) 
+
\frac{1}{r}
\bigl\|
(\nu-k_{r}\mu) \ast 
(\eta_{r^{1-\delta}}
-\eta_{(\theta r)^{1-\delta}} )
\bigr\|_{\underline{L}^1(B_{5r})}
\\ & \quad 
+ 
\frac{1}{r}
\bigl\|
(k_{r} - k_{\theta r}) \mu \ast 
(\eta_{r^{1-\delta}}
-\eta_{(\theta r)^{1-\delta}} )
\bigr\|_{\underline{L}^1(B_{5r})}
\\ & 
\leq 
C\Bigl( \frac {r}{R} \Bigr)^{\!\beta} 
\frac1R
\| \nu \ast \eta_{R^{1-\delta}} \|_{\underline{L}^1(B_{5R})}
+
CMr^{-\beta}
+
Cr^{-1-d-\beta}R^{-\beta \log\tfrac{R}{r}}
| \nu | (B_{R})
\,.
\end{align*}
Summing over the scales yields, for every~$j$ such that~$\theta^jR \geq r_0$, 
\begin{align}\label{535_1}
\bigl| p_{\theta^k R}\bigr|
&
\leq 
|p_R| +
\sum_{j=0}^k
\bigl| p_{\theta^j R}-p_{\theta^{j+1} R}\bigr|
\notag\\ & 
\leq
|p_R| + 
\frac{C}R
\| \nu \ast \eta_{R^{1-\delta}} \|_{\underline{L}^1(B_R)}
+
CMr_{0}^{-\beta}
+
C(\theta^{k}R)^{-1-d-\beta}R^{-\beta\log (\theta^{-k})}
| \nu | (B_{R})\,.
\end{align}
Since $\theta\in (0, \tfrac{1}{5}]$, there exists $k_{0}$ sufficiently large such that for all $k\geq k_{0}$, $\log(\theta^{-k}) \geq \frac{1+d+\beta}{\beta}$. It follows from \eqref{535_1} that, for any $k$, after increasing the final constant $C$,
\[ \bigl| p_{\theta^k R}\bigr|\leq |p_R| + 
\frac{C}R
\| \nu \ast \eta_{R^{1-\delta}} \|_{\underline{L}^1(B_R)}
+
CMr_{0}^{-\beta}
+CR^{-1-d-\beta}| \nu | (B_{R})
\,. \]
Using finally that by \eqref{e.krbd}, $|p_R| \leq 
\frac{C}R
\| \nu \ast \eta_{R^{1-\delta}} \|_{L^1(B_{5R})}$, 
 we obtain~\eqref{e.slopebound}.

\smallskip

\emph{Step 4.} 
The conclusion.
By the triangle inequality, 
\begin{align*}
\frac1r\big\| (\nu - k_r \mu)\ast \eta_{r^{1-\delta}} \big\|_{\underline{L}^1(B_{5r})}
&
\leq 
\frac1r\| \ell_{p_r} \|_{\underline{L}^1(B_{5r})} 
+
\frac1r 
\big\| (\nu - k_r \mu)\ast \eta_{r^{1-\delta}} - \ell_{p_r} \big\|_{\underline{L}^1(B_{5r})}
\\ \notag & 
=
C|p_r| + E(r). 
\end{align*}
Using~\eqref{e.decay.me},~\eqref{e.slopebound} and the prior display, we obtain, for every $r_0 \in[ \X^{1/(1-\delta)},\frac{1}{10}R]$, 
\begin{align}\label{e.triangle}
M 
&
= 
\sup_{r\in [r_0,10^{-1}R]}
\frac{1}{r}\big\| (\nu - k_r \mu)\ast \eta_{r^{1-\delta}} \big\|_{\underline{L}^1(B_{5r})}
\notag \\ &   
\leq 
\frac{C}{R}
\| \nu \ast \eta_{R^{1-\delta}} \|_{\underline{L}^1(B_{R})}
+
CMr_0^{-\beta}
+
CR^{-1-d-\beta}
| \nu | (B_{5R})
\,.
\end{align}
Taking~$r_0: = \max\{ \X^{1/(1-\delta)}, C\}$ for a sufficiently large constant~$C$, we can absorb the second term on the right side, to obtain
\begin{equation}\label{e.Mlower2}
M \leq 
\frac CR
\| \nu \ast \eta_{R^{1-\delta}} \|_{\underline{L}^1(B_{5R})}
+
CR^{-1-d-\beta}
| \nu | (B_{R})
\,.
\end{equation}
Using~\eqref{e.soft.kr.diff.theta}, if the supremum in $M$ occurs at $\bar{r}$, by replacing $k_{\bar{r}}$ by $k:=k_{r_{0}}$, we conclude that
\begin{align*}
\frac{1}{\bar{r}}\big\| (\nu - k \mu)\ast \eta_{\bar{r}^{1-\delta}} \big\|_{\underline{L}^1(B_{\bar{r}})}&\leq M+C\frac{1}{\bar{r}}|k-k_{\bar{r}}|\\
&\leq M+C\frac{1}{\bar{r}}\sum_{j=0}^{\log_{\theta}\frac{\bar{r}}{R}}|k_{\theta^{j+1}R}-k_{\theta^{j}R}|\\
&\leq M+C\frac{1}{r_{0}}\sum_{j=0}^{\log_{\theta^{-1}}\frac{R}{r_{0}}}C\left[M(\theta^{j}R)
+
C(\theta^{j}R)^{-d-\beta} R^{-\beta \log \frac{R}{r}}
| \nu | (B_{R})\right]\\
&\leq CM+CR^{-1-d-\beta}|\nu|(B_{R})\,,
\end{align*}
where the final inequality follows from a repetition of the argument after \eqref{535_1} above.  Therefore, 
\begin{equation*}
\sup_{r\in [r_0,R]}
\frac1r\big\| (\nu - k \mu)\ast \eta_{r^{1-\delta}} \big\|_{\underline{L}^1(B_{5r})}
\leq 
\frac CR
\| \nu \ast \eta_{R^{1-\delta}} \|_{\underline{L}^1(B_{5R})}
+
CR^{-1-d-\beta}
| \nu | (B_{R})
\,.
\end{equation*}
Finally, in view of~\eqref{e.weaktostrong.app1} and \eqref{e.Mlower2}, we deduce that 
\begin{equation*}
r^{-d} 
|\nu - k \mu |(B_{2r})
\leq
\frac{Cr}{R} 
\| \nu \ast \eta_{R^{1-\delta}} \|_{\underline{L}^1(B_{5R})}
+
\frac{Cr}{R}
R^{-d-\beta}
| \nu | (B_{R})
\,.
\end{equation*}
The proof is now complete.

\end{proof}

\section{Quantitative ergodicity for the environmental process}
\label{s.EPVP}

In this section we give the proof of Theorem~\ref{t.environmental}.

\begin{proof}[Proof of Theorem~\ref{t.environmental}]
Let $\xi\in(0,1)$, let $p\in(0,\xi d)$, and let $i, j \in \{ 1, \ldots , d \}$.  We define the process
\begin{equation*}
Z_t  
:=  \int_0^t 
\bigl( \a_{ij} (X_s) - \ahom_{ij} \bigr) \, ds,
\end{equation*}
and let $\mathcal{G}_{t}$ denote the standard filtration associated to $\left\{X_{t}\right\}$.  For a fixed $T\in(0,\infty)$, we will decompose $Z_T$ into a martingale part and an error part. Precisely, for $K\in\N$ to be fixed later, let $\tau = \nicefrac{T}{K}$ and let $\{M_k\}_{k=0}^K$ be the discrete martingale defined by $M_0=0$ and, for every $k\in\{0,\ldots,K-1\}$,
\begin{align*}
M_{k+1} & = M_k+\int_{k\tau}^{(k+1)\tau}\a_{ij}(X_s)\, ds - \mathbf{E}\left[\int_{k\tau}^{(k+1)\tau} \a_{ij}(X_s)\, ds \bigg| \mathcal{G}_{k \tau} \right]
\\ & = \int_0^{(k+1)\tau} \a_{ij}(X_s)\, ds - \sum_{n=0}^k \mathbf{E}\left[\int_{n\tau}^{(n+1)\tau} \a_{ij}(X_s)\, ds \bigg| \mathcal{G}_{n \tau}\right],
\end{align*}
and let $\{\mathcal{E}_k\}_{k=0}^K$ be the error terms defined by
\[ \mathcal{E}_{k+1} =  \sum_{n=0}^{k}\left(\mathbf{E}\left[\int_{n\tau}^{(n+1)\tau} \a_{ij}(X_s)\, ds \bigg| \mathcal{G}_{n \tau}\right]-\tau\ahom_{ij}\right),\]
for which we have that
\begin{equation}\label{ep_000}Z_T = M_K+\mathcal{E}_K.\end{equation}
We will treat the martingale term using Azuma's inequality.  Precisely, since for every $k\in\{0,\ldots,K-1\}$,
\[\abs{M_{k+1}-M_k}\leq 2\Lambda \tau,\]
it follows from Azuma's inequality and the fact that $M_0=0$ that, for every $\eta \in(0,\infty)$, by definition of $\tau$,
\begin{equation}\label{ep_00}\mathbf{P}[\abs{M_K}\geq \eta T] \leq \exp\left(-\frac{\eta^2 T^2}{8\Lambda^2K\tau^2}\right) = \exp\left(-\frac{\eta^2 T}{8\Lambda^2\tau}\right).\end{equation}
This completes the analysis of the martingale term.

To treat the error term, we will now fix the choice of constants.  Let $\beta = \nicefrac{(p+d\xi)}{2d}\in(\nicefrac{p}{d},\xi)$, let $\beta' = \xi$, let $p'=\nicefrac{p}{\beta}$, and let $K=\lceil T^{1-\beta'}\rceil$ so that $3^{-1}T^{\beta'}\leq \tau\leq 3T^{\beta'}$. For $\gamma\in(0,\infty)$ and $C\in[1,\infty)$, let $\Y_{p'} = \O_{p'}(C)$ be the random variable chosen to be sufficiently large (as in Remark \ref{r.Ystep1}), which guarantees that for $T^{\beta}\geq \Y_{p'}^{2}$, the estimates from Theorem~\ref{t.GF} and Theorem \ref{t.mto1} hold everywhere in the cube with side length $\Lambda T^2$. We henceforth restrict to the event that $T^\beta\geq \Y^2_{p'}$, where we observe by definition of $p$, $p'$, and $\beta$ that $\Y^{\beta^{-1}}_{p'} = \O_p(C)$.   We then first observe the identity, for every $n\in\{0,\ldots,K-1\}$,
\[\mathbf{E}\left[\int_{n\tau}^{(n+1)\tau} \a_{ij}(X_s)\, ds \bigg| \mathcal{G}_{n \tau}\right] = \int_{n\tau}^{(n+1)\tau} \int_{\R^d}P(s-n\tau,X_{n\tau},y)\a_{ij}(y)\, dy\, ds,\]
and hence $\mathcal{E}_{k+1}$ can be re-expressed as 
\begin{align*}
\mathcal{E}_{k+1}&=\sum_{n=0}^{k} \bigg[ \int_{n\tau}^{n\tau+T^{\beta}}\left( \int_{\R^d}P(s-n\tau,X_{n\tau},y)\a_{ij}(y)\, dy-\ahom_{ij}\right)\, ds\\
\notag&\quad +\int_{n\tau+T^{\beta}}^{(n+1)\tau}\left( \int_{\R^d}P(s-n\tau,X_{n\tau},y)\a_{ij}(y)\, dy-\ahom_{ij}\right)\, ds\bigg].
\end{align*}
We observe using the boundedness of $\a$ and $\ahom$ that the first term is controlled by
\begin{equation*}\label{ep_03}
\sum_{n=0}^{k}\int_{n\tau}^{n\tau+T^\beta}\left(\int_{\R^d}P(s-n\tau,X_{n\tau},y)\a_{ij}(y)\, dy - \ahom_{ij}\right) \, ds
\leq 2k\Lambda T^\beta,\end{equation*}
which implies, by definition of $\tau$, 
\begin{align}\label{e.1decomp}
T^{-1}\mathcal{E}_{K} & \leq \sum_{n=0}^{K-1} \left(\tau^{-1} \int_{n\tau+T^{\beta}}^{(n+1)\tau}\left( \int_{\R^d}P(s-n\tau,X_{n\tau},y)\a_{ij}(y)\, dy-\ahom_{ij}\right)\, ds\right)
\\ \nonumber & \qquad +2K\Lambda T^{\beta-1}.
\end{align}

To estimate, the second term, we will form the decomposition, for $X^*_t := \max_{0\leq t\leq T}\abs{X_t}$,
\begin{align}\label{e.bigdecomp}
&\int_{n\tau+T^{\beta}}^{(n+1)\tau}\left( \int_{\R^d}P(s-n\tau,X_{n\tau},y)\a_{ij}(y)\, dy-\ahom_{ij}\right)\, ds\\
\notag &=\int_{n\tau+T^{\beta}}^{(n+1)\tau}\left( \int_{\R^d}P(s-j\tau,X_{n\tau},y)\mathbf{1}_{\{X^*_T> \frac{1}{2}T^2\}}\a_{ij}(y)\, dy\right)\, ds\\
\notag &\quad+\int_{n\tau+T^{\beta}}^{(n+1)\tau}\left( \int_{B_{T^{2}}}P(s-n\tau,X_{n\tau},y)\mathbf{1}_{\{X^*_T\leq \frac{1}{2}T^2\}}\a_{ij}(y)\, dy-\ahom_{ij}\right)\, ds\\
\notag &\quad +\int_{n\tau+T^{\beta}}^{(n+1)\tau}\left( \int_{\R^{d}\setminus B_{T^{2}}}P(s-n\tau,X_{n\tau},y)\mathbf{1}_{\{X^*_T\leq \frac{1}{2}T^2\}}\a_{ij}(y)\, dy\right)\, ds.
\end{align}
It follows from the boundedness of $\A$ and Proposition~\ref{le_prop_2} that the first term on the righthand side of \eqref{e.bigdecomp} satisfies
\begin{multline}\label{ep_20}
\mathbf{P}\bigg[\sum_{n=0}^{K-1}\int_{n\tau+T^{\beta}}^{(n+1)\tau}\abs{\int_{\R^d}P(s-n\tau,X_{n\tau},y)\mathbf{1}_{\{X^*_T> \frac{1}{2}T^2\}}\a_{ij}(y)\, dy}\, ds\neq 0\bigg]
\\   
\leq \La \mathbf{P}\left[X^*_T\geq \frac{1}{2}T^2\right]\leq \Lambda\exp\left(-\frac{T^3}{8\Lambda}\right).
\end{multline}
For the third term of \eqref{e.bigdecomp}, the boundedness of $\a$, Proposition~\ref{le_prop_2}, and $s-n\tau\geq T^\beta$ yield that there exists $C=C(\la, \La, d,\al_{0}, K_{0})\in[1,\infty)$ such that
\begin{align}\label{ep_11}
\int_{n\tau+T^{\beta}}^{(n+1)\tau} \biggl |\int_{\R^{d}\setminus B_{T^{2}}}
\!\!P(s-n\tau,X_{n\tau},y)\mathbf{1}_{\{X^*_T\leq \frac{1}{2}T^{2}\}}\a_{ij}(y)\, dy \biggr|\, ds
 \leq C\Lambda (\tau-T^{\beta}) \exp\left(\!-\frac{T^3}{C}\right)
\,.
\end{align}
To control the second term on the righthand side of \eqref{e.bigdecomp}, we form the decomposition
\begin{align*}
\lefteqn{
\int_{B_{T^{2}}}P(s-n\tau,X_{j\tau},y)\mathbf{1}_{\{X^*_T\leq \frac{1}{2}T^2\}}\a_{ij}(y)\, dy
} \qquad &
\\ &
= \int_{B_{T^2}}m(y)\overline{P}(s-n\tau,X_{j\tau},y)\mathbf{1}_{\{X^*_T\leq \frac{1}{2}T^2\}}\a_{ij}(y)\, dy
\\ & \qquad + \int_{B_{T^2}}(P(s-n\tau,X_{n\tau},y)-m(y)\overline{P}(s-n\tau,X_{n\tau},y))\mathbf{1}_{\{X^*_T\leq \frac{1}{2}T^2\}}\a_{ij}(y)\, dy.
\end{align*}
Theorem~\ref{t.GF}, Proposition \ref{p.Holder.continuity}, and the boundedness of $\a$ yield that, for $\Y^2_{p'}\leq T^{\beta}\leq (s-n\tau)$,
\begin{align*}
 & \biggl |\int_{B_{T^2}}\left(P(s-n\tau,X_{n\tau},y)-m(y)\overline{P}(s-n\tau,X_{n\tau},y)\right)\mathbf{1}_{\{X^*_T\leq \frac{1}{2}T^2\}}\a_{ij}(y)\, dy \biggr |
\\ \nonumber &\qquad  \leq C\int_{\R^d}(s-n\tau)^{-\frac{d}{2}}\left(\frac{s-n\tau}{\Y^2_{p'}}\right)^{-\gamma}m(y)\exp\left(-\frac{\abs{X_{n\tau}-y}^2}{C(s-n\tau)}\right)\, dy
\leq C\Y^{2\gamma}_{p'}(s-n\tau)^{-\gamma}
\end{align*}
and hence 
\begin{align*}
&\int_{n\tau+T^{\beta}}^{(n+1)\tau}\biggl ( \int_{B_{T^{2}}}\!\! P(s-n\tau,X_{n\tau},y)\mathbf{1}_{\{X^*_T\leq \frac{1}{2}T^2\}}\a_{ij}(y)\, dy-\ahom_{ij}\biggr )\, ds\\
\notag &\quad\quad\quad\leq C\int_{n\tau+T^{\beta}}^{(n+1)\tau} \!\int_{B_{T^2}}\!\!
\overline{P}(s-n\tau,X_{n\tau},y)\mathbf{1}_{\{X^*_T\leq \frac{1}{2}T^2\}}\left(m(y)\a_{ij}(y) -\ahom_{ij}\right)\, dy\, ds\\
&\quad\quad\quad
+\frac{C}{1-\ga}\Y^{2\ga}_{p'}\tau^{1-\ga}.
\end{align*}
An application of Theorem~\ref{t.mto1}, a dyadic approximation of the heat kernel, and Proposition \ref{p.Holder.continuity} yield that, for some $\theta\in (0,1)$, 
\begin{align*}
\bigg|\int_{n\tau+T^{\beta}}^{(n+1)\tau}&\int_{B_{T^2}}\overline{P}(s-n\tau,X_{n\tau},y)\mathbf{1}_{\{X^*_T\leq \frac{1}{2}T^2\}}(m(y)\a_{ij}(y) -\ahom_{ij})\, dy \, ds\bigg|
\\& \leq C\int_{n\tau+T^\beta}^{(n+1)\tau}
\biggl ((s-n\tau)^{-\frac{\gamma}{2}}+ C\Lambda \exp\biggl (\!-\frac{(s-n\tau)^\theta}{C}\biggr )\biggr )\, ds
\\& \leq \frac{C}{1-\nicefrac{\gamma}{2}} \tau^{1-\frac{\gamma}{2}}+ C\Lambda\tau \exp\biggl (\!-\frac{T^{\beta\theta}}{C}\biggr ).
\end{align*}
Combining the last two displays, we have 
\begin{multline}\label{e.2decomp}
\int_{n\tau+T^{\beta}}^{(n+1)\tau}
\bigg( \int_{B_{T^{2}}}
P(s-n\tau,X_{n\tau},y)\mathbf{1}_{\{X^*_T\leq \frac{1}{2}T^2\}}\a_{ij}(y)\, dy-\ahom_{ij}\bigg)
\, ds\\
\leq C \tau^{1-\frac{\gamma}{2}}+ C\Lambda\tau \exp\left(-\frac{T^{\beta\theta}}{C}\right)+\frac{C}{1-\ga}\Y^{2\ga}_{p'}\tau^{1-\ga}.
\end{multline}
Returning to \eqref{e.bigdecomp}, we have from \eqref{ep_11} and \eqref{e.2decomp} that
\begin{align*}
&\int_{n\tau+T^{\beta}}^{(n+1)\tau}\left( \int_{\R^d}P(s-n\tau,X_{n\tau},y)\a_{ij}(y)\, dy-\ahom_{ij}\right)\, ds \\
\notag &\leq \int_{n\tau+T^{\beta}}^{(n+1)\tau}\left( \int_{\R^d}P(s-n\tau,X_{n\tau},y)\mathbf{1}_{\{X^*_T> \frac{1}{2}T^2\}}\a_{ij}(y)\, dy\right)\, ds\\
\notag&\qquad+C\Lambda (\tau-T^{\beta}) \exp\left(-\frac{T^3}{C}\right)+\frac{C}{1-\nicefrac{\gamma}{2}} \tau^{1-\frac{\gamma}{2}}+ C\Lambda\tau \exp\left(-\frac{T^{\beta\theta}}{C}\right)+\frac{C}{1-\ga}\Y^{2\ga}_{p'}\tau^{1-\ga}.
\end{align*}
Returning to \eqref{e.1decomp}, we have 
\begin{align*}
T^{-1}\mathcal{E}_{K}&\leq \sum_{n=0}^{K-1} \bigg[\tau^{-1} \int_{n\tau+T^{\beta}}^{(n+1)\tau}\left( \int_{\R^d}P(s-n\tau,X_{n\tau},y)\mathbf{1}_{\{X^*_T> \frac{1}{2}T^2\}}\a_{ij}(y)\, dy\right)\, ds\bigg]\\
& \qquad +K\left(\frac{C}{1-\nicefrac{\gamma}{2}} \tau^{-\frac{\gamma}{2}}+ C\Lambda \exp\left(-\frac{T^{\beta\theta}}{C}\right)+\frac{C}{1-\ga}\Y^{2\ga}_{k, p'}\tau^{-\ga}+2\La T^{\beta-1}\right).
\end{align*}
from which, on the event $\Y_{p'}^2\leq T^\beta$, since $\tau\geq 3^{-1}T^{\beta'}$, for a deterministic $C\in(0,\infty)$,
\begin{align}\label{ep_16}
T^{-1}\mathcal{E}_{K}&\leq \sum_{n=0}^{K-1} \bigg[\tau^{-1} \int_{n\tau+T^{\beta}}^{(n+1)\tau}\left( \int_{\R^d}P(s-n\tau,X_{n\tau},y)\mathbf{1}_{\{X^*_T> \frac{1}{2}T^2\}}\a_{ij}(y)\, dy\right)\, ds\bigg]\\
\notag&\qquad+CKT^{-(\beta'-\beta)\ga}.
\end{align}
It then follows from \eqref{ep_000}, \eqref{ep_00}, \eqref{ep_20}, \eqref{ep_16}, and $|K-T^{1-\beta'}|\leq 1$ that, for every $\eta\in (2CT^{(\beta-\beta')(1-\ga)}, \infty)$,
\begin{align}\label{ep_25}
&\mathbf{P}[T^{-1}\abs{Z_T}\geq \eta]\notag \\
& \leq \mathbf{P}[\abs{M_K}\geq 2^{-1}\eta T] \notag \\
&\quad + \mathbf{P}\left[\sum_{n=0}^{K-1}\tau^{-1}\int_{n\tau}^{(n+1)\tau}\abs{\int_{\R^d}P(s-n\tau,X_{n\tau},y)\mathbf{1}_{\{X^*_T> \frac{1}{2}T^2\}}\a_{ij}(y)\, dy}\, ds\neq 0\right]
\\ \nonumber & \leq \Lambda\exp\left(-\frac{T^{3}}{8\Lambda}\right)+\exp\left(-\frac{\eta^2 T^{1-\xi}}{32\Lambda^2}\right).
\end{align}
We then conclude that, for every for every $\eta \geq 2CT^{\frac{\gamma\delta}{2}}$,
\[ \mathbf{P}[T^{-1}\abs{Z_T}\geq \eta] \leq (1+\Lambda)\exp\left(-\frac{\eta^2 T^{1-\xi}}{32\Lambda^2}\right),\]
where for large $\eta$ we are using the boundedness of the random variable $T^{-1}Z_T$ to account for the first term on the righthand side of \eqref{ep_25}.  This completes the proof.  \end{proof}

\appendix

\section{Deterministic estimates on the adjoint equation}
\label{a.adjoint.estimates}

In this appendix, we prove deterministic estimates on the adjoint equation
\begin{equation}
\label{e.adjoint.m}
L^* m := 
-\sum_{i,j=1}^d\partial_i \partial_j  \bigl( \a_{ij} m \bigr) = 0 \,.
\end{equation}
The estimates we consider here are purely deterministic. That is, we do not assume that here~$\a$ is a random field, or consider statistical properties of solutions. We are interested only in the PDE theory for weak solutions of~\eqref{e.weaksol.adjoint}. 
Most of the results we present here have not, to our knowledge, appeared in the literature before. Some of them are folklore-type results (perhaps well-known to a few experts); others are not very difficult to extrapolate from known results but have nonetheless not to our knowledge been written down. 

\smallskip

Writing~\eqref{e.adjoint.m} in the way we have above makes it appear that~$m$ is a scalar function, but we actually first formulate weak solutions for general Radon measures. If~$U \subseteq\Rd$ is a domain and~$\mu$ is a Radon measure on~$U$, then we say that~$\mu$ is a \emph{weak solution of~\eqref{e.adjoint.m} in~$U$} if 
\begin{equation}
\label{e.weaksol.adjoint}
-\int_{U} \a_{ij}(x) \partial_i \partial_j \varphi(x) \,d\mu(x) = 0 \,, \quad \forall \varphi \in C^\infty_c(U)\,.
\end{equation}
Of course,~\eqref{e.weaksol.adjoint} is equivalent to the statement obtained by replacing the condition~$\varphi \in C^\infty_c(U)$ with~$\varphi \in C^2_c(U)$. 
If~$\mu$ happens to have a locally integrable density~$m$, then we also say that~$m$ is a weak solution of~\eqref{e.adjoint.m} if~$\mu$ is. 

\smallskip

\smallskip

In the following proposition, we show that a nonnegative Radon measures satisfying~\eqref{e.weaksol.adjoint} must have a density in~$L^{d/(d-1)}$ with a corresponding estimate which does not depend on the modulus of continuity of~$\a$. 
This kind of~$L^{d/(d-1)}$ estimate for the ``doubly divergence form'' adjoint equation goes back to the work of Bauman~\cite{B}. The exponent~$d/(d-1)$ is expected to appear since it is the H\"older dual exponent to~$d$, which arises in the ABP inequality for nondivergence form equations. 
However, in that work and in all the subsequent works of which we are aware, the constants appearing in localized versions of this estimate depended on the modulus of continuity of the coefficients and the estimate is therefore not scale-invariant. This is because a cutoff function creates a term that requires gradient estimates to control, and gradient estimates are typically valid only if the coefficients are continuous. 
Note that estimates which depend on the modulus of continuity are essentially useless at large scales, even if the coefficients are smooth. 

\smallskip

We present here a scale-invariant estimate with constants depending only on~$(d,\lambda,\Lambda)$, which was shown to us by Guido De Philippis, who has graciously agreed to allow us to include it here. The argument is a variant of the original argument of Bauman~\cite{B}, but dualizes the estimates for the Monge-Amp\`ere equation (the ABP inequality) rather than the Calder\'on-Zygmund estimates. 

\smallskip

Since the estimate we get is scale-invariant, we actually get a slightly better integrability exponent than~$d/(d-1)$, thanks to the Gehring lemma. This cannot be improved further: see Remark~\ref{r.nonono} below. 
\begin{proposition}
\label{p.Guido}
Let~$\a: B_{r} \to \mathcal{S}^{d}$ be continuous and satisfy~\eqref{e.unifellip} and consider a nonnegative Radon measure~$\mu$ on~$B_r$ satisfying
\begin{equation}
\label{e.invmeas.assump}
-\int_{B_r}\a_{ij} (x) \partial_{i}\partial_j \varphi(x) d\mu(x) 
= 0 \,, 
\quad \forall \varphi\in C^2_c(B_r)\,.
\end{equation}
Then there exist~$\delta(d,\lambda,\Lambda)>0$ and~$C(d,\lambda,\Lambda)<\infty$ and a function~$m\in L^{\frac{d}{d-1} + \delta}_{\mathrm{loc}}(B_r)$ satisfying
\begin{equation*}
d\mu(x) = m(x)\,dx
\end{equation*}
and
\begin{equation}
\label{e.Guido.plus}
\| m \|_{\underline{L}^{\frac d{d-1} + \delta} (B_{\sfrac r2})}
\leq 
\frac{C \mu(B_r)}{|B_r|}
\,.
\end{equation}
\end{proposition}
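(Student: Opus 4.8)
The plan is to dualize the Alexandrov--Bakelman--Pucci (ABP) maximum principle. The key observation is that the weak equation \eqref{e.invmeas.assump} says precisely that $\mu$ annihilates all functions of the form $\tr(\a D^2\varphi)$ with $\varphi\in C^2_c(B_r)$; equivalently, if we set $L\varphi := \tr(\a D^2\varphi)$, then $\mu\perp \mathrm{Range}(L|_{C^2_c})$. So to bound $\int_{B_{r/2}} f\,d\mu$ for a test function $f$, it suffices to solve the nondivergence Dirichlet problem $L\varphi = f - c$ in $B_r$ with $\varphi = 0$ on $\partial B_r$, for a suitable constant $c$ (which accounts for the fact that $\mu$ is not yet known to have mass-normalization), and then estimate $\|\varphi\|_{L^\infty(B_r)}$ in terms of $\|f\|_{L^d(B_r)}$. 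This is exactly what the ABP inequality provides: $\|\varphi\|_{L^\infty(B_r)} \le C r \|f-c\|_{L^d(B_r)} \le Cr\|f\|_{L^d(B_r)}$, with $C=C(d,\lambda,\Lambda)$ scale-invariantly. Integrating against $\mu$ and using $\int L\varphi\,d\mu=0$ gives $|\int_{B_r}(f-c)\,d\mu| = |\int_{B_r}(f-c-L\varphi)\,d\mu|$... actually more directly: $\int_{B_r} f\,d\mu - c\,\mu(B_r) = \int_{B_r}(f-c)\,d\mu = \int_{B_r}L\varphi\,d\mu = 0$ is false unless $\varphi$ is admissible; the correct bookkeeping is to choose $\varphi$ with a cutoff or to work on $B_{r/2}$ with $\varphi$ supported slightly larger. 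The cleanest route: take any $f\in C_c^\infty(B_{r/2})$, solve $L\varphi = f$ in $B_r$, $\varphi=0$ on $\partial B_r$; then $\varphi\notin C_c^2$ but one can still justify $\int_{B_r}f\,d\mu = \int_{B_r}L\varphi\,d\mu = 0$ only if $\mu$ vanishes near $\partial B_r$, which it need not. So instead one localizes: fix $\rho<r$, mollify/cutoff, and absorb boundary terms using that they involve $D\varphi$ on an annulus where interior estimates (not depending on continuity of $\a$ away from where we need them — but here $\a$ is merely continuous) give control. Following Bauman's original scheme, one does this carefully to obtain, for all $f\in C_c^\infty(B_{r/2})$,
\begin{equation*}
\Bigl| \int_{B_{r/2}} f\,d\mu \Bigr| \le \frac{C\mu(B_r)}{|B_r|^{1/d}}\,\|f\|_{L^d(B_{r/2})}\,,
\end{equation*}
after normalizing. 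By duality of $L^d$ and $L^{d/(d-1)}$, this yields $d\mu = m\,dx$ on $B_{r/2}$ with $\|m\|_{\underline{L}^{d/(d-1)}(B_{r/2})} \le C\mu(B_r)/|B_r|$.

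The steps, in order, are: (1) recall the ABP inequality in the form $\|\varphi\|_{L^\infty(B_\rho)} \le C\rho\,\|\tr(\a D^2\varphi)\|_{L^d(B_\rho)}$ for $\varphi\in C(\overline{B_\rho})\cap W^{2,d}_{\mathrm{loc}}$ with $\varphi|_{\partial B_\rho}=0$, scale-invariant with $C=C(d,\lambda,\Lambda)$; (2) given $f\in C_c^\infty(B_{r/2})$, use the solvability of the nondivergence Dirichlet problem (with continuous coefficients, so classical/$W^{2,p}$ theory applies on $B_{\rho}$ for $r/2<\rho<r$) to produce $\varphi_\rho$ with $L\varphi_\rho = f$ in $B_\rho$, $\varphi_\rho|_{\partial B_\rho}=0$; (3) test the equation \eqref{e.invmeas.assump} against a $C^2_c(B_r)$ approximation of $\varphi_\rho$ (extended by a suitable cutoff near $\partial B_\rho$), controlling the error terms — which involve $\a\cdot(\text{derivatives of the cutoff})\cdot(\varphi_\rho, D\varphi_\rho, D^2\varphi_\rho)$ — by a limiting/averaging argument over $\rho\in(r/2, r)$, exploiting that $\int_{r/2}^r(\cdots)\,d\rho$ is finite, to conclude $\int_{B_{r/2}}f\,d\mu = 0$ holds after subtracting an appropriate constant and hence, combined with (1), the dual bound; (4) conclude by $L^d$--$L^{d/(d-1)}$ duality that $\mu$ has a density $m\in L^{d/(d-1)}(B_{r/2})$ with the stated normalized bound; (5) upgrade the exponent from $d/(d-1)$ to $d/(d-1)+\delta$ via the Gehring lemma (reverse H\"older / self-improvement of integrability), using that the estimate from (4) is scale-invariant and applies on every subball, so one gets a reverse-H\"older inequality for $m$ on balls and Gehring gives the higher integrability with $\delta=\delta(d,\lambda,\Lambda)>0$; the constant $C$ in \eqref{e.Guido.plus} then depends only on $(d,\lambda,\Lambda)$.

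The main obstacle is step (3): the honest justification that one may test the adjoint equation against the ABP-barrier $\varphi_\rho$, which is not compactly supported in $B_r$ and a priori only in $W^{2,d}$, not $C^2$. The cutoff needed to make it admissible introduces lower-order terms supported on an annulus $B_\rho\setminus B_{\rho'}$, and those terms involve first and second derivatives of $\varphi_\rho$ paired against $\mu$, which one does not yet know how to control pointwise. Bauman's trick — and the reason De Philippis's argument is a \emph{variant} — is to avoid gradient estimates (which would require a modulus of continuity on $\a$) by instead integrating the estimate over a family of radii, so that the boundary/annulus contributions, while individually uncontrolled, have controlled average; alternatively one argues with $\mu$ replaced by $\mu$ restricted to $B_{\rho'}$ and passes to the limit $\rho'\uparrow r$. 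Getting this limiting argument to work cleanly, with all constants depending only on $(d,\lambda,\Lambda)$ and not on $\|\a\|_{C^{0,\alpha_0}}$, is the crux; everything else (ABP, duality, Gehring) is standard machinery. We would also need Remark~\ref{r.nonono}'s counterexample only to note optimality, not for the proof itself.
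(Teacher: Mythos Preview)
Your overall architecture is right: dualize an ABP-type bound to get $L^{d/(d-1)}$, then Gehring. You have also correctly located the single real difficulty, namely that after cutting off the test function the cross terms on the annulus require control of $D\varphi_\rho$, and for the linear problem $L\varphi_\rho=f$ with merely continuous coefficients there is \emph{no} gradient bound with constants depending only on $(d,\lambda,\Lambda)$. What you are missing is the idea that actually resolves this obstacle.

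Your proposed fix---averaging over radii, or restricting $\mu$ and passing to a limit---does not work, and you do not really argue that it does. The annulus term $\int \a_{ij}\,\partial_i\varphi_\rho\,\partial_j\zeta\,d\mu$ involves $D\varphi_\rho$ paired against the unknown measure $\mu$; averaging in $\rho$ moves the annulus but does nothing to produce a gradient bound independent of the modulus of $\a$. This is precisely why, as the paper notes, Bauman's original localization carried constants depending on the modulus of continuity. The De~Philippis variant is \emph{not} an averaging trick; it is a different choice of test function. Instead of solving $L\varphi=f$, one solves the Monge--Amp\`ere problem
\[
\det D^2 v = f^d \quad\text{in } B_4,\qquad v=0 \ \text{on }\partial B_4,
\]
for $f\ge 0$. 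This equation does not see $\a$ at all, so the classical Monge--Amp\`ere estimate gives $\|v\|_{L^\infty}+\|Dv\|_{L^\infty}\le C(d)\|f\|_{L^d}$. Since $v$ is convex, the arithmetic--geometric mean inequality yields the pointwise bound $f=(\det D^2v)^{1/d}\le \tfrac{1}{d}\Delta v\le \tfrac{1}{\lambda d}\tr(\a D^2 v)$. Integrating this \emph{inequality} against the \emph{nonnegative} measure $\mu$ and using the same cutoff identity you wrote down, the cross terms are now bounded by $(\|v\|_{L^\infty}\|D^2\zeta\|_{L^\infty}+\|Dv\|_{L^\infty}\|D\zeta\|_{L^\infty})\mu(B_1)\le C\|f\|_{L^d}\mu(B_1)$ with $C=C(d,\lambda,\Lambda)$ only. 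That is the whole point: the gradient bound comes for free because $v$ solves an $\a$-independent problem, at the price of getting only an inequality---which is harmless since $\mu\ge 0$.
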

\begin{proof}
By the Gehring lemma (see~\cite[Lemma C.4]{AKMbook}), it suffices to prove the statement of the proposition for~$\delta = 0$. That is, we want to show that~$\mu$ has a density~$m$ which satisfies, for a constant~$C(d,\lambda,\Lambda)<\infty$, the estimate
\begin{equation}
\label{e.Guido.delta=0}
\| m \|_{\underline{L}^{\frac d{d-1}} (B_{\sfrac r2})}
\leq 
\frac{C \mu(B_r)}{|B_r|}
=
C \| m \|_{\underline{L}^1(B_r)} 
\,.
\end{equation}
By scaling, we may assume~$r=1$. 

\smallskip

Fix~$f \in C^\infty_c(B_{1/2})$ with~$f \geq 0$ and let~$v \in C^\infty(B_4)$ satisfy the following Dirichlet problem for the Monge-Amp\`ere equation:
\begin{equation*}
\left\{
\begin{aligned}
& \det D^2 v = f^{d}  & \mbox{in} & \ B_4\,, \\ 
& v = 0 & \mbox{on} & \ \partial B_4\,.
\end{aligned}
\right.
\end{equation*}
Then~$v\in C^2(B_4)$ is convex, and we have, for a constant~$C(d)<\infty$, the estimate
\begin{equation*}
\| v \|_{L^\infty(B_4)} + \| D v \|_{L^\infty(B_4)} 
\leq 
C \| f^d \|_{L^1(B_4)}^{\sfrac 1d}
=
C \| f \|_{L^d(B_{1/2})} \,.
\end{equation*}
Select a cutoff $\zeta \in C^\infty_c(B_{1})$ satisfying~$\indc_{B_{\sfrac12}} \leq \zeta \leq \indc_{B_{\sfrac34}}$ and~$\| D^j\zeta \|_{L^\infty(B_1)} \leq 10$ for~$j\in\{1,2\}$. Using the arithmetic-geometric mean inequality, and the fact that~$v$ is convex, we have 
\begin{align*}
\int_{B_1} f(x) \,d\mu(x) 
&
=
\int_{B_1} \zeta(x) f(x) \,d\mu(x) 
\\ & 
=
\int_{B_1} \zeta(x) \bigl( \det(D^2v(x)) \bigr)^{\sfrac 1d} \,d\mu(x) 
\\ & 
\leq 
\frac{1}{d} \int_{B_1} \zeta(x) \Delta v(x)  \,d\mu(x) 
\leq 
\frac {1}{\lambda d}\int_{B_1} 
\zeta(x)\a_{ij}(x) \partial_{i}\partial_j v(x)\,d\mu(x)\,.
\end{align*}
We next use the identity  
\begin{equation}
\label{e.easyiden}
\zeta\a_{ij}\partial_{i}\partial_j v
=
\a_{ij} \partial_{i}\partial_j (\zeta v) 
-
v \a_{ij}\partial_{i}\partial_j \zeta
-2 \a_{ij} \partial_i v\partial_j \zeta 
\end{equation}
and the assumption that~$\mu$ is an invariant measure, i.e.~\eqref{e.invmeas.assump}, to compute
\begin{align*}
\lefteqn{
\int_{B_1} 
\zeta(x)\a_{ij}(x) \partial_{i}\partial_j v(x)\,d\mu(x)
} \qquad & 
\\ &
=
- \int_{B_1} 
\bigl( 
v(x) \a_{ij}(x)\partial_{i}\partial_j \zeta(x)
-2 \a_{ij}(x) \partial_i v(x)\partial_j \zeta(x)
\bigr) \,d\mu(x)
\\ & 
\leq 
\bigl( 
C \| v\|_{L^\infty(B_1) } \| D^2 \zeta \|_{L^\infty(B_1)}
+
C \| Dv\|_{L^\infty(B_1) } \| D \zeta \|_{L^\infty(B_1)}
\bigr) \mu(B_1)
\\ & 
\leq 
C \mu(B_1) \| f \|_{L^d(B_{\sfrac12})}\,.
\end{align*}
We have shown that, for every~$f \in C^\infty_c(B_{1/2})$ with~$f \geq 0$,
\begin{equation*}
\int_{B_1} f(x)\,d\mu(x)
\leq 
C \mu(B_1) \| f \|_{L^d(B_{\sfrac12})} \,.
\end{equation*}
Therefore, by density,~$f\mapsto \int_{B_1} f(x)\,d\mu(x)$ can be extended to a bounded linear functional on~$L^d(B_{\sfrac12})$ and it therefore follows by the representation theorem that~$\mu\vert_{B_{\sfrac12}}$ is given by a density belonging to~$L^{d/(d-1)}(B_{\sfrac12})$, which, if we denote it by~$m$, satisfies~\eqref{e.Guido.delta=0}. 
The result follows. 
\end{proof}

\begin{remark}
\label{r.nonono}
We claim that Proposition~\ref{p.Guido} cannot be improved, in the sense that, for any~$q>\frac{d}{d-1}$ and~$K<\infty$, there exists~$\a: B_{1} \to \mathcal{S}^{d}$ which is continuous and uniformly elliptic and a function~$m\in L^{\frac{d}{d-1}}(B_1)$ satisfying
\begin{equation*}
L^* m = 0 \quad \mbox{in} \ B_1,
\end{equation*}
such that 
\begin{equation}
\label{e.mKm}
\| m \|_{L^q(B_1)} > K \| m \|_{L^1(B_1)}\,.
\end{equation}
For instance, let us consider
\begin{equation}
\label{e.a.example}
\a(x) = 
\lambda \Bigl( I_d - \frac{x\otimes x}{|x|^2} \Bigr)
+ \Lambda \frac{x\otimes x}{|x|^2}\,.
\end{equation}
Then, by a direct computation, we find that the function
\begin{equation*}
m(x):= |x|^{-\gamma} \,,  \quad \gamma:= (d-1) \frac{\Lambda-\lambda}{\Lambda}\,,
\end{equation*}
satisfies, for every~$j\in\{1,\ldots,d\}$, 
\begin{equation*}
\sum_{i=1}^d \partial_i \bigl(  \a_{ij} m(x)  \bigr) = 0
\,.
\end{equation*}
In particular,~$L^*m =0$. Note that, for any~$q >\frac{d}{d-1}$, 
\begin{equation*}
m \not \in L^q(B_1) \quad \iff \quad  \frac{\lambda}{\Lambda} < 1 - \frac{d}{q(d-1)} \,.
\end{equation*}
Now, this coefficient field~$\a(x)$ that we have defined in~\eqref{e.a.example} is not continuous at the origin. However, this can be fixed by mollifying~$\a$ very slightly, to make it qualitatively continuous. This will only change~$m$ slightly, and by slowly removing the mollification we can get an example satisfying~\eqref{e.mKm} for any choice of~$K$. 
(The positive homogeneity just makes it easier to compute.) 
\end{remark}

If we allow ourselves to use the continuity of the coefficients~$\a(x)$ in a quantitative way, then we can improve the integrability of the invariant measure to~$L^q$ for every~$q \in (1,\infty)$. 
This is obtained by an argument following Bauman~\cite{B}, the main difference is that rather than proving~\eqref{e.reverse.Holder} for~$q=1$, we prove it for all~$q\in [1,d)$ allowing us to bootstrap the integrability exponent. 

\begin{proposition}
\label{p.Patty.plus}
Let~$\a: B_{r} \to \mathcal{S}^{d}$ be continuous and satisfy~\eqref{e.unifellip} and~$m \in L^1(B_{2r})$ be a weak solution of the equation
\begin{equation}
\label{e.invmeas.m.eq.2}
L^* m :=-\sum_{i,j=1}^d \partial_{i} \partial_j \bigl( \a_{ij} m \bigr) 
= 0 \quad \mbox{in} \ B_{2r}\,.
\end{equation}
Then
\begin{equation*}
m \in \bigcap_{q\in [1,\infty)} L^{q} (B_r)
\end{equation*}
and, for each~$q\in [1,\infty)$, there exists a constant~$C<\infty$ which depends on~$q$ and the modulus of continuity of~$\a$, in addition to $(d,\lambda,\Lambda)$, such that
\begin{equation}
\label{e.Patty}
\| m \|_{\underline{L}^{q}(B_r)}
\leq
C \| m \|_{\underline{L}^{1}(B_{2r})}
\,.
\end{equation}
\end{proposition}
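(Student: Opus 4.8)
The plan is to follow the dualization strategy of Bauman~\cite{B}, but run it against the Monge-Amp\`ere estimate in the style of Proposition~\ref{p.Guido}, except now keeping a parameter $q$ free so that we can bootstrap. First I would reduce to $r=1$ by scaling and reduce to $m\geq 0$ by using that, once we have local estimates for nonnegative solutions, the general case follows by the fact (from Proposition~\ref{p.Guido}) that $m$ is locally integrable and hence, writing $\mu = m\,dx$, we can compare against the nonnegative solution furnished by the existence theory; alternatively one works directly with the signed measure and tests against both the convex function $v$ and a suitable affine modification. The core claim to establish is the reverse-H\"older-type bound
\begin{equation}
\label{e.reverse.Holder}
\int_{B_{3/2}} f(x) \,|m(x)|\,dx \leq C \Bigl( \int_{B_{7/4}} |m(x)|\,dx \Bigr)^{1-\theta} \| f \|_{L^{d}(B_{3/2})}^{?}\cdots
\end{equation}
but more precisely: for every $q\in[1,d)$ and every nonnegative $f\in C^\infty_c(B_{3/2})$, one has $\int f m \leq C_q \|m\|_{\underline L^1(B_2)} \|f\|_{L^{d'}(B_{3/2})}$ where $d'$ is chosen so that duality upgrades $L^1$-integrability of $m$ to $L^{q}$-integrability. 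The mechanism is the same as in Proposition~\ref{p.Guido}: solve $\det D^2 v = f^{d}$ with zero boundary data on a slightly larger ball, use the arithmetic-geometric mean inequality $(\det D^2 v)^{1/d} \leq \tfrac1d \Delta v \leq \tfrac{1}{\lambda d}\operatorname{tr}(\a D^2 v)$, multiply by a cutoff $\zeta$, and expand $\zeta \a_{ij}\partial_i\partial_j v$ via the identity~\eqref{e.easyiden}. The term $\a_{ij}\partial_i\partial_j(\zeta v)$ integrates against $d\mu$ to zero by~\eqref{e.weaksol.adjoint}, and the remaining two terms involve $v$ and $Dv$ times $\mu$-mass.

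The new ingredient, compared to Proposition~\ref{p.Guido}, is that to reach $L^q$ for $q$ close to (and eventually beyond) $\tfrac{d}{d-1}$ and then all the way up, we cannot afford to throw away the derivative term $\a_{ij}\partial_iv\,\partial_j\zeta$ crudely — or rather, we can afford it only if we also control $\int_{\operatorname{supp}D\zeta} |m|$ in a better space than $L^1$. So I would set up an induction on the integrability exponent: assume $m\in L^{p}_{\mathrm{loc}}(B_{2r})$ with a quantitative estimate on a ball $B_{\rho}$, and show $m \in L^{p'}_{\mathrm{loc}}$ with $\tfrac1{p'} = \tfrac1p - \tfrac1d + \text{(something)}$, i.e. a fixed gain in the exponent, on a slightly smaller ball. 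Each step loses a definite fraction of the radius (shrinking from $B_\rho$ to, say, $B_{\rho - cr}$ or $B_{\rho/2^k}$), so after finitely many steps — the number depending on $q$ — we land at the desired exponent on $B_r$, and since the coefficients are (qualitatively) continuous, the constants at each step are finite, depending on the modulus of continuity of $\a$. Concretely, the Schauder / $W^{2,p}$ interior estimates for the Monge-Amp\`ere equation (valid once $f$ is, say, continuous and positive, or after a further approximation) give $\|D^2 v\|_{L^{p^*}}$ bounds, which is where the continuity of $\a$ enters and where the gain in exponent comes from; then H\"older's inequality against $m\in L^p$ closes the loop.

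The main obstacle I anticipate is making the bootstrap step clean: one needs, at the induction step, to control the cross term $\int \a_{ij}\partial_i v\,\partial_j\zeta\, m\,dx$ by $\|Dv\|_{L^{s}(\operatorname{supp}D\zeta)}\|m\|_{L^{s'}(\operatorname{supp}D\zeta)}$ with $\|Dv\|_{L^s}$ estimated through the Monge-Amp\`ere regularity theory (which requires knowing $f^d\in L^{t}$ for appropriate $t$, hence controlling the norm of $f$ in a space dual to the target space for $m$), and simultaneously control $\int v\,\a_{ij}\partial_i\partial_j\zeta\,m\,dx$ similarly. Balancing these exponents so that each iteration produces a strict, fixed gain $\tfrac1d$ (or a fixed fraction thereof) in the reciprocal exponent, while keeping the radii under control, is the delicate bookkeeping. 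A secondary technical point is the approximation/mollification needed to apply Monge-Amp\`ere regularity (the right-hand side $f^d$ should be smooth and positive; one takes $f$ smooth, bounded below by a small positive constant on the relevant ball, and then passes to the limit), together with justifying that $m$ is genuinely a function (not just a measure) so that the H\"older pairings make sense — but this last is already supplied by Proposition~\ref{p.Guido}, which gives the base case $m\in L^{d/(d-1)}_{\mathrm{loc}}$ to start the induction. Once the reverse-H\"older inequality with the improved exponent is in hand, the conclusion~\eqref{e.Patty} follows by iterating and absorbing.
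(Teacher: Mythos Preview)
Your overall architecture (duality, cutoff identity~\eqref{e.easyiden}, bootstrap from the base case $m\in L^{d/(d-1)}$ of Proposition~\ref{p.Guido}) matches the paper, but the auxiliary equation you dualize against is wrong, and this is a genuine gap rather than a cosmetic difference.

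You propose to keep solving the Monge--Amp\`ere equation $\det D^2 v = f^d$ and to extract $W^{2,p}$ bounds on $v$; you then say ``this is where the continuity of $\a$ enters.'' But $\a$ does not appear in that equation at all---the only place $\a$ shows up in the Monge--Amp\`ere argument is through the ellipticity inequality $(\det D^2 v)^{1/d}\leq \tfrac{1}{\lambda d}\tr(\a D^2 v)$, which uses nothing about the modulus of continuity. So your scheme cannot possibly exploit continuity of $\a$, and Remark~\ref{r.nonono} shows that without it one \emph{cannot} go beyond $L^{d/(d-1)+\delta}$. Separately, Caffarelli's $W^{2,p}$ theory for Monge--Amp\`ere requires the right-hand side to be close to a constant (or H\"older), not merely $f^d\in L^t$; it does not give the clean $\|D^2 v\|_{L^{p_*}}\lesssim \|f\|_{?}$ bound you need to close the H\"older pairing against $\|m\|_{L^p}$.

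The paper's fix is simple: in the bootstrap step, dualize against the \emph{linear} equation $-\tr(\a D^2 u)=f$ in $B_{2r}$ with zero boundary data. The Calder\'on--Zygmund $W^{2,p_*}$ estimate for nondivergence equations with continuous coefficients (e.g.\ \cite[Theorem~9.11]{GT}) gives $\|D^2 u\|_{\underline L^{p_*}}\leq C\|f\|_{\underline L^{p_*}}$ for every $p_*\in(1,\infty)$, with $C$ depending on the modulus of continuity of $\a$---this is exactly where continuity enters. Sobolev then upgrades this to $\|u\|_{\underline L^p}+r\|Du\|_{\underline L^p}\leq Cr^2\|f\|_{\underline L^{p_*}}$ with $\tfrac1{p_*}=\tfrac1p+\tfrac1d$. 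Now run your cutoff identity exactly as you described: since $\zeta f = \zeta \tr(\a D^2 u)$ (no AM--GM needed, this is an equality), the invariant-measure property kills the $\a_{ij}\partial_i\partial_j(\zeta u)$ term and H\"older on the two remaining terms gives
\[
\Bigl|\fint_{B_r} mf\Bigr| \leq C\|m\|_{\underline L^q(B_{2r})}\|f\|_{\underline L^{p_*}(B_r)},\qquad \tfrac1p+\tfrac1q=1,\ \tfrac1{p_*}=\tfrac1p+\tfrac1d.
\]
By duality $m\in L^{q^*}$ with $q^*=\tfrac{dq}{d-q}$, a fixed gain of $\tfrac1d$ in the reciprocal exponent. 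Iterating from $q=\tfrac{d}{d-1}$ and a covering argument finishes. Your bookkeeping worries about balancing exponents disappear once you use the linear dual equation.
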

\begin{proof}
Fix $q\in (1,d)$ and let~$q^* := \frac{dq}{d-q}$. 
We will prove the estimate 
\begin{equation}
\label{e.reverse.Holder}
\| m \|_{\underline{L}^{q^*}(B_r)}
\leq 
C 
\| m \|_{\underline{L}^{q}(B_{2r})}\,,
\end{equation}
for a constant~$C<\infty$ depending on~$(q,d,\lambda,\Lambda)$ in addition to the modulus of continuity of~$\a$.
Arguing by duality, we will demonstrate the bound~\eqref{e.reverse.Holder} by showing that, for every $f\in L^{p_*}(B_{r})$ with $p:= \frac{q}{q-1}$ and~$p_*:= \frac{pd}{d+p} = \frac{q^*}{q^*-1}$,  we have 
\begin{equation}
\label{e.dualize.dat.shit}
\biggl| \fint_{B_{r}} mf  \, dx \biggr|
\leq 
C \| m \|_{\underline{L}^q(B_{2r})}
\| f \|_{\underline{L}^{p_*}(B_{r})} \,.
\end{equation}
To that end, we select~$f\in C^\infty_c(B_r)$ and consider the solution~$u$ of the problem
\begin{equation*}
\left\{
\begin{aligned}
&Lu:= -\tr( \a D^2u) = f  & \mbox{in} & \ B_{2r} \,, \\
& u = 0 & \mbox{on} & \ \partial B_{2r}\,.
\end{aligned}
\right.
\end{equation*}
Since~$p_*\in (1,\infty)$, the classical Calder\'on-Zygmund~$W^{2,p_*}$ estimates (see for instance~\cite[Theorem 9.11]{GT}) imply that~$u\in W^{2,p_*}(B_{2r})$
and we have the estimate
\begin{equation*}
\| D^2 u \|_{\underline{L}^{p_*}(B_{2r})} 
\leq 
C \| f \|_{\underline{L}^{p_*}(B_{2r})}
\leq 
C \| f \|_{\underline{L}^{p_*}(B_{r})}
 \,.
\end{equation*}
By the Sobolev inequality, we deduce that $u\in W^{1,p}(B_{r})$ and, moreover, we have the estimate
\begin{equation*}
\| u \|_{\underline{L}^{p}(B_r)} 
+
r\| D u \|_{\underline{L}^{p}(B_r)} 
\leq
Cr^2 \| f \|_{\underline{L}^{p_*}(B_{r})}
\,.
\end{equation*}
Select a cutoff function~$\zeta \in C^\infty_c(B_{2r})$ which satisfies
\begin{equation*}
\indc_{B_r} \leq \zeta \leq \indc_{B_{3r/2}} 
\quad \mbox{and} \quad 
\| D^k \zeta\|_{L^\infty(B_{2r})}
\leq 
C r^{-k} , \quad \forall k \in\{1,2\}\,.
\end{equation*}
Using~\eqref{e.invmeas.m.eq.2}, the identity~\eqref{e.easyiden} above, and H\"older's inequality, 
we find 
\begin{align*}
\fint_{B_{r}} mf \, dx
&
=
\fint_{B_{2r}}
m \zeta \a_{ij} \partial_i \partial_j u \,dx
\\ & 
= 
- \fint_{B_{2r}} 
\bigl( 
u(x) \a_{ij}(x)\partial_{i}\partial_j \zeta(x)
-2 \a_{ij}(x) \partial_i u(x)\partial_j \zeta(x)
\bigr) m \,dx
\\ & 
\leq 
\bigl( C \| u\|_{\underline{L}^p(B_{2r}) } \| D^2 \zeta \|_{L^\infty(B_{2r})}
+
C \| Du\|_{\underline{L}^p(B_{2r}) } \| D \zeta \|_{L^\infty(B_{2r})}
\bigr) 
\| m \|_{\underline{L}^q(B_{2r})}
\\ & 
\leq 
C\| m \|_{\underline{L}^q(B_{2r})} \| f \|_{\underline{L}^{p_*}(B_{r})}\,.
\end{align*}
By density, we obtain~\eqref{e.dualize.dat.shit} and this completes the proof of~\eqref{e.reverse.Holder}. 

\smallskip

The statement of the proposition now follows from~\eqref{e.reverse.Holder}, Proposition~\ref{p.Guido}, and a simple covering argument. 
\end{proof}

To go beyond~$m\in \cap_{q\in [1,\infty)} L^q$ and obtain local pointwise boundedness of an adjoint solution~$m$, we need to assume more than just the continuity of~$\a$. We need H\"older continuity (or at least a Dini-type modulus) so as to implement a harmonic approximation scheme, and sum over the scales. This is in complete analogy to the classical Schauder estimates. 

\begin{proposition}
\label{p.Holder.continuity}
Let~$\alpha \in (0,1]$ and~$\a: B_{r} \to \mathcal{S}^{d}$ belong to~$C^{0,\alpha}(B_r;\mathcal{S}^d)$. 
Let~$m \in L^1(B_{2r})$ be a weak solution of the equation
\begin{equation}
\label{e.invmeas.m.eq.3}
L^* m :=-\sum_{i,j=1}^d \partial_{i} \partial_j \bigl( \a_{ij} m \bigr) 
= 0 \quad \mbox{in} \ B_{2r}\,.
\end{equation}
Then~$m \in C^{0,\alpha}(B_r)$ and there exists~$C(\alpha,d,\lambda,\Lambda)\in [1, \infty)$ such that
\begin{equation}
\label{e.Patty.2}
\| m \|_{L^\infty(B_r)} 
+
r^{\alpha} \bigl[ m \big]_{C^{0,\alpha} (B_r)}
\leq
Cr^{\al} [\a]_{C^{0,\alpha}(B_r)}
\| m \|_{\underline{L}^{1}(B_{2r})}
\,.
\end{equation}
\end{proposition}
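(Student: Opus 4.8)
\textbf{Proof plan for Proposition~\ref{p.Holder.continuity}.} The strategy is a Campanato-type iteration on dyadic scales, comparing $m$ to adjoint solutions of the frozen-coefficient equation. Without loss of generality take $r=1$ and work near the center of the ball. The key point is that for a \emph{constant} coefficient matrix $\a_0\in\mathcal S^d$ the adjoint equation $-\sum_{ij}\a_{0,ij}\partial_i\partial_j w=0$ is, after a linear change of variables, just Laplace's equation, so solutions enjoy interior derivative bounds of all orders; in particular any solution $w$ in $B_\rho$ satisfies the oscillation-decay estimate
\begin{equation*}
\fint_{B_{\theta\rho}} \bigl| w - (w)_{B_{\theta\rho}} \bigr|\,dx
\leq
C\theta \fint_{B_\rho} \bigl| w - (w)_{B_\rho} \bigr|\,dx
\end{equation*}
for any $\theta\in(0,\tfrac12]$, with $C$ depending only on $(d,\lambda,\Lambda)$. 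Against this I will measure the error introduced by replacing the true coefficients $\a(\cdot)$ in $B_\rho$ by the constant value $\a_0:=\a(x_0)$ at the center.

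First I would set up the comparison: given a ball $B_\rho(x_0)\subseteq B_1$, let $w$ solve the constant-coefficient adjoint problem $-\sum_{ij}\a_{0,ij}\partial_i\partial_j w=0$ in $B_\rho(x_0)$ with the same ``boundary data'' as $m$ in the appropriate weak sense — concretely, since we only have $m\in L^1$, it is cleanest to dualize: for test functions $\varphi$, $m-w$ tested against $\tr(\a_0 D^2\varphi)$ equals $\int (\a-\a_0):D^2\varphi\, m$, which is controlled by $[\a]_{C^{0,\alpha}}\rho^\alpha \|D^2\varphi\|_{L^\infty}\|m\|_{L^1(B_\rho)}$. Using the $W^{2,p}$ Calderón--Zygmund solvability for $L$ (exactly as in the proof of Proposition~\ref{p.Patty.plus}) to produce the test function $\varphi$ from arbitrary $f\in C_c^\infty$, and the fact that by Proposition~\ref{p.Patty.plus} we already know $m\in L^q_{\mathrm{loc}}$ for every $q<\infty$, I get a quantitative bound
\begin{equation*}
\fint_{B_{\rho/2}(x_0)} |m-w|\,dx
\leq
C \rho^{\alpha} [\a]_{C^{0,\alpha}(B_1)}
\| m \|_{\underline L^{q}(B_\rho(x_0))}
\end{equation*}
for a suitable fixed $q$. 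Combining this with the oscillation-decay estimate for $w$ and the triangle inequality yields the excess-improvement inequality
\begin{equation*}
\phi(\theta\rho)
\leq
C_0\theta\,\phi(\rho)
+
C_1\rho^{\alpha}\bigl( [\a]_{C^{0,\alpha}} + 1\bigr)\|m\|_{\underline L^q(B_1)},
\qquad
\phi(\rho) := \fint_{B_\rho(x_0)} \bigl| m - (m)_{B_\rho(x_0)}\bigr|\,dx.
\end{equation*}

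Next I would fix $\theta$ small enough that $C_0\theta\le \tfrac12\theta^{\alpha}$ (possible since $\alpha<1$ forces $\theta^\alpha\gg\theta$ for small $\theta$; for $\alpha=1$ one uses $\theta^{\alpha'}$ with $\alpha'<1$ and then the standard trick), and iterate the inequality down the geometric sequence of radii $\theta^k$. A routine summation gives $\phi(\rho)\le C\rho^{\alpha}\bigl([\a]_{C^{0,\alpha}}+1\bigr)\bigl(\|m\|_{\underline L^q(B_1)}+\phi(1)\bigr)$ for all $\rho\le\tfrac12$ and all $x_0$ in a slightly smaller ball, which is precisely the Campanato characterization of $C^{0,\alpha}$; hence $m$ has a continuous representative with $[m]_{C^{0,\alpha}(B_{1/2})}\le C([\a]_{C^{0,\alpha}}+1)\|m\|_{\underline L^q(B_1)}$, and the $L^\infty$ bound follows from averaging plus the oscillation control. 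Finally I convert $\|m\|_{\underline L^q(B_1)}$ back to $\|m\|_{\underline L^1(B_2)}$ via Proposition~\ref{p.Patty.plus} (applied on the doubled ball), and rescale to general $r$, where the factor $r^\alpha[\a]_{C^{0,\alpha}(B_r)}$ emerges naturally from the scaling of the seminorm; a minor cleanup handles the ``$+1$'' versus the stated form of the constant (one may absorb it since $[\a]_{C^{0,\alpha}}$ can be taken $\ge$ a fixed positive multiple after rescaling, or simply state the bound with $([\a]_{C^{0,\alpha}}+1)$, which is the honest dependence).

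\textbf{Main obstacle.} The delicate point is the low regularity of $m$: the comparison function $w$ must be defined and estimated purely through the weak (distributional/dualized) formulation, since $m$ is only $L^1$ a priori and the adjoint operator does not allow a naive trace or energy argument. Getting the harmonic-approximation error in the sharp form $C\rho^\alpha\|m\|_{\underline L^q}$ requires carefully combining the $W^{2,p}$ estimates for the \emph{direct} operator $L$ (to build good test functions on each ball) with the interior integrability already supplied by Proposition~\ref{p.Patty.plus}; this is the step where one must be careful about which norm of $m$ appears and about the scaling of the Calderón--Zygmund constants. Everything downstream — the choice of $\theta$, the iteration, the Campanato-to-Hölder passage — is standard once that estimate is in hand.
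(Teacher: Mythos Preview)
Your Campanato-by-harmonic-approximation outline is the textbook template, but the paper takes a cleaner route that avoids exactly the obstacle you single out. Instead of constructing a frozen-coefficient comparison solution $w$ for $m$, the paper compares mollified values $m\ast\eta_{2\delta}(0)-m\ast\eta_\delta(0)$ directly. After normalizing $\a(0)=I_d$, let $w_\delta$ be the bounded solution of $-\Delta w_\delta=\eta_{2\delta}-\eta_\delta$ in $\Rd$. The key observation is that $w_\delta\in C^\infty_c(B_{2\delta})$: the Newtonian potential is harmonic away from its pole, hence unchanged by radial mollification there, so $w_\delta$ vanishes outside $B_{2\delta}$. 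This makes $w_\delta$ a legitimate test function in \eqref{e.invmeas.m.eq.3} with no cutoff, and one reads off
\[
\bigl|m\ast\eta_{2\delta}(0)-m\ast\eta_\delta(0)\bigr|
=\biggl|\int\bigl(\Delta w_\delta-\tr(\a D^2w_\delta)\bigr)m\biggr|
\le C[\a]_{C^{0,\alpha}}\delta^\alpha\,\|m\|_{\underline L^1(B_{2\delta})},
\]
using $\|D^2w_\delta\|_{L^\infty}\le C\delta^{-d}$ and $\|\a-I_d\|_{L^\infty(B_{2\delta})}\le[\a]_{C^{0,\alpha}}(2\delta)^\alpha$. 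Summing over dyadic $\delta$ gives the Campanato bound, hence $C^{0,\alpha}$.

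By contrast, your plan has a real gap at the step you yourself flag. The test function $\varphi$ produced by solving $-\tr(\a_0D^2\varphi)=f$ in $B_\rho$ with zero Dirichlet data is \emph{not} in $C^2_c(B_\rho)$, so it cannot be plugged into the weak formulation of $L^*m=0$ or $L_0^*w=0$. If you cut it off as in Proposition~\ref{p.Patty.plus}, the annular terms involve $m$ and $w$ separately rather than $m-w$, and controlling $w$ on the annulus requires control of its boundary data---which is $m$ on $\partial B_\rho$, for which you have no trace. This can be repaired (mollify $m$ first and pass to the limit, or solve on an interior ball with a two-scale cutoff), but it is genuinely more work than your sketch suggests. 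Note also that the paper's argument uses only $\|m\|_{\underline L^1}$ at each scale and never invokes Proposition~\ref{p.Patty.plus}, whereas your route needs that proposition as an input before the iteration can even start.
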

\begin{proof}
By a change of variables, we may suppose without loss of generality that~$\a(0) = I_d$, and also that~$r=1$. 
Let~$\{ \eta_\delta\}_{\delta>0}$ be the standard mollifier; we will be using that it is radial. 
We will argue that
\begin{equation}
\label{e.adjoint.Schauder.wts.0}
\bigl| m\ast \eta_{2\delta}(0) - m\ast \eta_\delta (0) \bigr| 
\leq
C \delta^{\alpha} \bigl[ \a \bigr]_{C^{0,\alpha}(B_{1})} 
\| m \|_{\underline{L}^1(B_{2\delta})}
\,,\quad \forall \delta\in (0,\sfrac18)\,.
\end{equation}
Consider the unique bounded solution~$w_\delta\in C^\infty(\Rd)$ of the problem 
\begin{equation*}
-\Delta w_\delta = \eta_{2\delta} - \eta_\delta \quad \mbox{in} \ \Rd. 
\end{equation*}
By symmetry,~$w_\delta$ is radial. In fact, it vanishes outside of~$B_{2\delta}$ because the elliptic Green function is harmonic away from its pole, and is therefore unchanged by mollification (by a radial mollifier) by the mean value property. Classical Schauder estimates for the Poisson equation yield,\footnote{Alternatively, just write down the Green function formula and read the estimates off.} for every~$\beta \in (0,1]$, 
\begin{equation}
\label{e.wr.estimates}
\| D^2 w_\delta\|_{L^\infty(\Rd)} 
\leq 
\| \eta_{2\delta} - \eta_\delta \|_{L^\infty(\Rd)} 
+
\delta^{\beta} \bigl[ \eta_{2\delta} - \eta_\delta \bigr]_{C^{0,\beta}(\Rd)}
=
C\delta^{-d}\,.
\end{equation}
For each~$\delta < \sfrac12$, we may test~$w_\delta \in C^\infty_c(B_{2\delta}) \subseteq C^\infty_c(B_{1})$ in the weak formulation of the adjoint equation,~\eqref{e.wr.estimates}, to obtain
\begin{align*}
\bigl| m\ast \eta_{2\delta}(0) - m\ast \eta_\delta (0) \bigr| 
&
=
\biggl| \int_{B_{2\delta}}
\bigl( \eta_{2\delta}(x) - \eta_\delta(x) \bigr) m(x) \,dx \biggr|
\\ & 
=
\bigg| 
\int_{B_{2\delta}}
\Delta w_\delta(x) m(x) \,dx \biggr|
\\ & 
= 
\biggl|
\int_{B_{2\delta}}
\bigl( \Delta w_\delta(x) - \tr(\a(x) D^2w_\delta(x) ) \bigr) m(x) \,dx\biggr|
\\ & 
\leq
C 
\| m \|_{L^1(B_{2\delta})}
\| \a - I_d \|_{L^\infty(B_{2\delta})}
\| D^2 w_\delta \|_{L^\infty(B_{2\delta})}
\\ & 
\leq 
C\bigl[ \a \bigr]_{C^{0,\alpha}(B_{1})} 
\delta^{\alpha}
C\delta^{-d} \| m \|_{L^1(B_{2\delta})}
\,.
\end{align*}
This yields~\eqref{e.adjoint.Schauder.wts.0}.

\smallskip

The estimate~\eqref{e.adjoint.Schauder.wts.0} implies the proposition. To see this, note that it implies that~$\{ m \ast\eta_{2^{-n}\delta}(0)\}_{n\in\N}$  is Cauchy and so if $0$ is a Lebesgue point for~$m$, then $m(0) = \lim_{n\to \infty} m \ast\eta_{2^{-n}\delta}(0)$. Otherwise, we may redefine~$m(0)$. Summing~\eqref{e.adjoint.Schauder.wts.0} over the scales, we obtain
\begin{equation*}
\| m - m(0) \|_{\underline{L}^1(B_\delta)} 
\leq C \delta^{\alpha} \bigl[ \a \bigr]_{C^{0,\alpha}(B_{1})} 
\| m \|_{\underline{L}^1(B_{1})}\,, \forall \delta\in (0,\sfrac18)\,.
\end{equation*}
By translation, we obtain 
\begin{equation*}
\sup_{x\in B_1} 
\| m - m(x) \|_{\underline{L}^1(B_\delta(x))} 
\leq C \delta^{\alpha} \bigl[ \a \bigr]_{C^{0,\alpha}(B_{2})} 
\| m \|_{\underline{L}^1(B_{2})}\,, \forall \delta\in (0,\sfrac18)\,.
\end{equation*}
The result now follows from Campanato's characterization of H\"older spaces. 
\end{proof}

Note that higher regularity ($C^1$ and beyond) for the adjoint equation can be obtained in the case that the coefficients~$\a$ are also at least~$C^{1,\alpha}$. But this is classical, because we can write the equation in divergence form and apply Schauder estimates for divergence form equations.

{
\footnotesize
\bibliographystyle{abbrv}
\bibliography{nondivboot}
}

\end{document}